\newtheorem{theorem}{Theorem}[section]
\numberwithin{equation}{section}
\newtheorem{proposition}[theorem]{Proposition}
\newtheorem{definition}[theorem]{Definition}
\newtheorem{remark}[theorem]{Remark}
\newtheorem{lemma}[theorem]{Lemma}
\newtheorem{algorithm}[theorem]{Algorithm}
\titleformat{\section}{\normalfont\scshape\centering}{\thesection.}{0.5em}{}
\titleformat*{\subsection}{\itshape}
\titleformat*{\subsubsection}{\itshape}
\providecommand{\keywords}[1]
{
	{\small\textit{Keywords:} #1}
}
\providecommand{\MSC}[1]
{
	{\small\textit{AMS MSC (2020):~~} #1}
}
\providecommand{\jumptmp}[2]{#1\llbracket{#2}#1\rrbracket}
\providecommand{\jump}[1]{\jumptmp{}{#1}}
	\def\MR#1{}
\begin{document}
	\setlength{\abovedisplayskip}{5.5pt}
	\setlength{\belowdisplayskip}{5.5pt}
	\setlength{\abovedisplayshortskip}{5.5pt}
	\setlength{\belowdisplayshortskip}{5.5pt}

	\title{\vspace*{-3cm}Exact a posteriori error control for variational problems\\ via convex duality and explicit flux reconstruction}
	\author[1]{Sören Bartels\thanks{Email: \texttt{bartels@mathematik.uni-freiburg.de}}}
	\author[2]{Alex Kaltenbach\thanks{Email: \texttt{kaltenbach@math.tu-berlin.de}}}
	\date{\today\vspace*{-5mm}}
	\affil[1]{\small{Department of Applied Mathematics, University of Freiburg, Hermann--Herder--Str. 10,  79104  Freiburg}}
	\affil[2]{\small{Institute of Mathematics, Technical University of Berlin, Stra\ss e des 17.\ Juni 135\\ 10623 Berlin}}
	\maketitle

	\pagestyle{fancy}
	\fancyhf{}
	\fancyheadoffset{0cm}
	\addtolength{\headheight}{-0.25cm}
	\renewcommand{\headrulewidth}{0pt} 
	\renewcommand{\footrulewidth}{0pt}
	\fancyhead[CO]{\textsc{Exact Error Control via Convex Duality}}
	\fancyhead[CE]{\textsc{S. Bartels and A. Kaltenbach}}
	\fancyhead[R]{\thepage}
	\fancyfoot[R]{}
	
	\begin{abstract}
		A posteriori error estimates are an important tool to bound discretization errors in terms of computable quantities avoiding regularity conditions that are often difficult to establish. For non-linear and non-differentiable problems, problems involving jumping coefficients, and finite element methods using anisotropic triangulations, such estimates typically involve large factors, leading to sub-optimal error estimates. By making use of convex duality arguments, exact and explicit error representations are derived that avoid such effects.
	\end{abstract}

	\keywords{A posteriori error estimates; adaptivity; non-conforming methods; convex duality.}
	
	\MSC{35Q68; 49M25; 49M29; 65K15; 65N15; 65N50.}

	\section{Introduction}\label{sec:intro}
	\thispagestyle{empty}
	\subsection{Sharp error estimation}
	
	\hspace*{5mm}The derivation of sharp a posteriori error estimates 
	has been an active area of research over several decades. 
	 Typical concepts involve the precise characterization of generic constants occurring in residual estimates (\textit{cf}.\ \cite{BaDuRo92,Ains05,VeeVer09,ErStVo10,Vohr11,CaGeRi12,ErnVoh15}), 
	 the approximation~of local problems by higher-order methods (\textit{cf}.\ \cite{MoNoSi03,CarFun99}), 
	 the usage of convex duality relations (\textit{cf}.\ \cite{NeiRep04,Verf09}),
	  and the development of post-processing procedures to obtain equilibrated fluxes
	  (\textit{cf}.\ \mbox{\cite{LucWoh04,BraSch08,Brae09,ErnVoh13,BeCaLu16,SmeVoh20}}).
	  For general discussions of various aspects of a 
	  posteriori error estimation, we refer the reader, \textit{e.g.}, to \cite{BabStr01,Brae07-book,Rep08,Ver13,ChaLeg23}.
	   Recently, fully computable error representations for several convex variational problems have been identified by deriving explicit representation formulas for solutions of dual problems in terms of non-conforming primal approximations. The concept avoids the occurrence of typical constants, applies to a large class of non-quadratic, non-differentiable, constrained, and degenerate problems, for which classical approaches lead to sub-optimal~error~control.\linebreak
	   Closely related concepts  have been
	   used in the derivation of a posteriori error estimates for mixed and non-conforming
	   methods (\textit{cf}.\ \cite{ArnBre85,Arbo95,Alon96,DDPV96,AinOde00-book}).
	   \enlargethispage{15mm}
	
	\subsection{Prager--Synge identity}
	
		\hspace*{5mm}A well-known error representation in the context of the \textit{homogeneous Poisson problem}, \textit{i.e.},
	\begin{align}\label{intro:poisson}
		\begin{aligned}
	-	\Delta u&=f&&\quad\text{ in }\Omega\,,\\
		\nabla u\cdot n&=0&&\quad\text{ on }\Gamma_N\,,\\
		u&=0&&\quad\text{ on }\Gamma_D\,,
	\end{aligned}
	\end{align}
	where $\Gamma_D,\Gamma_N\subseteq \partial\Omega$ with $\Gamma_D\dot{\cup}\Gamma_N=\partial\Omega$ and $f\in L^2(\Omega)$,  
	was pointed out in \cite{PraSyn47} and follows from the celebrated \textit{Prager--Synge identity}, \textit{i.e.}, if $u \in  W^{1,2}_D(\Omega)$ is the (weak) solution of the Poisson problem~\eqref{intro:poisson} and $z = \nabla u \in  W^2_N(\textup{div}; \Omega)$  the solution of the corresponding (Fenchel)~dual~problem, then for every $v\in W^{1,2}_D(\Omega)$ and $y\in  W^2_N(\textup{div}; \Omega)$ with  $\textup{div}\, y = -f$ a.e.\ in $\Omega$, it holds that
	\begin{align}\label{intro:pager-synge}
		\tfrac{1}{2}\| \nabla v- \nabla u\|_{2,\Omega}^2+\tfrac{1}{2}\| y- z\|_{2,\Omega}^2=\tfrac{1}{2}\| \nabla v- y\|_{2,\Omega}^2\,.
	\end{align}
	The identity \eqref{intro:pager-synge} is an immediate consequence of the $L^2$-orthogonality of  $y-z$ and $\nabla v-\nabla u$ and has the interpretation that the squared $L^2$-distance between the gradient of a primal approximation $v\in W^{1,2}_D(\Omega)$ and a dual approximation $y\in  W^2_N(\textup{div}; \Omega)$  with $\textup{div}\, y = -f$~a.e.~in~$\Omega$ 
	yields an explicit and computable way to determine the sum of  the primal~and~dual~\mbox{approximation}~errors.\pagebreak
	
	\hspace*{-5mm}A limitation of using this interpretation practically is that often merely a primal approximation is given and determining an optimal or nearly optimal dual approximation typically~is~too~\mbox{expensive}. For special problems involving two-dimensional Poisson (\textit{cf}. \cite{Brae07-book}) and obstacle problems~(\textit{cf}.~\cite{BHS08}), this difficulty has been overcome by constructing a nearly optimal discrete dual vector field via a local post-processing procedure.\vspace*{-0.5mm}
	
	\subsection{Continuous convex duality}\enlargethispage{13mm}\vspace*{-0.5mm}
	
		\hspace*{5mm}The Prager--Synge identity \eqref{intro:pager-synge} can  be generalized to a larger class of convex minimization problems following, \textit{e.g.},  \cite{Rep99,Rep08,Repin18,Rep20A,Rep20B,Rep20C,BKAFEM22,BK23ROF,repin2024posteriori}: let $\phi \colon \Omega\times \mathbb{R}^d\to \mathbb{R}\cup\{+\infty\}$~and $\psi \colon \Omega\times \mathbb{R}\to \mathbb{R}\cup\{+\infty\}$ be (Lebesgue) measurable functions such that for a.e.\ $x\hspace*{-0.1em}\in\hspace*{-0.1em} \Omega$,~the~functions 
		${\phi (x,\cdot)\colon\hspace*{-0.1em}  \mathbb{R}^d\hspace*{-0.1em}\to\hspace*{-0.1em} \mathbb{R}\hspace*{-0.1em}\cup\hspace*{-0.1em}\{+\infty\}}$~and $\psi (x,\cdot)\colon  \hspace*{-0.15em}\mathbb{R}\hspace*{-0.15em}\to \hspace*{-0.15em}\mathbb{R}\cup\{+\infty\}$  are proper, convex, and lower semi-continuous.
		Then, the minimization of the functional $I\colon \hspace*{-0.15em} W^{1,p}_D(\Omega)\hspace*{-0.15em}\to\hspace*{-0.15em}  \mathbb{R}\cup\{+\infty\}$,~for~every~${v\hspace*{-0.15em}\in \hspace*{-0.15em} W^{1,p}_D(\Omega)}$~defined~by\vspace*{-0.25mm}
	\begin{align}\label{intro:primal}
		I(v)\coloneqq  \int_\Omega{\phi(\cdot,\nabla v)\,\mathrm{d}x}+\int_\Omega{\psi(\cdot,v)\,\mathrm{d}x}\,,
	\end{align}
	denotes the \textit{(Fenchel) primal problem}. 
	A corresponding \textit{(Fenchel) dual problem} consists in the maximization of the functional $D\colon \smash{W^{p'}_N(\textup{div};\Omega)}\to \mathbb{R}\cup\{-\infty\}$, for every $y\in \smash{W^{p'}_N(\textup{div};\Omega)}$~defined~by\vspace*{-0.25mm}
	\begin{align}\label{intro:dual}
		D(y)\coloneqq - \int_\Omega{\phi^*(\cdot,y)\,\mathrm{d}x}-\int_\Omega{\psi^*(\cdot,\textup{div}\,y)\,\mathrm{d}x}\,.
	\end{align}
	Here, $\phi^* \colon \Omega\times \mathbb{R}^d\to \mathbb{R}\cup\{+\infty\}$ and $\psi^* \colon \Omega\times \mathbb{R}\to \mathbb{R}\cup\{+\infty\}$ denote the \textit{Fenchel conjugates} (with respect to the second argument) to $\phi \colon \Omega\times \mathbb{R}^d\to \mathbb{R}\cup\{+\infty\}$ and $\psi \colon \Omega\times \mathbb{R}\to \mathbb{R}\cup\{+\infty\}$, respectively.
	 \textit{Fenchel--Young inequalities} (\textit{cf}.\ \eqref{eq:fenchel_young_ineq}) in combination~with~an~integration-by-parts formula imply a \textit{weak duality relation}, \textit{i.e.}, for every $v\in \smash{W^{1,p}_D(\Omega)}$~and~$y\in \smash{W^{p'}_N(\textup{div};\Omega)}$,~it~holds~that\vspace*{-0.25mm}
	\begin{align}\label{intro:weak_duality}
		\begin{aligned}
			I(v)&
			\ge \int_\Omega{\big\{\nabla v\cdot y -\phi^*(\cdot,y)\big\}\,\mathrm{d}x}+ \int_\Omega{\psi(\cdot,v)\,\mathrm{d}x}
			\\&= -\int_\Omega{\phi^*(\cdot,y)\,\mathrm{d}x}- \int_\Omega{\big\{\textup{div}\,y\,v-\psi(\cdot,v)\big\}\,\mathrm{d}x}
			\ge 
			D(y)\,. 
				\end{aligned}
	\end{align}\vspace*{-5mm}
	
	\subsection{Continuous strong duality, convex optimality relations, and flux reconstruction}\vspace*{-0.5mm}
	
	 	\hspace*{5mm}In many cases, \textit{e.g.}, if both $\phi\colon \Omega\times \mathbb{R}^d\to \mathbb{R}$ and $\psi\colon \Omega\times \mathbb{R}\to \mathbb{R}$ are Carath\'eodory mappings\footnote{A mapping $\Phi\colon \Omega\times \mathbb{R}^{\ell}\to \mathbb{R}$, $\ell\in \mathbb{N}$, is called \textit{Carath\'eodory mapping} if for a.e.\ $x\in \Omega$, the function $\Phi(x,\cdot)\colon \mathbb{R}^{\ell}\to \mathbb{R}$ is continuous, and for every $r\in \mathbb{R}^{\ell}$, the function $\Phi(\cdot,r)\colon \Omega\to \mathbb{R}$ is (Lebesgue) measurable.}
	 	and for every $v\in L^p(\Omega)$ and $y\in L^p(\Omega;\mathbb{R}^d)$, it holds that $\phi(\cdot,y),\psi(\cdot,v)\in L^1(\Omega)$ (\textit{cf}.\ \cite[Thm.\ 4.1, p.\ 59, Prop.\ 1.1, p.\ 77]{ET99}), there even holds a \textit{strong duality relation},~\textit{i.e.}, there exists minimizer $u\in W^{1,p}_D(\Omega)$ of \eqref{intro:primal}, called \textit{primal solution}, and a maximizer  $z\in W^{p'}(\textup{div};\Omega)$ of \eqref{intro:dual}, called~\textit{dual solution},  such that\vspace*{-0.5mm}
	 \begin{align}\label{intro:strong_duality}
	 	I(u) = D(z)\,.
	 \end{align}
	 The strong duality relation \eqref{intro:strong_duality} is available for a large class of convex minimization problems,~\textit{e.g.}, including \hspace*{-0.1mm}non-linear \hspace*{-0.1mm}Dirichlet \hspace*{-0.1mm}problems, \hspace*{-0.1mm}obstacle \hspace*{-0.1mm}problems, \hspace*{-0.1mm}certain \hspace*{-0.1mm}non-differentiable~\hspace*{-0.1mm}\mbox{problems},~\hspace*{-0.1mm}and degenerate minimization problems.
It is equivalent to \textit{convex optimality relations},~\textit{i.e.},~it~holds~that\vspace*{-0.5mm}
	\begin{alignat}{2}
			z \cdot \nabla u&=\phi^*(\cdot,z)+\phi(\cdot,\nabla u)&&\quad\text{ a.e.\ in }\Omega \,,\label{intro:convex_optimality.1}\\
			\textup{div}\,z \, u&=\psi^*(\cdot,\textup{div}\, z)+\psi(\cdot, u)&&\quad\text{ a.e.\ in }\Omega\,.\label{intro:convex_optimality.2} 
	\end{alignat} 
	The convex optimality relations   \eqref{intro:convex_optimality.1}, \eqref{intro:convex_optimality.2} characterize equality in the Fenchel--Young inequalities (\textit{cf}.\ \eqref{eq:fenchel_young_ineq})  used in the derivation of the dual problem (\textit{cf}.\ \eqref{intro:weak_duality}). By the \textit{Fenchel--Young~identity} (\textit{cf}.\ \eqref{eq:fenchel_young_id}), the convex optimality relations \eqref{intro:convex_optimality.1}, \eqref{intro:convex_optimality.2} are each equivalent to the inclusions\vspace*{-0.5mm}
	\begin{alignat}{2}
		z&\in \partial \phi(\cdot,\nabla u)&&\quad\text{ a.e.\ in }\Omega \,,\label{intro:convex_optimality.3}\\
		\textup{div}\,z&\in \partial\psi(\cdot, u)&&\quad\text{ a.e.\ in }\Omega \,,\label{intro:convex_optimality.4}
	\end{alignat}
	where we denote  with $\partial \phi\colon \Omega\times \mathbb{R}^d\to 2^{\mathbb{R}^d}$ and $\partial \psi\colon \Omega\times\mathbb{R}\to 2^{\mathbb{R}}$, the corresponding sub-differentials of $\phi\colon \Omega\times \mathbb{R}^d\to \mathbb{R}\cup\{+\infty\}$ and $\psi\colon \Omega\times \mathbb{R}\to \mathbb{R}\cup\{+\infty\}$  (with respect to the~second~argument).\pagebreak
	
	\hspace*{-5mm}If  $\phi(x,\cdot)\in C^1(\mathbb{R}^d)$ for a.e.\ $x\in\Omega$ or $\psi(x,\cdot)\in C^1(\mathbb{R})$ for a.e.\ $x\in\Omega$, then 
	\eqref{intro:convex_optimality.3},~\eqref{intro:convex_optimality.4}~each~become
	\begin{alignat}{2}
		z&= D\phi(\cdot,\nabla u)&&\quad\text{ a.e.\ in }\Omega \,,\label{intro:convex_optimality.5}\\
		\textup{div}\,z&= D \psi(\cdot, u)&&\quad\text{ a.e.\ in }\Omega \,,\label{intro:convex_optimality.6}
	\end{alignat}
	where the identity \eqref{intro:convex_optimality.5} has an interpretation as reconstruction formula, in the sense that given a primal solution $u\in W^{1,p}_D(\Omega)$, a dual solution $z\in \smash{W^{p'}_N(\textup{div};\Omega)}$ is immediately~available~via~\eqref{intro:convex_optimality.5}.

	\subsection{Generalized Prager--Synge identity}\enlargethispage{11mm}
	
		\hspace*{5mm}The convex and concave functionals $I\colon W^{1,p}_D(\Omega)\hspace*{-0.1em}\to\hspace*{-0.1em} \mathbb{R}\cup\{+\infty\}$ and ${D\colon \hspace*{-0.1em}\smash{W^{p'}_N(\textup{div};\Omega)}\hspace*{-0.1em}\to\hspace*{-0.1em} \mathbb{R}\cup\{-\infty\}}$ give rise to the definition of the non-negative convexity measures $\rho_I^2\colon W^{1,p}_D(\Omega)\times W^{1,p}_D(\Omega)\to \mathbb{R}$~and $\rho_{-D}^2\colon W^{p'}_N(\textup{div};\Omega)\times W^{p'}_N(\textup{div};\Omega)\to \mathbb{R}$, for every $v,\widehat{v}\in W^{1,p}_D(\Omega)$ and $y,\widehat{y}\in W^{p'}_N(\textup{div};\Omega)$~defined~by 
	\begin{align}\label{intro:coercivity_measures}
		\begin{aligned}
		\rho_I^2(v, \widehat{v}) &\coloneqq I(v) - I(\widehat{v}) - \langle \delta I(\widehat{v}),v - \widehat{v}\rangle_{\smash{W^{1,p}_D(\Omega)}}\,,\\
		\rho_{-D}^2(y, \widehat{y}) &\coloneqq -D(y) + D(\widehat{y}) + \langle \delta D(\widehat{y}),y - \widehat{y}\rangle_{\smash{W^{p'}_N(\textup{div};\Omega)}}\,,
	\end{aligned}
	\end{align}
	in case that $I\colon \smash{W^{1,p}_D(\Omega)}\to \mathbb{R}\cup\{+\infty\}$ and $D\colon \smash{W^{p'}_N(\textup{div};\Omega)}\to \mathbb{R}\cup\{-\infty\}$ are Fr\'echet~\mbox{differentiable}. The quantities  \eqref{intro:coercivity_measures} for every $v,\widehat{v}\hspace*{-0.1em}\in\hspace*{-0.1em} \smash{W^{1,p}_D(\Omega)}$ and $y,\widehat{y}\hspace*{-0.1em}\in\hspace*{-0.1em} \smash{W^{p'}_N(\textup{div};\Omega)}$ measure the distance between the functionals $I$ and $-D$ and their tangents at $I(\widehat{v})$ and $-D(\widehat{y})$ evaluated~at~$v$~and~$y$,~\mbox{respectively}. More generally, the variations $\delta I$ and $\delta D$ in \eqref{intro:coercivity_measures} can be replaced by suitable sub-gradients. Since 
	$\delta I(u) = 0$ and $\delta D(z) = 0$ (or $0\in \partial I(u) $ and $0\in \partial D(z)$),   in case of a strong duality \eqref{intro:strong_duality}, we find a \textit{generalized Prager--Synge identity}, \textit{i.e.}, given a primal solution $u\in W^{1,p}_D(\Omega)$~and~dual~solution $z\in \smash{W^{p'}_N(\textup{div};\Omega)}$, for every $v\in \smash{W^{1,p}_D(\Omega)}$ with $I(v)<+\infty$ and $y\in \smash{W^{p'}_N(\textup{div};\Omega)}$ with $D(y)>-\infty$, it holds that\vspace*{-1mm}
	\begin{align}\label{intro:gen_prager_synge}
		\begin{aligned}
			\rho_{\textup{tot}}^2(v,y)&\coloneqq 	\rho_I^2(v, u) +	\rho_{-D}^2(y, z) 
			\\& =I(v)-I(u)+D(z)-D(y)
			\\& =I(v)-D(y)
			\\&\eqqcolon \eta_{\textup{gap}}^2(v,y)\,,
		\end{aligned}
	\end{align}
		\textit{i.e.}, the inaccessible \textit{total error} $\rho_{\textup{tot}}^2(v,y)$ equals the accessible \textit{primal-dual~gap~\mbox{estimator}}~$\eta_{\textup{gap}}^2(v,y)$. 
	For the Poisson problem \eqref{intro:poisson}, we have that $\phi(r) =\phi^*(r) = \frac{1}{2}\vert r\vert ^2$ for all $r\in \mathbb{R}^d$, $\psi(x,s) = -f(x)s$ for a.e.\  $x\in \Omega$ and all $s\in \mathbb{R}$, and $\psi^*(x,s) = I_{\{-f(x)\}}^{\Omega}(s)$ for a.e.\  $x\in \Omega$ and all $s\in \mathbb{R}$, where the latter indicator functional enforces the constraint $\textup{div}\, y = -f $ a.e.\ in $\Omega$. Hence, for every $\smash{v\in W^{1,p}_D(\Omega)}$  and $y\in \smash{W^{p'}_N(\textup{div};\Omega)}$ with  $\textup{div}\, y = -f$ a.e.\ in $\Omega$,~with~an~\mbox{integration-by-parts}~formula,~we~find~that
	\begin{align}\label{intro:gen_prager_synge_poisson}
		\begin{aligned}
		\eta_{\textup{gap}}^2(v,y)&
		=\tfrac{1}{2}\|\nabla v\|^2_{2,\Omega}-(y, \nabla v)_{\Omega}+\tfrac{1}{2}\|y\|^2_{2,\Omega}
		\\&=\tfrac{1}{2}\| \nabla v-y\|^2_{2,\Omega}\,.
	\end{aligned}
	\end{align}
	It is remarkable that the employed global relations lead to an error representation as an integral of a non-negative function. Hence, the right-hand side can be decomposed using partitions and thereby provides meaningful local refinement indicators. This observation can~equally~be~generalized: using the definitions \eqref{intro:primal}, \eqref{intro:dual} and an integration-by-parts formula, for every $v\in \smash{W^{1,p}_D(\Omega)}$ with $I(v)<+\infty$ and $y\in \smash{W^{p'}_N(\textup{div};\Omega)}$ with $D(y)>-\infty$, we arrive at the general representation
	\begin{align}\label{intro:eta}
		\begin{aligned}
			\eta_{\textup{gap}}^2(v,y)&=\int_{\Omega}{\big\{\phi(\cdot,\nabla v)-y\cdot \nabla v +\phi^*(\cdot,y)\big\}\,\mathrm{d}x}\\&\quad+\int_{\Omega}{\big\{\psi(\cdot, v)-\textup{div}\,y\, v +\psi^*(\cdot,\textup{div}\,y)\big\}\,\mathrm{d}x}\,.
		\end{aligned}
	\end{align}
	Both integrands on the right-hand side of  \eqref{intro:eta}, by the  Fenchel--Young inequality (\textit{cf}.\ \eqref{eq:fenchel_young_ineq}),~are point-wise \hspace*{-0.15mm}non-negative 
	\hspace*{-0.15mm}and  \hspace*{-0.15mm}vanish \hspace*{-0.15mm}if \hspace*{-0.15mm}and \hspace*{-0.15mm}only \hspace*{-0.15mm}if \hspace*{-0.15mm}convex \hspace*{-0.15mm}optimality \hspace*{-0.15mm}relations \hspace*{-0.15mm}\eqref{intro:convex_optimality.1}, \hspace*{-0.15mm}\eqref{intro:convex_optimality.2}~\hspace*{-0.15mm}are~\hspace*{-0.15mm}\mbox{satisfied}. Hence, the primal dual-gap estimator measures the validity of the convex optimality relations \eqref{intro:convex_optimality.1},~\eqref{intro:convex_optimality.2}. Note that the convex optimality relations \eqref{intro:convex_optimality.1},~\eqref{intro:convex_optimality.2} do not require any regularity of $\phi\colon \Omega\times\mathbb{R}^d\to \mathbb{R}\cup\{+\infty\}$ and $\psi\colon \Omega\times \mathbb{R}\to \mathbb{R}\cup \{+\infty\}$, which makes the primal-dual gap estimator a predestined a posteriori error estimator  for non-differentiable convex minimization problems.
	
	\subsection{Discrete reconstruction formula}
	
		\hspace*{5mm}As in the case of the Poisson problem \eqref{intro:poisson}, dual problems typically involve constraints~(\textit{e.g.,} $\textup{div}\,y=-f$ a.e.\ in $\Omega$ (\textit{cf}.\ \eqref{intro:pager-synge}))  and are, thus, significantly harder to solve than~primal~problems, which often lead to monotone operators for which efficient iterative numerical solution procedures are available. It is, therefore,
	fundamental to avoid the explicit numerical solution~of~dual~problems. 
	For a Crouzeix--Raviart approximation of the Poisson problem \eqref{intro:poisson}, seeking for $u_h^{cr}\in \mathcal{S}^{1,cr}_D(\mathcal{T}_h)$ such that for every $v_h\in \mathcal{S}^{1,cr}_D(\mathcal{T}_h)$, it holds that
	\begin{align}
		(\nabla _hu_h^{cr},\nabla _hv_h)_{\Omega}=(f_h,\Pi_hv_h)_{\Omega}\,,\label{intro:poisson_cr}
	\end{align}
	where $f_h\in \mathcal{L}^0(\mathcal{T}_h)$ is an element-wise constant approximation of  $f\in L^2(\Omega)$,  using elementary but fundamental relations between the Crouzeix--Raviart and the Raviart--Thomas~element, 
	a remarkable 
	relation, called \textit{Marini formula}, between $u_h^{cr}\hspace*{-0.15em}\in\hspace*{-0.15em} \mathcal{S}^{1,cr}_D(\mathcal{T}_h)$~and~a~\mbox{Raviart--Thomas}~solution $z_h^{rt}\in \mathcal{R}T^0_N(\mathcal{T}_h)$ of the corresponding dual formulation to \eqref{intro:poisson_cr}
	has been~established~in~\cite{Mar85,AB85,Arbo95},~\textit{i.e.},\enlargethispage{1mm}
	\begin{align}\label{intro:marini_formula}
		z_h^{rt}=\nabla_h u_h^{cr}-\frac{f_h}{d}(\textup{id}_{\mathbb{R}^d}-\Pi_h\textup{id}_{\mathbb{R}^d})\quad\text{ a.e.\ in }\Omega\,.
	\end{align} 
	In \eqref{intro:poisson_cr}, $\mathcal{S}^{1,cr}_D(\mathcal{T}_h)$ denotes the \textit{Crouzeix--Raviart finite element space},  $ \mathcal{R}T^0_N(\mathcal{T}_h)$ the \textit{Raviart--Thomas finite element space}, $\nabla_h$ the element-wise application of the gradient operator $\nabla$, and $\Pi_h $ the (local) $L^2$-projection operator onto 
	the space of element-wise constant~functions~$\mathcal{L}^0(\mathcal{T}_h)$. 
	Due to $\mathcal{R}T^0_N (\mathcal{T}_h)\subseteq W^2_N (\textup{div};\Omega)$ and $\textup{div}\, z_h^{rt} = -f_h$ a.e.\ in $\Omega$, if
	  $f=f_h\in \mathcal{L}^0(\mathcal{T}_h)$, $z_h^{rt}\in \mathcal{R}T^0_N(\mathcal{T}_h)$ given via \eqref{intro:marini_formula}  is admissible in \eqref{intro:pager-synge}. In other words, in the case~of~the~Poisson~problem~\eqref{intro:poisson}, the discrete reconstruction formula \eqref{intro:marini_formula} enables to approximate  the primal and the dual problem simultaneously using only approximation \eqref{intro:poisson_cr}. 
	
	\subsection{Discrete convex duality}
	
		\hspace*{5mm} It is possible to construct a discrete primal problem that induces the same (discrete) convex duality relations like the continuous primal problem \eqref{intro:primal}, \textit{i.e.}, a corresponding discrete dual problem, a discrete weak duality relation, the equivalence of a discrete strong duality relation to discrete convex optimality relations and, most important, a discrete analogue of the reconstruction formula \eqref{intro:convex_optimality.5} or generalization of the discrete~reconstruction~formula~\eqref{intro:marini_formula},~respectively.~To this end, 
		we need to perform three non-conforming modifications on the~primal~energy~\mbox{functional}~\eqref{intro:primal}:
		\begin{itemize}[noitemsep,topsep=2pt,leftmargin=!]
			\item  we replace the energy densities by  $\phi_h\colon \Omega\times\mathbb{R}^d\to \mathbb{R}\cup\{+\infty\}$ and $\psi_h\colon \Omega\times\mathbb{R}^d\to \mathbb{R}\cup\{+\infty\}$, which are, again, (Lebesgue) measurable functions such that for a.e.\ $x\in \Omega$, the functions $\phi_h(x,\cdot)\colon \mathbb{R}^d\to \mathbb{R}\cup\{+\infty\}$ and $\psi_h(x,\cdot)\colon\mathbb{R}\to \mathbb{R}\cup\{+\infty\}$ are proper, convex, and lower semi-continuous. In addition,  for every $r\in \mathbb{R}^d$ and $s\in \mathbb{R}$, the functions $\phi_h(\cdot,r)\colon \Omega\to \mathbb{R}\cup\{+\infty\}$ and $\psi_h(\cdot,s)\colon\Omega\to \mathbb{R}\cup\{+\infty\}$ are element-wise constant;
			\item  we incorporate a (local) $L^2$-projection $\Pi_h$ onto $\mathcal{L}^0(\mathcal{T}_h)$ into the lower-order term;
			\item we replace the weak gradient $\nabla$ with the element-wise gradient $\nabla_h$.
		\end{itemize} 
		
	With these three non-conforming modifications, the \textit{discrete primal problem} is given via the minimization of the functional $I_h^{cr}\colon \mathcal{S}^{1,cr}_D(\mathcal{T}_h)\to \mathbb{R}\cup\{+\infty\}$, for every $v_h \in  \mathcal{S}^{1,cr}_D(\mathcal{T}_h)$ defined by
	\begin{align}\label{intro:discrete_primal}
			I_h^{cr}(v_h) \coloneqq \int_{\Omega}{\phi_h(\cdot,\nabla_hv_h)\,\mathrm{d}x} + \int_{\Omega}{\psi_h(\cdot,\Pi_hv_h)\,\mathrm{d}x} \,.
	\end{align}
	A corresponding \textit{discrete dual problem} to the minimization of \eqref{intro:discrete_primal} consists in the maximization of the functional $D_h^{rt}\colon \mathcal{R}T^0_N(\mathcal{T}_h)\to \mathbb{R}\cup\{-\infty\}$,
	for every $y_h\in \mathcal{R}T^0_N(\mathcal{T}_h)$  defined by 
	\begin{align}\label{intro:discrete_dual}
			D_h^{rt}(y_h) \coloneqq- \int_{\Omega}{\phi_h^*(\cdot,\Pi _hy_h)\,\mathrm{d}x} + \int_{\Omega}{\psi_h^*(\cdot,\textup{div}\,y_h)\,\mathrm{d}x} \,.
	\end{align}
	Note that the three non-conforming modifications (more precisely, the first two modifications), in particular, ensure that the integrands in \eqref{intro:discrete_primal} and \eqref{intro:discrete_dual}, respectively, are element-wise~constant.
	This is crucial for establishing a strong duality relation between the discrete primal problem \eqref{intro:discrete_primal}  and the discrete dual problem \eqref{intro:discrete_dual}.
	As above, Fenchel--Young inequalities  imply that a \textit{discrete weak duality relation} applies, \textit{i.e.}, for every $v_h\in   \mathcal{S}^{1,cr}_D(\mathcal{T}_h)$  and $y_h\in \mathcal{R}T^0_N(\mathcal{T}_h)$,~it~holds~that
		\begin{align}\label{intro:discrete_weak_duality}
			\begin{aligned}
		I_h^{cr}(v_h)&
		\ge \int_\Omega{\big\{
			\nabla_h v_h\cdot \Pi_h y_h -\phi^*_h(\cdot,\Pi_hy_h)\big\}\,\mathrm{d}x}+ \int_\Omega{\psi_h(\cdot,\Pi_hv_h)\,\mathrm{d}x}
		\\&=  - \int_\Omega{\phi^*_h(\cdot,\Pi_hy_h)\,\mathrm{d}x}-\int_\Omega{\big\{\textup{div}\,y_hv_h-\psi_h(\cdot,v_h)\big\}\,\mathrm{d}x}\ge D_h^{rt}(y_h)\,. 
	\end{aligned}
	\end{align}
	In \eqref{intro:discrete_weak_duality}, we used a \textit{discrete integration-by-parts formula} without contributions~from~element~sides, \textit{i.e.},  for every $v_h\in \mathcal{S}^{1,cr}_D(\mathcal{T}_h)$ and $y_h\in \mathcal{R}T^0_N(\mathcal{T}_h)$, it holds that
	\begin{align}\label{intro:discrete_pi}
		(\nabla_h v_h, \Pi_h y_h)_{\Omega}=-(\textup{div}\,y_h,\Pi_hv_h)_{\Omega}\,.
	\end{align}
	The use of the (local) $L^2$-projection in \eqref{intro:discrete_pi} is optional, but
	crucial for establishing a strong duality relation between  the discrete primal problem \eqref{intro:discrete_primal}  and the discrete~dual~problem~\eqref{intro:discrete_dual}. There holds a \textit{discrete strong duality relation}, \textit{i.e.}, it holds that
	\begin{align}\label{intro:discrete_strong_duality}
		I_h^{cr}(u_h^{cr})=D_h^{rt} (z_h^{rt})\,,
	\end{align} 
	if and only if there hold \textit{discrete convex optimality relations}, \textit{i.e.}, it holds that\enlargethispage{5mm}
	\begin{alignat}{2}
			\Pi_hz_h^{rt} \cdot \nabla_h u_h^{cr}&=\phi^*_h(\cdot,\Pi_h z_h^{rt})+\phi_h(\cdot,\nabla_h u_h^{cr})&&\quad\text{ a.e.\ in }\Omega \,,\label{intro:discrete_convex_optimality.1}\\
			\textup{div}\,z_h^{rt} \, \Pi_h u_h^{cr}&=\psi^*_h(\cdot,\textup{div}\, z_h^{rt})+\psi_h(\cdot, \Pi_hu_h^{cr})&&\quad\text{ a.e.\ in }\Omega\,.\label{intro:discrete_convex_optimality.2} 
	\end{alignat} 
	By the Fenchel--Young identity (\textit{cf}.\ \eqref{eq:fenchel_young_id}), the discrete convex optimality relations \eqref{intro:discrete_convex_optimality.1}, \eqref{intro:discrete_convex_optimality.2} are each equivalent to the inclusions
	\begin{alignat}{2}
			\Pi_hz_h^{rt} &\in \partial \phi_h(\cdot,\nabla_h u_h^{cr})&&\quad\text{ a.e.\ in }\Omega \,,\label{intro:discrete_convex_optimality.3}\\
		\textup{div}\,z_h^{rt}&\in \partial \psi_h(\cdot, \Pi_hu_h^{cr})&&\quad\text{ a.e.\ in }\Omega \,.\label{intro:discrete_convex_optimality.4}
	\end{alignat}
	If  $\phi_h(x,\cdot)\hspace*{-0.15em}\in \hspace*{-0.15em} C^1(\mathbb{R}^d)$ for a.e.\ $x\hspace*{-0.15em}\in\hspace*{-0.15em}\Omega$ or $\psi_h(x,\cdot)\hspace*{-0.15em}\in\hspace*{-0.15em} C^1(\mathbb{R})$ for a.e.\ $x\hspace*{-0.15em}\in\hspace*{-0.15em}\Omega$, then~\eqref{intro:discrete_convex_optimality.3},~\eqref{intro:discrete_convex_optimality.4}~each~become
	\begin{alignat}{2}
		\Pi_hz_h^{rt}&= D\phi_h(\cdot,\nabla_h u_h^{cr})&&\quad\text{ a.e.\ in }\Omega \,,\label{intro:discrete_convex_optimality.5}\\
		\textup{div}\,z_h^{rt}&= D \psi_h(\cdot, u_h^{cr})&&\quad\text{ a.e.\ in }\Omega \,.\label{intro:discrete_convex_optimality.6}
	\end{alignat}
	Note that, different from the continuous reconstruction formula \eqref{intro:convex_optimality.5}, the discrete convex optimality relation \eqref{intro:discrete_convex_optimality.5} does not give full information about a discrete~dual~solution~${z_h^{rt}\in \mathcal{R}T^0_N(\mathcal{T}_h)}$. However, \hspace*{-0.15mm}using \hspace*{-0.15mm}the \hspace*{-0.15mm}additional \hspace*{-0.15mm}information \hspace*{-0.15mm}provided \hspace*{-0.15mm}by \hspace*{-0.15mm}the \hspace*{-0.15mm}discrete \hspace*{-0.15mm}convex~\hspace*{-0.15mm}optimality~\hspace*{-0.15mm}\mbox{relation}~\hspace*{-0.15mm}\eqref{intro:discrete_convex_optimality.6}, the surjectivity of the divergence operator $\textup{div}\colon \mathcal{R}T^0_N(\mathcal{T}_h)\to \mathcal{L}^0(\mathcal{T}_h)$ if $\Gamma_D\neq \emptyset$ (and surjectivity  of $\textup{div}\colon \mathcal{R}T^0_N(\mathcal{T}_h)\to \mathcal{L}^0(\mathcal{T}_h)/\mathbb{R}$ if $\Gamma_D= \emptyset$), and  the \textit{discrete Helmholtz decomposition}
	\begin{align}\label{intro:discrete_helmholtz}
		(\mathcal{L}^0(\mathcal{T}_h))^d=\textup{div}\,(\textup{ker}|_{\mathcal{R}T^0_N(\mathcal{T}_h)})\oplus \nabla_h(\mathcal{S}^{1,cr}_D(\mathcal{T}_h))\,,
	\end{align}
	if $\phi_h(x,\cdot)\in C^1(\mathbb{R}^d)$ and $\psi_h(x,\cdot)\in C^1(\mathbb{R})$ for a.e.\ $x\in \Omega$, it is possible to establish a discrete strong duality relation \eqref{intro:discrete_strong_duality} and a  \textit{generalized Marini formula} (\textit{cf}.\ \cite{Bar21,CL15,LLC18}),~\textit{i.e.},~it~holds~that
	\begin{align}\label{intro:gen_marini}
		z_h^{rt}= D\phi_h(\cdot,\nabla_h u_h^{cr})+\frac{D\psi_h(\cdot,\Pi_h u_h^{cr})}{d}(\textup{id}_{\mathbb{R}^d}-\Pi_h\textup{id}_{\mathbb{R}^d})\quad\text{ a.e.\ in }\Omega\,.
	\end{align}
	Similar to the equivalence of \eqref{intro:convex_optimality.4} to $u\in \partial\psi(\cdot,\textup{div}\,z)$ a.e.~in~$\Omega$, that is 
	$u= D\psi(\cdot,\textup{div}\,z)$~a.e.~in~$\Omega$ if $\psi (x,\cdot)\in C^1(\mathbb{R})$ for a.e.\ $x\in \Omega$, 
	if instead $\phi_h^*(x,\cdot)\in C^1(\mathbb{R}^d)$ and $\psi_h^*(x,\cdot)\in C^1(\mathbb{R})$ for a.e.\ $x\in \Omega$, there exists an inversion of the reconstruction formula   \eqref{intro:gen_marini} (\textit{cf}.\ \cite{BK23ROF}), \textit{i.e.}, it holds that
	\begin{align}\label{intro:gen_marini_inv}
		 	u_h^{cr} = D\psi_h^*(\cdot,\textup{div}\,z_h^{rt})+ D\phi^*_h(\cdot,\Pi_h z_h^{rt})\cdot(\textup{id}_{\mathbb{R}^d}-\Pi_h\textup{id}_{\mathbb{R}^d})
		 \quad\text{ a.e.\ in }\Omega\,.
	\end{align}
	In many cases, we have that $\psi_h = \psi$, \textit{e.g.}, if $\psi(x,s) = -f(x)s$ for a.e.\ $x\in \Omega$ and all $s\in \mathbb{R}$ with an element-wise constant function $f\in \mathcal{L}^0(\mathcal{T}_h)$, and, in this case, by Jensen’s inequality,~we~have~that $-D_h^{rt}(z_h^{rt}) \leq  -D(z_h^{rt})$. Hence, in general, an inconsistent or non-conforming discretization of \hspace*{-0.1mm}the \hspace*{-0.1mm}primal \hspace*{-0.1mm}problem \hspace*{-0.1mm}is \hspace*{-0.1mm}necessary \hspace*{-0.1mm}to \hspace*{-0.1mm}ensure \hspace*{-0.1mm}strong \hspace*{-0.1mm}discrete \hspace*{-0.1mm}duality~\hspace*{-0.1mm}and~\hspace*{-0.1mm}to~\hspace*{-0.1mm}obtain~\hspace*{-0.1mm}a~\hspace*{-0.1mm}reconstruction~\hspace*{-0.1mm}formula.
	
	\newpage
	\subsection{A priori error estimates}
	
		\hspace*{5mm}The continuous strong duality relation \eqref{intro:strong_duality} and the discrete strong duality relation \eqref{intro:discrete_strong_duality} are useful in deriving a priori error estimates for the primal problem \eqref{intro:primal} and the~dual~problem~\eqref{intro:dual}.
	Letting $v_h \coloneqq \Pi_h^{cr} u\in \mathcal{S}^{1,cr}_D(\mathcal{T}_h)$ and $y_h \coloneqq \Pi_h^{rt} z\in \mathcal{R}T^0_N(\mathcal{T}_h)$ be the Crouzeix--Raviart and  Raviart--Thomas  quasi-interpolants of a primal solution $u\hspace*{-0.1em}\in \hspace*{-0.1em}W^{1,p}_D(\Omega)$~and~a~dual~\mbox{solution}~$\smash{z\hspace*{-0.1em}\in\hspace*{-0.1em} W^{p'}_N(\textup{div};\Omega)}$, respectively, if a strong duality relation (\textit{i.e.}, \eqref{intro:strong_duality}) applies, due to the discrete weak duality relation \eqref{intro:discrete_weak_duality}, we have that
	\begin{align}\label{intro:a_priori}
			\begin{aligned} 
		e_{\textup{tot},h}^2 (v_h,y_h)&\coloneqq \rho_{I_h^{cr}}^2 (v_h,u_h^{cr})+	\rho_{-D_h^{rt}}^2 (y_h,z_h^{rt})\\&=[I_h^{cr}(v_h)-I_h^{cr}(u_h^{cr})]+[D_h^{rt}(z_h^{rt})-D_h^{rt}(y_h)]
		\\&\leq  I_h^{cr}(v_h)-D_h^{rt}(y_h)
		\\&=[I_h^{cr}(v_h)-I(u)]+[D(z)-D_h^{rt}(z_h^{rt})]\,,
			\end{aligned}
	\end{align}
	\textit{i.e.}, the primal approximation error between the discrete primal solution $u_h^{cr}\in \mathcal{S}^{1,cr}_D(\mathcal{T}_h)$ and
	the  interpolant of the primal solution $u\in W^{1,p}_D(\Omega)$ plus the dual approximation error between
	the discrete dual solution $z_h^{rt}\in \mathcal{R}T^0_N(\mathcal{T}_h)$ and the interpolant of the dual solution $z\in W^{p'}_N(\textup{div};\Omega)$ is bounded in terms of primal and dual interpolation errors.~\hspace*{-0.15mm}For~\hspace*{-0.15mm}the~\hspace*{-0.15mm}\mbox{Poisson}~\hspace*{-0.15mm}\mbox{problem}~\hspace*{-0.15mm}\eqref{intro:poisson},~\hspace*{-0.15mm}from~\hspace*{-0.15mm}\eqref{intro:a_priori}, 
	using that $\nabla_h v_h = \Pi_h \nabla u$ a.e.\ in $\Omega$ together with Jensen's inequality, $\textup{div}\,y_h=\Pi_h\textup{div}\,z$~a.e.~in~$\Omega$ together with $\textup{div}\,z\hspace*{-0.1em}=\hspace*{-0.1em}-f$ a.e.\ in $\Omega$, an integration-by-parts formula, and the~second~binomial~\mbox{formula},  
	it follows that
	\begin{align}\label{intro:a_priori_poisson}
		\begin{aligned} 
		\tfrac{1}{2}\| \nabla_h v_h -\nabla_h u_h^{cr}\|^2_{2,\Omega}+\tfrac{1}{2}\| y_h -z_h^{rt}\|^2_{2,\Omega}&\leq 
			\tfrac{1}{2}\| \nabla_h v_h\|_{2,\Omega}^2  -\tfrac{1}{2}\| \nabla u\|^2_{2,\Omega}\\&\quad-
		(f,v_h-u)_{\Omega}-	\tfrac{1}{2}\| z\|_{2,\Omega}^2  +\tfrac{1}{2}\| y_h\|_{2,\Omega}^2 
		\\&\leq (z,z-y_h)_{\Omega}-	\tfrac{1}{2}\| z\|_{2,\Omega}^2  +\tfrac{1}{2}\| y_h\|_{2,\Omega}^2 
		\\&=\tfrac{1}{2}\|z-y_h\|_{2,\Omega}^2\,.
	\end{aligned}
	\end{align}
	If $u \in  W^{1+s,2}(\Omega)$ (\textit{i.e.}, $z \in W^{s,2}(\Omega;\mathbb{R}^d)$), $s\in (0,1)$, 
	 the right-hand side in \eqref{intro:a_priori_poisson}~is~of~order~$\mathcal{O}(h^{2s})$. This short proof can be generalized to a large class of variational problems (\textit{cf}. \cite{Bar21,K22CR,BK22Obstacle}) and avoids the usage of Strang lemmas to control the effect of the non-conformity of the discretization. It has recently been observed that Crouzeix--Raviart discretizations lead to higher convergence rates than classical conforming methods for certain non-differentiable problems (\textit{cf}. \cite{CP20,BK22}).\enlargethispage{5mm}
	
	\subsection{Data approximation and inexact solution}

	\subsubsection{Data approximation}
	
	\hspace*{5mm}In the case of a linear lower-order term $\psi\colon \Omega\times \mathbb{R}\to \mathbb{R}$, \textit{i.e.},
	$\psi(x,s) = -f(x)s$ for a.e.\ $x\in \Omega$ and all $s\in \mathbb{R}$ for some $f\in L^{p'}(\Omega)$, the discrete dual solution $z_h^{rt}\in \mathcal{R}T^0_N(\mathcal{T}_h)$ given via \eqref{intro:gen_marini}
	is admissible in the continuous dual  problem, \textit{i.e.}, satisfies $D(z_h^{rt})>-\infty$, if and only if $\textup{div}\,y_h = -f$ a.e.\ in $\Omega$, \textit{i.e.}, if and only if 
	$f=f_h\in \mathcal{L}^0(\mathcal{T}_h)$ is element-wise constant.  If this is not the case, then we introduce a modified functional $I^{(h)}\colon W^{1,p}_D(\Omega)\to \mathbb{R}\cup\{+\infty\}$, for every~$v\in W^{1,p}_D(\Omega)$~defined~by
	\begin{align}\label{intro:data_approx_primal}
			I^{(h)}(v)\coloneqq  \int_\Omega{\phi_h(\cdot,\nabla v)\,\mathrm{d}x}+\int_\Omega{\psi_h(\cdot,\Pi_h v)\,\mathrm{d}x}\,.
	\end{align}
	Then, for a minimizer $u^{(h)}\in W^{1,p}_D(\Omega)$ of \eqref{intro:data_approx_primal}, due to $I^{(h)}(u^{(h)}) \leq  I^{(h)}(u)$, we have that 
	\begin{align*}
		\rho_I^2(u^{(h)},u)&=I(u^{(h)})-I(u)
		\\&
		\leq 
		I(u^{(h)})-I^{(h)}(u^{(h)})+I^{(h)}(u)-I(u)
		\\&\leq \|\phi(\cdot,\nabla u^{(h)})-\phi_h(\cdot,\nabla u^{(h)})\|_{1,\Omega}+\|\psi(\cdot,u^{(h)})-\psi_h(\cdot,\Pi_hu^{(h)})\|_{1,\Omega}
		\\&\quad+\|\phi_h(\cdot,\nabla u)-\phi(\cdot,\nabla u)\|_{1,\Omega}+\|\psi_h(\cdot,\Pi_h u)-\psi(\cdot,u)\|_{1,\Omega}\,.
	\end{align*}
	For autonomous higher-order term $\phi\colon \mathbb{R}^d\to \mathbb{R}\cup\{+\infty\}$ 
	and
	linear lower-order term $\psi\colon \Omega\times \mathbb{R}\to \mathbb{R}$, \textit{i.e.},
	$\psi(x,s) = -f(x)s$ for a.e.\ $x\in \Omega$ and all $s\in \mathbb{R}$ for some $f\in L^{p'}(\Omega)$,
	with discretization  $\psi_h\colon \Omega\times \mathbb{R}\to \mathbb{R}$, given via $\psi_h(x,s) = -f_h(x)s$ for a.e.\ $x\in \Omega$ and all $s\in \mathbb{R}$,~where~${f_h\coloneqq \Pi_h f\in \mathcal{L}^0(\mathcal{T}_h)}$, 
	due to $f-f_h\perp \Pi_h (u-u^{(h)})$ in $L^2(\Omega)$,   using element-wise  Poincar\'e‘s inequality, 
	we find that 
	\begin{align*}
			\rho_I^2(u^{(h)},u)&=
			(f-f_h,u-u^{(h)}-\Pi_h (u-u^{(h)}))_{\Omega}
			\\&\leq c_P\,\|h_{\mathcal{T}}(f-f_h)\|_{p',\Omega}\|\nabla u-\nabla u^{(h)}\|_{p,\Omega}\,.
	\end{align*}
	Then,  the arguments explained above apply to the modified functional \eqref{intro:data_approx_primal}. 
	
		\subsubsection{Inexact solution} 
		
	\hspace*{5mm}In the case $\phi_h(x,\cdot)\in C^1(\mathbb{R}^d)$ and $\psi_h(x,\cdot)\in C^1(\mathbb{R})$ for a.e.\ $x\in \Omega$, 
	we can incorporate errors resulting from the inexact iterative solution of the discrete primal problem via discrete residuals.
	More precisely, if $\widetilde{u}_h^{cr}\in  \mathcal{S}^{1,cr}_D(\mathcal{T}_h)$ is an inexact approximation of the discrete primal problem, \textit{i.e.}, quasi-minimizer of  \eqref{intro:discrete_primal},
	we represent the residual in the discrete $W^{1,2}$-semi-norm. More precisely, we choose $\widetilde{r}_h\in \mathcal{S}^{1,cr}_D(\mathcal{T}_h)$ such that~for~every~${v_h\in \mathcal{S}^{1,cr}_D(\mathcal{T}_h)}$, it holds that
	\begin{align*}
		(\nabla_h \widetilde{r}_h, \nabla_h v_h)_{\Omega}=(D\phi_h(\cdot,\nabla_h u_h^{cr}), \nabla_h v_h)_{\Omega}+(D\psi_h(\cdot, u_h^{cr}),\Pi_h v_h)_{\Omega}\,.
	\end{align*}
	Then, $\widetilde{u}_h^{cr}\in \mathcal{S}^{1,cr}_D(\mathcal{T}_h)$ is a minimizer of the 
	functional $\widetilde{I}_h^{cr}\colon \mathcal{S}^{1,cr}_D(\mathcal{T}_h)\to \mathbb{R}$, for every $v_h\in \mathcal{S}^{1,cr}_D(\mathcal{T}_h)$ defined by 
	\begin{align*}
		\widetilde{I}_h^{cr}(v)\coloneqq \int_\Omega{\widetilde{\phi}_h(\cdot,\nabla_h v_h)\,\mathrm{d}x}+\int_\Omega{\psi_h(\cdot, v_h)\,\mathrm{d}x}\,.
	\end{align*}
	where $\widetilde{\phi}_h\colon \Omega\times \mathbb{R}^d\to \mathbb{R}$ is defined by $\widetilde{\phi}_h(x,r)\coloneqq \phi_h(x,r)-\nabla_h\widetilde{r}_h(x)\cdot r$ for a.e.\ $x\in \Omega$~and~all~$r\in \mathbb{R}^d$.
	The identities and estimates derived above now hold with $\phi_h$ replaced with $\widetilde{\phi}_h$. 
	
	\subsection{Properties of the primal-dual gap estimator} 
	
	\hspace*{5mm}In general, the discrete primal solution $u_h^{cr}\in \mathcal{S}^{1,cr}_D(\mathcal{T}_h)$
	is not admissible in the continuous primal problem since $u_h^{cr}\notin W^{1,p}_D(\Omega)$ and, thus, cannot be inserted in the primal-dual~gap~\mbox{estimator}, while
	 the discrete
	dual solution $z_h^{rt}\in \mathcal{R}T^0_N(\mathcal{T}_h)\subseteq \smash{W^{p'}_N(\textup{div};\Omega)}$, up to data approximation terms, is admissible in the continuous dual problem. An admissible approximation $\overline{u}_h^{cr}\in W^{1,p}_D(\Omega)$, \textit{e.g.}, can be obtained via a cheap node-averaging procedure.  
	 The generalized Prager--Synge~identity~\eqref{intro:gen_prager_synge} imposes no restrictions about optimality of the arguments, so that with admissible approximations $\overline{u}_h^{cr}\in W^{1,p}_D(\Omega)$ with $I(\overline{u}_h^{cr})<+\infty$   and $\overline{z}_h^{rt}\in W^{p'}_N(\textup{div};\Omega)$ with $D(\overline{z}_h^{rt})>-\infty$,~we~still~have~that
	\begin{align}\label{intro:gen_prager_synge_inexact}
		\smash{\rho_{\textup{tot}}^2}(\overline{u}_h^{cr},\overline{z}_h^{rt})=\eta_{\textup{gap}}^2(\overline{u}_h^{cr},\overline{z}_h^{rt})\,.
	\end{align}
	The primal-dual gap error estimator has some remarkable features:
	\begin{itemize}[noitemsep,topsep=2pt,leftmargin=!]
		\item It is obtained by a simple post-processing procedure of the discrete primal problem~(\textit{cf}.~\eqref{intro:gen_marini});
		\item It is reliable and efficient with constants one (\textit{cf}.\ \eqref{intro:gen_prager_synge}); 
		\item It does not require an exact solution of the possibly non-linear primal~problem (\textit{cf}.\ \eqref{intro:gen_prager_synge_inexact}); 
		\item It is globally equivalent to residual type estimators for many model problems;
		\item Its integrands are point-wise non-negative and, thus, it is suitable for local mesh refinement;
		\item It applies to non-linear, non-differentiable, degenerate, scalar, and vectorial problems and does not require the development of a particular error analysis.
	\end{itemize}
	Besides these positive features, some relevant properties are desirable but are not yet established:
	\begin{itemize}[noitemsep,topsep=2pt,leftmargin=!]
		\item Is it possible to prove optimal convergence of adaptive methods based on the local mesh refinement indicators given by the local contributions to the primal-dual gap estimator?
		\item Can one devise a general strategy to ensure admissibility of the reconstructed flux in the dual problem?
	\end{itemize}
	The second aspect arises in model problems with non-differentiable function
	${\phi\colon\hspace*{-0.15em} \Omega\hspace*{-0.15em}\times\hspace*{-0.15em}\mathbb{R}^d\hspace*{-0.15em}\to\hspace*{-0.15em} \mathbb{R}\hspace*{-0.15em}\cup\hspace*{-0.15em}\{+\infty\}}$ and some remedies have been proposed (\textit{cf}. \cite{CP20,Bar20,BK23ROF}). The first aspect primarily relates
	to the lack of local efficiency estimates. Due to the global equivalence of 
	primal-dual gap estimators to residual type estimators, for which convergence~theories~are~available,~\mbox{convergence}~is~expected. The verification of this equivalence
	follows closely the derivation of estimators of that~type.~In~fact, the \hspace*{-0.1mm}discrete \hspace*{-0.1mm}primal \hspace*{-0.1mm}solution \hspace*{-0.1mm}$u_h^{cr}\in \mathcal{S}^{1,cr}_D(\mathcal{T}_h)$ \hspace*{-0.1mm}acts \hspace*{-0.1mm}as \hspace*{-0.1mm}a \hspace*{-0.1mm}substitute~\hspace*{-0.1mm}of~\hspace*{-0.1mm}the~\hspace*{-0.1mm}\mbox{primal}~\hspace*{-0.1mm}solution~\hspace*{-0.1mm}${u\hspace*{-0.1em}\in\hspace*{-0.1em} W^{1,p}_D(\Omega)}$. In 
	\hspace*{-0.1mm}the \hspace*{-0.1mm}case \hspace*{-0.1mm}of \hspace*{-0.1mm}the \hspace*{-0.1mm}Poisson~\hspace*{-0.1mm}problem~\hspace*{-0.1mm}\eqref{intro:poisson},~\hspace*{-0.1mm}the~\hspace*{-0.1mm}\mbox{conforming}~\hspace*{-0.1mm}$P1$-approximation~\hspace*{-0.1mm}${\smash{u_h^{p1}}\hspace*{-0.175em}\in\hspace*{-0.175em} \mathcal{S}^1_D(\mathcal{T}_h)}$,~\hspace*{-0.1mm}and~\hspace*{-0.1mm}${f\hspace*{-0.175em}=\hspace*{-0.175em}f_h}$, due to  $\Pi_hz_h^{rt}=\nabla_h u_h^{cr}$ a.e.\ in $\Omega$
	and $ \nabla \smash{u_h^{p1}}-\nabla_h u_h^{cr}\perp \frac{1}{d}f_h(\textup{id}_{\mathbb{R}^d}-\Pi_h \textup{id}_{\mathbb{R}^d})$ in $L^2(\Omega;\mathbb{R}^d)$,~we~have~that\vspace*{-0.5mm}\enlargethispage{13mm}
	\begin{align*}
	\smash{\eta^2_{\textup{gap}}}(\smash{u_h^{p1}},z_h^{rt})=\tfrac{1}{2}\| \nabla \smash{u_h^{p1}}-\nabla_h u_h^{cr}\|_{2,\Omega}^2+\tfrac{1}{2d^2}\| f_h (\textup{id}_{\mathbb{R}^d}-\Pi_h \textup{id}_{\mathbb{R}^d})\|_{2,\Omega}^2\,.
	\end{align*}
	Letting $\delta_h \coloneqq \smash{u_h^{p1}} - u_h^{cr}\in \mathcal{S}^{1,cr}_D(\mathcal{T}_h)$ and $\Pi_h^{av}\delta_h\in\mathcal{S}^1_D(\mathcal{T}_h)$ be its node-averaging quasi-interpolant, so that, due
	to $\nabla_h\delta_h\perp \nabla_h(\mathcal{S}^1_D(\mathcal{T}_h))$  in $L^2(\Omega;\mathbb{R}^d)$, via element-wise integration-by-parts,~we~obtain
	\begin{align*}
	\| \nabla \smash{u_h^{p1}}-\nabla_h u_h^{cr}\|_{2,\Omega}^2&=(\nabla_h \delta_h, \nabla_h\delta_h-\nabla_h\Pi_h^{av}\delta_h)_{\Omega}
		\\&=(\jump{\nabla \smash{u_h^{p1}}\cdot n},\{\delta_h-\Pi_h^{av}\delta_h\})_{\mathcal{S}_h}+(f_h,\delta_h-\Pi_h^{av}\delta_h)_{\Omega}
		\\&\leq c_{av}\, \eta_{\textup{res},h}^2(\smash{u_h^{p1}})\,	\| \nabla \smash{u_h^{p1}}-\nabla_h u_h^{cr}\|_{2,\Omega}\,.
	\end{align*}
	The converse estimate uses typical bubble function arguments (\textit{cf}.\ \cite{Bre15,K22CR}).\vspace*{-0.5mm}
	
	\subsection{Recent related results and open problems}\vspace*{-0.5mm}
	
	\hspace*{5mm}The concepts  described above 
	can be generalized to other pairs of finite element methods:\vspace*{-0.5mm}
	 
	 \begin{itemize}[noitemsep,topsep=2pt,leftmargin=!]
	 	\item In \cite{Bar20}, a discrete convex duality theory for a first-order Discontinuous~Galerkin~(DG)~method was derived. 
	 	More precisely, in \cite{Bar20}, a discrete primal problem is given via the minimization of the functional $I_h^{dg}\colon \mathcal{L}^1(\mathcal{T}_h)\to \mathbb{R}\cup\{+\infty\}$, for every $v_h\in  \mathcal{L}^1(\mathcal{T}_h)$ defined by\vspace*{-1mm}
	 	\begin{align*}
	 		I_h^{dg}(v_h)&\coloneqq \int_{\Omega}{\phi_h(\cdot,\nabla_hv_h)\,\mathrm{d}x} + \int_{\Omega}{\psi_h(\cdot,\Pi_hv_h)\,\mathrm{d}x} 
	 		\\&\quad +\sum_{S\in \mathcal{S}_h}{\tfrac{\alpha_S}{2}\|\jump{v_h}_S(x_S)\|^2_{2,S}}+\sum_{S\in \mathcal{S}_h}{\tfrac{\beta_S}{2}\| \{ v_h\}_S(x_S)\|_{2,S}^2}\,,
	 	\end{align*}
	 	and a corresponding (Fenchel) dual problem is given via the maximization of the functional $D_h^{dg}\colon \hspace*{-0.15em}\mathcal{R}T^{0,dg}(\mathcal{T}_h)\hspace*{-0.15em}\coloneqq\hspace*{-0.15em} (\mathcal{L}^0(\mathcal{T}_h))^d+(\textup{id}_{\mathbb{R}^d}-\Pi_h\textup{id}_{\mathbb{R}^d})\mathcal{L}^0(\mathcal{T}_h)\hspace*{-0.15em}\to\hspace*{-0.15em} \mathbb{R}\cup\{-\infty\}$,~for~every~${y_h\hspace*{-0.15em}\in\hspace*{-0.15em}  \mathcal{R}T^{0,dg}(\mathcal{T}_h)}$ defined by\vspace*{-1.5mm}
	 	\begin{align*}
	 		D_h^{dg}(y_h)&\coloneqq -\int_{\Omega}{\phi_h^*(\cdot,\Pi_hy_h)\,\mathrm{d}x} - \int_{\Omega}{\psi_h^*(\cdot,\textup{div}\,y_h)\,\mathrm{d}x} 
	 		\\&\quad -\sum_{S\in \mathcal{S}_h}{\tfrac{1}{2\alpha_S}\| \jump{y_h\cdot n}_S\|^2_{2,S}}-\sum_{S\in \mathcal{S}_h}{\tfrac{1}{2\beta_S}\| \{y_h\cdot n\}_S\|^2_{2,S}}\,.
	 	\end{align*} 
	 	A discrete strong duality applies, \textit{i.e.}, $I_h^{dg}(u_h^{dg})=D_h^{dg}(z_h^{dg})$ for some $u_h^{dg}\in \mathcal{L}^1(\mathcal{T}_h)$ and $z_h^{dg}\in \mathcal{R}T^{0,dg}(\mathcal{T}_h)$,  provided that the parameters $\alpha_S,\beta_S>0$, $S\in \mathcal{S}_h$, are appropriately~chosen;
	 	\item In \cite{Tran23}, a discrete convex duality theory for a Hybrid High-Order  (HHO)~method~was~derived, thus, representing the first step towards higher-order element methods.
	 \end{itemize} 
	
	\subsection{Outline of the article}\vspace*{-0.5mm}
	\hspace*{5mm}\textit{The article is organized as follows.}
	In Section  \ref{sec:preliminaries}, we introduce the employed notation and the relevant function and finite element spaces. 
	In Section \ref{sec:convex_min}, we propose a general approach for explicit a posteriori error representation based on convex duality relations.
	In Section \ref{sec:model_problems}, we apply   the general concepts of Section~\ref{sec:convex_min} to typical model problems including the non-linear Dirichlet~problem, the obstacle problem, the Signorini problem, the Rudin--Osher--Fatemi image de-noising problem, 
	a minimization problem  jumping coefficients, 
	and~the~Stokes problem. In Section \ref{sec:equiv_residuals}, in the case of the non-linear Dirichlet~problem, we establish the global equivalence of the primal-dual gap estimator to a residual type estimator.
	In~Section~\ref{sec:node_avg_optimal}, we establish that the node-averaging quasi-interpolation operator locally
	preserves approximation capabilities. In Section~\ref{sec:experiments},  we review the practical relevance of the theoretical investigations of Section \ref{sec:model_problems}.

	\newpage
	\section{Preliminaries}\label{sec:preliminaries}
	
	 \subsection{Convex analysis}
	
	\hspace{5mm}For a (real) Banach space $X$, which is equipped with the norm $\|\cdot\|_X\colon X\to \mathbb{R}_{\ge 0}$, we denote its corresponding (continuous) dual space by $X^*$ equipped with the dual norm 
	$\|\cdot\|_{X^*}\colon X^*\to \mathbb{R}_{\ge 0}$, defined by $\|x^*\|_{X^*}\coloneqq \sup_{\|x\|_X\leq 1}{\langle x^*,x\rangle_X}$ for every $x^*\in X^*$, where $\langle \cdot,\cdot\rangle_X\colon X^*\times X\to \mathbb{R}$, defined by $\langle x^*,x\rangle_X\coloneqq x^*(x)$ for every $x^*\in X^*$ and $x\in X$, denotes the duality pairing.
	A functional $F\colon X\to \mathbb{R}\cup\{+\infty\}$ is called \textit{sub-differentiable} in $x\in  X$, if $ F(x)<\infty $ and if there exists $x^*\in  X^*$, called  \textit{sub-gradient}, such that for every $ y\in X $, it holds that
	\begin{align}
		\langle x^*,y-x\rangle_X\leq F(y)-F(x)\,.\label{eq:subgrad}
	\end{align} 
	The  \textit{sub-differential}  $\partial F\colon X\to  2^{X^*}$ of a functional $F\colon X\to \mathbb{R}\cup\{+\infty\}$ for every $ x\in X$~is~defined~by $\partial F(x)\coloneqq \{x^*\in X^*\mid \eqref{eq:subgrad}\text{ holds for }x^*\}$ if $F(x)<\infty$ and $\partial F(x)\coloneqq \emptyset$ else. 
	
	For a given functional $F\colon X\to \mathbb{R}\cup\{\pm\infty\}$, we denote its corresponding \textit{(Fenchel)~conjugate}~by $F^*\colon X^*\to \mathbb{R}\cup\{\pm\infty\}$, which for every $x^*\in X^*$ is defined by 
	\begin{align}
		F^*(x^*)\coloneqq \sup_{x\in X}{\langle x^*,x\rangle_X-F(x)}\,.\label{def:fenchel}
	\end{align}
	If $F\colon X\to \mathbb{R}\cup\{+\infty\}$ is a proper, convex, and lower semi-continuous functional, then also~its~(Fen-chel) conjugate $F^*\colon X^*\to\mathbb{R}\cup\{+\infty\}$ is a proper, convex, and lower semi-continuous~functional (\textit{cf}.\  \cite[p.\ 17]{ET99}). 
	Furthermore, for every $x^*\in X^*$ and $x\in X$ such that 
	$ F^*(x^*)+F(x)$~is~well-defined, \textit{i.e.},  critical cancellations $\infty-\infty$ do not occur, the \textit{Fenchel--Young inequality}
	\begin{align}
		\langle x^*,x\rangle_X\leq F^*(x^*)+F(x)\label{eq:fenchel_young_ineq}
	\end{align}
	applies. 
	In particular, 
	for every $x^*\in X^*$ and $x\in X$, it holds the \textit{Fenchel--Young identity}
	\begin{align}
		x^*\in \partial F(x)\quad\Leftrightarrow \quad	\langle x^*,x\rangle_X= F^*(x^*)+F(x)\,.\label{eq:fenchel_young_id}
	\end{align}
	\hspace{5mm}The following convexity measures for functionals play an important role in the derivation~of an explicit a posteriori error representation for convex minimization problems in Section \ref{sec:convex_min}; for further information, we refer the reader to  \cite{bregman67,NSV00,OBGXY05,bartels15}.
	
	\begin{definition}[Br\`egman distance and symmetric Br\`egman distance]\label{def:convexity_measure}
		Let $X$ be a (real) Banach space and $F\colon X\to \mathbb{R}\cup\{+\infty\}$ proper, \textit{i.e.}, $\textup{dom}(F)\coloneqq \{x\in X\mid F(x)<+\infty\}\neq \emptyset$.
		\begin{itemize}[noitemsep,topsep=2pt,leftmargin=!,labelwidth=\widthof{(ii)}]
			\item[(i)] The \textup{Br\`egman distance} $\sigma^2_F\colon \hspace*{-0.15em}
			\textup{dom}(F)\times X\hspace*{-0.15em}\to\hspace*{-0.15em} [0,+\infty]$ for every $x\hspace*{-0.15em}\in\hspace*{-0.15em} \textup{dom}(F)$~and~$y\hspace*{-0.15em}\in\hspace*{-0.15em} X$~is~\mbox{defined}~by
			\begin{align*}
				\sigma^2_F(y,x)\coloneqq F(y)-F(x)-\sup_{x^*\in \partial F(x)}{\langle x^*,y-x\rangle_X}\,,
			\end{align*}
			where we use the convention $\sup(\emptyset)\coloneqq-\infty$.
			\item[(ii)] The \hspace*{-0.15mm}\textup{symmetric \hspace*{-0.15mm}Br\`egman \hspace*{-0.15mm}distance} \hspace*{-0.15mm}$\sigma^2_{F,s}\colon \hspace*{-0.15em}
			\textup{dom}(F)^2\hspace*{-0.15em}\to\hspace*{-0.15em}  [0,+\infty]$ \hspace*{-0.15mm}for \hspace*{-0.15mm}every \hspace*{-0.15mm}$x,y\hspace*{-0.15em}\in \hspace*{-0.15em}\textup{dom}(F)$~\hspace*{-0.15mm}is~\hspace*{-0.15mm}\mbox{defined}~\hspace*{-0.15mm}by
			\begin{align*}
				\sigma_{F,s}^2(y,x)\coloneqq\sigma_F^2(y,x)+\sigma_F^2(x,y)=\inf_{x^*\in \partial F(x);y^*\in (\partial F)(y)}{\langle x^*-y^*,x-y\rangle_X}\,,
			\end{align*}
			where we use the convention $\inf(\emptyset)\coloneqq +\infty$.
		\end{itemize}
	\end{definition}
	
	\begin{definition}[Optimal convexity measure at a minimizer]\label{def:convexity_measure_optimal}
		Let $X$ be a (real) Banach~space~and $F\colon X\to \mathbb{R}\cup\{+\infty\}$ proper. Moreover, let $x\in X$ be minimal for $F\colon X\to \mathbb{R}\cup\{+\infty\}$.~Then,~the \textup{optimal convexity measure} $\rho^2_F\colon 
		X^2\to [0,+\infty]$ \textup{at} $x\in X$ for every $y\in X$~is~defined~by
		\begin{align*}
			\rho^2_F(y,x)\coloneqq F(y)-F(x)\ge 0 \,.
		\end{align*}
	\end{definition}
	
	\begin{remark}\label{rem:convexity_measure_optimal}
		Let $X$ be a (real) Banach space and $F\colon X\to \mathbb{R}\cup\{+\infty\}$ proper.~Moreover,~let $x\in X$ be minimal for $F\colon X\to \mathbb{R}\cup\{+\infty\}$. Then, due to $0\in \partial F(x)$, for every $y\in \textup{dom}(F)$,~it~holds~that
		\begin{align*}
			\sigma^2_F(y,x)\le  \rho^2_F(y,x)\,.
		\end{align*}
	\end{remark}
	
	  \subsection{Function spaces}\enlargethispage{11mm}
	
	\hspace{5mm}Throughout the article, unless otherwise specified, we denote by ${\Omega \subseteq \mathbb{R}^d}$, $d \in \mathbb{N}$, a bounded simplicial Lipschitz domain, whose (topological) boundary is disjointly divided into a  Dirichlet part $\Gamma_D$ and a Neumann part $\Gamma_N$, \textit{i.e.}, $\Gamma_D,\Gamma_N\subseteq \partial\Omega$ and 
	$\Gamma_D\dot{\cup}\Gamma_N=\partial\Omega $. We assume that either $\vert\Gamma_D\vert  >0$ or $\Gamma_D=\emptyset $ as well as $\vert\Gamma_N\vert  >0$ or $\Gamma_N=\emptyset $.

	For  $\ell\in \mathbb{N}$ and $p\in [1,+\infty)$, we employ the notation
	\begin{align*}
		\begin{aligned}
			U_{\ell}^p(\Omega)&\coloneqq  \big\{v\in L^p(\Omega;\mathbb{R}^{\ell}) \mid \nabla v\in L^p(\Omega;\mathbb{R}^{\ell\times d})\big\}\,,\\
			Z_{\ell}^p(\Omega)&\coloneqq  \big\{y\in L^{p'}(\Omega;\mathbb{R}^{\ell\times d}) \mid \textup{div}\,y\in L^{p'}(\Omega;\mathbb{R}^{\ell})\big\}\,,
		\end{aligned}
	\end{align*} 
	where the divergence needs to be understood row-wise , \textit{i.e.}, if $y=(y_{ij})_{i\in\{1,\ldots,\ell\},j\in\{1,\ldots,d\}} \in Z_{\ell}^p(\Omega)$, then 
	$(\textup{div}\,y)_i\coloneqq \smash{\sum_{j=1}^d{\partial_j y_{ij}}}$ for all $i=1,\ldots,\ell$.
	In the special case $\ell=1$,
	we employ the standard notation  $L^p(\Omega) \coloneqq L^p(\Omega;\mathbb{R}^1)$, $W^{1,p}(\Omega)\coloneqq U^p_1(\Omega)$, and $W^{p'}(\textup{div};\Omega)\coloneqq Z^p_1(\Omega)$. 
	
	For $\ell\in \mathbb{N}$, a (Lebesgue) measurable set $M\subseteq \mathbb{R}^d$, $d\in \mathbb{N}$, and   (Lebesgue) measurable functions, vector or tensor fields $u,v\colon M\to \mathbb{R}^{\ell}$, we employ the inner product
	\begin{align*}
		(u,v)_{M}\coloneqq \int_{M}{u\odot v\,\mathrm{d}x}\,,
	\end{align*}
	whenever the right-hand side is well-defined, where $\odot\colon\mathbb{R}^{\ell}\times \mathbb{R}^{\ell}\to \mathbb{R}$ either denotes scalar multiplication, the Euclidean inner product, or the Frobenius inner product.
	 For $\ell\in \mathbb{N}$,~${p\in [1,+\infty]}$, and a (Lebesgue) measurable set $M\subseteq \mathbb{R}^n$, $n\in \mathbb{N}$, we employ the notation $\|\cdot\|_{p,M}\coloneqq \|\cdot\|_{L^p(M;\mathbb{R}^{\ell})}$.
	
	Denote by $\textup{tr}(\cdot)\colon \hspace*{-0.1em}U_{\ell}^p(\Omega)\hspace*{-0.1em}\to\hspace*{-0.1em} W^{1-\smash{\frac{1}{p}},p}(\partial\Omega;\mathbb{R}^{\ell})$ the trace  and by $\textup{tr}(\cdot)n\colon\hspace*{-0.1em} Z_{\ell}^p(\Omega)\hspace*{-0.1em}\to\hspace*{-0.1em} W^{-\smash{\frac{1}{p'}},p'}(\partial\Omega;\mathbb{R}^{\ell})$\footnote{Here, $W^{-\smash{\frac{1}{p'}},p'}(\gamma;\mathbb{R}^{\ell})\coloneqq  (W^{\smash{1-\smash{\frac{1}{p}},p}}(\gamma;\mathbb{R}^{\ell}))^*$ for all $\gamma\in \{\Gamma_N,\partial\Omega\}$.} the normal trace operator. Then,  for every $v\in U_{\ell}^p(\Omega)$ and $y\in Z_{\ell}^p(\Omega)$, it holds that
	\begin{align}\label{eq:pi_cont}
		(\nabla v,y)_{\Omega}+(v,\textup{div}\,y )_{\Omega}=\langle \textup{tr}(y)n,\textup{tr}(v)\rangle_{\partial\Omega}\,,
	\end{align}
	where we abbreviate $\langle \textup{tr}(y)n,\textup{tr}(v)\rangle_{\gamma}\coloneqq \langle \textup{tr}(y)n,\textup{tr}(v)\rangle_{W^{\smash{1-\smash{\frac{1}{p}},p}}(\gamma;\mathbb{R}^{\ell})}$ for all $y\in W^{-\smash{\frac{1}{p'}},p'}(\gamma;\mathbb{R}^{\ell})$, $v\in W^{\smash{1-\frac{1}{p},p}}(\gamma;\mathbb{R}^{\ell})$, and $\gamma\in \{\Gamma_N,\partial\Omega\}$.
	Then, for $\ell\in \mathbb{N}$ and $p\in [1,+\infty]$, we employ~the~notation
	\begin{align*}
		\begin{aligned}
			U_{\ell,D}^p(\Omega)&\coloneqq  \big\{v\in U_{\ell}^p(\Omega) \mid \textup{tr}(v)=0\textup{ a.e.\ on }\Gamma_D\big\}\,,\\
			Z_{\ell,N}^p(\Omega)&\coloneqq  \big\{y\in Z_{\ell}^p(\Omega)\mid \langle\textup{tr}(y)n,v\rangle_{\partial\Omega} =0\text{ for all }\smash{v\in U_{\ell,D}^p(\Omega)}\big\}\,.
		\end{aligned}
	\end{align*}
	In what follows, we omit writing both $\textup{tr}(\cdot)$ and $\textup{tr}(\cdot)n$ in this context. For $\ell\in \mathbb{N}$ and $p\in [1,+\infty)$, 
	we employ the notation $U_{\ell,0}^p(\Omega)\coloneqq U_{\ell,D}^p(\Omega)$ if $\Gamma_D=\partial\Omega$ as well as  $Z_{\ell,0}^p(\Omega)\coloneqq Z_{\ell,N}^p(\Omega)$~if~$\Gamma_N=\partial \Omega$.
	In the special case $\ell\hspace*{-0.1em}=\hspace*{-0.1em}1$,
	we employ the standard~notation~${\smash{W^{1,p}_D(\Omega)}\hspace*{-0.1em}\coloneqq\hspace*{-0.1em} U_{1,D}^p(\Omega)}$,~${\smash{W^{1,p}_0(\Omega)}\hspace*{-0.1em}\coloneqq\hspace*{-0.1em} U_{1,0}^p(\Omega)}$, ${W^{p'}_N(\textup{div};\Omega)\coloneqq Z_{1,N}^p(\Omega)}$, and  ${W^{p'}_0(\textup{div};\Omega)\coloneqq Z_{1,0}^p(\Omega)}$.

	\subsection{Triangulations}
	
	\hspace{5mm}Throughout the entire paper, we denote by $\{\mathcal{T}_h\}_{h>0}$, a family of  
	triangulations of $\Omega\hspace*{-0.1em}\subseteq\hspace*{-0.1em} \mathbb{R}^d$,~${d\hspace*{-0.1em}\in\hspace*{-0.1em} \mathbb{N}}$.
	Here,~${h>0}$~refers to the \textit{average mesh-size}, \textit{i.e.}, we set $h\coloneqq (\vert\Omega\vert/\textup{card}(\mathcal{N}_h))^{1/d}$. Moreover, 
	 we set $h_T\coloneqq  \textup{diam}(T)$ for all $T\in \mathcal{T}_h$. 
	For every element $T \in \mathcal{T}_h$,
	we denote by $\rho_T>0$, the supremum of diameters of~inscribed~balls. We assume that there exists a constant $\omega_0>0$, independent of $h>0$, such that $\max_{T\in \mathcal{T}_h}{h_T}{\rho_T^{-1}}\le
	\omega_0$. The smallest such constant is called the \textit{chunkiness}~of~$\{\mathcal{T}_h\}_{h>0}$. We~define  
	\begin{align*}
		\mathcal{S}_h&\coloneqq \mathcal{S}_h^{i}\cup \mathcal{S}_h^{\partial\Omega}\,,\\
		\mathcal{S}_h^{i}&\coloneqq  \{T\cap T'\mid T,T'\in\mathcal{T}_h\,,\text{dim}_{\mathscr{H}}(T\cap T')=d-1\}\,,\\ 
		\mathcal{S}_h^{\partial\Omega}&\coloneqq\{T\cap \partial\Omega\mid T\in \mathcal{T}_h\,,\text{dim}_{\mathscr{H}}(T\cap \partial\Omega)=d-1\}\,,\\
		\mathcal{S}_h^\gamma&\coloneqq\{S\in \mathcal{S}_h\mid \textup{int}(S)\subseteq \gamma\}\text{ for } \gamma\in \{\Gamma_D,\Gamma_N\}\,,
	\end{align*}
	where for every  $M\subseteq \mathbb{R}^d$, we denote by $\text{dim}_{\mathscr{H}}(M)\coloneqq\inf\{d'\geq 0\mid \mathscr{H}^{d'}(M)=0\}$, the Hausdorff dimension. 
	The set $\mathcal{N}_h$  contains the vertices of $\mathcal{T}_h$.

	For $k\in \mathbb{N}\cup\{0\}$ and $T\in \mathcal{T}_h$, let $\mathbb{P}^k(T)$ denote the set of polynomials of maximal~degree~$k$~on~$T$. Then, for $k\in \mathbb{N}\cup\{0\}$, the sets of element-wise polynomial functions and  continuous element-wise polynomial functions, respectively, are defined by
	\begin{align*}
		\mathcal{L}^k(\mathcal{T}_h)&\coloneqq  \big\{v_h\in L^\infty(\Omega)\mid v_h|_T\in\mathbb{P}^k(T)\text{ for all }T\in \mathcal{T}_h\big\}\,,\\
		\mathcal{S}^k(\mathcal{T}_h)&\coloneqq  	\mathcal{L}^k(\mathcal{T}_h)\cap C^0(\overline{\Omega})\,.
	\end{align*}
	In addition, we set $\mathcal{S}^k_D(\mathcal{T}_h)\coloneqq \mathcal{S}^k(\mathcal{T}_h)\cap W^{1,p}_D(\Omega)$ and $\mathcal{S}^k_0(\mathcal{T}_h)\coloneqq \mathcal{S}^k(\mathcal{T}_h)\cap W^{1,p}_0(\Omega)$. 
	The (local) $L^2$-projection $\Pi_h\colon \hspace*{-0.1em}L^1(\Omega;\mathbb{R}^{\ell})\hspace*{-0.1em}\to\hspace*{-0.1em} (\mathcal{L}^0(\mathcal{T}_h))^{\ell}$ onto element-wise constant functions, vector~or~tensor~fields, respectively, for every 
	$v\in L^1(\Omega;\mathbb{R}^{\ell}) $ is defined by 
	\begin{align*}
		\Pi_h v|_T\coloneqq \fint_T{v\,\mathrm{d}x}\quad\text{ for all }T\in \mathcal{T}_h\,.
	\end{align*}
	The \hspace*{-0.1mm}element-wise \hspace*{-0.1mm}gradient 
	\hspace*{-0.1mm}$\nabla_h \colon \hspace*{-0.15em}(\mathcal{L}^1(\mathcal{T}_h))^{\ell}\hspace*{-0.15em}\to\hspace*{-0.15em} (\mathcal{L}^0(\mathcal{T}_h))^{\ell\times d}$ \hspace*{-0.1mm}is \hspace*{-0.1mm}given \hspace*{-0.1mm}via \hspace*{-0.1mm}the \hspace*{-0.1mm}element-wise~\hspace*{-0.1mm}\mbox{application}~\hspace*{-0.1mm}of the gradient operator, \textit{i.e.}, for every $v_h\in  (\mathcal{L}^1(\mathcal{T}_h))^{\ell}$, we~have~that~${\nabla_h v_h|_T\hspace*{-0.1em}\coloneqq \hspace*{-0.1em} \nabla(v_h|_T)}$~for~all~${T\hspace*{-0.1em}\in\hspace*{-0.1em} \mathcal{T}_h}$.
	
	Moreover, for $m\in \mathbb{N}\cup\{0\}$ and $S\in \mathcal{S}_h$, let $\mathbb{P}^m(S)$ denote the set of polynomials of maximal degree $m$ on $S$. Then, for $m\in \mathbb{N}\cup\{0\}$ and $\mathcal{M}_h\in \{\mathcal{S}_h,\mathcal{S}^{i}_h,\mathcal{S}^{\partial\Omega}_h,\mathcal{S}^{\Gamma_D}_h,\mathcal{S}^{\Gamma_N}_h\}$, the set of side-wise polynomial functions  is defined by
	\begin{align*}
		\mathcal{L}^m(\mathcal{M}_h)\coloneqq  \big\{v_h\in L^\infty(\cup\mathcal{M}_h)\mid v_h|_S\in\mathbb{P}^m(S)\text{ for all }S\in \mathcal{M}_h\big\}\,.
	\end{align*} 
	The (local) $L^2$-projection $\pi_h\colon L^1(\cup\mathcal{S}_h;\mathbb{R}^{\ell})\to (\mathcal{L}^0(\mathcal{S}_h))^{\ell}$ onto side-wise constant functions, vector, or tensor fields, respectively,  for every 
	$v\in L^1(\cup\mathcal{S}_h;\mathbb{R}^{\ell}) $ is defined by 
	\begin{align*}
		\pi_h v|_S\coloneqq \fint_S{v\,\mathrm{d}s}\quad\text{ for all }S\in \mathcal{S}_h\,.
	\end{align*}

	\subsubsection{Crouzeix--Raviart element}\enlargethispage{13mm}\vspace*{-0.5mm}
	
	\qquad The \textit{Crouzeix--Raviart finite element space} (\textit{cf}.\ \cite{CR73}) is defined as the space of element-wise affine functions that are continuous in the barycenters of inner element sides, \textit{i.e.},\footnote{Here, for every inner side $S\in\mathcal{S}_h^{i}$, the jump is defined by $\jump{v_h}_S\coloneqq v_h|_{T_+}-v_h|_{T_-}$ on $S$, where $T_+, T_-\in \mathcal{T}_h$ satisfy $\partial T_+\cap\partial T_-=S$, and for every boundary side $S\in\mathcal{S}_h\cap\partial \Omega$, the jump is defined by $\jump{v_h}_S\coloneqq v_h|_T$ on $S$, where $T\in \mathcal{T}_h$ satisfies $S\subseteq \partial T$.}
	\begin{align*}\mathcal{S}^{1,cr}(\mathcal{T}_h)\coloneqq \big\{v_h\in \mathcal{L}^1(\mathcal{T}_h)\mid   \pi_h \jump{v_h}_S=0\text{ in }S\text{ for all }S\in \mathcal{S}_h^{i}\big\}\,.
	\end{align*}
	The Crouzeix--Raviart finite element space with homogeneous Dirichlet boundary condition on $\Gamma_D$  is defined as the space of 
	Crouzeix--Raviart finite element functions that vanish in the barycenters of boundary element sides that belong to $\Gamma_D$, \textit{i.e.},
	\begin{align*}
	\mathcal{S}^{1,cr}_D(\mathcal{T}_h)\coloneqq \big\{v_h\in\smash{\mathcal{S}^{1,cr}(\mathcal{T}_h)}\mid  \pi_h \jump{v_h}_S=0\text{ in }S\text{ for all }S\in \mathcal{S}_h^{\Gamma_D}\big\}\,.
	\end{align*} 
	We employ the notation $\mathcal{S}^{1,cr}_0(\mathcal{T}_h)=\mathcal{S}^{1,cr}_D(\mathcal{T}_h)$ if $\Gamma_D=\partial\Omega$. 
	The functions $\varphi_S\in \smash{\mathcal{S}^{1,cr}(\mathcal{T}_h)}$,~${S\in \mathcal{S}_h}$, that satisfy the Kronecker property $\varphi_S(x_{S'})=\delta_{S,S'}$ for all $S,S'\in \mathcal{S}_h$, form a basis of $\smash{\mathcal{S}^{1,cr}(\mathcal{T}_h)}$. Then, 
	the functions 	 $\varphi_S\in \smash{\mathcal{S}^{1,cr}_D(\mathcal{T}_h)}$, $S\in \mathcal{S}_h\setminus\mathcal{S}_h^{\Gamma_D}$, form a basis of $\smash{\smash{\mathcal{S}^{1,cr}_D(\mathcal{T}_h)}}$.  
	For $\ell\in \mathbb{N}$,~we~employ the notation
	\begin{align*}
		\begin{aligned}
			U^{cr}_{\ell}(\mathcal{T}_h)\coloneqq (\mathcal{S}^{1,cr}(\mathcal{T}_h))^{\ell}\,,\qquad
			 U^{cr}_{\ell,D}(\mathcal{T}_h)\coloneqq  (	\mathcal{S}^{1,cr}_D(\mathcal{T}_h))^{\ell}\,,\qquad U^{cr}_{\ell,0}(\mathcal{T}_h)\coloneqq  (	\mathcal{S}^{1,cr}_0(\mathcal{T}_h))^{\ell}\,.
		\end{aligned}
	\end{align*} 
	The (Fortin) quasi-interpolation operator $\Pi_h^{cr}\colon \hspace*{-0.15em}U_{\ell}^p(\Omega)\hspace*{-0.15em}\to\hspace*{-0.15em} U^{cr}_{\ell}(\mathcal{T}_h)$, for every $v\hspace*{-0.15em}\in\hspace*{-0.15em} U_{\ell}^p(\Omega)$~is~\mbox{defined}~by
	\begin{align}
		\Pi_h^{cr}v\coloneqq \sum_{S\in \mathcal{S}_h}{v_S\,\varphi_S}\,,\quad\text{ where } v_S\coloneqq \fint_S{v\,\textup{d}s}\text{ for all }S\in \mathcal{S}_h\,,\label{CR-interpolant}
	\end{align}
	preserves averages of gradients and moments (on sides), \textit{i.e.}, for every $v\in U_{\ell}^p(\Omega)$, it holds that
	\begin{alignat}{2}
		\nabla_h\Pi_h^{cr}v&=\Pi_h\nabla v&&\quad \text{ in  }(\mathcal{L}^0(\mathcal{T}_h))^{\ell\times d}\,,\label{eq:grad_preservation}\\
		\pi_h\Pi_h^{cr}v&=\pi_h v&&\quad \text{ in }(\mathcal{L}^0(\mathcal{S}_h))^{\ell}\,.\label{eq:trace_preservation}
	\end{alignat}
	In particular, from \eqref{eq:trace_preservation}, it follows that $\Pi_h^{cr}(U^p_{\ell,D}(\Omega))\subseteq U^{cr}_{\ell,D}(\mathcal{T}_h)$ and $\Pi_h^{cr}(U^p_{\ell,0}(\Omega))\subseteq U^{cr}_{\ell,0}(\mathcal{T}_h)$.

	\subsubsection{Raviart--Thomas element}
	
	\qquad The \textit{(lowest order) Raviart--Thomas finite element space} (\textit{cf}.\ \cite{RT75}) is defined as the space of element-wise affine vector fields that have continuous constant normal components on inner elements sides, \textit{i.e.},\footnote{For every inner side $S\hspace{-0.1em}\in\hspace{-0.1em}\mathcal{S}_h^{i}$, the normal jump is defined by $\jump{y_h n}_S\hspace{-0.1em}\coloneqq \hspace{-0.1em}\smash{y_h|_{T_+} n_{T_+}+y_h|_{T_-} n_{T_-}}$ on $S$, where $T_+, T_-\hspace{-0.1em}\in\hspace{-0.1em} \mathcal{T}_h$~satisfy~$\smash{\partial T_+\cap\partial T_-\hspace{-0.1em}=\hspace{-0.1em}S}$, and for every $T\in \mathcal{T}_h$, $\smash{n_T\colon\partial T\to \mathbb{S}^{d-1}}$ denotes the outward unit normal vector field~to~$ T$, 
		and for every boundary side $\smash{S\in\mathcal{S}_h\cap\partial \Omega}$, the normal jump is defined by $\smash{\jump{y_h n}_S\coloneqq \smash{y_h|_T n}}$ on $S$, where $T\in \mathcal{T}_h$ satisfies $S\subseteq \partial T$.}
	\begin{align*}
		\mathcal{R}T^0(\mathcal{T}_h)\coloneqq \big\{y_h\in (\mathcal{L}^1(\mathcal{T}_h))^d\mid &\,\smash{y_h|_T\cdot n_T=\textup{const}\text{ on }\partial T\text{ for all }T\in \mathcal{T}_h\,,}\\ 
		&\smash{	\jump{y_h\cdot n}_S=0\text{ on }S\text{ for all }S\in \mathcal{S}_h^{i}\big\}\,.}
	\end{align*}
	The Raviart--Thomas finite element space with homogeneous slip boundary condition on $\Gamma_N$ is defined as the space of Raviart--Thomas vector fields whose normal components~vanish~on~$\Gamma_N$,~\textit{i.e.},
	\begin{align*}
		\mathcal{R}T^{0}_N(\mathcal{T}_h)\coloneqq \big\{y_h\in	\mathcal{R}T^0(\mathcal{T}_h)\mid y_h\cdot n=0\text{ on }\Gamma_N\big\}\,.
	\end{align*} 
	We employ the notation $\mathcal{R}T^{0}_0(\mathcal{T}_h)\hspace*{-0.1em}\coloneqq\hspace*{-0.1em}	\mathcal{R}T^{0}_N(\mathcal{T}_h)$ if  $\Gamma_N\hspace*{-0.1em}=\hspace*{-0.1em}\partial\Omega$.
	The vector fields $\psi_S\hspace*{-0.1em}\in\hspace*{-0.1em} \mathcal{R}T^0(\mathcal{T}_h)$,~${S\hspace*{-0.1em}\in\hspace*{-0.1em} \mathcal{S}_h}$, that satisfy the Kronecker property $\psi_S|_{S'}\cdot n_{S'}\hspace*{-0.1em}=\hspace*{-0.1em}\delta_{S,S'}$ on $S'$ for all $S'\hspace*{-0.1em}\in\hspace*{-0.1em} \mathcal{S}_h$, where~$n_S$~for~all~${S\hspace*{-0.1em}\in\hspace*{-0.1em} \mathcal{S}_h}$~is the unit normal vector on $S$ pointing from $T_-$ to $T_+$ if $T_+\cap T_-=S\in \mathcal{S}_h$, form~a~basis~of~$\mathcal{R}T^0(\mathcal{T}_h)$.
	Then, the vector fields $\psi_S\hspace*{-0.15em}\in\hspace*{-0.15em} \smash{\mathcal{R}T^{0}_N(\mathcal{T}_h)}$, $S\hspace*{-0.15em}\in\hspace*{-0.15em} \mathcal{S}_h\setminus\Gamma_N$, form~a~basis~of~$\mathcal{R}T^{0}_N(\mathcal{T}_h)$. For $\ell\hspace*{-0.15em}\in\hspace*{-0.15em} \mathbb{N}$,~we~\mbox{employ} the notations
	\begin{align*}
		\begin{aligned}
			Z^{rt}_{\ell}(\mathcal{T}_h)&\coloneqq  \big\{y=(y_{ij})_{i\in\{1,\ldots,\ell\},j\in\{1,\ldots,d\}}\mid (y_{ij})_{j\in\{1,\ldots,d\}}\in \mathcal{R}T^0(\mathcal{T}_h)\text{ for all }i= 1,\ldots,\ell\big\}\,,\\
			Z^{rt}_{\ell,N}(\mathcal{T}_h)&\coloneqq  \big\{y=(y_{ij})_{i\in\{1,\ldots,\ell\},j\in\{1,\ldots,d\}}\mid (y_{ij})_{j\in\{1,\ldots,d\}}\in \mathcal{R}T^0_N(\mathcal{T}_h)\text{ for all }i= 1,\ldots,\ell\big\}\,,\\
			Z^{rt}_{\ell,0}(\mathcal{T}_h)&\coloneqq  \big\{y=(y_{ij})_{i\in\{1,\ldots,\ell\},j\in\{1,\ldots,d\}}\mid (y_{ij})_{j\in\{1,\ldots,d\}}\in \mathcal{R}T^0_0(\mathcal{T}_h)\text{ for all }i= 1,\ldots,\ell\big\}\,.
		\end{aligned}
	\end{align*} 
	The (Fortin) quasi-interpolation operator $\Pi_h^{rt}\colon \hspace*{-0.15em}W^{1,1}(\Omega;\mathbb{R}^{\ell\times d})\hspace*{-0.15em}\to\hspace*{-0.15em} Z^{rt}_{\ell}(\mathcal{T}_h)$,~for~every~$y\hspace*{-0.15em}\in\hspace*{-0.15em} W^{1,1}(\Omega;\mathbb{R}^{\ell\times d})$ is defined by
	\begin{align}
		\Pi_h^{rt} y\coloneqq \sum_{S\in \mathcal{S}_h}{y_S\,\psi_S}\,,\quad\text{ where } y_S\coloneqq \fint_S{yn_S\,\textup{d}s}\text{ for all }S\in \mathcal{S}_h\,,\label{RT-interpolant}
	\end{align}
	preserves averages of divergences and normal traces, \textit{i.e.}, for every $y\in W^{1,1}(\Omega;\mathbb{R}^{\ell\times d})$,~it~holds~that
	\begin{alignat}{2}
		\textup{div}\,\Pi_h^{rt}y&=\Pi_h\textup{div}\,y&&\quad \text{ in }(\mathcal{L}^0(\mathcal{T}_h))^{\ell}\,,\label{eq:div_preservation}\\
		\Pi_h^{rt}y n&=\pi_hy n&&\quad \text{ in }(\mathcal{L}^0(\mathcal{S}_h))^{\ell}\,.\label{eq:normal_trace_preservation}
	\end{alignat}
	In particular, from \eqref{eq:normal_trace_preservation}, it follows that $\Pi_h^{rt}(Z^p_{\ell,N}(\Omega))\subseteq Z^{rt}_{\ell,N}(\mathcal{T}_h)$ and $\Pi_h^{rt}(Z^p_{\ell,0}(\Omega))\subseteq Z^{rt}_{\ell,0}(\mathcal{T}_h)$.
	
	\subsubsection{Discrete integration-by-parts formula}
	
\hspace{5mm}For every $v_h\in U^{cr}_{\ell}(\mathcal{T}_h)$ and $y_h\in Z^{rt}_{\ell}(\mathcal{T}_h)$, there holds the \textit{discrete integration-by-parts
	formula}
\begin{align}
	(\nabla_hv_h,\Pi_h y_h)_{\Omega}+(\Pi_h v_h,\,\textup{div}\,y_h)_{\Omega}=(\pi_h v_h,y_h n)_{\partial\Omega}\,.\label{eq:pi}
\end{align}
	which follows from the fact that for every $y_h\in Z^{rt}_{\ell}(\mathcal{T}_h)$, it holds that  $y_h|_Tn_T=\textrm{const}$~on~$\partial T$~for~all $T\in \mathcal{T}_h$ and	$\jump{y_h n}_S=0$ on $S$ for all $S\in \mathcal{S}_h^{i}$, and for every ${v_h\in U^{cr}_{\ell}(\mathcal{T}_h)}$, it holds that $\pi_h\jump{v_h}_S=0$ for all $S\in \mathcal{S}_h^{i}$.
In \cite{CL15,LLC18,CP20,Bar20,Bar21,BKAFEM22}, the discrete integration-by-parts formula \eqref{eq:pi} formed a cornerstone in the \mbox{derivation} of a discrete convex duality theory and, as such,  plays a central 
role in the derivation of the results presented below.
Appealing to \cite[Sec.~2.4]{BW21} and \cite[Subsec.~2.3.3]{BK23ROF}, there hold the \textit{discrete Helmholtz decompositions}
\begin{align}
	(\mathcal{L}^0(\mathcal{T}_h))^{\ell\times d}&=\textup{ker}(\textup{div}|_{\smash{Z^{rt}_{\ell,N}(\mathcal{T}_h)}})\oplus \nabla_{\!h}( U^{cr}_{\ell,D}(\mathcal{T}_h))
	\,,\label{eq:decomposition.2}\\
	(\mathcal{L}^0(\mathcal{T}_h))^{\ell}&=\textup{ker}(\nabla_{\!h}|_{\smash{ U^{cr}_{\ell,D}(\mathcal{T}_h)}})\oplus \textup{div}\,(Z^{rt}_{\ell,N}(\mathcal{T}_h)) \,.\label{eq:decomposition.1}
\end{align}

\newpage
	
	\section{Exact a posteriori error estimation for convex minimization problems} \label{sec:convex_min}

\subsection{Continuous convex duality} 

\hspace{5mm}\textit{Primal problem.} Let $\phi\colon \Omega\times \mathbb{R}^{\ell \times d}\to \mathbb{R}\cup\{+\infty\}$ and $\psi\colon \Omega\times\mathbb{R}^{\ell} \to \mathbb{R}\cup\{+\infty\}$ be measurable functions such that for a.e.\ $x\in \Omega$,~the~functions $\phi(x,\cdot)\colon \mathbb{R}^{\ell \times d}\to \mathbb{R}\cup\{+\infty\}$ and $\psi(x,\cdot)\colon \mathbb{R}^{\ell}\to \mathbb{R}\cup\{+\infty\}$ are proper, convex, and lower semi-continuous such that for every $y\in L^p(\Omega;\mathbb{R}^{\ell\times d})$ and $v\in L^p(\Omega;\mathbb{R}^{\ell})$, the following integrals exist and are finite or infinity, \textit{i.e.}, 
\begin{align*}
	\int_{\Omega}{\phi(\cdot,y)\,\textup{d}x}, \int_{\Omega}{\psi(\cdot,v)\,\textup{d}x}\in \mathbb{R}\cup\{+\infty\}\,.
\end{align*}
Moreover,  let $g\in W^{\smash{-\frac{1}{p'}},p'}(\Gamma_N;\mathbb{R}^{\ell})$ be given Neumann boundary data and  let $u_D\in W^{1-\frac{1}{p},p}(\partial\Omega)$ be given Dirichlet boundary data.
We examine the  minimization of the functional $I\colon U_{\ell}^p(\Omega)\to \mathbb{R}\cup\{+\infty\}$, for every $v\in U_{\ell}^p(\Omega)$ defined by 
\begin{align}
	I(v)\coloneqq \int_{\Omega}{\phi(\cdot,\nabla v)\,\textup{d}x}+\int_{\Omega}{\psi(\cdot,v)\,\textup{d}x}-\langle g, v\rangle_{\Gamma_N}+\smash{I_{\{u_D\}}^{\Gamma_D}}(v)\,,\label{primal}
\end{align}
where $\smash{I_{\{u_D\}}^{\Gamma_D}}\colon \smash{W^{\smash{1-\frac{1}{p}},p}(\Gamma_D;\mathbb{R}^{\ell})}\to \mathbb{R}\cup\{+\infty\}$ for every $\widehat{v}\in  \smash{W^{\smash{1-\frac{1}{p}},p}(\Gamma_D;\mathbb{R}^{\ell})}$ is defined by 
\begin{align*}
	\smash{I_{\{u_D\}}^{\Gamma_D}}(\widehat{v})\coloneqq \begin{cases}
			0&\text{ if }\widehat{v}=u_D\text{ a.e.\ on }\Gamma_D\,,\\
			+\infty&\text{ else}\,.
	\end{cases}
\end{align*}
In what follows, we refer to the minimization of $I \colon U_{\ell}^p(\Omega) \to\mathbb{R} \cup \{+\infty\}$ as the \textit{primal problem}.

\textit{Dual problem.}  Let $\phi^*\colon \Omega\times \mathbb{R}^{\ell \times d}\to \mathbb{R}\cup\{+\infty\}$ and $\psi^*\colon \Omega\times\mathbb{R}^{\ell}\to \mathbb{R}\cup\{+\infty\}$ be the Fenchel conjugates to $\phi\colon \Omega\times \mathbb{R}^{\ell\times d}\to \mathbb{R}\cup\{+\infty\}$ and $\psi\colon \Omega\times\mathbb{R}^{\ell} \to \mathbb{R}\cup\{+\infty\}$, respectively, with respect to the second argument.
Then, a \textit{(Fenchel) dual problem} to the minimization of \eqref{primal} is given via the maximization of the functional $D\colon  Z_{\ell}^p(\Omega)\to \mathbb{R} \cup \{ -\infty \}$, for every $y\in  Z_{\ell}^p(\Omega)$ defined by 
\begin{align}
	D(y)\coloneqq -\int_{\Omega}{\phi^*( \cdot, y)\,\textup{d}x}-\int_{\Omega}{\psi^*(\cdot,\textup{div}\,y)\,\textup{d}x}+\langle y n ,u_D\rangle_{\partial\Omega}- \langle g,u_D\rangle_{\Gamma_N} -\smash{I_{\{g\}}^{\Gamma_N}}(y n)\,,\label{dual}
\end{align}
where   $\smash{I_{\{g\}}^{\Gamma_N}}\colon \smash{W^{\smash{-\frac{1}{p'}},p'}(\Gamma_N;\mathbb{R}^{\ell})}\to \mathbb{R}\cup\{+\infty\}$ for every $\widehat{y}\in\smash{W^{\smash{-\frac{1}{p'}},p'}(\Gamma_N;\mathbb{R}^{\ell})}$ is defined by 
\begin{align*}
		\smash{I_{\{g\}}^{\Gamma_N}}(\widehat{y})\coloneqq \begin{cases}
		0&\text{ if }\langle \widehat{y}, v\rangle_{\partial \Omega} = \langle g,v\rangle_{\Gamma_N}\text{ for all }v\in U_{\ell,D}^p(\Omega)\,,\\
		+\infty&\text{ else}\,.
	\end{cases}
\end{align*}
We always assume that $\phi\colon \Omega\times\mathbb{R}^{\ell \times d}\to \mathbb{R}\cup\{+\infty\}$ and $\psi\colon \Omega\times \mathbb{R}^{\ell}\to \mathbb{R}\cup\{+\infty\}$ are such that \eqref{primal} admits at least one minimizer $u\in U_{\ell}^p(\Omega)$, called  \textit{primal solution},
and that \eqref{dual} admits at least one maximizer  $z\in Z_{\ell}^p(\Omega)$, called \textit{dual solution}.
The derivation of a weak duality relation between of \eqref{primal} and \eqref{dual} can be found in the proof of the following results that also establishes the equivalence of a strong duality relation and convex optimality relations. 

\begin{proposition}[Strong duality and convex duality relations]\label{prop:duality} The following statements apply:
	\begin{itemize}[noitemsep,topsep=2pt,leftmargin=!,labelwidth=\widthof{(ii)}]
		\item[(i)]  A \textup{weak duality relation} applies, \textit{i.e.}, 
		\begin{align}
			\inf_{v\in U_{\ell}^p(\Omega)}{I(v)}
			\ge \sup_{y\in Z_{\ell}^p(\Omega)}{D(y)}\,.\label{eq:weak_duality}
		\end{align}
		\item[(ii)]  A  \textup{strong duality relation} applies, \textit{i.e.}, 
		\begin{align}
			I(u) = D(z)\,,\label{eq:strong_duality}
		\end{align}
		if and only if the \textup{convex optimality relations} apply, \textit{i.e.}, 
		\begin{alignat}{2} 
			z: \nabla u&=\phi^*(\cdot,z)+\phi(\cdot,\nabla u)&&\quad\text{ a.e.\ in }\Omega\,,\label{eq:optimality_relations.1}\\
			\textup{div}\,z\cdot u& =\psi^*(\cdot,	\textup{div}\,z)+\psi(\cdot, u)&&\quad\text{ a.e.\ in }\Omega\,.\label{eq:optimality_relations.2}
		\end{alignat}
	\end{itemize}
\end{proposition}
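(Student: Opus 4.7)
\textbf{Proof plan for Proposition \ref{prop:duality}.} The strategy is to derive weak duality by a pointwise Fenchel--Young argument combined with the integration-by-parts formula \eqref{eq:pi_cont}, and then observe that strong duality corresponds exactly to equality in the two Fenchel--Young inequalities used, which by \eqref{eq:fenchel_young_id} is the content of the convex optimality relations.

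\emph{Step 1 (weak duality).} Fix $v\in U_\ell^p(\Omega)$ and $y\in Z_\ell^p(\Omega)$. If either $v\notin \mathrm{dom}(I)$ or $y\notin \mathrm{dom}(D)$, then $I(v)=+\infty$ or $D(y)=-\infty$, so \eqref{eq:weak_duality} is trivial. Otherwise, the indicator functionals force $v=u_D$ a.e.\ on $\Gamma_D$ and $\langle yn,\widehat v\rangle_{\partial\Omega}=\langle g,\widehat v\rangle_{\Gamma_N}$ for all $\widehat v\in U_{\ell,D}^p(\Omega)$. Apply the Fenchel--Young inequality \eqref{eq:fenchel_young_ineq} pointwise a.e.\ in~$\Omega$ to $\phi$ and $\psi$, giving
\begin{align*}
\phi(\cdot,\nabla v)+\phi^*(\cdot,y)\ge y:\nabla v,\qquad \psi(\cdot,v)+\psi^*(\cdot,\mathrm{div}\,y)\ge \mathrm{div}\,y\cdot v\quad\text{a.e.\ in }\Omega,
\end{align*}
integrate over $\Omega$, and insert the resulting lower bound for $\int_\Omega\phi(\cdot,\nabla v)\,\mathrm{d}x+\int_\Omega\psi(\cdot,v)\,\mathrm{d}x$ into the definition \eqref{primal} of $I(v)$. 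Applying \eqref{eq:pi_cont} to rewrite $(y,\nabla v)_\Omega=-(\mathrm{div}\,y,v)_\Omega+\langle yn,v\rangle_{\partial\Omega}$ yields, after splitting $\langle yn,v\rangle_{\partial\Omega}=\langle yn,u_D\rangle_{\Gamma_D}+\langle yn,v\rangle_{\Gamma_N}$ (using $v=u_D$ on $\Gamma_D$), that
\begin{align*}
I(v)\ge -\int_\Omega \phi^*(\cdot,y)\,\mathrm{d}x-\int_\Omega \psi^*(\cdot,\mathrm{div}\,y)\,\mathrm{d}x+\langle yn,u_D\rangle_{\Gamma_D}+\langle yn-g,v\rangle_{\Gamma_N}.
\end{align*}
The admissibility of $y$ (encoded by $\smash{I_{\{g\}}^{\Gamma_N}}(yn)=0$) converts the boundary integral on $\Gamma_N$ into $-\langle g,u_D\rangle_{\Gamma_N}+\langle yn,u_D\rangle_{\Gamma_N}$, so that in total the right-hand side equals $D(y)$.

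\emph{Step 2 (strong duality $\Leftrightarrow$ convex optimality relations).} Specializing Step~1 to the primal solution $u$ and dual solution $z$, the identity $I(u)=D(z)$ is equivalent to equality in the two Fenchel--Young integral inequalities, that is,
\begin{align*}
\int_\Omega\bigl\{\phi(\cdot,\nabla u)+\phi^*(\cdot,z)-z:\nabla u\bigr\}\,\mathrm{d}x=0,\qquad \int_\Omega\bigl\{\psi(\cdot,u)+\psi^*(\cdot,\mathrm{div}\,z)-\mathrm{div}\,z\cdot u\bigr\}\,\mathrm{d}x=0.
\end{align*}
Since both integrands are pointwise nonnegative by \eqref{eq:fenchel_young_ineq}, they must vanish a.e.\ in $\Omega$; via the Fenchel--Young identity \eqref{eq:fenchel_young_id} this is exactly \eqref{eq:optimality_relations.1}, \eqref{eq:optimality_relations.2}. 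Conversely, if the optimality relations hold a.e., the same integrands vanish pointwise, the chain of inequalities in Step~1 becomes a chain of equalities, and $I(u)=D(z)$ follows.

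\emph{Main obstacle.} The only delicate step is the careful treatment of the boundary contributions and the indicator functionals: one must verify that admissibility of $u$ on $\Gamma_D$ and of $z$ on $\Gamma_N$ makes the duality pairings in \eqref{primal} and \eqref{dual} coincide after integration by parts, so that no spurious boundary terms obstruct equality. Once this bookkeeping is done, the remainder is a direct application of the pointwise Fenchel--Young inequality and identity together with \eqref{eq:pi_cont}.
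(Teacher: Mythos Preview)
Your proposal is correct and follows essentially the same route as the paper: pointwise Fenchel--Young inequalities plus the integration-by-parts formula \eqref{eq:pi_cont} for weak duality, and then the observation that strong duality forces the two nonnegative Fenchel--Young integrands to vanish a.e. One small point: your splitting $\langle yn,v\rangle_{\partial\Omega}=\langle yn,u_D\rangle_{\Gamma_D}+\langle yn,v\rangle_{\Gamma_N}$ is informal, since $yn$ is only a distribution on $\partial\Omega$ and need not restrict to $\Gamma_D$, $\Gamma_N$ separately; the paper handles this more carefully via $\langle yn,v\rangle_{\partial\Omega}=\langle yn,u_D\rangle_{\partial\Omega}+\langle yn,v-u_D\rangle_{\partial\Omega}$ and the admissibility condition on the full boundary (equation \eqref{eq:boundary}), which yields the same result.
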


\begin{proof}
	\textit{ad (i).} For every $v\in U_{\ell}^p(\Omega)$ and $y\in Z_{\ell}^p(\Omega)$, by the Fenchel--Young inequality  (\textit{cf}.\  \eqref{eq:fenchel_young_ineq}), we have that 
	\begin{align}\label{prop:duality.1}
		\begin{aligned}
			\phi(\cdot,\nabla v)&\geq y: \nabla v- \phi^*(\cdot,y)&&\quad\text{ a.e.\ in }\Omega\,,\\
			\psi(\cdot,v)&\geq	\textup{div}\,y\cdot v- \psi^*(\cdot,	\textup{div}\,y)&&\quad\text{ a.e.\ in }\Omega\,.
		\end{aligned}
	\end{align}
	Adding the two inequalities in \eqref{prop:duality.1}, using the integration-by-parts formula \eqref{eq:pi_cont}, for every $v\in U_{\ell}^p(\Omega)$ with $I_{\{u_D\}}^{\Gamma_D}(v)=0$  and $y\in Z_{\ell}^p(\Omega)$ with $I_{\{g\}}^{\Gamma_N}(yn)=0$, we find that 
	\begin{align*}
		I(v)&\ge \int_{\Omega}{\phi(\cdot,\nabla v)\,\mathrm{d}x}+\int_{\Omega}{\psi(\cdot,v)\,\mathrm{d}x}-\langle g,v\rangle_{\Gamma_N}
		\\&\ge 
		-	\int_{\Omega}{\phi^*(\cdot,y)\,\mathrm{d}x}-\int_{\Omega}{\psi^*(\cdot,	\textup{div}\,y)\,\mathrm{d}x}+\langle y n,u_D\rangle_{\partial\Omega}-\langle g,u_D\rangle_{\Gamma_N}\\&
		=D(y)\,,
	\end{align*}
	where we also used that for every $v\in U_{\ell}^p(\Omega)$ with $I_{\{u_D\}}^{\Gamma_D}(v)=0$  and $y\in Z_{\ell}^p(\Omega)$ with $I_{\{g\}}^{\Gamma_N}(yn)=0$, it holds that
	\begin{align}\label{eq:boundary}
		\begin{aligned}
		\langle yn,v\rangle_{\partial\Omega}&=\langle yn,u_D\rangle_{\partial\Omega}+\langle yn,v-u_D\rangle_{\partial\Omega}
		\\&=\langle yn,u_D\rangle_{\partial\Omega}-\langle g,u_D\rangle_{\Gamma_N}+\langle g,v\rangle_{\Gamma_N}\,.
	\end{aligned}
	\end{align}
	On the other hand, for every $v\in U_{\ell}^p(\Omega)$ such that $\smash{I_{\{u_D\}}^{\Gamma_D}}(v)=+\infty$, we have that $I(v)=+\infty$, and 
	for every $y\in Z_{\ell}^p(\Omega)$  such that $\smash{I_{\{g\}}^{\Gamma_N}}(y n) =+\infty$, we have that
	$D(z)=-\infty$.  
	
	\textit{ad (ii).} The strong duality relation \eqref{eq:strong_duality} is equivalent to 
	\begin{align*}
	&	\int_{\Omega}{\big\{\phi^*(\cdot,z)-z: \nabla u+\phi(\cdot,\nabla u)\big\}\,\mathrm{d}x}\\&\quad+\int_{\Omega}{\big\{\psi^*(\cdot,	\textup{div}\,z)-\textup{div}\,z\cdot u+\psi(\cdot, u)\big\}\,\mathrm{d}x}=0\,.
	\end{align*}
	Therefore, due to \eqref{prop:duality.1}, we find that the strong duality relation \eqref{eq:strong_duality} is equivalent to the convex optimality relations \eqref{eq:optimality_relations.1}, \eqref{eq:optimality_relations.2}.
\end{proof}

\begin{remark}[Equivalent convex optimality relations ]\label{rem:duality}
		\begin{itemize}[noitemsep,topsep=2pt,leftmargin=!,labelwidth=\widthof{(iii)}]
			\item[(i)] If $\phi(x,\cdot)\hspace*{-0.15em}\in \hspace*{-0.15em} C^1(\mathbb{R}^{\ell\times d})$~for~a.e.~${x\hspace*{-0.15em}\in\hspace*{-0.15em} \Omega}$, 
			by \hspace*{-0.1mm}the \hspace*{-0.1mm}Fenchel--Young \hspace*{-0.1mm}identity \hspace*{-0.1mm}(\textit{cf}.\ \hspace*{-0.1mm}\eqref{eq:fenchel_young_id}), \hspace*{-0.1mm}the \hspace*{-0.1mm}convex \hspace*{-0.1mm}optimality~\hspace*{-0.1mm}relation~\hspace*{-0.1mm}\eqref{eq:optimality_relations.1}~\hspace*{-0.1mm}is~\hspace*{-0.1mm}\mbox{equivalent}~\hspace*{-0.1mm}to 
			\begin{align}
				z= D\phi(\cdot,\nabla u)\quad\textup{ a.e.\ in }\Omega\,;\label{eq:optimality_relations.3}
			\end{align}
			\item[(ii)] If $\phi^*(x,\cdot)\in C^1(\mathbb{R}^{\ell\times d})$ for a.e.\ $x\in \Omega$, by the Fenchel--Young identity (\textit{cf}.\ \eqref{eq:fenchel_young_id}), the convex optimality relation \eqref{eq:optimality_relations.1} is equivalent to 
			\begin{align}
				\nabla u= D\phi^*(\cdot,z)\quad\textup{ a.e.\ in }\Omega\,;\label{eq:optimality_relations.4}
			\end{align}
			
			\item[(iii)] If  
			$\psi(x,\cdot)\in C^1(\mathbb{R}^{\ell})$ for a.e.\ $x\in \Omega$, 
			then, by the Fenchel--Young identity (\textit{cf}.\ \eqref{eq:fenchel_young_id}),  the convex optimality relation \eqref{eq:optimality_relations.2} is equivalent to 
			\begin{align}
				\textup{div}\,z=D\psi(\cdot, u)\quad\textup{ a.e.\ in }\Omega\,;\label{eq:optimality_relations.5}
			\end{align}
			\item[(iv)] If $\psi^*(x,\cdot)\in C^1(\mathbb{R}^{\ell})$ for a.e.\ $x\in \Omega$, 
			then, by the Fenchel--Young identity (\textit{cf}.\ \eqref{eq:fenchel_young_id}),  the convex optimality relation \eqref{eq:optimality_relations.2} is equivalent to 
			\begin{align}
				u=D\psi^*(\cdot, \textup{div}\,z)\quad\textup{ a.e.\ in }\Omega\,.\label{eq:optimality_relations.6}
			\end{align}
		\end{itemize}
\end{remark}

The convex duality relations \eqref{eq:optimality_relations.1}, \eqref{eq:optimality_relations.2} motivate introducing the \textit{primal-dual~gap~estimator} $\smash{\eta^2_{\textup{gap}}}\colon U_{\ell}^p(\Omega)\times Z_{\ell}^p(\Omega)\to [0,+\infty]$, for every 
$(v,y)^\top \in U_{\ell}^p(\Omega)\times Z_{\ell}^p(\Omega)$ defined by
\begin{align}
	\smash{\eta^2_{\textup{gap}}}(v,y)\coloneqq I(v)-D(y)\,.\label{def:eta}
\end{align}
Note that the sign of the estimator \eqref{def:eta} is a consequence of the weak duality relation \eqref{eq:weak_duality}.\enlargethispage{10mm}

Together with the optimal convexity measures (\textit{cf}.\ Definition \ref{def:convexity_measure_optimal}) $\rho_I^2\colon U_{\ell}^p(\Omega)\to [0,+\infty]$ of \eqref{primal} at a primal solution $u\in U_{\ell}^p(\Omega)$ and $\rho_{-D}^2\colon Z_{\ell}^p(\Omega)\to [0,+\infty]$ of the negative of \eqref{dual} at a dual solution $z\in Z_{\ell}^p(\Omega)$, we arrive at the following \textit{generalized Prager--Synge identity}.

\begin{theorem}[Generalized Prager--Synge identity]\label{thm:main}
	If the strong duality relation \eqref{eq:strong_duality} applies, then the following statements apply:
	\begin{itemize}[noitemsep,topsep=2pt,leftmargin=!,labelwidth=\widthof{(ii)}]
		\item[(i)] For every $v\in U_{\ell}^p(\Omega)$ and $y\in  Z_{\ell}^p(\Omega)$, for the \textup{primal-dual total error}, we  have that 
		\begin{align}\label{eq:representation.1} 
				\smash{\rho_{\textup{tot}}^2}(v,y)\coloneqq\rho^2_I(v,u)+\rho^2_{-D}(y,z)=\smash{\eta^2_{\textup{gap}}}(v,y)\,. 
		\end{align} 
		
		\item[(ii)]  For every $v\in U_{\ell}^p(\Omega)$ with $v=u_D$ a.e.\ in $\Gamma_D$ and $y\in  Z_{\ell}^p(\Omega)$ with $I_{\{g\}}^{\Gamma_N}(yn) =0$, we have that 
		\begin{align}\label{eq:representation} 
			\begin{aligned}
				\smash{\eta^2_{\textup{gap}}}(v,y)
				&=\int_{\Omega}{\big\{\phi(\cdot,\nabla v)-\nabla v:  y+\phi^*(\cdot,y)\big\}\,\mathrm{d}x}\\&\quad +\int_{\Omega}{\big\{\psi(\cdot, v)- v\cdot\mathrm{div}\,y+\psi^*(\cdot,\mathrm{div}\,y)\big\}\,\mathrm{d}x}\,.
			\end{aligned} 
		\end{align} 
	\end{itemize}
\end{theorem}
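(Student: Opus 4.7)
The plan is to prove the two statements separately. Part (i) follows almost immediately from unpacking the definitions of the optimal convexity measures together with the hypothesized strong duality, while part (ii) amounts to an explicit expansion of $I(v)-D(y)$ using the integration-by-parts formula \eqref{eq:pi_cont} and the boundary-term identity \eqref{eq:boundary} already worked out in the proof of Proposition \ref{prop:duality}.

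For part (i), I will appeal to Definition \ref{def:convexity_measure_optimal}. Since $u\in U_{\ell}^p(\Omega)$ is a minimizer of $I$ and $z\in Z_{\ell}^p(\Omega)$ is a maximizer of $D$, i.e., a minimizer of $-D$, for every $v\in U_{\ell}^p(\Omega)$ and $y\in Z_{\ell}^p(\Omega)$ one has $\rho_I^2(v,u)=I(v)-I(u)$ and $\rho_{-D}^2(y,z)=-D(y)+D(z)$. Summing these and invoking the strong duality hypothesis \eqref{eq:strong_duality}, the contributions $-I(u)+D(z)$ cancel, so
\begin{align*}
\rho_{\textup{tot}}^2(v,y)=I(v)-I(u)+D(z)-D(y)=I(v)-D(y)=\eta_{\textup{gap}}^2(v,y),
\end{align*}
which is exactly \eqref{eq:representation.1}.

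For part (ii), I fix $v\in U_{\ell}^p(\Omega)$ with $v=u_D$ a.e.\ on $\Gamma_D$ and $y\in Z_{\ell}^p(\Omega)$ with $\smash{I_{\{g\}}^{\Gamma_N}}(yn)=0$, so that the Dirichlet and Neumann indicator contributions in \eqref{primal} and \eqref{dual} vanish. Subtracting then gives
\begin{align*}
\eta_{\textup{gap}}^2(v,y)&=\int_\Omega\{\phi(\cdot,\nabla v)+\phi^*(\cdot,y)\}\,\mathrm{d}x+\int_\Omega\{\psi(\cdot,v)+\psi^*(\cdot,\textup{div}\,y)\}\,\mathrm{d}x\\
&\quad-\langle g,v\rangle_{\Gamma_N}-\langle yn,u_D\rangle_{\partial\Omega}+\langle g,u_D\rangle_{\Gamma_N}.
\end{align*}
The task is to turn the three remaining boundary pairings into the volume pairings $(\nabla v,y)_\Omega$ and $(v,\textup{div}\,y)_\Omega$. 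First, the identity \eqref{eq:boundary} collapses the three boundary terms into $-\langle yn,v\rangle_{\partial\Omega}$. Second, the continuous integration-by-parts formula \eqref{eq:pi_cont} rewrites this as $-(\nabla v,y)_\Omega-(v,\textup{div}\,y)_\Omega$. Distributing these two volume integrals onto the $\phi,\phi^*$ and $\psi,\psi^*$ integrands, respectively, yields the claimed representation \eqref{eq:representation}.

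Neither step involves real difficulty: the only subtle point is bookkeeping of the Dirichlet and Neumann boundary pieces produced by the indicator functionals $\smash{I_{\{u_D\}}^{\Gamma_D}}$ and $\smash{I_{\{g\}}^{\Gamma_N}}$, which is precisely why the admissibility hypotheses on $v$ and $y$ are imposed in (ii); once they are in place, the algebra is the same as in the derivation of the weak duality relation in the proof of Proposition \ref{prop:duality}, but carried out as an equality rather than an inequality.
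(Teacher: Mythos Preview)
Your proof is correct and follows essentially the same approach as the paper: part (i) is obtained directly from Definition~\ref{def:convexity_measure_optimal} together with the strong duality relation \eqref{eq:strong_duality}, and part (ii) is obtained by expanding \eqref{primal} and \eqref{dual}, collapsing the boundary terms via \eqref{eq:boundary}, and then applying the integration-by-parts formula \eqref{eq:pi_cont}. The paper's proof is simply a terser statement of exactly these steps.
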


\begin{proof}\let\qed\relax 
	\textit{ad (i).} Due to  \eqref{eq:strong_duality},  Definition \ref{def:convexity_measure_optimal}, and \eqref{def:eta}, 
	for every $v\in U_{\ell}^p(\Omega)$ and $y\in Z_{\ell}^p(\Omega)$, we have that 
	\begin{align*}
	\smash{\rho_{\textup{tot}}^2}(v,y)=	\rho^2_I(v,u)+\rho^2_{-D}(y,z)=I(v)-I(u)+D(z)-D(y)=\smash{\eta^2_{\textup{gap}}}(v,y)\,.
	\end{align*}
	
	 \textit{ad (ii).} The identitiy \eqref{eq:representation} follows from  \eqref{primal}, \eqref{dual}, and the integration-by-parts~\mbox{formula}~\eqref{eq:pi_cont} together with \eqref{eq:boundary}
\end{proof}

\begin{remark}
	\begin{itemize}[noitemsep,topsep=2pt,leftmargin=!,labelwidth=\widthof{(ii)}]
		\item[(i)] By the Fenchel--Young inequality (\textit{cf}.\ \eqref{eq:fenchel_young_ineq}), the integrands in the representation \eqref{eq:representation} are non-negative, \textit{i.e.}, for every $v\in U_{\ell}^p(\Omega)$ and $y\in  Z_{\ell}^p(\Omega)$, we~have~that
		\begin{align*}
			\begin{aligned}
		\phi(\cdot,\nabla v)-\nabla v:  y+\phi^*(\cdot,y)&\ge 0&&\quad\text{ a.e.\ in }\Omega\,,\\
		\psi(\cdot, v)- v\cdot\mathrm{div}\,y+\psi^*(\cdot,\mathrm{div}\,y)&\ge 0&&\quad \text{ a.e.\ in }\Omega\,.
	\end{aligned}
		\end{align*}
		 and, thus, are suitable as local refinement indicators in an adaptive mesh refinement~procedure. Apart from that, due to Proposition \ref{prop:duality}(ii), for every $v\in U_{\ell}^p(\Omega)$ with $v=u_D$ a.e.\ in $\Gamma_D$ and $y\in  Z_{\ell}^p(\Omega)$ with $I_{\{g\}}^{\Gamma_N}(yn) =0$, 
		 we have that
		 \begin{align*}
		 	\begin{aligned}
		 	\phi(\cdot,\nabla v)-\nabla v:  y+\phi^*(\cdot,y)&= 0&&\quad\text{ a.e.\ in }\Omega\,,\\
		 	\psi(\cdot, v)- v\cdot\mathrm{div}\,y+\psi^*(\cdot,\mathrm{div}\,y)&=0&&\quad \text{ a.e.\ in }\Omega\,,
		 		\end{aligned}
		 \end{align*}
		 if and only if 
		 \begin{align*}
		 	I(v)=D(y)\,,
		 \end{align*}
		 \textit{i.e.}, if $v\in U_{\ell}^p(\Omega)$ is minimal for \eqref{primal} and $y\in Z_{\ell}^p(\Omega)$ is maximal for \eqref{dual}.~In~other~words, given the strong duality relation \eqref{eq:strong_duality}, the primal-dual gap estimator  measures how well approximations $v\in U_{\ell}^p(\Omega)$ and $y\in  Z_{\ell}^p(\Omega)$ satisfy the convex optimality~relations~\eqref{eq:optimality_relations.1},~\eqref{eq:optimality_relations.2}. This is an advantage compared to residual type a posteriori error estimators which traditionally measure how well an approximation $v\in U_{\ell}^p(\Omega)$  satisfies the strong formulation of optimality conditions and, consequently, employ classical derivatives of the energy densities. The convex optimality relations \eqref{eq:optimality_relations.1},~\eqref{eq:optimality_relations.2}, however, do not require any regularity~of~the~energy~\mbox{densities}. This makes the primal-dual gap estimator a predestined a posteriori~error~estimator, in~particular, for non-differentiable convex minimization problems.
		
		\item[(ii)] Due to Remark \ref{rem:convexity_measure_optimal}, from Theorem \ref{thm:main}(i), for every $v\in  U_{\ell}^p(\Omega)$ and~$y\in  Z_{\ell}^p(\Omega)$,~it~\mbox{follows}~that
		\begin{align*}
			\sigma_I^2(v,u)+\sigma_{-D}^2(y,z)\leq \smash{\eta^2_{\textup{gap}}}(v,y)\,.
		\end{align*} 
	\end{itemize}
\end{remark}

%
%

For the a posteriori error estimators \eqref{eq:representation}  for being numerically practicable, it is necessary to have a 
computationally cheap way to obtain sufficiently accurate approximations of the dual solution 
 and/or of the primal solution,  respectively.
In Section~\ref{sec:discrete_duality}, resorting to (discrete) convex duality relations between a non-conforming Crouzeix--Raviart approximation of the primal problem and a  Raviart--Thomas approximation of the dual problem, we arrive at discrete reconstruction formulas, called \textit{generalized Marini formula}~(\textit{cf}.~\cite{Mar85,Bar21}).

\subsection{Discrete convex minimization problem and discrete convex duality}\label{sec:discrete_duality}\enlargethispage{10mm}

\hspace{5mm}\textit{Discrete primal problem.} 	Let $\phi_h\colon \Omega\times\mathbb{R}^{\ell\times d}\to \mathbb{R}\cup\{+\infty\}$  and $\psi_h\colon \Omega\times\mathbb{R}^{\ell}\to \mathbb{R}\cup\{+\infty\}$ be approximations of $ \phi\colon \Omega\times\mathbb{R}^{\ell \times d}\to \mathbb{R}\cup\{+\infty\}$  and $ \psi\colon \Omega\times\mathbb{R}^{\ell} \to \mathbb{R}\cup\{+\infty\}$, respectively, such that for a.e.\ $x\in \Omega$, the functions $\phi_h(x,\cdot)\colon  \mathbb{R}^{\ell \times d}\to \mathbb{R}\cup\{+\infty\}$  and $\psi_h(x,\cdot)\colon \mathbb{R}^{\ell}\to \mathbb{R}\cup\{+\infty\}$ are proper, convex,  and lower semi-continuous and
 $\phi_h(\cdot,r),\psi_h(\cdot,s)\in \mathcal{L}^0(\mathcal{T}_h)$ for all $r\in \mathbb{R}^d$ and $s\in\mathbb{R}$. 
Moreover, let $g_h\in (\mathcal{L}^0(\mathcal{S}_h^{\Gamma_N}))^{\ell }$ and $u_D^h\in (\mathcal{L}^0(\mathcal{S}_h^{\Gamma_D}))^{\ell}$ be approximations of the Neumann boundary data $g\in W^{-\smash{\frac{1}{p'}},p'}(\Gamma_N;\mathbb{R}^{\ell})$ and the Dirichlet boundary~data~${u_D\in W^{\smash{1-\frac{1}{p}},p}(\partial\Omega;\mathbb{R}^{\ell})}$,~respectively.
We examine the minimization of the functional $I_h^{cr}\colon U^{cr}_{\ell}(\mathcal{T}_h)\to \mathbb{R}\cup\{+\infty\}$,~for~every~$v_h\in U^{cr}_{\ell}(\mathcal{T}_h)$ defined by
\begin{align}
	I_h^{cr}(v_h)\coloneqq \int_{\Omega}{\phi_h(\cdot,\nabla_h v_h)\,\textup{d}x}+\int_{\Omega}{\psi_h(\cdot,\Pi_h v_h)\,\textup{d}x}-(g_h,\pi_h v_h)_{\Gamma_N}+I_{\{u_D^h\}}^{\Gamma_D}(\pi_hv_h)\,,\label{discrete_primal}
\end{align}
where $I_{\{u_D^h\}}^{\Gamma_D}\colon (\mathcal{L}^0(\mathcal{S}_h^{\Gamma_D}))^{\ell}\to \mathbb{R}\cup\{+\infty\}$ for every $\widehat{v}_h\in (\mathcal{L}^0(\mathcal{S}_h^{\Gamma_D}))^{\ell}$ is defined by
\begin{align*}
	I_{\{u_D^h\}}^{\Gamma_D}(\widehat{v}_h)\coloneqq \begin{cases}
		0&\text{ if }\widehat{v}_h=u_D^h\text{ a.e.\ on }\Gamma_D\,,\\
		+\infty&\text{ else}\,.
	\end{cases}
\end{align*}
In what follows, we refer to the minimization of $I_h^{cr}\colon U^{cr}_{\ell}(\mathcal{T}_h)\to \mathbb{R}\cup\{+\infty\}$ as the \textit{discrete primal problem}.

\textit{Discrete dual problem.} A corresponding \textit{discrete (Fenchel) dual problem} to the minimization~of~\eqref{discrete_primal} 
is found to be given via the maximization of the~functional~${D_h^{rt}\colon \hspace*{-0.1em}Z^{rt}_{\ell}(\mathcal{T}_h)\hspace*{-0.1em}\to\hspace*{-0.1em} \mathbb{R}\hspace*{-0.1em}\cup\hspace*{-0.1em}\{-\infty\}}$, for every $y_h\in Z^{rt}_{\ell}(\mathcal{T}_h)$ defined by
\begin{align}
	D_h^{rt}(y_h)\coloneqq-\int_{\Omega}{\phi_h^*(\cdot,\Pi_h y_h)\,\textup{d}x}-\int_{\Omega}{\psi_h^*(\cdot,\textup{div}\,y_h)\,\textup{d}x}+(y_h n,u_D^h)_{\Gamma_D}-I_{\{g_h\}}^{\Gamma_N}(y_hn)\,,\label{discrete_dual}
\end{align} 
where $I_{\{g_h\}}^{\Gamma_N}\colon (\mathcal{L}^0(\mathcal{S}_h^{\Gamma_N}))^{\ell}\to \mathbb{R}\cup\{+\infty\}$ for every $\widehat{y}_h\in (\mathcal{L}^0(\mathcal{S}_h^{\Gamma_N}))^{\ell}$ is defined by
\begin{align*}
	I_{\{g_h\}}^{\Gamma_N}(\widehat{y}_h)\coloneqq \begin{cases}
		0&\text{ if }\widehat{y}_h =g_h\text{ a.e.\ on }\Gamma_N\,,\\
		+\infty&\text{ else}\,.
	\end{cases}
\end{align*}

We will always assume that $\phi_h\colon \Omega\times\mathbb{R}^{\ell \times d}\to \mathbb{R}\cup\{+\infty\}$ and $\psi_h\colon \Omega\times \mathbb{R}^{\ell}\to \mathbb{R}\cup\{+\infty\}$ are such that \eqref{discrete_primal} admits at least one minimizer $u_h^{cr}\in U^{cr}_{\ell}(\mathcal{T}_h)$, called \textit{discrete primal solution},
 and that \eqref{discrete_dual} admits at least one maximizer  $z_h^{rt}\in Z^{rt}_{\ell}(\mathcal{T}_h)$, called  \textit{discrete dual solution}.
The derivation of the discrete dual problem \eqref{discrete_dual} can be found in the proof of the following proposition that also establishes the equivalence of a discrete strong duality relation and discrete convex optimality relations.

\begin{proposition}[Strong duality and convex duality relations]\label{prop:discrete_duality} The following statements apply:
	\begin{itemize}[noitemsep,topsep=2pt,leftmargin=!,labelwidth=\widthof{(ii)}]
		\item[(i)]  A \textup{discrete weak duality relation} applies, \textit{i.e.},
		\begin{align}
			\inf_{v_h\in U^{cr}_{\ell}(\mathcal{T}_h)}{I_h^{cr}(v_h)}
			\ge \sup_{y_h\in Z^{rt}_{\ell}(\mathcal{T}_h)}{D_h^{rt}(y_h)}\,.\label{eq:discrete_weak_duality}
		\end{align}
		\item[(ii)]  A  \textup{discrete strong duality relation} applies, \textit{i.e.},
		\begin{align}
			I_h^{cr}(u_h^{cr}) = D_h^{rt}(z_h^{rt})\,,\label{eq:discrete_strong_duality}
		\end{align}
		if and only if  \textup{discrete convex optimality relations} apply, \textit{i.e.},
		\begin{alignat}{2} 
					\Pi_h z_h^{rt}: \nabla_h u_h^{cr}&=\phi^*_h(\cdot,\Pi_hz_h^{rt})+\phi_h(\cdot,\nabla_h u_h^{cr})&&\quad\text{ a.e. in }\Omega\,,\label{eq:discrete_optimality_relations.1}\\
					\textup{div}\,z_h^{rt}\cdot\Pi_hu_h^{cr}& =\psi_h^*(\cdot,	\textup{div}\,z_h^{rt})+\psi_h(\cdot,\Pi_hu_h^{cr})&&\quad\text{ a.e. in }\Omega\,.
					\label{eq:discrete_optimality_relations.2}
		\end{alignat}
	\end{itemize}
\end{proposition}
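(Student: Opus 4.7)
The plan is to mirror closely the proof of the continuous Proposition \ref{prop:duality}, substituting the discrete integration-by-parts formula \eqref{eq:pi} for its continuous counterpart \eqref{eq:pi_cont}. The three non-conforming modifications underlying $I_h^{cr}$ and $D_h^{rt}$ (element-wise constant densities, $L^2$-projection in the lower-order term, and element-wise gradient) are precisely tailored so that each ingredient in the continuous argument has a direct discrete analogue.

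For part (i), I would first apply the Fenchel--Young inequality \eqref{eq:fenchel_young_ineq} pointwise to the element-wise constant pairs $(\Pi_h y_h, \nabla_h v_h)$ for $\phi_h$ and $(\textup{div}\,y_h, \Pi_h v_h)$ for $\psi_h$, both yielding a.e.\ inequalities on $\Omega$. Integrating over $\Omega$ and summing, I would then invoke the discrete integration-by-parts formula \eqref{eq:pi} to rewrite $(\nabla_h v_h, \Pi_h y_h)_\Omega + (\Pi_h v_h, \textup{div}\,y_h)_\Omega$ as the boundary term $(\pi_h v_h, y_h n)_{\partial\Omega}$. When both indicator functionals $I_{\{u_D^h\}}^{\Gamma_D}(\pi_h v_h)$ and $I_{\{g_h\}}^{\Gamma_N}(y_h n)$ vanish, a splitting analogous to \eqref{eq:boundary} expresses this boundary term as $(u_D^h, y_h n)_{\Gamma_D} + (g_h, \pi_h v_h)_{\Gamma_N}$, which matches precisely the boundary contributions appearing in $I_h^{cr}(v_h)$ and $D_h^{rt}(y_h)$, delivering $I_h^{cr}(v_h) \ge D_h^{rt}(y_h)$. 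The residual cases in which one of the indicator functionals takes the value $+\infty$ are immediate.

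For part (ii), the discrete strong duality relation \eqref{eq:discrete_strong_duality}, combined with the chain of inequalities from (i) applied at $(u_h^{cr}, z_h^{rt})$, forces the two integrated Fenchel--Young deficits
\begin{align*}
\phi_h^*(\cdot, \Pi_h z_h^{rt}) - \Pi_h z_h^{rt} : \nabla_h u_h^{cr} + \phi_h(\cdot, \nabla_h u_h^{cr})\,, \\
\psi_h^*(\cdot, \textup{div}\,z_h^{rt}) - \textup{div}\,z_h^{rt} \cdot \Pi_h u_h^{cr} + \psi_h(\cdot, \Pi_h u_h^{cr})\,,
\end{align*}
to integrate to zero. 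Since both integrands are pointwise non-negative by \eqref{eq:fenchel_young_ineq}, each must vanish a.e.\ in $\Omega$, which is exactly \eqref{eq:discrete_optimality_relations.1}, \eqref{eq:discrete_optimality_relations.2}. The converse direction is obtained by traversing the chain of equalities backwards, again using \eqref{eq:pi} and the boundary splitting to conclude $I_h^{cr}(u_h^{cr}) = D_h^{rt}(z_h^{rt})$.

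The main technical point, though more bookkeeping than genuine difficulty, is to ensure that the $L^2$-projections are inserted in exactly the right slots: $\Pi_h y_h$ rather than $y_h$ must be fed into $\phi_h^*$ so that the Fenchel--Young evaluation involves only element-wise constant quantities (and thus remains a sharp pointwise relation), and $\Pi_h v_h$ rather than $v_h$ must appear as the second argument of $\psi_h$. This placement corresponds precisely to the form of the discrete integration-by-parts formula \eqref{eq:pi}, which is what makes the entire discrete duality theory consistent and is the structural reason why strong duality can be equivalent to the pointwise discrete convex optimality relations.
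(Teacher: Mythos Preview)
Your proposal is correct and follows essentially the same approach as the paper's own proof: pointwise Fenchel--Young inequalities for the pairs $(\Pi_h y_h,\nabla_h v_h)$ and $(\textup{div}\,y_h,\Pi_h v_h)$, combined via the discrete integration-by-parts formula \eqref{eq:pi} and the boundary splitting, yield weak duality, and strong duality is then equivalent to the vanishing of the two non-negative Fenchel--Young deficits a.e.\ in $\Omega$. Your additional remark on the precise placement of the $\Pi_h$-projections is a helpful structural observation that the paper leaves implicit.
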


\begin{proof}
	
	\textit{ad (i).} For every $v_h\hspace*{-0.1em}\in\hspace*{-0.1em} U^{cr}_{\ell}(\mathcal{T}_h)$ and $y_h\hspace*{-0.1em}\in \hspace*{-0.1em} Z^{rt}_{\ell}(\mathcal{T}_h)$, by the Fenchel--Young~\mbox{inequality}~(\textit{cf}.~\eqref{eq:fenchel_young_ineq}), we have that 
	\begin{align}\label{prop:discrete_duality.1}
		\begin{aligned}
		\phi_h(\cdot,\nabla_h v_h)&\geq 	\Pi_h y_h: \nabla_h v_h- \phi^*_h(\cdot,\Pi_hy_h)&&\quad\text{ a.e. in }\Omega\,,\\
		\psi_h(\cdot,\Pi_hv_h)&\geq 	\textup{div}\,y_h\cdot\Pi_hv_h- \psi_h^*(\cdot,	\textup{div}\,y_h)&&\quad\text{ a.e. in }\Omega\,.
		\end{aligned}
	\end{align}
	\newpage 
	\hspace*{-5.5mm}Adding the two inequalities in \eqref{prop:discrete_duality.1}, using the discrete integration-by-parts formula \eqref{eq:pi}, for every $v_h\hspace*{-0.15em}\in \hspace*{-0.15em} U^{cr}_{\ell}(\mathcal{T}_h)$ with $\pi_hv_h\hspace*{-0.15em}=\hspace*{-0.15em}u^h_D$ a.e.\ on $\Gamma_D$ and $y_h\hspace*{-0.15em}\in\hspace*{-0.15em} Z^{rt}_{\ell}(\mathcal{T}_h)$ with $y_hn \hspace*{-0.15em}=\hspace*{-0.15em}g_h$~a.e.~on~$\Gamma_N$,~we~find~that 
	\begin{align*}
	I_h^{cr}(v_h)&=\int_{\Omega}{\phi_h(\cdot,\nabla_h v_h)\,\mathrm{d}x}+\int_{\Omega}{\psi_h(\cdot,\Pi_hv_h)\,\mathrm{d}x}-(g_h,\pi_h v_h)_{\Gamma_N}
	\\&\ge 
	-	\int_{\Omega}{\phi^*_h(\cdot,\Pi_hy_h)\,\mathrm{d}x}-\int_{\Omega}{\psi_h^*(\cdot,	\textup{div}\,y_h)\,\mathrm{d}x}+(y_h n,u_D^h)_{\Gamma_D}=D_h^{rt}(y_h)\,.
	\end{align*}
	On the other hand, for every $v_h\hspace*{-0.15em}\in\hspace*{-0.15em} U^{cr}_{\ell}(\mathcal{T}_h)$ such that $\smash{I_{\{u_D^h\}}^{\Gamma_D}(\pi_hv_h)}\hspace*{-0.15em}=\hspace*{-0.15em}+\infty$, we~have~that~${I_h^{cr}(v_h)\hspace*{-0.15em}=\hspace*{-0.15em}+\infty}$, and 
	for every $y_h\in Z^{rt}_{\ell}(\mathcal{T}_h)$ such that $\smash{I_{\{g_h\}}^{\Gamma_N}(y_h n) }=+\infty$, we have that
	$D_h^{rt}(z_h^{rt})=-\infty$.
	
	\textit{ad (ii).} The discrete strong duality relation \eqref{eq:discrete_strong_duality} is equivalent to 
	\begin{align*}
		&\int_{\Omega}{\big\{\phi^*_h(\cdot,\Pi_h z_h^{rt})-\Pi_h z_h^{rt}: \nabla_h u_h^{cr}+\phi_h(\cdot,\nabla_h u_h^{cr})\big\}\,\mathrm{d}x}\\&\quad+\int_{\Omega}{\big\{\psi^*_h(\cdot,	\textup{div}\,z_h^{rt})-\textup{div}\,z_h^{rt}\cdot\Pi_h u_h^{cr}+\psi_h(\cdot, \Pi_h u_h^{cr})\big\}\,\mathrm{d}x}=0\,.
	\end{align*}
	Therefore, due to \eqref{prop:discrete_duality.1}, we find that the discrete strong duality relation \eqref{eq:discrete_strong_duality} is equivalent to the discrete convex optimality relations \eqref{eq:discrete_optimality_relations.1}, \eqref{eq:discrete_optimality_relations.2}.
\end{proof}

\begin{remark}[Equivalent discrete convex optimality relations]\label{rem:discrete_duality}
	\begin{itemize}[noitemsep,topsep=2pt,leftmargin=!,labelwidth=\widthof{(iii)}]
		\item[(i)] If $\phi_h(x,\cdot)\in C^1(\mathbb{R}^{\ell \times d})$ for a.e.\ $x\in \Omega$,  by the Fenchel--Young identity (\textit{cf}.\ \eqref{eq:fenchel_young_id}), the discrete convex optimality relation \eqref{eq:discrete_optimality_relations.1} is equivalent to 
		\begin{align}
				\Pi_h z_h^{rt}=D\phi_h(\cdot,\nabla_h u_h^{cr})\quad\text{ a.e.\ in }\Omega\,;\label{eq:discrete_optimality_relations.3}
		\end{align}
		\item[(ii)]  If 
		$\phi^*_h(x,\cdot)\in  C^1(\mathbb{R}^{\ell \times d})$ for a.e.\ $x\in \Omega$,   by the Fenchel--Young identity (\textit{cf}.\  \eqref{eq:fenchel_young_id}),  the discrete convex optimality relation \eqref{eq:discrete_optimality_relations.2} is equivalent to 
		\begin{align}
			\nabla_h u_h^{cr}=D\phi^*_h(\cdot,\Pi_h z_h^{rt})\quad\text{ a.e.\ in }\Omega\,;\label{eq:discrete_optimality_relations.4}
		\end{align}
		\item[(iii)] If  
		$\psi_h(x,\cdot)\in C^1(\mathbb{R}^{\ell })$ for a.e.\ $x\in \Omega$, 
		 by the Fenchel--Young identity (\textit{cf}.\ \eqref{eq:fenchel_young_id}), the discrete convex optimality relation  \eqref{eq:discrete_optimality_relations.2} is equivalent to 
		\begin{align}
			\textup{div}\,z_h^{rt}=D\psi_h(\cdot,\Pi_hu_h^{cr})\quad\text{ a.e.\ in }\Omega\,;\label{eq:discrete_optimality_relations.5}
		\end{align}
		 \item[(iv)]  If  $\psi_h^*(x,\cdot)\in C^1(\mathbb{R}^{\ell})$ for a.e.\ $x\in \Omega$,  by the Fenchel--Young identity (\textit{cf}.\  \eqref{eq:fenchel_young_id}), the discrete convex optimality relation \eqref{eq:discrete_optimality_relations.2} is equivalent to 
		\begin{align}
			\Pi_hu_h^{cr}=D\psi_h^*(\cdot,\textup{div}\,z_h^{rt})\quad\text{ a.e.\ in }\Omega\,.\label{eq:discrete_optimality_relations.6}
		\end{align}
	\end{itemize}
\end{remark}

The relations \eqref{eq:discrete_optimality_relations.3}--\eqref{eq:discrete_optimality_relations.6} motivate the following discrete reconstruction formulas for a discrete dual solution $z_h^{rt}\in Z^{rt}_{\ell}(\mathcal{T}_h)$ from a discrete primal solution  $u_h^{cr}\in U^{cr}_{\ell}(\mathcal{T}_h)$~and~vice~versa, called \textit{generalized Marini formulas} (\textit{cf}.\  \cite{Mar85,Bar21}).

\begin{proposition}[Generalized Marini formulas]\label{prop:gen_marini}
	The following statements apply:
	\begin{itemize}[noitemsep,topsep=2pt,leftmargin=!,labelwidth=\widthof{(ii)}]
		\item[(i)] If $\phi_h(x,\cdot)\in  C^1(\mathbb{R}^{\ell \times d})$ and $\psi_h(x,\cdot)\in C^1(\mathbb{R}^{\ell})$ for a.e.\ $x\in \Omega$, then, given a minimizer $u_h^{cr}\in U^{cr}_{\ell}(\mathcal{T}_h)$ of \eqref{discrete_primal},
		a maximizer $z_h^{rt}\in Z^{rt}_{\ell}(\mathcal{T}_h)$ of \eqref{discrete_dual} is given by
		\begin{align}
			z_h^{rt}= D\phi_h(\cdot,\nabla_h u_h^{cr})+\frac{D\psi_h(\cdot, \Pi_hu_h^{cr})}{d}\otimes(\textup{id}_{\mathbb{R}^d}-\Pi_h\textup{id}_{\mathbb{R}^d})\quad\text{ a.e.\ in }\Omega\,,\label{eq:reconstruction_formula.1}
		\end{align}
		and a discrete strong duality relation, \textit{i.e.}, \eqref{eq:discrete_strong_duality},  applies.
		\item[(ii)] If $\phi^*_h(x,\cdot)\in  C^1(\mathbb{R}^{\ell \times d})$ and $\psi_h^*(x,\cdot)\in C^1(\mathbb{R}^{\ell})$ for a.e.\ $x\in \Omega$, then, given a maximizer $z_h^{rt}\in Z^{rt}_{\ell}(\mathcal{T}_h)$ of \eqref{discrete_dual}, a minimizer $u_h^{cr}\in U^{cr}_{\ell}(\mathcal{T}_h)$ of \eqref{discrete_primal} is given by
		\begin{align}
			u_h^{cr} = D\psi_h^*(\cdot,\textup{div}\,z_h^{rt})+ D\phi^*_h(\cdot,\Pi_h z_h^{rt})(\textup{id}_{\mathbb{R}^d}-\Pi_h\textup{id}_{\mathbb{R}^d})
			\quad\text{ a.e.\ in }\Omega\,,\label{eq:reconstruction_formula.2}
		\end{align}
		and a discrete strong duality relation, \textit{i.e.}, \eqref{eq:discrete_strong_duality}, applies.
	\end{itemize}        
\end{proposition}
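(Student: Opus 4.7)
\medskip

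\textit{Plan.} I will focus on part (i); part (ii) is then proved by an entirely analogous argument with the roles of primal and dual interchanged. Starting from a discrete primal minimizer $u_h^{cr}\in U^{cr}_{\ell}(\mathcal{T}_h)$, I \emph{define} the tensor field $z_h^{rt}$ element-wise by the right-hand side of \eqref{eq:reconstruction_formula.1}. On each $T\in\mathcal{T}_h$ this gives a piecewise-affine object whose $i$-th row has the shape $a_{i,T}+b_{i,T}\frac{1}{d}(x-x_T)$ with constants $a_{i,T},b_{i,T}$ depending only on $T$; in particular, each row has a constant normal component on every face $S\subset\partial T$. Two direct calculations will then give the two identities I need: applying $\Pi_h$ annihilates $(\mathrm{id}_{\mathbb{R}^d}-\Pi_h\mathrm{id}_{\mathbb{R}^d})$ and hence $\Pi_h z_h^{rt}=D\phi_h(\cdot,\nabla_h u_h^{cr})$, while applying the row-wise divergence kills the first (element-wise constant) term and exploits $\mathrm{div}(x-x_T)=d$ in the second, yielding $\mathrm{div}\,z_h^{rt}=D\psi_h(\cdot,\Pi_h u_h^{cr})$ element-wise.

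The principal obstacle is to promote this element-wise construction to a genuine element of $Z^{rt}_{\ell,N}(\mathcal{T}_h)$, i.e.\ to establish normal continuity across inner sides and the Neumann boundary identity $z_h^{rt}n=g_h$ on $\Gamma_N$. I will argue this by testing with Crouzeix--Raviart functions. Pick any $v_h\in U^{cr}_{\ell,D}(\mathcal{T}_h)$ and perform an element-wise integration by parts against $z_h^{rt}$. Because $z_h^{rt}|_T n_T$ is constant on each $S\subset\partial T$ and $\pi_h\jump{v_h}_S=0$ on every $S\in\mathcal{S}_h^i$, the interface contributions collapse to $\sum_{S\in\mathcal{S}_h^i}\int_S \pi_h v_h\cdot\jump{z_h^{rt}n}_S\,\mathrm{d}s$ plus a boundary term $\sum_{S\in\mathcal{S}_h^{\Gamma_N}}\int_S \pi_h v_h\cdot z_h^{rt}n\,\mathrm{d}s$. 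Substituting the two identities derived above, the volume part equals
\begin{align*}
(D\phi_h(\cdot,\nabla_h u_h^{cr}),\nabla_h v_h)_{\Omega}+(D\psi_h(\cdot,\Pi_h u_h^{cr}),\Pi_h v_h)_{\Omega},
\end{align*}
which by the Euler--Lagrange equation for the minimizer $u_h^{cr}$ of \eqref{discrete_primal} equals $(g_h,\pi_h v_h)_{\Gamma_N}$. Consequently
\begin{align*}
\sum_{S\in\mathcal{S}_h^i}\int_S \pi_h v_h\cdot\jump{z_h^{rt}n}_S\,\mathrm{d}s+\sum_{S\in\mathcal{S}_h^{\Gamma_N}}\int_S \pi_h v_h\cdot(z_h^{rt}n-g_h)\,\mathrm{d}s=0
\end{align*}
for every $v_h\in U^{cr}_{\ell,D}(\mathcal{T}_h)$. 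Since the side averages $\pi_h v_h|_S$ for $S\in\mathcal{S}_h\setminus\mathcal{S}_h^{\Gamma_D}$ exhaust all degrees of freedom of $U^{cr}_{\ell,D}(\mathcal{T}_h)$, I can localize this identity to a single side and conclude both $\jump{z_h^{rt}n}_S=0$ on every $S\in\mathcal{S}_h^i$ and $z_h^{rt}n=g_h$ on every $S\in\mathcal{S}_h^{\Gamma_N}$. Hence $z_h^{rt}\in Z^{rt}_{\ell,N}(\mathcal{T}_h)$ with $I_{\{g_h\}}^{\Gamma_N}(z_h^{rt}n)=0$.

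To finish, the element-wise identities $\Pi_h z_h^{rt}=D\phi_h(\cdot,\nabla_h u_h^{cr})$ and $\mathrm{div}\,z_h^{rt}=D\psi_h(\cdot,\Pi_h u_h^{cr})$ are precisely \eqref{eq:discrete_optimality_relations.3} and \eqref{eq:discrete_optimality_relations.5}, which under the $C^1$ hypotheses are equivalent via Remark~\ref{rem:discrete_duality} to the discrete convex optimality relations \eqref{eq:discrete_optimality_relations.1}, \eqref{eq:discrete_optimality_relations.2}. Proposition~\ref{prop:discrete_duality}(ii) then yields $I_h^{cr}(u_h^{cr})=D_h^{rt}(z_h^{rt})$, so discrete strong duality holds and, by the discrete weak duality \eqref{eq:discrete_weak_duality}, $z_h^{rt}$ maximizes $D_h^{rt}$. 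For part (ii), I start from a discrete dual maximizer $z_h^{rt}$, define $u_h^{cr}$ by \eqref{eq:reconstruction_formula.2}, and repeat the argument dually: one checks that $\pi_h u_h^{cr}=D\psi_h^*(\cdot,\mathrm{div}\,z_h^{rt})$-type averages are well-defined on sides so that $u_h^{cr}\in U^{cr}_{\ell}(\mathcal{T}_h)$ with $\pi_h u_h^{cr}=u_D^h$ on $\Gamma_D$, uses the Euler--Lagrange equation associated with $z_h^{rt}$ tested against $y_h\in Z^{rt}_{\ell,N}(\mathcal{T}_h)$ together with element-wise integration by parts and the jump vanishing $\jump{y_h n}_S=0$ to deduce the desired CR-conformity, and closes via Remark~\ref{rem:discrete_duality}(ii),(iv) and Proposition~\ref{prop:discrete_duality}(ii).
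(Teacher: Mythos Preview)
Your argument is correct. The difference from the paper lies in how you establish that the element-wise defined $z_h^{rt}$ actually belongs to $Z^{rt}_{\ell}(\mathcal{T}_h)$ (and satisfies $z_h^{rt}n=g_h$ on $\Gamma_N$). The paper does not perform a side-by-side localization; instead it invokes the surjectivity of $\mathrm{div}\colon Z^{rt}_{\ell}(\mathcal{T}_h)\to(\mathcal{L}^0(\mathcal{T}_h))^{\ell}$ to pick an auxiliary $y_h\in Z^{rt}_{\ell}(\mathcal{T}_h)$ with $\mathrm{div}\,y_h=D\psi_h(\cdot,\Pi_h u_h^{cr})$, shows via the Euler--Lagrange equation and the discrete integration-by-parts formula that $z_h^{rt}-y_h\in(\mathcal{L}^0(\mathcal{T}_h))^{\ell\times d}$ is $L^2$-orthogonal to $\nabla_h(U^{cr}_{\ell,0}(\mathcal{T}_h))$, and then appeals to the discrete Helmholtz decomposition \eqref{eq:decomposition.2} (in its complementary form) to conclude $z_h^{rt}-y_h\in\mathrm{ker}(\mathrm{div}|_{Z^{rt}_{\ell}(\mathcal{T}_h)})$, whence $z_h^{rt}\in Z^{rt}_{\ell}(\mathcal{T}_h)$. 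Part (ii) is handled dually via \eqref{eq:decomposition.1}. Your route is more elementary in that it avoids the Helmholtz machinery and the surjectivity of the divergence, and it delivers the Neumann identity $z_h^{rt}n=g_h$ in the same stroke; the paper's route, by contrast, packages the orthogonality structure into a single citation of \eqref{eq:decomposition.2}--\eqref{eq:decomposition.1} and is therefore shorter once those lemmas are in place. The closing step---passing from \eqref{eq:discrete_optimality_relations.3}, \eqref{eq:discrete_optimality_relations.5} via Fenchel--Young to strong duality and maximality---is identical in both.
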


\begin{proof}\let\qed\relax 
	\textit{ad (i).} By definition, it holds that $ z_h^{rt}\in (\mathcal{L}^1(\mathcal{T}_h))^{\ell\times d}$ and the discrete convex optimality relation \eqref{eq:discrete_optimality_relations.3} is satisfied.
	Since $u_h^{rt}\in U^{cr}_{\ell}(\mathcal{T}_h)$ is minimal for \eqref{discrete_primal} as well as $\phi_h(x,\cdot)\in  C^1(\mathbb{R}^{\ell\times d})$ for a.e.\ $x\in \Omega$ and $\psi_h(x,\cdot)\in C^1(\mathbb{R}^{\ell})$ for a.e.\ $x\in \Omega$, for every $v_h\in U^{cr}_{\ell,D}(\mathcal{T}_h)$, we have that\vspace*{-0.25mm}
	\begin{align}\label{prop:gen_marini0.1}
		\smash{(D\phi_h(\cdot,\nabla_h u_h^{cr}),\nabla_h v_h)_{\Omega}+(D\psi_h(\cdot,\Pi_h u_h^{cr}),\Pi_h v_h)_{\Omega}-(g_h,\pi_h v_h)_{\Gamma_N}=0\,.}
	\end{align}
	By the surjectivity of $\textup{div}\colon Z^{rt}_{\ell}(\mathcal{T}_h)\to (\mathcal{L}^0(\mathcal{T}_h))^{\ell}$,  there exists~$y_h\in Z^{rt}_{\ell}(\mathcal{T}_h)$~such~that\vspace*{-0.25mm}
	\begin{align}\label{prop:gen_marini0.2}
	\smash{	\textup{div}\, y_h= D\psi_h(\cdot,\Pi_h u_h^{cr})\quad\text{ a.e.\ in }\Omega\,.}
	\end{align}
	As a result,  resorting to the discrete integration-by-parts~formula~\eqref{eq:pi} and to \eqref{prop:gen_marini0.2}, \eqref{prop:gen_marini0.1},~and \eqref{eq:discrete_optimality_relations.3}, for every $v_h\in U^{cr}_{\ell,0}(\mathcal{T}_h)$, we~find~that\vspace*{-0.25mm}
	\begin{align}\label{prop:gen_marini0.3}
		\begin{aligned}
		\smash{(z_h^{rt}-y_h,\nabla_hv_h)_{\Omega}  
			=(D\phi_h(\cdot,\nabla_h u_h^{cr}),\nabla_h v_h)_{\Omega}+(D\psi_h(\cdot,\Pi_h u_h^{cr}),\Pi_h v_h)_{\Omega}=0}\,. 
		\end{aligned}
	\end{align}
	On the other hand, we have that $\textup{div}\,((z_h^{rt}\hspace*{-0.1em}-\hspace*{-0.1em}y_h)|_T)\hspace*{-0.15em}=\hspace*{-0.15em}0$ in $T$ for all $T\in \mathcal{T}_h$, \textit{i.e.}, ${z_h^{rt}\hspace*{-0.15em}-\hspace*{-0.15em}y_h\hspace*{-0.15em}\in \hspace*{-0.15em}(\mathcal{L}^0(\mathcal{T}_h))^{\ell \times d}}$.
	Thus, \eqref{prop:gen_marini0.3} in conjunction with \eqref{eq:decomposition.2} implies that
	$z_h^{rt}-y_h\in (\nabla_h(U^{cr}_{\ell,0}(\mathcal{T}_h)))^{\perp}=\textup{ker}(\textup{div} |_{\smash{Z^{rt}_{\ell}(\mathcal{T}_h)}})$. As a result, due to $y_h\in Z^{rt}_{\ell}(\mathcal{T}_h)$, we conclude that $z_h^{rt}\in Z^{rt}_{\ell}(\mathcal{T}_h)$ with\vspace*{-0.25mm}
	\begin{align}\label{prop:gen_marini0.4}
		\begin{aligned}
		\Pi_h z_h^{rt}	&=D\phi_h(\cdot,\nabla_h u_h^{cr})&&\quad\text{ a.e.\ in }\Omega\,,\\[-0.5mm]
		\textup{div}\,z_h^{rt}	&=D\psi_h(\cdot,\Pi_hu_h^{cr})&&\quad\text{ a.e.\ in }\Omega\,,\\[-0.5mm]
		z_h^{rt}n&=g_h&&\quad\text{ a.e.\ on }\Gamma_N\,.
		\end{aligned}
	\end{align}
	By the Fenchel--Young identity (\textit{cf}.\ \eqref{eq:fenchel_young_id}), \eqref{prop:gen_marini0.4}$_{1,2}$ are equivalent to\vspace*{-0.25mm}
	\begin{align}\label{prop:gen_marini0.5}
		\begin{aligned}
			\Pi_h z_h^{rt}: \nabla_h u_h^{cr}&=\phi^*_h(\cdot,\Pi_hz_h^{rt})+\phi_h(\cdot,\nabla_h u_h^{cr})&&\quad\text{ a.e. in }\Omega\,,\\
			\textup{div}\,z_h^{rt}\cdot\Pi_hu_h^{cr}& =\psi_h^*(\cdot,	\textup{div}\,z_h^{rt})+\psi_h(\cdot,\Pi_hu_h^{cr})&&\quad\text{ a.e. in }\Omega\,.
		\end{aligned}
	\end{align}
	Adding \eqref{prop:gen_marini0.5}$_1$ and \eqref{prop:gen_marini0.5}$_2$, subsequently, integrating with respect to $x\in \Omega$, using the discrete integration-by-parts formula \eqref{eq:pi}, and using the definitions \eqref{discrete_primal}~and~\eqref{discrete_dual}, we arrive at $I_h^{cr}(u_h^{cr})=D_h^{rt}(z_h^{rt})$, 
	which, by the discrete weak duality relation \eqref{eq:discrete_weak_duality}, implies that $z_h^{rt}\in Z^{rt}_{\ell}(\mathcal{T}_h)$ is maximal for \eqref{discrete_dual}.\enlargethispage{13mm}
	
	\textit{ad (ii).} By definition, it holds that $ u_h^{cr}\in (\mathcal{L}^1(\mathcal{T}_h))^{\ell}$ and the discrete convex optimality relations \eqref{eq:discrete_optimality_relations.4}, \eqref{eq:discrete_optimality_relations.6} are satisfied.
	Since $z_h^{rt}\in Z^{rt}_{\ell}(\mathcal{T}_h)$ is maximal for \eqref{discrete_dual} as well as $\phi^*_h(x,\cdot)\in  C^1(\mathbb{R}^{\ell \times d})$ for a.e.\ $x\in \Omega$ and $\psi_h^*(x,\cdot)\in C^1(\mathbb{R}^{\ell})$ for a.e.\ $x\in \Omega$, for every $y_h\in Z^{rt}_{\ell,N}(\mathcal{T}_h)$,~we~have~that\vspace*{-0.5mm}
	\begin{align}\label{prop:gen_marini.1}
		\smash{(D\phi^*_h(\cdot,\Pi_h z_h^{rt}),\Pi_hy_h)_{\Omega}+(D\psi_h^*(\cdot,\textup{div}\,z_h^{rt}),\textup{div}\,y_h)_{\Omega}+(y_h n,u_D^h)_{\Gamma_D}=0\,.}
	\end{align}
	In particular, \eqref{prop:gen_marini.1} implies that $D\phi^*_h(\cdot,\Pi_h z_h^{rt})\in  (\textup{ker}(\textup{div}|_{\smash{Z^{rt}_{\ell,0}(\mathcal{T}_h)}}))^\perp$.
	Due to \eqref{eq:decomposition.2},~it~holds~that
	$(\textup{ker}(\textup{div}|_{\smash{Z^{rt}_{\ell,0}(\mathcal{T}_h)}}))^\perp=\nabla_h (U^{cr}_{\ell}(\mathcal{T}_h))$. Therefore, there exists 
	$v_h\in U^{cr}_{\ell}(\mathcal{T}_h)$~such~that\vspace*{-0.25mm} 
	\begin{align}\label{prop:gen_marini.2}
		\smash{\nabla_h  v_h= D\phi^*_h(\cdot,\Pi_h z_h^{rt})\quad\text{ a.e.\ in }\Omega\,.}
	\end{align}
	As a result,  resorting to the discrete integration-by-parts~formula~\eqref{eq:pi} and to \eqref{prop:gen_marini.2}, \eqref{prop:gen_marini.1}, and \eqref{eq:discrete_optimality_relations.6}, for every $y_h\in Z^{rt}_{\ell,0}(\mathcal{T}_h)$, we~find~that\vspace*{-0.25mm}
	\begin{align}\label{prop:gen_marini.2.1}
		\begin{aligned}
				\smash{( u_h^{cr}-v_h,\textup{div}\,y_h)_{\Omega} 
			= (D\phi^*_h(\cdot,\Pi_h z_h^{rt}),\Pi_hy_h)_{\Omega}+(D\psi_h^*(\cdot,\textup{div}\,z_h^{rt}),\textup{div}\,y_h)_{\Omega}=0\,.} 
		\end{aligned}
	\end{align} 
	On the other hand, we have that $\nabla_h(u_h^{cr}-v_h)=0$ a.e.\ in $\Omega$, \textit{i.e.}, $u_h^{cr}-v_h\in (\mathcal{L}^0(\mathcal{T}_h))^{\ell}$.
	Thus, \eqref{prop:gen_marini.2.1} in conjunction with \eqref{eq:decomposition.1} implies that
	$u_h^{cr}-v_h\in (\textup{div}\,(Z^{rt}_{\ell,0}(\mathcal{T}_h)))^{\perp}=\textup{ker}(\nabla_h |_{U^{cr}_{\ell}(\mathcal{T}_h)})$.\linebreak As a result, due to $v_h\in U^{cr}_{\ell}(\mathcal{T}_h)$, we conclude that $u_h^{cr}\in U^{cr}_{\ell}(\mathcal{T}_h)$ with\vspace*{-0.25mm}
	\begin{align}\label{prop:gen_marini.3}
		\begin{aligned}
			\nabla_h u_h^{cr}&=D\phi^*_h(\cdot,\Pi_h z_h^{rt})&&\quad\text{ a.e.\ in }\Omega\,,\\[-0.5mm]
			\Pi_hu_h^{cr}&=D\psi_h^*(\cdot,\textup{div}\,z_h^{rt})&&\quad\text{ a.e.\ in }\Omega\,,\\[-0.5mm]
			\pi_hu_h^{cr}&=u_D^h&&\quad\text{ a.e.\ on }\Gamma_D\,.
		\end{aligned}
	\end{align}
	By \hspace*{-0.1mm}the \hspace*{-0.1mm}Fenchel--Young \hspace*{-0.1mm}identity \hspace*{-0.1mm}(\textit{cf}.\ \hspace*{-0.1mm}\eqref{eq:fenchel_young_id}), \hspace*{-0.1mm}\eqref{prop:gen_marini.3}$_{1,2}$ \hspace*{-0.1mm}is \hspace*{-0.1mm}equivalent \hspace*{-0.1mm}to \hspace*{-0.1mm}\eqref{prop:gen_marini0.5}. 
	Adding~\hspace*{-0.1mm}\eqref{prop:gen_marini0.5}$_1$~and \eqref{prop:gen_marini0.5}$_2$, subsequently, integrating with respect to $x\in \Omega$, using the discrete integration-by-parts formula \eqref{eq:pi}, and using the definitions \eqref{discrete_primal}~and~\eqref{discrete_dual}, we arrive at $I_h^{cr}(u_h^{cr})=D_h^{rt}(z_h^{rt})$, 
	which, by  the discrete weak duality relation \eqref{eq:discrete_weak_duality}, implies that $u_h^{cr}\in U^{cr}_{\ell}(\mathcal{T}_h)$~is~\mbox{minimal}~for~\eqref{discrete_primal}.~$\qedsymbol$
\end{proof}\newpage

		\section{Model problems}\label{sec:model_problems}\vspace*{-0.75mm}
	
	\hspace*{4mm}In this section, we illustrate the procedure described above by addressing~model~\mbox{problems}.\vspace*{-0.75mm}\enlargethispage{15mm}
	
	\subsection{Non-linear Dirichlet problem}\label{subsec:nl_dirichlet}\vspace*{-0.75mm}
	
	\subsubsection{Continuous problem}\vspace*{-0.75mm}
	
	\hspace*{5mm}A class of variational problems that
	includes the non-linear Laplace operator (\textit{cf}.\ \cite{CL15,LLC18,K22CR,balci2023error}) involves a function $\phi\colon \Omega\times\mathbb{R}^d\to \mathbb{R}$ that satisfies 
	\begin{itemize}[noitemsep,topsep=2pt,leftmargin=!,labelwidth=\widthof{(A.2)},font=\itshape]
		\item[(A.1)]\hypertarget{A.1}{} $\phi\colon \Omega\times\mathbb{R}^d\to \mathbb{R}$ is a Carath\'eodory mapping and $\phi(x,\cdot)\colon\mathbb{R}^d\to \mathbb{R}$ is convex for a.e.\ $x\in \Omega$;
		\item[(A.2)]\hypertarget{A.2}{} There exist constants $\alpha_m,\alpha_M>0$, functions $\beta_m,\beta_M\in L^1(\Omega)$, and a variable exponent $p\in L^\infty(\Omega)$ with $p^-\coloneqq \textup{ess\,sup}_{x\in \Omega}{p(x)}>1$ such that for a.e.\ $x\in \Omega$ and $r\in \mathbb{R}^d$,~it~holds~that\vspace*{-0.75mm}
		\begin{align*}
			\beta_m(x)+\alpha_m\,\vert r\vert^{p(x)}\leq \phi(x,r)\leq \beta_M(x)+\alpha_M\,\vert r\vert^{p(x)}\,;
		\end{align*}
	\end{itemize}
	and a function  $\psi\colon \Omega\times\mathbb{R}\to \mathbb{R}$ defined by $\psi(x,s)\coloneqq -f(x)s$ for a.e.\ $x\in \Omega$ and all $s\in \mathbb{R}$,~where~$f\in L^{p'(\cdot)}(\Omega)$\footnote{$L^{p'(\cdot)}(\Omega)\coloneqq \{ v\in L^1(\Omega)\mid \vert v\vert^{p'(\cdot)}\in L^1(\Omega)\}$, $W^{1,p(\cdot)}(\Omega)\coloneqq \{v\in W^{1,1}(\Omega)\mid \vert v\vert^{p(\cdot)}, \vert \nabla v\vert^{p(\cdot)}\in L^1(\Omega)\}$, $W^{p'(\cdot)}(\textup{div};\Omega)\coloneqq \{y\in W^1(\textup{div};\Omega)\mid \vert y\vert^{p'(\cdot)}, \vert \textup{div}\,y\vert^{p'(\cdot)}\in L^1(\Omega)\}$.} and $p'(x)\coloneqq \frac{p(x)}{p(x)-1}$ for a.e.\ $x\in \Omega$. Then, for $g \in  L^{(p^-)'}(\Gamma_N )$ and $u_D\in W^{\smash{1-\frac{1}{p^-}},p^-}(\partial\Omega)$, the \textit{non-linear Dirichlet problem} is given via the optimality condition of the minimization of the functional $I\colon W^{1,p(\cdot)}(\Omega)\to \mathbb{R}\cup\{+\infty\}$,~for~every~${v\in W^{1,p(\cdot)}(\Omega)}$~defined~by\vspace*{-0.75mm}
	\begin{align}\label{eq:nl_dirichlet_primal}
		I(v)\coloneqq \int_{\Omega}{\phi(\cdot,\nabla v)\,\mathrm{d}x}-(f, v)_{\Omega}-( g, v)_{\Gamma_N}+I_{\{u_D\}}^{\Gamma_D}(v)\,,
	\end{align} 
	Then, a corresponding (Fenchel) dual problem is given via the maximization of
	the functional $D\colon W^{\smash{p'(\cdot)}}(\textup{div};\Omega)\to \mathbb{R}\cup\{-\infty\}$, for every $y\in W^{\smash{p'(\cdot)}}(\textup{div};\Omega)$  defined by\vspace*{-0.75mm}
	\begin{align}\label{eq:nl_dirichlet_dual}
	D(y)\coloneqq  -\int_{\Omega}{\phi^*(\cdot,y)\,\mathrm{d}x}+\langle y\cdot n, u_D\rangle_{\partial\Omega}-(g, u_D)_{\Gamma_N}-I_{\{-f\}}^{\Omega}(\textup{div}\,y)-I_{\{g\}}^{\Gamma_N}(y\cdot n) \,.
	\end{align}
	As a result, given the two functionals \eqref{eq:nl_dirichlet_primal} and \eqref{eq:nl_dirichlet_dual}, the corresponding 
	primal-dual gap estimator\linebreak $\eta^2_{\textup{gap}}\colon W^{1,p(\cdot)}(\Omega)\times W^{\smash{p'(\cdot)}}(\textup{div};\Omega)\to [0,+\infty]$,~for~every~${v\in W^{1,p(\cdot)}(\Omega)}$ with $v=u_D$ a.e.\ on $\Gamma_D$ and $y\in W^{\smash{p'(\cdot)}}(\textup{div};\Omega)$ with  
	$\textup{div}\, y= -f$ a.e.\ in $\Omega$ and $I_{\{g\}}^{\Gamma_N}(y\cdot n)=0$, is given via\vspace*{-0.75mm}
	\begin{align}
		\eta_{\textup{gap}}^2(v,y)\coloneqq \int_{\Omega}{\big\{\phi(\cdot,\nabla v)-\nabla v\cdot y+\phi^*(\cdot,y)\big\}\,\mathrm{d}x}\,.\label{eq:nl_dirichlet_eta}
	\end{align}
	The integrand of \eqref{eq:nl_dirichlet_eta}, 
	by the  Fenchel--Young inequality (\textit{cf}.\ \eqref{eq:fenchel_young_ineq}), is point-wise non-negative~and, by the  Fenchel--Young identity (\textit{cf}.\ \eqref{eq:fenchel_young_id}), vanishes if and only if $y \in  \partial\phi(\cdot,\nabla v)$ a.e.\ in $\Omega$.\vspace*{-0.5mm}

	\subsubsection{Discrete  problem}\vspace*{-0.75mm}
	
	\hspace*{5mm}Let  $\phi_h\colon \Omega\times \mathbb{R}^d\to \mathbb{R}$ be an approximation $\phi\colon \Omega\times \mathbb{R}^d\to \mathbb{R}$ satisfying (\hyperlink{A.1}{A.1})
	and $\phi_h(\cdot,r)\in \mathcal{L}^0(\mathcal{T}_h)$ for all $r\in \mathbb{R}^d$. Moreover, let $f_h\in \mathcal{L}^0(\mathcal{T}_h)$, $g_h\in \mathcal{L}^0(\mathcal{S}_h^{\Gamma_N})$, and $u_D^h\in \mathcal{L}^0(\mathcal{S}_h^{\Gamma_D})$ be  approximations of $f\in L^{p'(\cdot)}(\Omega)$, $g\in L^{\smash{(p^-)'}}(\Gamma_N)$, and $u_D|_{\Gamma_D}\in W^{\smash{1-\frac{1}{p^-}},p^-}(\Gamma_D)$.~Then, 
	the  \textit{discrete non-linear Dirichlet problem} is given via the minimization of the functional $I_h^{cr}\colon \mathcal{S}^{1,cr}(\mathcal{T}_h)\to \mathbb{R}\cup\{+\infty\}$, for every $v_h\in  \mathcal{S}^{1,cr}(\mathcal{T}_h)$ defined by\vspace*{-0.75mm}
	\begin{align}\label{eq:nl_dirichlet_discrete_primal}
		I_h^{cr}(v_h)\coloneqq \int_{\Omega}{\phi_h(\cdot,\nabla_h v_h)\,\mathrm{d}x}-(f_h, \Pi_hv_h)_{\Omega}-(g_h, \pi_h v_h)_{\Gamma_N}+I_{\{u_D^h\}}^{\Gamma_D}(\pi_hv_h)\,.
	\end{align}
	Then, a corresponding (Fenchel) dual problem is given via the maximization of 
	the functional $D_h^{rt}\colon \mathcal{R}T^0(\mathcal{T}_h)\to \mathbb{R}\cup\{-\infty\}$, for every $y_h\in \mathcal{R}T^0(\mathcal{T}_h)$ defined by\vspace*{-0.75mm}
	\begin{align}\label{eq:nl_dirichlet_discrete_dual}
		D_h^{rt}(y_h)\coloneqq  -\int_{\Omega}{\phi^*_h(\cdot,\Pi_h y_h)\,\mathrm{d}x}+(y_h\cdot n, u_D^h)_{\Gamma_D}-I_{\{-f_h\}}^{\Omega}(\textup{div}\,y_h)-I_{\{g_h\}}^{\Gamma_N}(y_h\cdot n) \,.
	\end{align} 
	If $\phi_h(x,\cdot)\in C^1(\mathbb{R}^d)$  for a.e.\ $x\in\Omega$, then
	given a discrete primal solution $u_h^{cr}\in \mathcal{S}^{1,cr}(\mathcal{T}_h)$, a discrete dual solution $z_h^{rt}\in \mathcal{R}T^0(\mathcal{T}_h)$ is immediately available via the generalized Marini formula\vspace*{-0.75mm}
	\begin{align}\label{eq:nl_dirichlet_marini}
		z_h^{rt} =D\phi_h(\cdot,\nabla_h u_h^{cr})-\frac{f_h}{d}(\textup{id}_{\mathbb{R}^d}-\Pi_h\textup{id}_{\mathbb{R}^d})\quad\text{ a.e.\  in }\Omega\,.
	\end{align} 
	\newpage
	
	\subsection{Obstacle problem}\label{subsec:obstacle}
	
	\subsubsection{Continuous problem}\enlargethispage{10mm}
	
	\hspace*{5mm}Non-differentiable lower-order contributions arise in formulating an obstacle problem as a variation problem (\textit{cf}.\ \cite{friedmann,caffarelli}): for an external force $f\hspace*{-0.15em}\in\hspace*{-0.15em} L^2(\Omega)$, Dirichlet boundary~data~${u_D\hspace*{-0.15em}\in\hspace*{-0.15em} W^{\frac{1}{2},2}(\partial\Omega)}$, and an obstacle $\chi\in W^{1,2}(\Omega)$ with $\chi\leq u_D$ a.e.\ on $\Gamma_D$, the \textit{obstacle problem} is given via   the minimization of the functional $I\colon W^{1,2}(\Omega)\to \mathbb{R}\cup\{+\infty\}$, for every $v \in W^{1,2}(\Omega)$ defined by 
	\begin{align}\label{eq:obstacle_primal}
		I(v) \coloneqq \tfrac{1}{2}\|\nabla v\|_{2,\Omega}^2-(f, v)_{\Omega}+I_+^{\Omega}(v-\chi)+I_{\{u_D\}}^{\Gamma_D}(v)\,,
	\end{align}
	where the indicator functional $I_+^{\Omega}\colon L^2(\Omega)\to \mathbb{R}\cup\{+\infty\}$ for every $\widehat{v}\in L^2(\Omega)$ is defined by
	\begin{align*}
		I_+^{\Omega}(\widehat{v})\coloneqq \begin{cases}
			0&\text{ if }\widehat{v}\ge 0\text{ a.e.\ in }\Omega\,,\\
			+\infty&\text{ else}\,.
		\end{cases}
	\end{align*}
	 Then, a corresponding (Fenchel) dual problem is given via the maximization of the functional $D\colon L^2(\Omega;\mathbb{R}^d)\to \mathbb{R}\cup\{-\infty\}$,  for every $y\in W^2_N(\textup{div};\Omega)$ defined by
	\begin{align}\label{eq:obstacle_dual}
		D(y)\coloneqq  -\tfrac{1}{2}\| y\|_{2,\Omega}^2-(\textup{div}\,y+f,\chi)_{\Omega}+\langle y\cdot n,u_D\rangle_{\partial\Omega}-I_-^{\Omega}(f+\textup{div}\,y)\,,
	\end{align}
	where the indicator functional $I_-^{\Omega}\colon L^2(\Omega)\to \mathbb{R}\cup\{+\infty\}$ for every $\widehat{y}\in L^2(\Omega)$ is defined by
	\begin{align*}
		I_-^{\Omega}(\widehat{y})\coloneqq \begin{cases}
			0&\text{ if }\widehat{y}\le 0\text{ a.e.\ in }\Omega\,,\\
			+\infty&\text{ else}\,.
		\end{cases}
	\end{align*}
	The dual problem, in general,  is ill-posed in $W^2_N(\textup{div};\Omega)$. A maximizing vector field $z\in  L^2(\Omega;\mathbb{R}^d)$ with  distributional divergence defines a non-negative distribution~${\lambda \coloneqq -\textup{div}\, y - f\in (W^{1,2}_D(\Omega))^*}$.
	As a result, given the two functionals \eqref{eq:obstacle_primal} and \eqref{eq:obstacle_dual}, the corresponding primal-dual gap estimator \linebreak$\smash{\eta^2_{\textup{gap}}}\colon W^{1,2}(\Omega)\times W^2_N(\textup{div};\Omega)\hspace*{-0.15em}\to\hspace*{-0.15em}[0,+\infty]$, for every $v\hspace*{-0.15em}\in\hspace*{-0.15em} W^{1,2}(\Omega)$ with $v\hspace*{-0.15em}=\hspace*{-0.15em}u_D$ a.e.\ on $\Gamma_D$~and~${v\hspace*{-0.15em}\ge\hspace*{-0.15em} \chi}$~a.e.\ in $\Omega$ and $y\in W^2_N(\textup{div};\Omega)$  and $\textup{div}\,y\le -f$ a.e.\ in $\Omega$, using the~\mbox{integration-by-parts}~\mbox{formula}~\eqref{eq:pi_cont}, is given via
	\begin{align}\label{eq:obstacle_eta}
		\smash{\eta^2_{\textup{gap}}}(v,y)=\tfrac{1}{2}\|\nabla v-y\|_{2,\Omega}^2+(-\textup{div}\,y-f,v-\chi)_{\Omega}\,.
	\end{align}
	The first part of \eqref{eq:obstacle_eta} measures a violation of the optimality condition $y=\nabla v$ a.e.\ in $\Omega$ and  
	the second part of \eqref{eq:obstacle_eta} measures a violation of the complementarity property $(\textup{div}\,y+f, v-\chi)_{\Omega} = 0$.
	
	\subsubsection{Discrete problem}
	
	\hspace*{5mm}Let   $f_h\in  \mathcal{L}^0(\mathcal{T}_h)$,  $u_D^h\in  \mathcal{L}^0(\mathcal{S}_h^{\Gamma_D})$, and $\chi_h\in \mathcal{L}^0(\mathcal{T}_h)$ be~approximations~of~$f\in L^2(\Omega)$,~$u_D|_{\Gamma_D}\in W^{\frac{1}{2},2}(\Gamma_D)$, and $\chi\in W^{1,2}(\Omega)$. Then, the \textit{discrete obstacle problem} is given via the minimization of the functional
	$I_h^{cr}\colon \mathcal{S}^{1,cr}(\mathcal{T}_h)\to \mathbb{R}\cup\{+\infty\}$, for every $v_h \in  \mathcal{S}^{1,cr}(\mathcal{T}_h)$~defined~by 
	\begin{align}\label{eq:obstacle_discrete_primal}
		I_h^{cr}(v_h)
		\coloneqq \tfrac{1}{2}\|\nabla_h v_h\|_{2,\Omega}^2-(f_h,\Pi_h v_h)_{\Omega} +I_+^{\Omega}(\Pi_hv_h-\chi_h)+I_{\{u_D^h\}}^{\Gamma_D}(\pi_hv_h)\,.
	\end{align}
	Then, a corresponding (Fenchel) dual problem is given via the maximization of the functional $D_h^{rt}\colon \mathcal{R}T^0_N(\mathcal{T}_h)\to \mathbb{R}\cup\{-\infty\}$,  for every $y_h\in \mathcal{R}T^0_N(\mathcal{T}_h)$ defined by
	\begin{align}\label{eq:obstacle_discrete_dual}
		\begin{aligned} 
		 D_h^{rt}(y_h)&\coloneqq  -\tfrac{1}{2}\| \Pi_hy_h\|_{2,\Omega}^2-(\textup{div}\,y_h+f_h,\chi_h)_{\Omega}+(y_h\cdot n,u_D^h)_{\Gamma_D}-I_-^{\Omega}(f_h+\textup{div}\,y_h)\,.
		\end{aligned}
	\end{align}
	Given a discrete primal solution $u_h^{cr}\in \mathcal{S}^{1,cr}(\mathcal{T}_h)$ and a  Lagrange multiplier~${\smash{\overline{\lambda}}_h^{cr}\in \Pi_h( \mathcal{S}^{1,cr}_D(\mathcal{T}_h))}$, for every $v_h \in \mathcal{S}^{1,cr}_D(\mathcal{T}_h)$ satisfying  
	\begin{align}\label{eq:obstacle_multiplier}
		(\smash{\overline{\lambda}}_h^{cr}, \Pi_h v_h)_{\Omega}=(f_h, \Pi_h v_h)_{\Omega}-(\nabla_h u_h^{cr},\nabla_h v_h)_{\Omega}\,,
	\end{align}
	proceeding as in the proof Proposition \ref{prop:gen_marini}(i), 
	a discrete dual solution $z_h^{rt}\in \mathcal{R}T^0(\mathcal{T}_h)$ is immediately available via the generalized~Marini~formula
	\begin{align}\label{eq:obstacle_marini}
		z_h^{rt}=\nabla_h u_h^{cr}-\frac{f_h-\smash{\overline{\lambda}}_h^{cr}}{d}(\textup{id}_{\mathbb{R}^d}-\Pi_h\textup{id}_{\mathbb{R}^d})\quad\text{ a.e.\  in }\Omega\,.
	\end{align}  
	
	\newpage
	\subsection{Scalar Signorini problem}\label{subsec:signorini}

	\subsubsection{Continuous problem}\enlargethispage{15mm}
	
	\hspace*{5mm}A scalar variant of elastic contact problems leads to a variational problem with an inequality constraint on a subset $\Gamma_C$ of the boundary on which penetration~of~an~obstacle~is~\mbox{prevented}~(\textit{cf}.~\cite{signorini}): 
	for $\Gamma_D,\Gamma_N,\Gamma_C\subseteq \partial\Omega$ with $\Gamma_D\dot{\cup}\Gamma_N\dot{\cup}\Gamma_C=\partial\Omega$,
	$f\in L^2(\Omega)$, $g\in W^{-\smash{\frac{1}{2}},2}(\Gamma_N)$, $u_D\in W^{\frac{1}{2},2}(\partial\Omega)$, and $\chi\in W^{1,2}(\Omega)$ with $\chi=u_D$ a.e.\ on $\Gamma_D$,  the \textit{scalar Signorini problem} is given via the minimization of  the functional $I\colon W^{1,2}(\Omega)\to \mathbb{R}\cup\{+\infty\}$, for every $v \in W^{1,2}(\Omega)$ defined by
	\begin{align}\label{eq:signorini_primal}
			I(v)\coloneqq \tfrac{1}{2}\| \nabla v\|_{2,\Omega}^2-(f, v)_{\Omega}-\langle g, v\rangle_{\Gamma_N}+I_+^{\Gamma_C}(v-\chi)+I_{\{u_D\}}^{\Gamma_D}(v)\,,
	\end{align}
	where the indicator functional $I_+^{\Gamma_C}\colon W^{\frac{1}{2},2}(\Gamma_C)\to \mathbb{R}\cup\{+\infty\}$ for every $\widehat{v}\in W^{\frac{1}{2},2}(\Gamma_C)$~is~defined~by
	\begin{align*}
			I_+^{\Gamma_C}(\widehat{v})\coloneqq
			\begin{cases}
				0&\text{ if }\widehat{v}\ge 0\text{ a.e.\ on }\Gamma_C\,,\\
				+\infty&\text{ else}\,.
			\end{cases}
	\end{align*}
	Then, a corresponding (Fenchel) dual problem is given via the maximization~of~the~\mbox{functional} $D\colon  W^2(\textup{div};\Omega)\to \mathbb{R}\cup\{-\infty\}$, for every $y\in W^2(\textup{div};\Omega)$ defined by
	\begin{align}\label{eq:signorini_dual}
		\begin{aligned} 
		D(y)&\coloneqq  -\tfrac{1}{2}\| y\|_{2,\Omega}^2+\langle y\cdot n,\chi\rangle_{\partial\Omega}-\langle g,\chi\rangle_{\Gamma_N}
		-\smash{I_{\{-f\}}^{\Omega}}(\textup{div}\,y)\\&\quad-\smash{I_{\{g\}}^{\Gamma_N}}(y\cdot n)-\smash{I_+^{\Gamma_C}}(y\cdot n)\,,
	\end{aligned}
	\end{align} 
	where the indicator functional $I_-^{\Omega}\colon \hspace*{-0.1em}W^{-\frac{1}{2},2}(\partial\Omega)\hspace*{-0.1em}\to\hspace*{-0.1em} \mathbb{R}\cup\{+\infty\}$ for every $\widehat{y}\hspace*{-0.1em}\in\hspace*{-0.1em} W^{-\frac{1}{2},2}(\partial\Omega)$~is~defined~by
	\begin{align*}
		I_+^{\Gamma_C}(\widehat{y})\coloneqq \begin{cases}
			0&\text{ if }\langle \widehat{y},v\rangle_{\partial\Omega}\hspace*{-0.1em}\ge\hspace*{-0.1em} 0\text{ for all }v\hspace*{-0.1em}\in\hspace*{-0.1em} W^{1,2}_D(\Omega)\text{ with }v\hspace*{-0.1em}=\hspace*{-0.1em}0\text{ a.e.\ on }\Gamma_N\text{ and }v\hspace*{-0.1em}\ge\hspace*{-0.1em} 0\text{ a.e.\ on }\Gamma_C\,,\\
			+\infty&\text{ else}\,.
		\end{cases}
	\end{align*}
	As a result, given the two functionals \eqref{eq:signorini_primal} and \eqref{eq:signorini_dual},
	the corresponding primal-dual~gap~estimator $\eta^2_{\textup{gap}}\colon W^{1,2}(\Omega)\times W^2(\textup{div};\Omega)\to [0,+\infty]$, for every $v\in W^{1,2}(\Omega)$
	with $v=u_D$ a.e.\ on $\Gamma_D$ and $v\ge \chi$ a.e.\ on $\Gamma_C$ and $y\in W^2(\textup{div};\Omega)$ with $I_{\{g\}}^{\Gamma_N}(y\cdot n)=0$ and $\smash{I_+^{\Gamma_C}}(y\cdot n)=0$, using the integration-by-parts formula \eqref{eq:pi_cont}, is given via 
	\begin{align}\label{eq:signorini_eta}
		\smash{\eta^2_{\textup{gap}}}(v,y)=\tfrac{1}{2}\|\nabla v-y\|_{2,\Omega}^2+\langle y\cdot n,v-\chi\rangle_{\partial\Omega}-\langle g,v-\chi\rangle_{\Gamma_N}\,.
	\end{align}
	The first part of \eqref{eq:signorini_eta} measures a  violation of the optimality relation $y=\nabla v$ a.e.\ in $\Omega$  
	and the second part of \eqref{eq:signorini_eta}  measures a violation of the  complementarity property $\langle y\cdot n,v-\chi\rangle_{\partial\Omega}=\langle g,v-\chi\rangle_{\Gamma_N}$.\vspace*{-1mm}

	\subsubsection{Discrete problem}

	\hspace*{5mm}Let \hspace*{-0.1mm}$f_h\hspace*{-0.175em}\in\hspace*{-0.175em} \mathcal{L}^0(\mathcal{T}_h)$, \hspace*{-0.1mm}$g_h\hspace*{-0.175em}\in\hspace*{-0.175em} \mathcal{L}^0(\mathcal{S}_h^{\Gamma_N})$, \hspace*{-0.1mm}$u_D^h\hspace*{-0.175em}\in \hspace*{-0.175em}\mathcal{L}^0(\mathcal{S}_h^{\Gamma_D})$, and $\chi_h\hspace*{-0.175em}\in\hspace*{-0.175em} \mathcal{L}^0(\mathcal{S}_h^{\Gamma_C})$ \hspace*{-0.1mm}be \hspace*{-0.1mm}approximations~\hspace*{-0.1mm}of~\hspace*{-0.1mm}${f\hspace*{-0.175em}\in\hspace*{-0.175em} L^2(\Omega)}$, $g\in W^{-\frac{1}{2},2}(\Gamma_N)$, $u_D|_{\Gamma_D}\in W^{\frac{1}{2},2}(\Gamma_D)$, and $\chi|_{\Gamma_C}\in W^{\frac{1}{2},2}(\Gamma_C)$.
	 Then,~the~\textit{discrete~scalar~Signorini problem} is given via the minimization of $I_h^{cr}\colon \mathcal{S}^{1,cr}(\mathcal{T}_h)\to \mathbb{R}\cup\{+\infty\}$, for every $v_h\in \mathcal{S}^{1,cr}(\mathcal{T}_h)$ defined by
	\begin{align}
		\begin{aligned}
			I_h^{cr}(v_h)&\coloneqq   \tfrac{1}{2}\|\nabla_hv_h\|_{2,\Omega}^2-( f_h,\Pi_hv_h)_{\Omega}-(g_h,\pi_h v_h)_{\Gamma_N}\\&\quad+I_{\{u_D^h\}}^{\Gamma_D}(\pi_h v_h)+\smash{I_+^{\Gamma_C}}(\pi_h v_h-\chi_h)\,,
		\end{aligned}
		\label{eq:signorini_discrete_primal}
	\end{align}
	Then, a corresponding (Fenchel)  dual problem is given via the maximization~of~the~\mbox{functional} $D_h^{rt}\colon \mathcal{R}T^0(\mathcal{T}_h)\to \mathbb{R}\cup\{-\infty\}$, for every $y_h\in \mathcal{R}T^0(\mathcal{T}_h)$ defined by 
	\begin{align}
		\begin{aligned}
			D_h^{rt}(y_h)&\coloneqq -\tfrac{1}{2}\| \Pi_hy_h\|_{2,\Omega}^2+(y_h\cdot n,\pi_h u_D^h)_{\Gamma_D\cup\Gamma_C}\\&\quad-
			I_{\{-f_h\}}^{\Omega}(\textup{div}\,y_h)-I_{\{g_h\}}^{\Gamma_N}(y_h\cdot n)-\smash{I_+^{\Gamma_C}}(y_h\cdot n)\,.
		\end{aligned}\label{eq:signorini_discrete_dual}
	\end{align}
	Appealing to Proposition \ref{prop:gen_marini}(i), given a discrete primal solution $u_h^{cr}\in \mathcal{S}^{1,cr}(\mathcal{T}_h)$, a discrete dual solution $z_h^{rt}\in  \mathcal{R}T^0(\mathcal{T}_h)$ is immediately available via the generalized Marini formula 
	\begin{align*}
		z_h^{rt}=\nabla_h u_h^{cr}-\frac{f_h}{d}(\textup{id}_{\mathbb{R}^d}-\Pi_h\textup{id}_{\mathbb{R}^d})\quad\text{ a.e.\  in }\Omega\,.
	\end{align*} 
	
	\newpage
	\subsection{Rudin--Osher--Fatemi image de-noising model}\label{subsec:rof}\vspace*{-0.5mm}

	\subsubsection{Continuous problem}\vspace*{-0.5mm}\enlargethispage{10mm}
	
	\hspace*{5mm}A model problem that requires the usage of spaces of functions with bounded variation is the 
	\textit{Rudin--Osher--Fatemi image de-noising model} (\textit{cf}.\ \cite{ROF92,Bar15}): for a noisy image $g\in L^2(\Omega)$ and a fidelity parameter $\alpha>0$, it is given via the minimization of the functional ${I\colon BV(\Omega)\cap L^2(\Omega)\to \mathbb{R}}$, for every $v\in BV(\Omega)\cap L^2(\Omega)$  defined by 
	\begin{align}\label{eq:tv_primal}
	\smash{	I(v)\coloneqq \vert \textup{D}v\vert(\Omega)+\tfrac{\alpha}{2}\|v-g\|_{2,\Omega}^2\,.}
	\end{align}
	Here, $\vert \textup{D}(\cdot)\vert(\Omega)\colon \smash{L^1_{\textup{loc}}(\Omega)}\to \mathbb{R}\cup\{+\infty\}$, for every $v\in \smash{L^1_{\textup{loc}}(\Omega)}$ defined by 
	\begin{align*}
		\vert \textup{D}v\vert(\Omega)\coloneqq \sup_{\varphi\in C^\infty_c(\Omega;\mathbb{R}^d)\,:\,\|\varphi\|_{\infty,\Omega}\leq 1}{(v,\textup{div}\,\varphi)_{\Omega}}\,,
	\end{align*}
	denotes the total variation functional and $BV(\Omega)\coloneqq\{v\in L^1(\Omega)\mid \vert \textup{D}v\vert(\Omega)<+\infty \}$ the space of functions with bounded variation.
	The total variation functional  can be seen as an extension~of~the~semi-norm
	in $W^{1,1}(\Omega)$ and allows for discontinuous minimizers. For every $ v\in L^2(\Omega)$,
	it can be characterized via the Fenchel duality
	\begin{align}\label{eq:tv_hfdiv}
		\vert \textup{D}v\vert(\Omega)=\sup_{y\in W^2_0(\textup{div};\Omega)}{-(v,\textup{div}\,y)_{\Omega}-\smash{I_{K_1(0)}^{\Omega}}(y)}\,,
	\end{align}
	where \hspace*{-0.1mm}the \hspace*{-0.1mm}indicator \hspace*{-0.1mm}functional \hspace*{-0.1mm}$\smash{I_{K_1(0)}^{\Omega}}\colon \hspace*{-0.15em} W^2_0(\textup{div};\Omega)\hspace*{-0.15em}\to \hspace*{-0.15em}\mathbb{R}\cup\{+\infty\}$ \hspace*{-0.1mm}for \hspace*{-0.1mm}every \hspace*{-0.1mm}$\widehat{y}\hspace*{-0.15em}\in  \hspace*{-0.15em} W^2_0(\textup{div};\Omega)$~\hspace*{-0.1mm}is~\hspace*{-0.1mm}\mbox{defined}~\hspace*{-0.1mm}by
	\begin{align*}
		\smash{I_{K_1(0)}^{\Omega}}(\widehat{y})\coloneqq \begin{cases}
			0&\text{ if }\vert \widehat{y}\vert \leq 1\text{ a.e.\ in }\Omega\,,\\
			+\infty&\text{ else}\,.
		\end{cases}
	\end{align*}
 	By means of the relation \eqref{eq:tv_hfdiv}, one finds that  (\textit{cf}.\ \cite{HK04}) the (Fenchel)~(pre-)dual~problem~is~given via the maximization of $D\colon W^2_0(\textup{div};\Omega)\to \mathbb{R}\cup\{-\infty\}$, for every $y\in W^2_0(\textup{div};\Omega)$ defined~by
	\begin{align}\label{eq:tv_dual}
	\smash{D(y)\coloneqq 
	 -\smash{I_{K_1(0)}^{\Omega}}(y)-\tfrac{1}{2\alpha}\|\textup{div}\,y+\alpha\,g\|_{2,\Omega}^2+\tfrac{\alpha}{2}\|g\|_{2,\Omega}^2\,}. 
	\end{align}
	As a result, given the two functionals \eqref{eq:tv_primal} and \eqref{eq:tv_dual}, 
	the corresponding primal-dual~gap~estimator $\eta_{\textup{gap}}^2\colon BV(\Omega)\cap L^2(\Omega)\times W^2_0(\textup{div};\Omega)\to[0,+\infty] $, for every 
	$v\in BV(\Omega)\cap L^2(\Omega)$ and $y\in W^2_0(\textup{div};\Omega)$ with $\vert y\vert \leq 1$ a.e.\ in~$\Omega$, is given via 
	\begin{align}\label{eq:tv_eta}
		\smash{\eta_{\textup{gap}}^2(v,y)=\vert \textup{D}v\vert(\Omega)+(v,\textup{div}\, y)_{\Omega}+\tfrac{1}{2\alpha}\|\textup{div}\,y+\alpha\,(v-g)\|_{2,\Omega}^2\,.}
	\end{align}
	The \hspace*{-0.1mm}first \hspace*{-0.1mm}two \hspace*{-0.1mm}parts \hspace*{-0.1mm}of \hspace*{-0.1mm}\eqref{eq:tv_eta}  \hspace*{-0.1mm}measure  \hspace*{-0.1mm}a \hspace*{-0.1mm}violation \hspace*{-0.1mm}of \hspace*{-0.1mm}the \hspace*{-0.1mm}optimality \hspace*{-0.1mm}condition \hspace*{-0.1mm}${
	\vert \textup{D}v\vert(\Omega)=-(v,\textup{div}\, y)_{\Omega}}$ 
	and the third part of \eqref{eq:tv_eta}  measures a violation of the optimality condition $\textup{div}\, y = \alpha(v-g)$ a.e.\ in $\Omega$.\vspace*{-1mm}
	
	\subsubsection{Discrete problem}\vspace*{-0.5mm}
	
	\hspace*{5mm}Let $\phi_h\in C^1(\mathbb{R}^d)$  
	 and  $g_h\in \mathcal{L}^0(\mathcal{T}_h)$ be approximations of the Euclidean length $\vert \cdot\vert$~and~$g\in L^2(\Omega)$. Then, the \textit{discrete Rudin--Osher--Fatemi image de-noising model} is given via the minimization of the functional $I_h^{cr}\colon \mathcal{S}^{1,cr}(\mathcal{T}_h)\to \mathbb{R}$, for every $v_h\in \mathcal{S}^{1,cr}(\mathcal{T}_h)$ defined by 
	\begin{align}\label{eq:tv_discrete_primal}
		I_h^{cr}(v_h)=\int_{\Omega}{\phi_h(\nabla_h v_h)\,\mathrm{d}x}+\tfrac{\alpha}{2}\|\Pi_h v_h-g_h\|_{2,\Omega}^2\,.
	\end{align} 
	Then, a corresponding (Fenchel) dual problem is given via the maximization~of~the~functional $D_h^{rt}\colon \mathcal{R}T^0_0(\mathcal{T}_h)\to \mathbb{R}\cup\{-\infty\}$, for every $y_h\in \mathcal{R}T^0_0(\mathcal{T}_h)$ defined by 
	\begin{align}\label{eq:tv_discrete_dual}
		D_h^{rt}(y_h)\coloneqq -\int_{\Omega}{\phi_h^*(\Pi_h y_h)\,\mathrm{d}x}-\tfrac{1}{2\alpha}\|\textup{div}\,y_h+\alpha\,g_h\|_{2,\Omega}^2+\tfrac{\alpha}{2}\|g_h\|_{2,\Omega}^2\,.
	\end{align}
	Appealing to Proposition \ref{prop:gen_marini}(i), given a discrete primal solution $u_h^{cr}\in \mathcal{S}^{1,cr}(\mathcal{T}_h)$, a discrete dual solution $z_h^{rt}\in  \mathcal{R}T^0_0(\mathcal{T}_h)$ is immediately available via the generalized Marini formula 
	\begin{align}\label{eq:rof_marini}
		z_h^{rt}=D\phi_h(\nabla_h u_h^{cr})+\frac{\alpha (\Pi_hu_h^{cr}-g_h)}{d}(\textup{id}_{\mathbb{R}^d}-\Pi_h\textup{id}_{\mathbb{R}^d})\quad\text{ a.e.\  in }\Omega\,.
	\end{align} 
	
	\newpage
	\subsection{Jumping coefficients}\vspace*{-0.5mm}\label{subsec:jumping}\enlargethispage{12mm}
	
	\subsubsection{Continuous problem}\vspace*{-0.5mm}
	\hspace*{5mm}The derivation of sharp a posteriori error estimates is particularly challenging if a partial differential equation involves coefficients whose minimal and maximal values are not comparable:
	for a right-hand side $f\in L^2(\Omega)$, Neumann boundary data $g\in W^{-\frac{1}{2},2}(\Gamma_N)$, and Dirichlet boundary data $u_D\in W^{\frac{1}{2}, 2}(\partial\Omega)$, a
	related model problem is given via the minimization~of~the~functional $I\colon W^{1,2}(\Omega)\to\mathbb{R}\cup\{+\infty\}$, for every $v\in W^{1,2}(\Omega)$ defined by\vspace*{-0.5mm}
	\begin{align}\label{eq:jumping_primal}
		I(v)\coloneqq \tfrac{1}{2}\|A^{\frac{1}{2}}(\cdot) \nabla v\|_{2,\Omega}^2 -(f,v)_{\Omega}-\langle g,v\rangle_{\Gamma_N}+I_{\{u_D\}}^{\Gamma_D}(v)\,,
	\end{align}
	where $A \colon \Omega \to \mathbb{R}^{d\times d}$ is a tensor-valued mapping having the following properties:
	\begin{itemize}[noitemsep,topsep=2pt,leftmargin=!,labelwidth=\widthof{(A.2)},font=\itshape]
		\item[(B.1)]\hypertarget{B.1}{} $A \colon \Omega \to \mathbb{R}^{d\times d}$ is (Lebesgue) measurable;
		\item[(B.2)]\hypertarget{B.2}{} For a.e.\ $x\in \Omega$, the tensor $A(x)\in \mathbb{R}^{d\times d}$ is symmetric and positive definite;
		\item[(B.3)]\hypertarget{B.3}{} There exist constants $\alpha_m,\alpha_M>0$ such that for every $r\in \mathbb{R}^d$ and a.e.\  $x\in \Omega$, it holds that\vspace*{-0.5mm}
		\begin{align*}
			\alpha_m\vert r\vert^2\leq \vert A^{\frac{1}{2}}(x)r\vert^2=A(x)r\cdot r \leq \alpha_M\vert r\vert^2\,.
		\end{align*}
	\end{itemize}
	Note that, due to (\hyperlink{B.2}{B.2}), for a.e.\ $x\in \Omega$, the tensor $A(x)\in \mathbb{R}^{d\times d}$ admits a root $A(x)^{\frac{1}{2}}\in \mathbb{R}^{d\times d}$.
	The tensor-valued mapping $A^{\frac{1}{2}} \colon \Omega \to \mathbb{R}^{d\times d}$ in \eqref{eq:jumping_primal} is defined by $A^{\frac{1}{2}}(x)\coloneqq A(x)^{\frac{1}{2}}$~for~a.e.~$x\in \Omega$.
	Since for a.e.\ $x\in \Omega$, the tensor $A(x)^{\frac{1}{2}}\in \mathbb{R}^{d\times d}$ is symmetric and positive definite,~it~is~invertible.
	The tensor-valued mapping $A^{-\frac{1}{2}} \colon \Omega \to \mathbb{R}^{d\times d}$  is defined by $A^{-\frac{1}{2}}(x)\coloneqq A(x)^{-\frac{1}{2}}$~for~a.e.~$x\in \Omega$.
	If $\phi\colon \Omega\times \mathbb{R}^d\to \mathbb{R}$ is defined by 	$\phi(x,r)\coloneqq\frac{1}{2}\vert A^{\frac{1}{2}}(x)r\vert^2$ for a.e.\ $x\in \Omega$ and all $r\in \mathbb{R}^d$, then $\phi^*\colon \Omega\times \mathbb{R}^d\to \mathbb{R}$ for a.e.\ $x\in \Omega$ and every $s\in \mathbb{R}^d$ is found to be given via\vspace*{-0.5mm}
	\begin{align*}
		 \phi^*(x,s)=\vert A^{-\frac{1}{2}}(x)s\vert^2\,.
	\end{align*}
	Then, a corresponding (Fenchel) dual problem is given via the maximization of the functional $D\colon W^2(\textup{div};\Omega)\to \mathbb{R}\cup\{-\infty\}$,
	 for every  $ y\in W^2(\textup{div};\Omega)$ defined by 
	\begin{align}\label{eq:jumping_dual}
	D(y)\coloneqq -\tfrac{1}{2}\|A^{-\frac{1}{2}}(\cdot) y\|_{2,\Omega}^2+\langle y\cdot n,u_D\rangle_{\partial\Omega}-\langle g,u_D\rangle_{\Gamma_N}-I_{\{-f\}}^{\Omega}(\textup{div}\,y)-I_{\{g\}}^{\Gamma_N}(y\cdot n)\,.
	\end{align}
	As \hspace*{-0.1mm}a \hspace*{-0.1mm}result, \hspace*{-0.1mm}given \hspace*{-0.1mm}the \hspace*{-0.1mm}two \hspace*{-0.1mm}functionals \hspace*{-0.1mm}\eqref{eq:jumping_primal} \hspace*{-0.1mm}and \hspace*{-0.1mm}\eqref{eq:jumping_dual}, \hspace*{-0.1mm}the \hspace*{-0.1mm}corresponding \hspace*{-0.1mm}primal-dual~\hspace*{-0.1mm}gap~\hspace*{-0.1mm}\mbox{estimator} $\eta_{\textup{gap}}^2\colon  W^{1,2}(\Omega)\times W^2(\textup{div};\Omega)\to [0,+\infty]$,
	for every $v\in W^{1,2}(\Omega)$ with $v=u_D$ a.e.\ on~$\Gamma_D$~and~$ y\in W^2(\textup{div};\Omega)$ with $I_{\{g\}}^{\Gamma_N}(y\cdot n)=0$ and $\textup{div}\, y=-f$ a.e.\ in $\Omega$, using the integration-by-parts formula~\eqref{eq:pi_cont},  is given via\vspace*{-0.5mm}
	\begin{align}\label{eq:jumping_eta}
		\smash{\eta^2_{\textup{gap}}}(v,y)=\tfrac{1}{2}\|A^{\frac{1}{2}}(\cdot)\nabla v-A^{-\frac{1}{2}}(\cdot)y\|_{2,\Omega}^2\,,
	\end{align}
	measuring the violation of the optimality condition $y=A(\cdot)\nabla v$ a.e.\ in $\Omega$.
	
	\subsubsection{Discrete problem}\vspace*{-0.5mm}
	\hspace*{5mm}Let $A_h\in (\mathcal{L}^0(\mathcal{T}_h))^{d\times d}$ be an  approximation of 
	$A\colon \Omega\to \mathbb{R}^{d\times d}$ satisfying (\hyperlink{B.2}{B.2}). Moreover, let $f_h\in \mathcal{L}^0(\mathcal{T}_h)$, $g_h\in \mathcal{L}^0(\mathcal{S}_h^{\Gamma_N})$, and $u_D^h\in \mathcal{L}^0(\mathcal{S}_h^{\Gamma_D})$ be approximations of $f\in L^2(\Omega)$, $g\in W^{-\frac{1}{2},2}(\Gamma_N)$, and $u_D|_{\Gamma_D}\in W^{\frac{1}{2},2}(\Gamma_D)$. Then, the discrete problem is given via the minimization of the functional $I_h^{cr}\colon \mathcal{S}^{1,cr}(\mathcal{T}_h)\to \mathbb{R}\cup\{+\infty\}$, for every $v_h\in \mathcal{S}^{1,cr}(\mathcal{T}_h)$ defined by\vspace*{-0.5mm}
	\begin{align}
		\label{eq:jumping_discrete_primal}
		I_h^{cr}(v_h)\coloneqq \tfrac{1}{2}\|A_h^{\frac{1}{2}}(\cdot) \nabla_h v_h\|_{2,\Omega}^2 -(f_h,\Pi_hv_h)_{\Omega}-(g_h,\pi_h v_h)_{\Gamma_N}+I_{\{u_D^h\}}^{\Gamma_D}(\pi_h v_h)\,.
	\end{align}
	Then, a corresponding (Fenchel) dual problem is given via the maximization of the functional $D_h^{rt}\colon \mathcal{R}T^0(\mathcal{T}_h)\to \mathbb{R}\cup\{-\infty\}$,
	for every  $ y_h\in \mathcal{R}T^0(\mathcal{T}_h)$ defined by\vspace*{-0.5mm}
	\begin{align}\label{eq:jumping_discrete_dual}
		D_h^{rt}(y_h)\coloneqq -\tfrac{1}{2}\|A_h^{-\frac{1}{2}}(\cdot) y_h\|_{2,\Omega}^2+( y_h\cdot n,u_D^h)_{\Gamma_D}-I_{\{-f_h\}}^{\Omega}(\textup{div}\,y_h)-I_{\{g_h\}}^{\Gamma_N}(y_h\cdot n)\,.
	\end{align}
	Appealing to Proposition \ref{prop:gen_marini}(i),  given a discrete primal solution $u_h^{cr}\in \mathcal{S}^{1,cr}(\mathcal{T}_h)$, a discrete dual solution $z_h^{rt}\in  \mathcal{R}T^0(\mathcal{T}_h)$ is immediately available via the generalized Marini formula\vspace*{-0.5mm}
	\begin{align}\label{eq:jumping_marini}
		z_h^{rt}=A_h(\cdot)\nabla_h u_h^{cr}-\frac{f_h}{d}(\textup{id}_{\mathbb{R}^d}-\Pi_h\textup{id}_{\mathbb{R}^d})\quad\text{ a.e.\  in }\Omega\,.
	\end{align}

	\newpage
	\subsection{Elasto-plastic torsion}\label{subsec:elasto}
	
	\subsubsection{Continuous problem}

	\hspace*{5mm}The \textit{elasto-plastic torsion problem} models the torsion of an infinitely long elasto-plastic cylinder of a cross section $\Omega$ and plasticity yield $r>0$ (\textit{cf}.\ \cite{daniele2014,CH22}), where we, for simplicity,~set~$r=1$:
	for a right-hand side $f\in L^2(\Omega)$, Neumann boundary data $g\in W^{-\frac{1}{2},2}(\Gamma_N)$, and Dirichlet boundary data $u_D\in W^{\frac{1}{2},2}(\partial\Omega)$, it  is given via the minimization of the functional $I\colon W^{1,2}(\Omega)\to\mathbb{R}\cup\{+\infty\}$, for every $v\in W^{1,2}(\Omega)$ defined by\enlargethispage{8mm}
	\begin{align}\label{eq:elasto_primal}
		I(v)\coloneqq \tfrac{1}{2}\|\nabla v\|_{2,\Omega}^2+I_{K_1(0)}(\nabla v)  -(f,v)_{\Omega}-\langle g,v\rangle_{\Gamma_N}+I_{\{u_D\}}^{\Gamma_D}(v)\,.
	\end{align}
	If $\phi\colon \mathbb{R}^d\to \mathbb{R} \cup\{+\infty\}$
	 is defined by 	$\phi(r)\coloneqq\frac{1}{2}\vert r\vert^2+I_{K_1(0)}(r)$ for all $r\in \mathbb{R}^d$, then $\phi^*\colon  \mathbb{R}^d\to \mathbb{R}$~and every $s\in \mathbb{R}^d$ is found to be given via 
	\begin{align*}
		\phi^*(s)=\begin{cases}
			\tfrac{1}{2}\vert  s\vert ^2&\text{ if }\vert s\vert \leq 1\,,\\
			\vert  s\vert -\tfrac{1}{2}&\text{ else }\,.
		\end{cases} 
	\end{align*}
	Then, a corresponding (Fenchel) dual problem is given via the maximization of the functional $D\colon W^1(\textup{div};\Omega)\to \mathbb{R}\cup\{-\infty\}$,
	for every  $ y\in W^1(\textup{div};\Omega)$ defined by 
	\begin{align}\label{eq:elasto_dual}
		D(y)\coloneqq -\int_\Omega{\phi^*(y)\,\mathrm{d}x} +\langle y\cdot n,u_D\rangle_{\partial\Omega}-\langle g,u_D\rangle_{\Gamma_N}-I_{\{-f\}}^{\Omega}(\textup{div}\,y)-I_{\{g\}}^{\Gamma_N}(y\cdot n)\,.
	\end{align}
	The best of the authors knowledge, establishing the well-posedness the dual problem and a strong duality relation are still open problems.
	As \hspace*{-0.1mm}a \hspace*{-0.1mm}result, \hspace*{-0.1mm}given \hspace*{-0.1mm}the \hspace*{-0.1mm}two \hspace*{-0.1mm}functionals~\hspace*{-0.1mm}\eqref{eq:jumping_primal}~\hspace*{-0.1mm}and~\hspace*{-0.1mm}\eqref{eq:jumping_dual}, \hspace*{-0.1mm}the \hspace*{-0.1mm}corresponding \hspace*{-0.1mm}primal-dual~\hspace*{-0.1mm}gap~\hspace*{-0.1mm}\mbox{estimator} $\eta_{\textup{gap}}^2\colon  W^{1,2}(\Omega)\times W^1(\textup{div};\Omega)\to  [0,+\infty]$,
	for every $v\hspace*{-0.1em}\in\hspace*{-0.1em} W^{1,2}(\Omega)$ with $\vert \nabla v\vert\leq 1$ a.e.\ in $\Omega$ and $v= u_D$ a.e.\ on $\Gamma_D$ and $y\in W^1(\textup{div};\Omega)$ with $I_{\{g\}}^{\Gamma_N}(y\cdot n)=0$ and $\textup{div}\, y=-f$ a.e.\ in $\Omega$, using the integration-by-parts formula~\eqref{eq:pi_cont},  is given via\vspace*{-0.5mm}
	\begin{align}\label{eq:elasto_eta}
		\smash{\eta^2_{\textup{gap}}}(v,y)&=
		\tfrac{1}{2}\|\nabla v\|_{2,\Omega}^2-(\nabla v,y)_\Omega+\int_\Omega{\phi^*(y)\,\mathrm{d}x} \,,
	\end{align}
	measuring the violation of the optimality condition $\frac{1}{2}\vert \nabla v\vert^2-\nabla v\cdot y+\phi^*(y)=0$ a.e.\ in $\Omega$.
	
	\subsubsection{Discrete problem}\vspace*{-0.5mm}
	\hspace*{5mm}Let $\phi_h\colon \hspace*{-0.1em}\mathbb{R}^d\hspace*{-0.1em}\to\hspace*{-0.1em} \mathbb{R}\cup\{+\infty\}$ 
	  be an  approximation of $I_{K_1(0)}\colon \mathbb{R}^d\hspace*{-0.1em}\to\hspace*{-0.1em} \mathbb{R}\cup\{+\infty\}$~such~that~${\phi_h^*\hspace*{-0.1em}\in\hspace*{-0.1em} C^1(\mathbb{R}^d)}$. Moreover, let $f_h\in \mathcal{L}^0(\mathcal{T}_h)$, $g_h\in \mathcal{L}^0(\mathcal{S}_h^{\Gamma_N})$, and $u_D^h\in \mathcal{L}^0(\mathcal{S}_h^{\Gamma_D})$ be approximations~of~${f\in L^2(\Omega)}$, $g\in W^{-\frac{1}{2},2}(\Gamma_N)$, and $u_D|_{\Gamma_D}\in W^{\frac{1}{2},2}(\Gamma_D)$. Then, the discrete problem is given via the minimization of the functional $I_h^{cr}\colon \mathcal{S}^{1,cr}(\mathcal{T}_h)\to \mathbb{R}\cup\{+\infty\}$, for every $v_h\in \mathcal{S}^{1,cr}(\mathcal{T}_h)$ defined by 
	\begin{align}
		\label{eq:elasto_discrete_primal}
		I_h^{cr}(v_h)\coloneqq \tfrac{1}{2}\|\nabla_h v_h\|_{2,\Omega}^2+\int_{\Omega}{\phi_h(\nabla_h v_h)\,\mathrm{d}x} -(f_h,\Pi_hv_h)_{\Omega}-(g_h,\pi_h v_h)_{\Gamma_N}+I_{\{u_D^h\}}^{\Gamma_D}(\pi_h v_h)\,.
	\end{align}
	Then, a corresponding (Fenchel) dual problem is given via the maximization of the functional $D_h^{rt}\colon \mathcal{R}T^0(\mathcal{T}_h)\to \mathbb{R}\cup\{-\infty\}$,
	for every  $ y_h\in \mathcal{R}T^0(\mathcal{T}_h)$ defined by 
	\begin{align}\label{eq:elasto_discrete_dual}
		D_h^{rt}(y_h)\coloneqq -\int_\Omega{\phi^*_h(\Pi_hy_h)\,\mathrm{d}x}  +( y_h\cdot n,u_D^h)_{\Gamma_D}-I_{\{-f_h\}}^{\Omega}(\textup{div}\,y_h)-I_{\{g_h\}}^{\Gamma_N}(y_h\cdot n)\,.
	\end{align}
	Proceeding as in the proof Proposition \ref{prop:gen_marini}(ii),  given a discrete dual solution $z_h^{rt}\in  \mathcal{R}T^0(\mathcal{T}_h)$, a discrete primal solution $u_h^{cr}\in \mathcal{S}^{1,cr}(\mathcal{T}_h)$ is immediately available via the generalized~Marini~\mbox{formula}
	\begin{align}\label{eq:elasto_marini}
		u_h^{cr}=\lambda_h^{rt}+D\phi^*_h(\Pi_h z_h^{rt})\cdot (\textup{id}_{\mathbb{R}^d}-\Pi_h\textup{id}_{\mathbb{R}^d})\quad\text{ a.e.\  in }\Omega\,,
	\end{align} 
	where $\lambda_h^{rt}\in \mathcal{L}^0(\mathcal{T}_h)$ is such that for every $(y_h,\mu_h)^\top\in \mathcal{R}T^0(\mathcal{T}_h)\times \mathcal{L}^0(\mathcal{T}_h)$, it holds that
	\begin{align*}
		(D\phi_h^*(\Pi_hz_h^{rt}),\Pi_hy_h)_{\Omega}+(\lambda_h^{rt},\textup{div}\,y_h)_{\Omega}&=0\,,\\
		(\textup{div}\,z_h^{rt},\mu_h)_{\Omega}+(f_h,\mu_h)_{\Omega}&=0\,.
	\end{align*}
	
	\newpage
	\if0
	\subsection{Navier--Lam\'e problem}\label{subsec:navier-lame}\vspace*{-0.5mm}

	\subsubsection{Continuous problem}\vspace*{-0.5mm}\enlargethispage{9mm}
	
	\hspace*{5mm}Small deformations of elastic bodies are modeled by the Navier--Lam\'e equations (\textit{cf}.\ \cite{ciarlet}): for
	 an external force $f\in L^2(\Omega;\mathbb{R}^d)$, Neumann boundary data $g\in W^{-\frac{1}{2},2}(\Gamma_N;\mathbb{R}^d)$, and Dirichlet boundary data $u_D\hspace*{-0.15em}\in \hspace*{-0.15em} W^{\frac{1}{2},2}(\partial\Omega;\mathbb{R}^d)$,  the \textit{Navier--Lam\'e minimization problem}~is~given~via~\mbox{optimality} condition \hspace*{-0.1mm}of \hspace*{-0.1mm}the \hspace*{-0.1mm}minimization \hspace*{-0.1mm}of \hspace*{-0.1mm}the \hspace*{-0.1mm}functional \hspace*{-0.1mm}$I\colon \hspace*{-0.15em}U_d^2(\Omega)\hspace*{-0.15em}\to\hspace*{-0.15em} \mathbb{R}\cup\{+\infty\}$, \hspace*{-0.1mm}for~\hspace*{-0.1mm}every~\hspace*{-0.1mm}${v\hspace*{-0.15em}\in\hspace*{-0.15em} U_d^2(\Omega)}$~\hspace*{-0.1mm}\mbox{defined}~\hspace*{-0.1mm}by\vspace*{-0.5mm}
	\begin{align}\label{eq:navier-lame_primal}
			I(v)\coloneqq \tfrac{1}{2}\|\mathbb{C}^{\frac{1}{2}} \varepsilon(v)\|_{2,\Omega}^2-(f, v)_{\Omega}-\langle g,v\rangle_{\Gamma_N}+I_{\{u_D\}}^{\Gamma_D}(v)\,.
	\end{align}
	Here, the symmetric gradient $\varepsilon\colon U_d^2(\Omega)\to L^2(\Omega; \mathbb{R}^{d\times d} )$ is defined by $\varepsilon(v)\coloneqq \frac{1}{2}(\nabla v+\nabla v^\top)$	a.e.\ in $\Omega$
	for all $v\in U_d^2(\Omega)$ and the positive definite linear operator $\mathbb{C} \colon \mathbb{R}^{d\times d} \to \mathbb{R}^{d\times d}$  is defined by $\mathbb{C}R \coloneqq 2\,\mu\, R+ \lambda\,(\textup{tr}\, R)\,\textup{I}_{d\times d}$ for all   $R\in\mathbb{R}^{d\times d}$, where $\lambda, \mu \hspace*{-0.1em}>\hspace*{-0.1em} 0$ denote the  so-called~Lam\'e~constants. If  $\phi\colon \hspace*{-0.1em}\mathbb{R}^{d\times d} \hspace*{-0.1em}\to\hspace*{-0.1em} \mathbb{R}_{\ge 0}$ is defined by $	\phi ( R ) \hspace*{-0.1em}\coloneqq\hspace*{-0.1em} \frac{1}{2} \vert  \mathbb{C}^{\frac{1}{2}} R \vert^2 \hspace*{-0.1em}=\hspace*{-0.1em} \frac{1}{2}\mathbb{C} R :R$ for all $R\in \mathbb{R}^{d\times d}$, then ${\phi^*\colon \mathbb{R}^{d\times d} \to \mathbb{R}_{\ge 0}}$ for every $S\in \mathbb{R}^{d\times d}$ is found to be given via
	\begin{align*}
		\phi^*( S ) = \tfrac{1}{2} \vert  \mathbb{C}^{-\frac{1}{2}} S \vert^2 = \frac{1}{2}\mathbb{C}^{-1} S : S\,,
	\end{align*}
	where $\mathbb{C}^{-1}\colon \mathbb{R}^{d\times d} \to \mathbb{R}^{d\times d} $ is given via $\mathbb{C}^{-1}S=\frac{1}{d^2\lambda + 2d\mu }\,(\textup{tr}\,S)\,\textup{I}_{d\times d}+\smash{\frac{1}{2\mu }}\,\textup{dev}\, S$ for all $S\in \mathbb{R}^{d\times d}$, where $\textup{dev}\, S \coloneqq S-\smash{\frac{1}{d}}\,(\textup{tr}\,S)\,\textup{I}_{d\times d}$ is the deviatoric part.  Then,
	a corresponding~(Fenchel) dual problem is given via the maximization of the functional $D\colon Z_d^2(\Omega)\to \mathbb{R}\cup\{-\infty\}$,~for~every~${y\in Z_d^2(\Omega)}$ defined by\vspace*{-1mm}
	\begin{align}\label{eq:navier-lame_dual}
		D(y)\coloneqq -\tfrac{1}{2}\|\mathbb{C}^{-\frac{1}{2}} y\|_{2,\Omega}^2+\langle yn,u_D\rangle_{\partial\Omega}-\langle g,u_D\rangle_{\Gamma_N}-I_{\{-f\}}^{\Omega}(\textup{div}\,y)-I_{\{g\}}^{\Gamma_N}(yn)\,.
	\end{align} 
	As a result, given the two functionals \eqref{eq:navier-lame_primal} and \eqref{eq:navier-lame_dual}, the corresponding primal-dual~gap~estimator $\eta^2_{\textup{gap}}\colon U_d^2(\Omega)\times Z_d^2(\Omega)\to \mathbb{R}$, for every $v\in U_d^2(\Omega)$ with $v=u_D$ a.e.\ on $\Gamma_D$ and $y\in Z_d^2(\Omega)$ with $I_{\{g\}}^{\Gamma_N}(yn)=0$ and $\textup{div}\,y=-f$ a.e.\ in $\Omega$, using the integration-by-parts~formula~\eqref{eq:pi_cont}, is~given~via\vspace*{-1mm}
	\begin{align}\label{eq:navier-lame_eta}
		\eta^2_{\textup{gap}}(v,y)\coloneqq \tfrac{1}{2}\|\mathbb{C}^{\frac{1}{2}}\varepsilon(v)-\mathbb{C}^{-\frac{1}{2}}y\|_{2,\Omega}^2\,.
	\end{align}
	
	\subsubsection{Discrete problem}\vspace*{-0.5mm}
	
	\hspace*{5mm}The canonical discretization of \eqref{eq:navier-lame_primal} with a Crouzeix--Raviart method is unstable due to the lack of a discrete (non-conforming variant of) Korn’s inequality, \textit{i.e.}, in general,~there~exist~non-trivial vector fields $ v_h \in U_d^{cr}(\mathcal{T}_h)\setminus\{0\}$ with $ \varepsilon_h (v_h ) = 0$ a.e.\ in $\Omega$, where the element-wise symmetric gradient $\varepsilon_h \colon \hspace*{-0.15em}U_d^{cr}(\mathcal{T}_h)\hspace*{-0.15em}\to\hspace*{-0.15em} (\mathcal{L}^0(\mathcal{T}_h))^{d\times d}$ is defined by $\varepsilon_h (v_h ) |_T\hspace*{-0.15em}\coloneqq\hspace*{-0.15em} \varepsilon(v_h|_T )$~for~all~${T\hspace*{-0.15em}\in\hspace*{-0.15em} \mathcal{T}_h}$~and~${v_h\hspace*{-0.15em}\in\hspace*{-0.15em} U_d^{cr}(\mathcal{T}_h)}$.
	Hence, a stabilization is required, for  $f_h\in (\mathcal{L}^0(\mathcal{T}_h))^d$, $g_h\in (\mathcal{L}^0(\mathcal{S}_h^{\Gamma_N}))^d$, and $u_D^h\in (\mathcal{L}^0(\mathcal{S}_h^{\Gamma_D}))^d$, leading to a functional $I_h^{cr,\textup{stab}}\colon U_d^{cr}(\mathcal{T}_h)\to \mathbb{R}\cup\{+\infty\}$, for every $v_h\in U_d^{cr}(\mathcal{T}_h)$ defined by\vspace*{-0.5mm}
	\begin{align*}
		I_h^{cr,\textup{stab}}(v_h)\coloneqq \tfrac{1}{2}\| \mathbb{C}^{\frac{1}{2}} \varepsilon_h(v_h)\|_{2,\Omega}^2+s_h(v_h,v_h)-(f_h, \Pi_h v_h)_{\Omega}-(g_h, \pi_h v_h)_{\Gamma_N}+I_{\{u_D\}}^{\Gamma_D}(\pi_hv_h)\,,
	\end{align*}
	where $s_h\colon U_d^{cr}(\mathcal{T}_h)\times U_d^{cr}(\mathcal{T}_h)\to \mathbb{R}$ is a symmetric bilinear form, so that the problem~is~\mbox{well-posed} (\textit{cf}.\ \cite{BS08}). 
	Given the minimizer $u_h^{cr}\in U_d^{cr}(\mathcal{T}_h)$ of $I_h^{cr,\textup{stab}}\colon U_d^{cr}(\mathcal{T}_h)\to \mathbb{R}\cup\{+\infty\}$, we choose a  residual given via element-wise gradient of a Crouzeix--Raviart vector field, \textit{i.e.}, we~choose~${r_h \in U_{d,D}^{cr}(\mathcal{T}_h)}$ such that for every $v_h\in  U_{d,D}^{cr}(\mathcal{T}_h)$, it holds that\vspace*{-0.5mm}
	\begin{align}\label{eq:navier-lame_lift}
		(\nabla_h r_h,\nabla_h v_h)_{\Omega}=s_h(u_h,v_h)=(f_h, \Pi_h v_h)_{\Omega}+(g_h, \pi_h v_h)_{\Gamma_N}-(\mathbb{C}\varepsilon_h (u_h^{cr}),\nabla_h v_h)_{\Omega}\,.
	\end{align}
	Then, from \eqref{eq:navier-lame_lift}, proceeding as in the proof of Proposition \ref{prop:gen_marini}(i), we find that a tensor field $z_h^{rt}\in Z_d^{rt}(\mathcal{T}_h)$ with $	\Pi_h z_h^{rt}=\mathbb{C}\varepsilon_h (u_h^{cr})+\nabla_hr_h$ a.e.\ in $\Omega$, $\textup{div}\, z_h^{rt} = -f_h$ a.e.\ in~$\Omega$, and $z_h^{rt} n =g_h$ a.e.\ in $\Gamma_N$  is given via the generalized Marini formula\vspace*{-0.5mm}
	\begin{align}\label{eq:navier-lame_marini}
		z_h^{rt}=\mathbb{C}\varepsilon_h (u_h^{cr})+\nabla_hr_h-\frac{1}{d}f_h\otimes (\textup{id}_{\mathbb{R}^d}-\Pi_h\textup{id}_{\mathbb{R}^d})\quad\text{ a.e.\  in }\Omega\,.
	\end{align} 
	The possible asymmetry of \eqref{eq:navier-lame_marini} can be
	seen as part of the discretization error. In general, it is not true that
	\eqref{eq:navier-lame_marini} is optimal for a discrete dual problem. It is, however, admissible in the continuous dual problem \eqref{eq:navier-lame_dual} if $f=f_h$ and $g=g_h$.\fi
	
	\newpage
	\subsection{Stokes' problem}\vspace*{-0.5mm}\label{subsec:stokes}
	
	\subsubsection{Continuous problem}\vspace*{-0.5mm}\enlargethispage{12mm}
	
	\hspace*{5mm}Stokes' problem can be formulated as a minimization problem over divergence-free velocity fields (\textit{cf}.\ \cite{Boffi2008}): for an external force $f\in L^2(\Omega;\mathbb{R}^d)$, Neumann boundary data  $g\in W^{-\frac{1}{2},2}(\Gamma_N;\mathbb{R}^d)$, and Dirichlet boundary data $u_D\in W^{\frac{1}{2},2}(\partial\Omega;\mathbb{R}^d)$, the \textit{Stokes minimization problem} is defined via the minimization of the functional  $I\colon U_d^2(\Omega)\to \mathbb{R}\cup\{+\infty\}$, for every $v\in U_d^2(\Omega)$~defined~by 
	\begin{align}\label{eq:stokes_primal}
		I(v)\coloneqq \tfrac{1}{2}\| \nabla v\|_{2,\Omega}^2+I_{\{0\}}^{\Omega}(\textup{tr}\,\nabla v)-(f, v)_{\Omega}-\langle g,v\rangle_{\Gamma_N}+I_{\{u_D\}}^{\Gamma_D}(v)\,.
	\end{align}
	where the indicator functional $I_{\{0\}}^{\Omega}\colon L^2(\Omega)\to \mathbb{R}\cup\{+\infty\}$ for every $\widehat{v}\in L^2(\Omega)$ is defined by\vspace*{-0.5mm}
	\begin{align*}
		I_{\{0\}}^{\Omega}(\widehat{v})\coloneqq \begin{cases}
			0&\text{ if }\widehat{v}=0\text{ a.e.\ in }\Omega\,,\\
			+\infty&\text{ else}\,.
		\end{cases}
	\end{align*}
	If the function $\phi\colon \mathbb{R}^{d\times d}\to \mathbb{R}\cup\{+\infty\}$ is defined by 	$\phi(R)\coloneqq\frac{1}{2}\vert R\vert^2-I_{\{0\}}^{\Omega}(\textup{tr}\,R)$ for all ${R\in\mathbb{R}^{d\times d}}$, then $\phi^*\colon   \mathbb{R}^{d\times d}\to \mathbb{R}\cup\{+\infty\}$ for every $S\in \mathbb{R}^{d\times d}$ is found to be given via
	\begin{align*}
		\phi^*(S)=\tfrac{1}{2}\vert \textup{dev}\,S\vert^2\,.
	\end{align*}
	Then, a corresponding (Fenchel) dual problem is given via the maximization of the functional $D\colon Z_d^2(\Omega)\to \mathbb{R}\cup\{-\infty\}$, for every $y\in Z_d^2(\Omega)$ defined by\vspace*{-0.5mm}
	\begin{align}\label{eq:stokes_dual}
		D(y)\coloneqq -\tfrac{1}{2}\|\textup{dev}\, 	y\|_{2,\Omega}^2+\langle yn,u_D\rangle_{\partial\Omega}-\langle g,u_D\rangle_{\Gamma_N}-I_{\{-f\}}^{\Omega}(\textup{div}\,y)-I_{\{g\}}^{\Gamma_N}(yn)\,.
	\end{align}
	As a result, given the two functionals \eqref{eq:stokes_primal} and \eqref{eq:stokes_dual}, 
	the corresponding primal-dual gap estimator $\eta_{\textup{gap}}^2\colon U_d^2(\Omega)\times Z_d^2(\Omega)\to \mathbb{R}$, for every 
	$v\in U_d^2(\Omega)$ with $v=u_D$ a.e.\ on~$\Gamma_D$ and $\textup{tr}\,\nabla v=0$ a.e.\ in $\Omega$ and $y\in Z_d^2(\Omega)$ with $I_{\{g\}}^{\Gamma_N}(yn)=0$ and $\textup{div}\,y=-f$ a.e.\ in $\Omega$, 
	using the integration-by-parts formula \eqref{eq:pi_cont} and that $(\textup{dev}\,y,\nabla v)_{\Omega}=(y,\nabla v)_{\Omega}$, 
	is~given~via
	\begin{align*}
		\eta_{\textup{gap}}^2(v,y)=\tfrac{1}{2}\|\nabla v-y \|_{2,\Omega}^2\,,
	\end{align*}
	measuring the violation of the optimality condition $y=\nabla v$ a.e.\ in $\Omega$.

	\subsubsection{Discrete problem}
	
	\hspace*{5mm}Let $f_h\hspace*{-0.12em}\in \hspace*{-0.12em}(\mathcal{L}^0(\mathcal{T}_h))^d$, $g_h\hspace*{-0.12em}\in\hspace*{-0.12em} (\mathcal{L}^0(\mathcal{S}_h^{\Gamma_N}))^d$, and $u_D^h\hspace*{-0.12em}\in\hspace*{-0.12em} (\mathcal{L}^0(\mathcal{S}_h^{\Gamma_D}))^d$ be~\mbox{approximations}~of~${f\hspace*{-0.12em}\in\hspace*{-0.12em} L^2(\Omega;\mathbb{R}^d)}$, $g\in W^{-\frac{1}{2},2}(\Gamma_N;\mathbb{R}^d)$, and $u_D|_{\Gamma_D}\in W^{\frac{1}{2},2}(\Gamma_D;\mathbb{R}^d)$. Then, the \textit{discrete Stokes minimization problem} is given via the minimization of 
	the functional $I_h^{cr}\colon U_d^{cr}(\mathcal{T}_h)\to \mathbb{R}\cup\{+\infty\}$, for every $v_h\in U_d^{cr}(\mathcal{T}_h)$ defined by 
	\begin{align*}
		I_h^{cr}(v_h)\coloneqq \tfrac{1}{2}\| \nabla_h v_h\|_{2,\Omega}^2+I_{\{0\}}^{\Omega}(\textup{tr}\,\nabla_h v_h)-(f_h,\Pi_h v_h)_{\Omega}-(g_h,\pi_hv_h)_{\Gamma_N}+I_{\{u_D^h\}}^{\Omega}(\pi_hv_h)\,.
	\end{align*}
	Then, a corresponding (Fenchel) dual problem is given via the maximization of the functional $D_h^{rt}\colon Z_d^{rt}(\mathcal{T}_h)\to \mathbb{R}\cup\{-\infty\}$, for every $y_h\in Z_d^{rt}(\mathcal{T}_h)$ defined by
	\begin{align*}
		D_h^{rt}(y_h)\coloneqq -\tfrac{1}{2}\|\Pi_h \textup{dev}\, 	y_h\|_{2,\Omega}^2+
		(y_hn, u_D^h)_{\Gamma_D} -I_{\{-f_h\}}^{\Omega}(\textup{div}\,y_h)-I_{\{g_h\}}^{\Gamma_N}(y_hn)\,.
	\end{align*} 
	Appealing to Proposition \ref{prop:gen_marini}(i), given a discrete primal solution $u_h^{cr}\in U_d^{cr}(\mathcal{T}_h)$, a discrete dual solution $z_h^{rt}\in Z_d^{rt}(\mathcal{T}_h)$ is immediately available  via the generalized Marini formula\vspace*{-1mm}
	\begin{align*}
		z_h^{rt}=\nabla_h u_h^{cr}-\frac{1}{d} f_h\otimes(\textup{id}_{\mathbb{R}^d}-\Pi_h\textup{id}_{\mathbb{R}^d})\quad\text{ a.e.\  in }\Omega\,.
	\end{align*}
	Note that, different from the previous model problem, an admissible approximation $\overline{u}_h^{cr}\in U_d^2(\Omega)$,
	\textit{i.e.}, $\overline{u}_h^{cr}=u_D$ a.e.\ on $\Gamma_D$ and $\textup{tr}\,\nabla\overline{u}_h^{cr}=0$ a.e.\ in $\Omega$, 
	cannot be obtained via simple node-averaging since, in general, we have that $\vert \{\textup{tr}\,\nabla \Pi_h^{av}u_h^{cr}\neq 0\}\vert>0$, although, by construction, we have that
	$\textup{tr}\,\nabla_h u_h^{cr}=0$ a.e.\ in $\Omega$. Instead an approximation $\overline{u}_h^{cr}\in U_d^2(\Omega)$ can be obtained via node-averaging  combined with a local divergence-correction procedure (\textit{cf}.\ \cite{zanotti_stokes}), in which one solves local discrete Stokes problems in  finite element spaces with higher polynomial degree. Since these local problems can be solved in parallel, the overall cost of the  local divergence-correction~procedure~is~moderate.
	\newpage

	\section{Equivalence to residual type error estimators}\label{sec:equiv_residuals}
	
	\hspace*{5mm}In the case of the \textit{$\varphi$-Dirichlet problem}, \textit{i.e.}, if we have that $\phi\coloneqq\varphi\circ \vert \cdot\vert\in C^0(\mathbb{R}^d)\cap C^2(\mathbb{R}^d\setminus\{0\})$, where $\varphi\colon \mathbb{R}_{\ge 0}\to \mathbb{R}_{\ge 0}$ is an $N$-function (\textit{cf}.\  Appendix \ref{subsec:auxiliary}) that satisfies the following conditions (\textit{cf}.\  \cite[Assumption 1]{DK08}):
	\begin{itemize}[noitemsep,topsep=2pt,leftmargin=!,labelwidth=\widthof{(C.4)},font=\itshape]
		\item[(C.1)]\hypertarget{C.1}{}  $\varphi$ satisfies $\Delta_2$-condition (\textit{i.e.}, $\Delta_2(\varphi)<\infty$) and the $\nabla_2$-condition (\textit{i.e.}, $\Delta_2(\varphi^*)<\infty$);
		\item[(C.2)]\hypertarget{C.2}{}  $\varphi\in C^2(0,\infty)$ and uniformly with respect to $t\ge 0$, it holds that\footnote{Here, we employ the notation $f\sim g $ for two (Lebesgue) measurable functions $f,g\colon\Omega\to \mathbb{R}$, if there exists a constant $c>0$ such that $c^{-1}f\leq g\leq cf$ almost everywhere in $\Omega$.} 
		\begin{align*}
			\varphi'(t)\sim t\,\varphi''(t)\,;
		\end{align*}
	\end{itemize}
	and if we have that $\psi(x,\cdot)\coloneqq (s\mapsto -f(x)s)\hspace*{-0.05em}\in\hspace*{-0.05em} C^1(\mathbb{R})$ for a.e.\ $x\in \Omega$, assuming that ${f\hspace*{-0.05em}=\hspace*{-0.05em}f_h\hspace*{-0.05em}\in\hspace*{-0.05em} \mathcal{L}^0(\mathcal{T}_h)}$ and $g=g_h=0$,
	we can
	relate the primal-dual gap estimator to the  residual type~\mbox{estimator}~in~\cite{DK08}, which, in fact, coincides with the standard residual type estimator for the Poisson~problem~\eqref{intro:poisson} (\textit{i.e.}, \hspace*{-0.1mm}$\varphi(t)\hspace*{-0.175em}\coloneqq\hspace*{-0.175em}\frac{1}{2} t^2$ \hspace*{-0.1mm}for \hspace*{-0.1mm}ll \hspace*{-0.1mm}$t\hspace*{-0.175em}\ge\hspace*{-0.175em} 0$).
 \hspace*{-0.1mm}If \hspace*{-0.1mm}$u_h^{p1}\hspace*{-0.175em}\in\hspace*{-0.175em} \mathcal{S}^1_D(\mathcal{T}_h)$ \hspace*{-0.1mm}is \hspace*{-0.1mm}the \hspace*{-0.1mm}unique~\hspace*{-0.1mm}minimizer~\hspace*{-0.1mm}of~\hspace*{-0.1mm}${I_h^{p1}\hspace*{-0.175em}\coloneqq\hspace*{-0.175em} \smash{I|_{\smash{\mathcal{S}^1_D(\mathcal{T}_h)}}}\colon \hspace*{-0.175em}\mathcal{S}^1_D(\mathcal{T}_h)\hspace*{-0.175em}\to\hspace*{-0.175em} \mathbb{R}}$, the residual type~\mbox{estimator} denotes the quantity
	\begin{align}
		\begin{aligned}
			\eta^2_{\textup{res},h}(u_h^{p1})\coloneqq \sum_{T\in \mathcal{T}_h}{\eta_{\textup{res},T}^2(u_h^{p1})}\,,
			\end{aligned}\label{thm:residual.1.1}
	\end{align}
	where, if $h_S\coloneqq \textup{diam}(S)$ for all $S\in \mathcal{S}_h$, for every $T\in \mathcal{T}_h$ and $S\in \mathcal{S}_h^{i}$ with $S\subseteq \partial T$,
	\begin{align}
		\begin{aligned}
			\eta_{\textup{res},T}^2(u_h^{p1})&\coloneqq \eta_{E,T}^2(u_h^{p1})+\sum_{S\in \mathcal{S}_h^{i}\,:\,S\subseteq \partial T}{\eta_{J,S}^2(u_h^{p1})}\,,\\
			\eta_{E,T}^2(u_h^{p1})&\coloneqq \|(\varphi_{\vert \nabla \smash{u_h^{p1}}\vert})^*(h_{\mathcal{T}}\vert f_h\vert)\|_{1,T}\,,\\
			\eta_{J,S}^2(u_h^{p1})&\coloneqq \big\|h_S^{\frac{1}{2}} \jump{F(\nabla u_h^{p1})}_S\big\|_{2,S}^2\,.\end{aligned}\label{thm:residual.1.2}
	\end{align}
	In \eqref{thm:residual.1.2},  for every $a\hspace*{-0.1em}\ge\hspace*{-0.1em} 0$, the function $(\varphi_a)^*\colon\hspace*{-0.1em}\mathbb{R}_{\ge 0}\hspace*{-0.1em}\to \hspace*{-0.1em}\mathbb{R}_{\ge 0}$ is the Fenchel conjugate of ${\varphi_a\colon\hspace*{-0.1em}\mathbb{R}_{\ge 0}\hspace*{-0.1em}\to\hspace*{-0.1em} \mathbb{R}_{\ge 0}}$ (\textit{cf}.\ Appendix~\ref{subsec:auxiliary}) and the function $F\colon \mathbb{R}^d\to \mathbb{R}^d$ for every $r\in \mathbb{R}^d$ is defined by
	\begin{align*}
		F(r)\coloneqq \sqrt{\frac{\varphi'(\vert r\vert )}{\vert r\vert}}r\,.
	\end{align*}
	In \cite[Lem.\ 8 \& Cor.\ 11]{DK08}, it has been shown that the error estimator \eqref{thm:residual.1.1} is reliable and efficient with respect to the primal approximation error, \textit{i.e.}, there exist constants $ c_{\textup{rel}},c_{\textup{eff}}>0$ such that
	\begin{align}
	c_{\textup{rel}}\,\|F(\nabla u_h^{p1})-F(\nabla u)\|_{2,\Omega}^2 \leq \eta_{\textup{res},h}^2(u_h^{p1})\leq c_{\textup{eff}}\,\|F(\nabla u_h^{p1})-F(\nabla u)\|_{2,\Omega}^2\,.\label{eq:rel-eff}
	\end{align}
	
	Generalizing the procedure in  \cite{BreS08,Gud10A,Gud10B,BKAFEM22} and resorting to  properties of the node-averaging quasi-interpolation operator $\Pi_h^{av}\colon\mathcal{S}^{1,cr}_D(\mathcal{T}_h)\to  \mathcal{S}^1_D(\mathcal{T}_h)$ (\textit{cf}.\ Appendix \ref{subsec:node_average}), we are able to establish the global equivalence of the primal-dual gap estimator \eqref{eq:nl_dirichlet_eta} in the case $v= \smash{u_h^{p1}}\in \mathcal{S}^1_D(\mathcal{T}_h)$ and $y=z_h^{rt}\in  \mathcal{R}T^0_N(\mathcal{T}_h)$ to the residual type estimator  \eqref{thm:residual.1.1}.\enlargethispage{10mm}
	
	\begin{theorem}\label{thm:equivalences} Let   $\phi=\varphi\circ \vert \cdot\vert\in C^0(\mathbb{R}^d)\cap C^2(\mathbb{R}^d\setminus\{0\})$, where $\varphi\colon \mathbb{R}_{\ge 0}\to \mathbb{R}_{\ge 0}$ is an $N$-function satisfying (\hyperlink{C.1}{C.1}), (\hyperlink{C.2}{C.2}),   $\psi(x,\cdot)\coloneqq (s\mapsto -f(x)s)\in C^1(\mathbb{R})$ for a.e.\ $x\in \Omega$~for~$f=f_h\in \mathcal{L}^0(\mathcal{T}_h)$, and let $g=g_h=0$.  Then, it holds that
		\begin{align} \eta_{\textup{res},h}^2(\smash{u_h^{p1}}) \sim \smash{\eta^2_{\textup{gap}}}(\smash{u_h^{p1}},z_h^{rt})=\int_{\Omega}{\big\{\phi(\nabla \smash{u_h^{p1}})-\nabla \smash{u_h^{p1}}\cdot z_h^{rt}+\phi^*(z_h^{rt})\big\}\,\mathrm{d}x } \,,
			\label{eq:equivalence}
		\end{align}
		where the equivalence $\sim$ depends only on $\omega_0$, $\Delta_2(\varphi)$, and $\nabla_2(\varphi)$.
	\end{theorem}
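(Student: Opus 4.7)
The plan is to combine the generalized Marini formula~\eqref{eq:nl_dirichlet_marini} with the shifted $N$-function equivalences from~\cite{DK08} and with approximation properties of the node-averaging operator $\Pi_h^{av}\colon \mathcal{S}^{1,cr}_D(\mathcal{T}_h)\to \mathcal{S}^1_D(\mathcal{T}_h)$ recalled in Appendix~\ref{subsec:node_average}. First I would rewrite the gap integrand element-wise, using that on every $T\in \mathcal{T}_h$ both $\nabla u_h^{p1}|_T$ and $\Pi_h z_h^{rt}|_T = D\phi(\nabla_h u_h^{cr})|_T$ are constant, that $\int_T(z_h^{rt}-\Pi_h z_h^{rt})\,\mathrm{d}x=0$, and that $D\phi^*(\Pi_h z_h^{rt})=\nabla_h u_h^{cr}$; adding and subtracting $\phi(\nabla_h u_h^{cr})$ and $\phi^*(\Pi_h z_h^{rt})$ and invoking the Fenchel--Young identity~\eqref{eq:fenchel_young_id} yields the splitting
\begin{align*}
	\eta^2_{\textup{gap}}(u_h^{p1},z_h^{rt})=\int_\Omega\sigma^2_\phi(\nabla u_h^{p1},\nabla_h u_h^{cr})\,\mathrm{d}x+\int_\Omega\sigma^2_{\phi^*}(z_h^{rt},\Pi_h z_h^{rt})\,\mathrm{d}x\,.
\end{align*}

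Second, by the Br\`egman distance characterizations in~\cite{DK08}, one has $\sigma^2_\phi(a,b)\sim\vert F(a)-F(b)\vert^2$ pointwise with constants depending only on $\Delta_2(\varphi)$ and $\nabla_2(\varphi)$, so the first integral above is equivalent to $\|F(\nabla u_h^{p1})-F(\nabla_h u_h^{cr})\|^2_{2,\Omega}$. For the second integral, the bound $\vert z_h^{rt}-\Pi_h z_h^{rt}\vert\leq\tfrac{h_T}{d}\vert f_h\vert$ on $T\in\mathcal{T}_h$ from~\eqref{eq:nl_dirichlet_marini}, combined with the dual equivalence $\sigma^2_{\phi^*}(p,q)\sim(\varphi_{\vert q\vert})^*(\vert p-q\vert)$ and a shift-change from $\vert\Pi_h z_h^{rt}\vert\sim\varphi'(\vert\nabla_h u_h^{cr}\vert)$ to $\vert\nabla u_h^{p1}\vert$ (at the cost of a term absorbable into $\|F(\nabla u_h^{p1})-F(\nabla_h u_h^{cr})\|^2_{2,\Omega}$), identifies $\int_\Omega\sigma^2_{\phi^*}(z_h^{rt},\Pi_h z_h^{rt})\,\mathrm{d}x\sim\sum_{T\in\mathcal{T}_h}\eta^2_{E,T}(u_h^{p1})$.

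The remaining task is to prove $\|F(\nabla u_h^{p1})-F(\nabla_h u_h^{cr})\|^2_{2,\Omega}\sim\sum_{S\in\mathcal{S}^{i}_h}\eta^2_{J,S}(u_h^{p1})+\sum_{T\in\mathcal{T}_h}\eta^2_{E,T}(u_h^{p1})$. Using the pointwise strong monotonicity $(D\phi(a)-D\phi(b))\cdot(a-b)\sim\vert F(a)-F(b)\vert^2$, I would test the discrete Euler--Lagrange equations for $u_h^{p1}$ and $u_h^{cr}$ with $\Pi_h^{av}\delta_h\in\mathcal{S}^1_D(\mathcal{T}_h)$ and $\delta_h\in\mathcal{S}^{1,cr}_D(\mathcal{T}_h)$, respectively, where $\delta_h\coloneqq u_h^{p1}-u_h^{cr}$. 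Subtracting reduces the estimate to bounding $(D\phi(\nabla u_h^{p1}),\nabla_h\delta_h-\nabla\Pi_h^{av}\delta_h)_\Omega$ plus $f_h$-terms; since $D\phi(\nabla u_h^{p1})$ is element-wise constant, an element-wise integration-by-parts leaves only side contributions $\sum_{S\in\mathcal{S}^{i}_h}(\jump{D\phi(\nabla u_h^{p1})\cdot n}_S,\delta_h-\Pi_h^{av}\delta_h)_S$, bounded by $\eta^2_{J,S}$ via H\"older's inequality and the sharp local approximation estimate for $\Pi_h^{av}$ from Appendix~\ref{subsec:node_average}, while the $f_h$-contributions give $\sum_T\eta^2_{E,T}$ via Jensen's inequality. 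The converse efficiency direction is standard Verf\"urth-type bubble-function analysis transposed to the Orlicz setting as in~\cite{DK08,Bre15,K22CR}.

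The hard part will be performing the shift-change arguments of~\cite{DK08} uniformly in $h>0$ so that all the intermediate shifts $\vert\nabla u_h^{p1}\vert$, $\vert\nabla_h u_h^{cr}\vert$, and $\vert\Pi_h z_h^{rt}\vert$ yield $\sim$-equivalent contributions; this is where conditions~(\hyperlink{C.1}{C.1}) and~(\hyperlink{C.2}{C.2}) are consumed. Together with the chunkiness bound $\omega_0$ entering the trace and approximation estimates, all implicit constants depend only on $\omega_0$, $\Delta_2(\varphi)$, and $\nabla_2(\varphi)$, as claimed.
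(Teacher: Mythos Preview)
Your decomposition $\eta^2_{\textup{gap}}(u_h^{p1},z_h^{rt})=\int_\Omega\sigma^2_\phi(\nabla u_h^{p1},\nabla_h u_h^{cr})\,\mathrm{d}x+\int_\Omega\sigma^2_{\phi^*}(z_h^{rt},\Pi_h z_h^{rt})\,\mathrm{d}x$ is correct and is precisely the identity the paper obtains in~\eqref{thm:residual.1} (after noting $\int_T\nabla_h u_h^{cr}\cdot(z_h^{rt}-\Pi_h z_h^{rt})\,\mathrm{d}x=0$). The paper then passes to the symmetric Br\`egman quantities $I_h^1,I_h^2$ via the convexity inequalities~\eqref{thm:residual.2}, but under (\hyperlink{C.1}{C.1}),~(\hyperlink{C.2}{C.2}) these are equivalent to yours. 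Your treatment of the upper bound $\eta^2_{\textup{gap}}\lesssim\eta^2_{\textup{res}}$---testing the two discrete Euler--Lagrange equations with $\Pi_h^{av}\delta_h$ and $\delta_h$, element-wise integration by parts producing $\jump{D\phi(\nabla u_h^{p1})\cdot n}_S$, bounding via shifted Young and the node-averaging estimate, handling $I_h^2$ via the Marini formula and a shift change---is exactly the paper's argument in~\eqref{thm:residual.4}--\eqref{thm:residual.12}.

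The notable divergence is in the reverse direction $\eta^2_{\textup{res}}\lesssim\eta^2_{\textup{gap}}$. You aim for the sharper statement $\eta^2_{\textup{res}}\lesssim\|F(\nabla u_h^{p1})-F(\nabla_h u_h^{cr})\|^2_{2,\Omega}$ via bubble functions, with $u_h^{cr}$ playing the role of the exact solution. This is a viable medius-type argument (in the spirit of~\cite{Bre15,K22CR}), but note that the standard bubble estimates in~\cite{DK08} compare to $\nabla u$, not $\nabla_h u_h^{cr}$, so an additional step is required (e.g., exploiting $[z_h^{rt}\cdot n]_S=0$ to rewrite $\jump{D\phi(\nabla u_h^{p1})\cdot n}_S$ as a jump of $D\phi(\nabla u_h^{p1})-\Pi_h z_h^{rt}$ plus Marini remainder). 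The paper instead takes a two-line shortcut that you may have overlooked: by the Prager--Synge identity (Theorem~\ref{thm:main}(i)) one has $\rho^2_I(u_h^{p1},u)\le\rho^2_{\textup{tot}}(u_h^{p1},z_h^{rt})=\eta^2_{\textup{gap}}(u_h^{p1},z_h^{rt})$, and then the already-established efficiency~\eqref{eq:rel-eff} from~\cite{DK08} gives $\eta^2_{\textup{res}}(u_h^{p1})\lesssim\|F(\nabla u_h^{p1})-F(\nabla u)\|^2_{2,\Omega}\sim\rho^2_I(u_h^{p1},u)$. This avoids re-doing any bubble analysis and makes the dependence of constants on $\omega_0,\Delta_2(\varphi),\nabla_2(\varphi)$ immediate.
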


	\begin{remark}
		\label{rem:equivalences}
		Theorem \ref{thm:equivalences} extends the results in \cite{DK08} by the aspect that the residual type estimator \eqref{thm:residual.1.1} is not only equivalent to the primal approximation (\textit{i.e.}, to $\rho^2_I(u_h^{p1},u)$), but to the primal approximation plus the dual approximation error  (\textit{i.e.}, to $\rho_{\textup{tot}}^2(u_h^{p1},z_h^{rt})\coloneqq\rho^2_I(u_h^{p1},u)+\rho^2_{-D}(z_h^{rt},z)$). In~other~words, the residual type estimator \eqref{thm:residual.1.1} also provides control of the approximation error of the Raviart--Thomas approximation of the dual problem. 
	\end{remark}
	
	\begin{proof}[Proof of  (Theorem \ref{thm:equivalences}).]\let\qed\relax
		
	\textit{ad  $\eta_{\textup{gap}}^2(\smash{u_h^{p1}},z_h^{rt}) \leq c\, \eta_{\textup{res},h}^2(\smash{u_h^{p1}})$.} Using the discrete optimality~\mbox{relation} \eqref{eq:discrete_optimality_relations.1} (which is equivalent to \eqref{eq:discrete_optimality_relations.3}), we find that
	\begin{align}\label{thm:residual.1}
		\left.\begin{aligned}
		\phi(\nabla u_h^{p1})&-\nabla u_h^{p1}\cdot \Pi_h z_h^{rt}+\phi^*(z_h^{rt})\\&= \phi(\nabla u_h^{p1})-D\phi(\nabla_h u_h^{cr})\cdot (\nabla u_h^{p1}-\nabla_h u_h^{cr})+\phi(\nabla_hu_h^{cr})
		\\&\quad+	\phi^*(z_h^{rt})-\phi^*(\Pi_h z_h^{rt})\,.\end{aligned}\right\}\quad\text{ a.e.\ in }\Omega\,.
	\end{align}
	On the other hand, by the convextity of $\phi,\phi^*\in C^1(\mathbb{R}^d)$, we have that
	\begin{align}\label{thm:residual.2}
		\begin{aligned}
			-\phi(\nabla_hu_h^{cr})&\le -\phi(\nabla u_h^{p1})+D\phi(\nabla u_h^{p1})\cdot (\nabla u_h^{p1}-\nabla_h u_h^{cr})&&\quad\text{ a.e.\ in }\Omega\,,\\
			-\phi^*(\Pi_h z_h^{rt})&\le -\phi^*( z_h^{rt})+D\phi^*( z_h^{rt})\cdot ( z_h^{rt}-\Pi_h z_h^{rt})&&\quad\text{ a.e.\ in }\Omega\,,
		\end{aligned}
	\end{align}
	Therefore, using \eqref{thm:residual.1} and \eqref{thm:residual.2} together with $D\phi^*(\Pi_h z_h^{rt})\hspace*{-0.2em}\perp \hspace*{-0.2em} (z_h^{rt}-\Pi_h z_h^{rt})$ in $L^2(\Omega;\mathbb{R}^d)$,~we~find~that
	\begin{align}
		\label{thm:residual.3}
		\begin{aligned}
		\smash{\eta^2_{\textup{gap}}}(\smash{u_h^{p1}},z_h^{rt})&\leq (D\phi(\nabla u_h^{p1})-D\phi(\nabla_h u_h^{cr}),\nabla u_h^{p1}-\nabla_h u_h^{cr})_\Omega\\
		&\quad+(D\phi^*(z_h^{rt})-D\phi^*(\Pi_h z_h^{rt}),z_h^{rt}-\Pi_h z_h^{rt})_\Omega
		\\&\eqqcolon I_h^1+I_h^2\,.
			\end{aligned}
	\end{align}
	So, it is enough to estimate $I_h^1$ and $I_h^2$:
	
	\textit{ad $I_h^1$.} 
	Abbreviating $e_h\coloneqq \smash{u_h^{p1}}-u_h^{cr}\in \mathcal{S}^{1,cr}_D(\mathcal{T}_h)$ and using  Galerkin orthogonality of the continuous and the discrete primal problem, we find that
	\begin{align}\begin{aligned}\label{thm:residual.4}
		I_h^1&= 
			(D\phi(\nabla \smash{u_h^{p1}}),\nabla_h( e_h-  \Pi_h^{av} e_h) )_\Omega	+(f_h, \Pi_h^{av} e_h-e_h )_\Omega
			\\&=\vcentcolon I_h^{1,1}+I_h^{1,2}
			\,.
		\end{aligned}
	\end{align} 
	Let us next estimate $\smash{I_h^{1,1}}$ and $\smash{I_h^{1,2}}$:
	
	\textit{ad $I_h^{1,1}$.}
	Using that $\jump{D\phi(\nabla \smash{u_h^{p1}})\hspace*{-0.1em}\cdot \hspace*{-0.1em}n( e_h\hspace*{-0.1em}-\hspace*{-0.1em}\Pi_h^{av}e_h)}_S\hspace*{-0.15em}=\hspace*{-0.15em}\jump{D\phi(\nabla \smash{u_h^{p1}})\hspace*{-0.1em}\cdot \hspace*{-0.05em}n}_S\{e_h\hspace*{-0.05em}-\hspace*{-0.05em} \Pi_h^{av} e_h\}_S +\{D\phi(\nabla \smash{u_h^{p1}})\cdot n\}_S \jump{e_h-\Pi_h^{av} e_h}_S$ on $S$, $\pi_h\jump{e_h- \Pi_h^{av} e_h}_S=0$ and $\{D\phi(\nabla \smash{u_h^{p1}})\}_S=\textup{const}$ on $S$ for all ${S\in \mathcal{S}_h^{i}}$, an element-wise \hspace*{-0.1mm}integration-by-parts, \hspace*{-0.1mm}a \hspace*{-0.1mm}discrete \hspace*{-0.1mm}trace \hspace*{-0.1mm}inequality \hspace*{-0.1mm}(\textit{cf}.\ \hspace*{-0.1mm}\cite[\hspace*{-0.5mm}Lem.\ \hspace*{-0.5mm}12.8]{EG21}),~\hspace*{-0.1mm}and~\hspace*{-0.1mm}\mbox{Proposition} \ref{cor:n-function}, 
	denoting for every $S\in \mathcal{S}_h$ by $\omega_S\coloneqq \bigcup\{T\in \mathcal{T}_h\mid S\subseteq \partial T\}$ the side patch and for every $T\in \mathcal{T}_h$ by $\omega_T\coloneqq \bigcup\{T'\in \mathcal{T}_h\mid T\cap T'\neq \emptyset\}$  the element patch,
	we find that\enlargethispage{10mm}
	\begin{align}\label{thm:residual.5}
		\begin{aligned}
			I_h^1&=(\jump{D\phi(\nabla \smash{u_h^{p1}})\cdot n},\{e_h- \textcolor{black}{ \Pi_h^{av}} e_h\})_{\mathcal{S}_h^{i}} 
			\\&\leq 
			\sum_{S\in \mathcal{S}_h^{i}}{\vert \jump{D\phi(\nabla \smash{u_h^{p1}})\cdot n}_S\vert \|\{e_h- \Pi_h^{av} e_h\}_S\|_{1,S}} 
			\\&\leq c\,\sum_{S\in \mathcal{S}_h^{i}}{\vert \jump{D\phi(\nabla \smash{u_h^{p1}})\cdot n}_S\vert h_S^{-1}\|e_h- \Pi_h^{av} e_h\|_{1,\omega_S}} \\&
			\leq c\,\sum_{S\in \mathcal{S}_h^{i}}{  \sum_{T\in \mathcal{T}_h\,:\,T\subseteq \omega_S}{\|\vert \jump{D\phi(\nabla \smash{u_h^{p1}})\cdot n}_S\vert\nabla_h e_h\|_{1,\omega_T}}}\,.
		\end{aligned}	
	\end{align}
	Then, for every $T\in \mathcal{T}_h$, using in $\omega_T$, the $\varepsilon$-Young inequality (\textit{cf}.\  \eqref{eq:eps-young}) for  $\smash{\varphi_{\vert\nabla \smash{u_h^{p1}}(T)\vert}\colon \mathbb{R}_{\ge 0}\to \mathbb{R}_{\ge 0}}$ 
	and $(\varphi_{\vert \nabla \smash{u_h^{p1}}(T)\vert})^*(\vert \jump{D\phi(\nabla \smash{u_h^{p1}})\cdot n}_S\vert)\hspace*{-0.1em}\sim \hspace*{-0.1em}\vert \jump{F(\nabla \smash{u_h^{p1}})}_S\vert^2$~for~all~$ S\hspace*{-0.1em}\in\hspace*{-0.1em}\mathcal{S}_h^{i}$ with $S\hspace*{-0.1em}\subseteq\hspace*{-0.1em} \partial  T$ (\textit{cf}.\ \mbox{\cite[Cor.~6]{DK08}}), where we write $\nabla \smash{u_h^{p1}}(T)$ to indicate that the shift on $\omega_T$ depends only on the value of $\nabla \smash{u_h^{p1}}$~on~$T$,  from \eqref{thm:residual.5}, for every $\varepsilon>0$, we deduce~that
	\begin{align}\label{thm:residual.6}
		\begin{aligned}
			I_h^{1,1}&\leq c\,\sum_{S\in \mathcal{S}_h^{i}}{  \sum_{T\in \mathcal{T}_h\,:\,T\subseteq \omega_S}{
					c_\varepsilon\,\|(\varphi_{\vert \nabla \smash{u_h^{p1}}(T)\vert})^*(\vert \jump{D\phi(\nabla \smash{u_h^{p1}})\cdot n}_S\vert)\|_{1,\omega_T}}}\\&\quad+
			\varepsilon\,c\,\sum_{S\in \mathcal{S}_h^{i}}{  \sum_{T\in \mathcal{T}_h\,:\,T\subseteq \omega_S}{\|\varphi_{\vert\nabla \smash{u_h^{p1}}(T)\vert}(\vert \nabla_ he_h\vert)\|_{1,\omega_T}}}
			\\
			&
			\leq c_\varepsilon\,\sum_{S\in \mathcal{S}_h^{i}}{\eta_{J,S}^2(u_h^{p1})}+ \varepsilon\,c\, \sum_{T\in \mathcal{T}_h}{ \|\varphi_{\vert\nabla \smash{u_h^{p1}}(T)\vert}(\vert \nabla_ he_h\vert)\|_{1,\omega_T}}\,.
		\end{aligned}
	\end{align}
	
	\textit{ad $I_h^{1,2}$.}
	Using the $\varepsilon$-Young inequality (\textit{cf}.\ \eqref{eq:eps-young})  and Proposition \ref{cor:n-function}, for every ${\varepsilon\hspace*{-0.1em}>\hspace*{-0.1em}0}$,~we~\mbox{obtain} 
	\begin{align}\label{thm:residual.7}
		\begin{aligned}
			I_h^{1,2}&\leq c_\varepsilon\,\sum_{T\in \mathcal{T}_h}{\|(\varphi_{\vert \nabla \smash{u_h^{p1}}\vert})^*(h_T\vert f_h\vert)\|_{1,T}}\\&\quad+\varepsilon\,\sum_{T\in \mathcal{T}_h}{\|\varphi_{\vert \nabla \smash{u_h^{p1}}\vert }(h_T^{-1}\vert e_h- \Pi_h^{av}e_h\vert )\|_{1,T}}\\&\leq 
			c_\varepsilon\,\sum_{T\in \mathcal{T}_h}{\eta_{E,T}^2(u_h^{p1})}+ \varepsilon\,c\,\sum_{T\in \mathcal{T}_h}{\|\varphi_{\vert \nabla \smash{u_h^{p1}}(T)\vert }(\vert \nabla_he_h\vert )\|_{1,\omega_T}}\,.
		\end{aligned} 
	\end{align}
	
	\hspace*{-5mm}Combining \eqref{thm:residual.6} and \eqref{thm:residual.7} in \eqref{thm:residual.4}, using \eqref{eq:rel-eff} in doing so,  for every $\varepsilon>0$,  we conclude that
	\begin{align}\label{thm:residual.9}
			I_h^1\leq c_\varepsilon\,\eta_{\textup{res},h}^2(\smash{u_h^{p1}})+\varepsilon\,c\sum_{T\in \mathcal{T}_h}{\|\varphi_{\vert\nabla \smash{u_h^{p1}}(T)\vert }(\vert \nabla_h e_h\vert )\|_{1,\omega_T}} \,.
	\end{align}
	 Proceeding as in \cite[p.\ 9 \& 10]{DK08}, we find that
	\begin{align}\label{thm:residual.10}
		\sum_{T\in \mathcal{T}_h}{\|\varphi_{\vert \nabla \smash{u_h^{p1}}(T)\vert }(\vert \nabla_he_h\vert )\|_{1,\omega_T}}\leq c\, \sum_{T\in \mathcal{T}_h}{\|\varphi_{\vert \nabla \smash{u_h^{p1}}\vert }(\vert \nabla_he_h\vert )\|_{1,\omega_T}}+c\,\sum_{S\in \mathcal{S}_h^{i}}{\eta_{J,S}^2(u_h^{p1})}\,.
	\end{align}
	Therefore, using \eqref{thm:residual.10} in \eqref{thm:residual.9} together with the equivalence chain 
	\begin{align}\label{eq:second_equiv}
		\left.\begin{aligned}
				\varphi_{\vert \nabla \smash{u_h^{p1}}\vert }(\vert \nabla_he_h\vert )&\sim \vert F(\nabla \smash{u_h^{p1}})-F(\nabla_h u_h^{cr})\vert^2\\&\sim
			(D\phi(\nabla \smash{u_h^{p1}})-D\phi(\nabla_h u_h^{cr}))\cdot(\nabla \smash{u_h^{p1}}-\nabla_h u_h^{cr})
		\end{aligned}\right\}\quad\text{ a.e.\ in }\Omega\,,
	\end{align}
	for every $\varepsilon>0$,  we arrive at
	\begin{align}\label{thm:residual.11} 
	I_h^1 \leq c_\varepsilon\,\eta_{\textup{res},h}^2(\smash{u_h^{p1}})+\varepsilon\,c\,I_h^1\,. 
	\end{align}
	Resorting the reconstruction formula \eqref{eq:nl_dirichlet_marini} and \cite[Lem.\ 3]{DK08}, with $F^*\colon \mathbb{R}^d\to \mathbb{R}^d$ for every $r\in \mathbb{R}^d$ defined by\vspace*{-1mm}
	\begin{align*}
			F^*(r)\coloneqq \sqrt{\frac{(\varphi^*)'(\vert r\vert )}{\vert r\vert}}r\,,
	\end{align*}
	and the equivalence  
	\begin{align*}
		\vert F^*(z_h^{rt})-F^*(\Pi_hz_h^{rt})\vert^2\sim
		(D\phi^*(z_h^{rt})-D\phi^*(\Pi_hz_h^{rt}))\cdot(z_h^{rt}-\Pi_hz_h^{rt})\quad\text{ a.e.\ in }\Omega\,,
	\end{align*}
	a change of shift (\textit{cf}.\ \cite[Cor.\ 28]{DK08}), and the second equivalence  in \eqref{eq:second_equiv}, for every ${\varepsilon\hspace*{-0.1em}>\hspace*{-0.1em}0}$,~we~find~that
	\begin{align}\label{thm:residual.12} 
		\begin{aligned} 
	I_h^2&\leq c\,
		\|(\varphi_{\vert \nabla_hu_h^{cr}\vert})^*(h_T\vert f_h\vert )\|_{1,\Omega}
		\\&
		\leq c_\varepsilon\,\eta_{\textup{res},h}^2(\smash{u_h^{p1}})+\varepsilon\, c\, I_h^1 \,. 
			\end{aligned}
	\end{align} 
	For $\varepsilon>0$ sufficiently small, from \eqref{thm:residual.11} and \eqref{thm:residual.12} in \eqref{thm:residual.3}, we conclude that
	\begin{align}
		\eta_{\textup{gap}}^2(\smash{u_h^{p1}},z_h^{rt})
		\leq c\,\eta_{\textup{res},h}^2(\smash{u_h^{p1}})\,. \label{thm:residual.13} 
	\end{align}	
	
	\textit{ad $\eta_{\textup{res},h}^2(\smash{u_h^{p1}})\leq c\,\eta_{\textup{gap}}^2(\smash{u_h^{p1}},z_h^{rt})$.}
	From Theorem \ref{thm:main}(i), \eqref{thm:residual.13}, and \eqref{eq:rel-eff} together with the equivalence 
	\begin{align*}
		\rho^2_I(\smash{u_h^{p1}},u)\sim  \|F(\nabla \smash{u_h^{p1}})-F(\nabla u)\|_{2,\Omega}^2\,,
	\end{align*}
	we conclude that
	\begin{align*}
		\eta_{\textup{res},h}^2(\smash{u_h^{p1}})&\leq c\,\rho^2_I(\smash{u_h^{p1}},u)
		\\&\leq  c\,\rho^2_{\textup{tot}}(\smash{u_h^{p1}},z_h^{rt})
		\\&\leq c\, \eta_{\textup{gap}}^2(\smash{u_h^{p1}},z_h^{rt})\,.\tag*{$\qedsymbol$}
	\end{align*} 
	\end{proof}
	\newpage

		\section{Quasi-optimality of node-averaging   operator}\label{sec:node_avg_optimal}
	
	\hspace*{5mm}Since, in general, for a discrete primal solution, we have that $u_h^{cr}\notin U_{\ell}^p(\Omega)$, it is necessary to post-process the discrete primal solution to obtain an admissible  approximation~$\overline{u}_h^{cr}\in U_{\ell}^p(\Omega)$. Here, it is convenient to enforce admissibility via the node-averaging quasi-interpolation operator $\Pi_h^{av}\colon \hspace*{-0.175em}\mathcal{S}^{1,cr}_D(\mathcal{T}_h)\hspace*{-0.175em}\to\hspace*{-0.175em} \mathcal{S}^1_D(\mathcal{T}_h)$ (\textit{cf}.\ Appendix \ref{subsec:node_average}), which satisfies  the following~local~\mbox{best-approximation} result.\enlargethispage{3mm}
	
	\begin{lemma}\label{lem:best-approx-inv}
		Let   $\phi=\varphi\circ \vert \cdot\vert\in C^0(\mathbb{R}^d)\cap C^2(\mathbb{R}^d\setminus\{0\})$, where $\varphi\colon \mathbb{R}_{\ge 0}\to \mathbb{R}_{\ge 0}$ is an $N$-function satisfying (\hyperlink{C.1}{C.1}), (\hyperlink{C.2}{C.2}).  Then,  there exists a constant $c>0$ depending only on $\omega_0$, $\Delta_2(\varphi)$, and $\nabla_2(\varphi)$, such that  for every $v_h\in \mathcal{S}^{1,cr}_D(\mathcal{T}_h)$ and $T\in \mathcal{T}_h$, it holds that
		\begin{align*}
			\|F(\nabla_hv_h)-F(\nabla \Pi_h^{av} v_h)\|_{2,T}^2&\leq c\,\inf_{v\in W^{1,1}_D(\Omega)\,:\,F(\nabla v)\in L^2(\Omega;\mathbb{R}^d)}{\|F(\nabla_hv_h)-F(\nabla v)\|_{2,\omega_T}^2}
			\\&\quad+c\,\big\|\smash{h_{\mathcal{S}}^{1/2}}\jump{F(\nabla_h v_h)}\big\|_{2,\mathcal{S}_h^{i}(T)}^2\,,
		\end{align*} 
		where $\mathcal{S}_h^{i}(T)\coloneqq \{S\in \mathcal{S}^{i}_h\mid S\cap T\neq \emptyset\}$ for all $T\in \mathcal{T}_h$ and 
		$h_{\mathcal{S}}|_S\coloneqq h_S$ for all $S\in \mathcal{S}_h$.
	\end{lemma}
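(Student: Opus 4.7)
The plan is to reduce the estimate, via the Diening--Kreuzer pointwise equivalence $|F(A)-F(B)|^2\sim \varphi_{|A|}(|A-B|)$ between the natural $F$-distance and the shifted modular (together with the shift-change lemmas that produce only constants depending on $\Delta_2(\varphi)$ and $\nabla_2(\varphi)$), to a localized control of $|\nabla_h v_h|_T-\nabla\Pi_h^{av}v_h|_T|$ by two kinds of data: tangential jumps of $\nabla_h v_h$ on interior sides of $\omega_T$, and best-approximation data for $v\in W^{1,1}_D(\Omega)$ near Dirichlet boundary vertices of $T$. Since $\nabla_h v_h$ and $\nabla\Pi_h^{av}v_h$ are element-wise constant, the target $L^2$-integral on $T$ collapses to a single shifted modular evaluation times $|T|$, which is what we will estimate.

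First, I would exploit the explicit definition of $\Pi_h^{av}$: at every interior Lagrange vertex $z$ of $\mathcal{T}_h$, $\Pi_h^{av}v_h(z)$ equals the arithmetic mean of $v_h|_{T'}(z)$ taken over $T'\ni z$, whereas at every vertex $z\in\Gamma_D$ we have $\Pi_h^{av}v_h(z)=0$. Writing $\nabla_h v_h|_T-\nabla\Pi_h^{av}v_h|_T$ as a fixed linear combination of the nodal discrepancies $v_h|_T(z)-\Pi_h^{av}v_h(z)$, with coefficients bounded in terms of $\omega_0$, reduces the task to estimating these nodal discrepancies one vertex at a time.

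For an interior vertex $z$, the discrepancy is a sum of $v_h|_T(z)-v_h|_{T'}(z)$ along a chain of elements $T=T_0,T_1,\ldots,T_k=T'$ sharing interior sides $S_j\in\mathcal{S}_h^i(T)$, so it is a sum of pointwise evaluations of $\jump{v_h}_{S_j}$. Since $v_h\in\mathcal{S}^{1,cr}_D(\mathcal{T}_h)$ satisfies $\pi_h\jump{v_h}_S=0$ on each interior side, $\jump{v_h}_S$ is an affine function on $S$ with vanishing mean, and a Poincaré inequality on $S$ combined with an inverse estimate yields $|\jump{v_h}_S(z)|\leq c\,h_S|\jump{\nabla_h v_h}_S|$; applying the Diening--Kreuzer equivalence once more then produces the side-jump term $\|h_S^{1/2}\jump{F(\nabla_h v_h)}\|_{2,\mathcal{S}_h^i(T)}^2$ on the right-hand side. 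For a boundary vertex $z\in\Gamma_D$, I would introduce an arbitrary competitor $v\in W^{1,1}_D(\Omega)$ with $F(\nabla v)\in L^2(\Omega;\mathbb{R}^d)$, use the Dirichlet trace $v(z)=0$ to rewrite $v_h|_T(z)=(v_h-v)|_T(z)$, and bound this nodal value by a shifted Orlicz--Poincaré inequality on $\omega_T$ in terms of $\int_{\omega_T}\varphi_{|\nabla_h v_h|}(|\nabla_h v_h-\nabla v|)\,\mathrm{d}x$, which, after translating back through Diening--Kreuzer and passing to the infimum over $v$, furnishes the best-approximation term.

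The main obstacle is the boundary-vertex step: reading off a pointwise trace $v(z)$ on $\Gamma_D$ in a framework that only assumes $v\in W^{1,1}_D$ with $F(\nabla v)\in L^2$ requires a Sobolev/Orlicz embedding that is not automatic for a single vertex, and the subsequent estimate must be compatible with the shifted $N$-function $\varphi_{|\nabla_h v_h|_T|}$ while still permitting the shift to be changed to a shift involving $|\nabla v|$ at the cost of constants depending only on $\Delta_2(\varphi)$ and $\nabla_2(\varphi)$. This will be handled by passing through a boundary patch argument and invoking shifted Orlicz--Poincaré and change-of-shift lemmas from Diening--Kreuzer in order to absorb the nodal evaluation into the modular best-approximation quantity on $\omega_T$. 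A final summation over vertices of $T$, together with the equivalence of the $L^2(T)$-norm of $F(\nabla_h v_h)-F(\nabla\Pi_h^{av}v_h)$ with the scalar quantity $|T|^{1/2}|F(\nabla_h v_h|_T)-F(\nabla\Pi_h^{av}v_h|_T)|$, then yields the stated inequality.
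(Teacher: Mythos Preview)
The paper does not give a self-contained proof; it simply states that the argument ``follows along the lines of the proof of [Lem.~3.8]{K22CR} up to minor adjustments.'' Your sketch is consistent with the standard strategy behind that reference: pass to the shifted modular via the Diening--Kreuzer equivalence, localize the node-averaging error to vertex-wise discrepancies, telescope interior vertex discrepancies along element chains (using the zero-mean jump property of Crouzeix--Raviart functions) to produce the jump term, and charge boundary-vertex discrepancies to the best-approximation term. In broad outline this is the right plan and matches what the cited proof does.

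There is, however, a genuine weak point in your boundary-vertex step. You propose to write $v_h|_T(z)=(v_h-v)|_T(z)$ using $v(z)=0$, but for $v\in W^{1,1}_D(\Omega)$ with merely $F(\nabla v)\in L^2$ a point evaluation $v(z)$ is in general meaningless, and you correctly flag this as ``the main obstacle''; your proposed fix (Orlicz embedding plus shift-change) is too vague to close the gap. The clean way around point values is to replace $v$ by its Crouzeix--Raviart interpolant $w_h\coloneqq\Pi_h^{cr}v\in\mathcal{S}^{1,cr}_D(\mathcal{T}_h)$: then $v_h-w_h$ is again a Crouzeix--Raviart function with vanishing side means on $\Gamma_D$, so the nodal value $(v_h-w_h)|_T(z)$ at a Dirichlet vertex $z$ is controlled, via a chain reaching a Dirichlet side and the affine structure, by $h_T\,|\nabla_h(v_h-w_h)|$ on neighbouring elements. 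Since $\nabla_h w_h=\Pi_h\nabla v$, the Diening--Kreuzer equivalence together with Jensen's inequality converts this into $\|F(\nabla_h v_h)-F(\nabla v)\|_{2,\omega_T}^2$, and the remaining term $w_h|_T(z)$ is handled by the same chain-and-jump mechanism as interior vertices (it produces additional jump contributions of $\nabla_h w_h$, which are again dominated by the best-approximation term on $\omega_T$). With this modification your argument goes through, and this is essentially how the cited lemma is proved.
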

	
	\begin{proof}
		Follows along the lines of the proof of \cite[Lem.\ 3.8]{K22CR} up to minor adjustments.
	\end{proof}
	
	Using Lemma \ref{lem:best-approx-inv}, in turn, we can deduce that the local distance of a node-averaged Crouzeix--Raviart function to a Sobolev function on an element is bounded by the local distance of the same Crouzeix--Raviart function to the same Sobolev function on an element patch plus an additive term quantifying the local fractional higher regularity of the Sobolev function. This justifies the usage of the node-averaging quasi-interpolation operator in local mesh~refinement~procedures.
	
	In order to express the fractional
	regularity of functions, we make use of Nikolski\u{\i} spaces. For
	given $p \in  [1, \infty)$, $\beta\in (0,1]$, an open set $G\subseteq \mathbb{R}^d$, $d\in \mathbb{N}$, and $v\in
	L^p(G)$, the  \textit{Nikolski\u{\i} semi-norm} is
	defined by
	\begin{align*}
		[v]_{N^{\beta,p}(G)}\coloneqq \sup_{h\in \mathbb{R}^d\setminus\{0\}}{\vert h\vert^{-\beta}\bigg(\int_{G\cap (G-h)}{\vert v(\cdot + h)-v\vert^p\,\mathrm{d}x}\bigg)^{\frac{1}{p}}}<\infty\,.
	\end{align*}
	Then, for $p \in  [1, \infty)$ and $\beta\in (0,1]$, the \textit{Nikolski\u{\i} space} is defined by
	\begin{align*}
		N^{\beta,p}(G)\coloneqq \big\{ v\in L^p(G)\mid [v]_{N^{\beta,p}(G)}<\infty\big\}\,,
	\end{align*}
	and the \textit{Nikolski\u{\i} norm} $\|\cdot\|_{N^{\beta,p}(G)}\coloneqq \| \cdot\|_{p,G}+[\cdot]_{N^{\beta,p}(G)}$ turns $N^{\beta,p}(G)$  into a Banach space.\enlargethispage{5mm}
	
	\begin{proposition}\label{prop:best-approx}
		Let   $\phi=\varphi\circ \vert \cdot\vert\in C^0(\mathbb{R}^d)\cap C^2(\mathbb{R}^d\setminus\{0\})$, where $\varphi\colon \mathbb{R}_{\ge 0}\to \mathbb{R}_{\ge 0}$ is an $N$-function satisfying (\hyperlink{C.1}{C.1}), (\hyperlink{C.2}{C.2}). Then, there exists a constant $c\hspace*{-0.15em}>\hspace*{-0.15em}0$ depending~only~on~$\omega_0$,~$\Delta_2(\varphi)$,~and~$\nabla_2(\varphi)$, such that
		for every $v_h\in\mathcal{S}^{1,cr}_D(\mathcal{T}_h)$, $v\in W^{1,1}_D(\Omega)$ with $F(\nabla v)\in L^2(\Omega;\mathbb{R}^d)$,~and~$T\in\mathcal{T}_h$,~it~holds~that 
		\begin{align}\label{prop:best-approx.1}
			\begin{aligned} 
				\|F(\nabla \Pi_h^{av} v_h)-F(\nabla v)\|_{2,T}^2&\leq c\,\|F(\nabla_h v_h)-F(\nabla v)\|_{2,\omega_T}^2+c\,\inf_{r\in \mathbb{R}^d}{\|F(\nabla v)-F(r)\|_{2,\omega_T}^2}\,,
			\end{aligned} 
		\end{align}
		and if, in addition, $F(\nabla v)\in N^{\beta,2}(\textup{int}(\omega_T);\mathbb{R}^d)$ with $\beta\in (0,1]$, then it holds that
		\begin{align}\label{prop:best-approx.2}
			\begin{aligned}  
					\|F(\nabla \Pi_h^{av} v_h)-F(\nabla v)\|_{2,T}^2&\leq c\,\|F(\nabla_h v_h)-F(\nabla v)\|_{2,\omega_T}^2 +c\,h_T^{2\beta}\,[F(\nabla v)]_{N^{\beta,2}(\textup{int}(\omega_T))}^2\,. 
			\end{aligned} 
		\end{align}
	\end{proposition}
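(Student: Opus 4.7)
The plan is to combine the local best-approximation result of Lemma \ref{lem:best-approx-inv} with a triangle inequality and a careful treatment of the jump terms that appear there. I would first write
\begin{align*}
\|F(\nabla \Pi_h^{av} v_h)-F(\nabla v)\|_{2,T}^2\leq 2\|F(\nabla \Pi_h^{av} v_h)-F(\nabla_h v_h)\|_{2,T}^2+2\|F(\nabla_h v_h)-F(\nabla v)\|_{2,T}^2\,,
\end{align*}
and apply Lemma \ref{lem:best-approx-inv} to the first summand. The first (infimum) term produced by the lemma is already of the desired shape, and the second summand of the triangle split is absorbed into the first term on the right-hand side of \eqref{prop:best-approx.1}. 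So everything reduces to controlling the jump contribution $\|h_{\mathcal S}^{1/2}\jump{F(\nabla_h v_h)}\|_{2,\mathcal{S}_h^{i}(T)}^2$ by the right-hand side of \eqref{prop:best-approx.1}.

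For the jump estimate I would exploit that $F(\nabla_h v_h)\in (\mathcal L^0(\mathcal T_h))^d$, so that on each $S\in\mathcal S_h^i$ with neighbouring elements $T_\pm$ and any vector $y\in\mathbb R^d$,
\begin{align*}
h_S|S|\,\vert\jump{F(\nabla_h v_h)}_S\vert^2\leq c\sum_{\pm}|T_\pm|\,\vert F(\nabla_h v_h|_{T_\pm})-y\vert^2=c\sum_{\pm}\|F(\nabla_h v_h)-y\|_{2,T_\pm}^2\,,
\end{align*}
using $h_S|S|\sim |T_\pm|$ from the chunkiness assumption. Inserting $F(\nabla v)$ via the triangle inequality yields
\begin{align*}
h_S|S|\,\vert\jump{F(\nabla_h v_h)}_S\vert^2\leq c\,\|F(\nabla_h v_h)-F(\nabla v)\|_{2,\omega_S}^2+c\,\|F(\nabla v)-y\|_{2,\omega_S}^2\,,
\end{align*}
and choosing $y=F(r)$ with $r\in\mathbb R^d$ optimal yields a bound by $\inf_{r\in\mathbb R^d}\|F(\nabla v)-F(r)\|_{2,\omega_S}^2$ after minimisation. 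Summing over $S\in\mathcal{S}_h^{i}(T)$ and combining gives \eqref{prop:best-approx.1}.

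For \eqref{prop:best-approx.2} I would bound the constant-approximation infimum by a Poincar\'e-type estimate in Nikolski\u{\i} spaces. Concretely, setting $F_{\omega_T}\coloneqq \fint_{\omega_T}F(\nabla v)\,\mathrm{d}x$, surjectivity of $F\colon\mathbb R^d\to\mathbb R^d$ (which follows from the $N$-function assumption (\hyperlink{C.1}{C.1}) and the structural assumption (\hyperlink{C.2}{C.2}), making $r\mapsto\sqrt{\varphi'(|r|)/|r|}\,r$ a radial bijection) lets me choose $r^\star\in\mathbb R^d$ with $F(r^\star)=F_{\omega_T}$, so that
\begin{align*}
\inf_{r\in\mathbb R^d}\|F(\nabla v)-F(r)\|_{2,\omega_T}^2\leq \|F(\nabla v)-F_{\omega_T}\|_{2,\omega_T}^2\leq c\,h_T^{2\beta}\,[F(\nabla v)]_{N^{\beta,2}(\textup{int}(\omega_T))}^2\,,
\end{align*}
by the standard Nikolski\u{\i} Poincar\'e inequality on the shape-regular patch $\omega_T$. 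The main obstacle will be the jump step: it has to be carried out so that the element-patch $\omega_T$ in \eqref{prop:best-approx.1} is the only neighbourhood appearing (in particular, the $\omega_S\subseteq\omega_T$ relation must hold for all $S\in\mathcal{S}_h^i(T)$), and the infimum over $F(r)$ rather than over arbitrary $y\in\mathbb R^d$ requires the surjectivity of $F$ — both of which rely essentially on the $N$-function properties of $\varphi$ and on shape regularity.
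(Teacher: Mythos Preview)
Your approach for \eqref{prop:best-approx.1} is correct and coincides with the paper's: the paper also bounds the jump term by writing $\jump{F(\nabla_h v_h)}_S=\jump{F(\nabla_h v_h)-F(r)}_S$ for arbitrary $r\in\mathbb R^d$, converts the side integral to a volume integral on $\omega_T$ via $h_S|S|\sim|T_\pm|$ (your $\omega_S\subseteq\omega_T$ observation is exactly what makes this land on the element patch), splits through $F(\nabla v)$ by the triangle inequality, and then invokes Lemma~\ref{lem:best-approx-inv}.

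For \eqref{prop:best-approx.2} there is a genuine difference. The paper chooses $r=\fint_{\omega_T}\nabla v\,\mathrm{d}x$ (the mean of $\nabla v$, \emph{not} of $F(\nabla v)$) and then appeals to a nonlinear Poincar\'e-type inequality in Nikolski\u{\i} spaces from \cite[ineqs.\ (4.6), (4.7)]{breit-lars-etal}, which directly gives $\|F(\nabla v)-F(\fint_{\omega_T}\nabla v\,\mathrm{d}x)\|_{2,\omega_T}^2\leq c\,h_T^{2\beta}[F(\nabla v)]_{N^{\beta,2}(\textup{int}(\omega_T))}^2$. Your route instead picks $r^\star$ with $F(r^\star)=\fint_{\omega_T}F(\nabla v)\,\mathrm{d}x$ via surjectivity of $F$ and then applies the \emph{linear} Nikolski\u{\i}--Poincar\'e inequality to $F(\nabla v)-F_{\omega_T}$. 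Your argument is more self-contained (it avoids the external reference) and the surjectivity is indeed available since $t\mapsto\sqrt{\varphi'(t)\,t}$ is continuous, vanishes at $0$, and tends to $\infty$ under (\hyperlink{C.1}{C.1}); the paper's version has the advantage that the competitor is completely explicit and needs no bijectivity discussion, at the cost of citing a non-trivial nonlinear inequality.
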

	
	\begin{proof}
		\textit{ad \eqref{prop:best-approx.1}.}
	For every $v_h\in\mathcal{S}^{1,cr}_D(\mathcal{T}_h)$, $v\in W^{1,1}_D(\Omega)$~with~${F(\nabla v)\in L^2(\Omega;\mathbb{R}^d)}$, and $T\in\mathcal{T}_h$, it holds that 
		\begin{align}\label{rem:best-approxP1CR.4} 
			\begin{aligned}
				\big\|\smash{h_{\mathcal{S}}^{1/2}}\jump{F(\nabla_h v_h)}\big\|_{2,\mathcal{S}_h^{i}(T)}^2&=\inf_{r\in \mathbb{R}^d}{\big\|\smash{h_{\mathcal{S}}^{1/2}}\jump{F(\nabla_h v_h)-F(r)}\big\|_{2,\mathcal{S}_h^{i}(T)}^2}\\&\leq c\,\inf_{r\in \mathbb{R}^d}{
					\|F(\nabla_h v_h)-F(r)\|_{2,\omega_T}^2}
				\\&\leq 
				c\,\|F(\nabla_h v_h)-F(\nabla v)\|_{2,\omega_T}^2
				+c\,\inf_{r\in \mathbb{R}^d}{\|F(\nabla v)-F(r)\|_{2,\omega_T}^2}\,,
			\end{aligned}
		\end{align}
		so that, using Lemma \ref{lem:best-approx-inv}, we conclude that \eqref{prop:best-approx.1} applies. 

		\textit{ad \eqref{prop:best-approx.2}.}
		If, in addition, $F(\nabla v)\in N^{\beta,2}(\textup{int}(\omega_T);\mathbb{R}^d)$ with $\beta\in (0,1]$, choosing $r=\fint_{\omega_T}{\nabla v\,\mathrm{d}x}$ for every $T\in \mathcal{T}_h$ in \eqref{prop:best-approx.1}, by \cite[ineqs. (4.6), (4.7)]{breit-lars-etal}, we find that \eqref{prop:best-approx.2} applies. 
	\end{proof} 
 
	\section{Numerical experiments}\label{sec:experiments}
	
	\hspace*{5mm}In this section, we review the practical relevance of the theoretical investigations of Section~\ref{sec:model_problems}.
	 In doing so, we restrict to scalar model problems of Section~\ref{sec:model_problems} (\textit{i.e.}, we restrict~to~the~case~${\ell\hspace*{-0.1em}=\hspace*{-0.1em}1}$).~The vectorial model problems of Section~\ref{sec:model_problems} (\textit{cf}. Subsections \ref{subsec:navier-lame}, \ref{subsec:stokes}) will be experimentally investigated in forthcoming articles. All experiments were conducted employing the finite element software package \texttt{FEniCS}  (version 2019.1.0, \textit{cf}.\  \cite{LW10}). All graphics were generated using~the~\texttt{Matplotlib} library (version 3.5.1, \textit{cf}.\ \cite{Hun07}) and the \texttt{Vedo} library (version 2023.4.4, \textit{cf}.\ \cite{vedo}). In the following, all convergence rates are to be understood in terms of the squared primal-dual gap estimator~$\eta_{\textup{gap}}^2$.
	
	\subsection{Implementation details regarding the adaptive mesh refinement procedure}\vspace*{-0.5mm}
	 
	\hspace*{5mm}The experiments are based on the following generic  \textit{adaptive algorithm} (\textit{cf}.\ \cite{BabStr01,BanRan03,Ver13}):\vspace*{-0.5mm}
	
		\begin{algorithm}[AFEM]\label{alg:afem}
		Let $\varepsilon_{\textup{STOP}}>0$, $\theta\in (0,1)$, and  $\mathcal{T}_0$ an initial  triangulation of $\Omega$. Then, for every $k\in \mathbb{N}\cup \{0\}$:
		\begin{description}[noitemsep,topsep=1pt,labelwidth=\widthof{\textit{('Estimate')}},leftmargin=!,font=\normalfont\itshape]
			\item[('Solve')]\hypertarget{Solve}{}
			Compute a discrete primal solution $u_k^{cr}\hspace*{-0.1em}\coloneqq \hspace*{-0.1em} u_{h_k}^{cr}\hspace*{-0.1em}\in\hspace*{-0.1em} \smash{\mathcal{S}^{1,cr}(\mathcal{T}_k)}$ (\textit{i.e.}, a minimizer~of~\eqref{discrete_primal}) and a discrete dual solution $z_k^{rt}\coloneqq z_{h_k}^{rt}\in \smash{\mathcal{R}T^0(\mathcal{T}_k)}$ (\textit{i.e.}, a maximizer of  \eqref{discrete_dual}).
			Post-process $u_k^{cr}\in \smash{\mathcal{S}^{1,cr}(\mathcal{T}_k)}$ and $z_k^{rt}\in \smash{\mathcal{R}T^0(\mathcal{T}_k)}$
			to  obtain admissible approximations $\overline{u}_k^{cr}\in W^{1,p}(\Omega)$ with $I(\overline{u}_k^{cr})<\infty$  and  $\overline{z}_k^{rt}\in W^{p'}(\textup{div};\Omega)$ with  $D(\overline{z}_k^{rt})>-\infty$;
			\item[('Estimate')]\hypertarget{Estimate}{} Compute the resulting local refinement primal-dual indicators $\smash{\{\eta^2_{\textup{gap},T}(\overline{u}_k^{cr},\overline{z}_k^{rt})\}_{T\in \mathcal{T}_k}}$. If $\smash{\eta^2_{\textup{gap}}}(\overline{u}_k^{cr},\overline{z}_k^{rt})\leq \varepsilon_{\textup{STOP}}$, then \textup{STOP}; otherwise, continue with step (\hyperlink{Mark}{'Mark'});
			\item[('Mark')]\hypertarget{Mark}{}  Choose a minimal (in terms of cardinality) subset $\mathcal{M}_k\subseteq\mathcal{T}_k$ such that\vspace*{-0.5mm} 
			\begin{align*}
				\sum_{T\in \mathcal{M}_k}{\eta_{\textup{gap},T}^2(\overline{u}_k^{cr},\overline{z}_k^{rt})}\ge \theta^2\sum_{T\in \mathcal{T}_k}{\eta_{\textup{gap},T}^2(\overline{u}_k^{cr},\overline{z}_k^{rt})}\,;
			\end{align*}
			\item[('Refine')]\hypertarget{Refine}{} Perform a (minimal) conforming refinement of $\mathcal{T}_k$ to obtain $\mathcal{T}_{k+1}$~such~that~each element $T\in \mathcal{M}_k$  is `refined' in $\mathcal{T}_{k+1}$.  
			Increase~$k\mapsto k+1$~and~continue~with~step~(\hyperlink{Solve}{'Solve'}).
		\end{description}
	\end{algorithm}
	
	\begin{remark}
		\begin{description}[noitemsep,topsep=1pt,labelwidth=\widthof{\textit{(iii)}},leftmargin=!,font=\normalfont\itshape]
			\item[(i)] If not otherwise specified, we employ the parameter $\theta=\smash{\frac{1}{2}}$ in step (\hyperlink{Estimate}{'Mark'}).
			\item[(ii)] To find the  set $\mathcal{M}_k\subseteq \mathcal{T}_k$ in step (\hyperlink{Mark}{'Mark'}), we resort to the D\"orfler marking~strategy~(\textit{cf}.~\cite{Doe96}).
			\item[(iii)] The  (minimal) conforming refinement of $\mathcal{T}_k$ with respect to  $\mathcal{M}_k$ in step (\hyperlink{Refine}{'Refine'}) is obtained by deploying the \textit{red}-\textit{green}-\textit{blue}-refinement algorithm (\textit{cf}.\ \cite{Ver13}) for $d=2$ and by deploying the Plaza--Carey refinement algorithm (\textit{cf}.\ \cite{plaza}) for $d=3$.
		\end{description}
	\end{remark}

	\subsection{$p(\cdot)$-Dirichlet problem}
	
	 \hspace*{5mm}In this subsection,  
	 we review the  theoretical findings of  Subsection \ref{subsec:nl_dirichlet}.

	 \subsubsection{Implementation details regarding the optimization procedure}\vspace*{-0.5mm}\enlargethispage{15mm}
	
	\hspace*{5mm}Before we present our numerical experiments, we briefly outline  implementation details regarding the optimization procedure.\vspace*{-0.5mm}
	
	\begin{remark}
		\begin{description}[noitemsep,topsep=1pt,labelwidth=\widthof{\textit{(iii)}},leftmargin=!,font=\normalfont\itshape]
			\item[(i)] The discrete primal solution $u_k^{cr}\in \smash{\mathcal{S}^{1,cr}(\mathcal{T}_k)}$ (\textit{i.e.}, minimizer of \eqref{eq:nl_dirichlet_discrete_primal})~in~step (\hyperlink{Solve}{'Solve'}) is computed using the Newton line search algorithm of \texttt{\textup{PETSc}} (version~3.17.3,~\textit{cf}.~\cite{PETSc19}) (with an absolute tolerance of $\tau_{\textup{abs}}= 1.0\times 10^{-8}$ and a relative~tolerance~of~$\tau_{\textup{rel}}= 1.0\times 10^{-10} $) to the corresponding discrete Euler--Lagrange equations. The linear system emerging in each Newton step is solved using the sparse direct~solver~from~\textup{\texttt{MUMPS}} (version 5.5.0, \textit{cf}.\ \cite{mumps});
			\item[(ii)] The reconstruction of the discrete dual solution $z_k^{rt}\in \smash{\mathcal{R}T^0(\mathcal{T}_k)}$ (\textit{i.e.}, maximizer of \eqref{eq:nl_dirichlet_discrete_dual}) in step (\hyperlink{Solve}{'Solve'}) is based on the generalized Marini formula \eqref{eq:nl_dirichlet_marini};
			\item[(iii)] As \hspace*{-0.1mm}conforming \hspace*{-0.1mm}approximations \hspace*{-0.1mm}in \hspace*{-0.1mm}step \hspace*{-0.1mm}(\hyperlink{Solve}{'Solve'}), \hspace*{-0.1mm}we \hspace*{-0.1mm}employ \hspace*{-0.1mm}${\overline{u}_k^{cr}\hspace*{-0.175em}\coloneqq\hspace*{-0.175em}\Pi^{av}_{h_k} u_k^{cr}\hspace*{-0.175em}\in\hspace*{-0.175em} \mathcal{S}^1_D(\mathcal{T}_k)\hspace*{-0.175em}\subseteq\hspace*{-0.175em} W^{1,p(\cdot)}_D(\Omega)}$ in the case $u_D=0$ and $\overline{z}_k^{rt}=z_k^{rt}\in \mathcal{R}T^0(\mathcal{T}_k) \subseteq W^{p'(\cdot)}(\textup{div};\Omega)$ in any case.
			\item[(iv)] The local refinement indicators $\smash{\{\eta^2_{\textup{gap},T}(\overline{u}_k^{cr},z_k^{rt})\}_{T\in \mathcal{T}_k}}\subseteq \mathbb{R}_{\ge 0}$, for every $T\in \mathcal{T}_h$,~are~given~via\vspace*{-0.5mm}
			\begin{align*}
				\eta^2_{\textup{gap},T}(\overline{u}_k^{cr},z_k^{rt})\coloneqq \int_T{\big\{\phi(\cdot,\nabla \overline{u}_k^{cr})-\nabla\overline{u}_k^{cr}\cdot z_k^{rt}+\phi^*(\cdot,z_k^{rt})\big\}\,\mathrm{d}x}\,,
			\end{align*}
			which follows from restricting $\eta^2_{\textup{gap}}(\overline{u}_k^{cr},z_k^{rt})$ (\textit{cf}. \eqref{eq:nl_dirichlet_eta}) to each element $T\in \mathcal{T}_h$.
		\end{description}
	\end{remark}
	
	\subsubsection{Example with corner singularity on $L$-shape domain}\enlargethispage{2mm}
	
	\hspace*{5mm}For our numerical experiments, we choose $\Omega\coloneqq \left(-1,1\right)^2 \setminus ([0,1]\times [-1,0])$, $ \Gamma_D \coloneqq \partial\Omega$, $ \Gamma_N \coloneqq \emptyset$, $u_D=0\in W^{1-\smash{\frac{1}{p^-}},p^-}(\Gamma_D)$, $p\in C^\infty(\overline{\Omega})$, for every $x\in \overline{\Omega}$ defined by
	\begin{align*}
		p(x)\coloneqq p^-+\tfrac{1}{2}\vert x\vert^2\,,
	\end{align*} 
	where $p^-\hspace*{-0.1em}\in\hspace*{-0.1em} \{1.5,2\}$,  $\phi\colon\hspace*{-0.1em} \Omega\times \mathbb{R}^2\hspace*{-0.1em}\to\hspace*{-0.1em} \mathbb{R}$ (satisfying (\hyperlink{A.1}{A.1}), (\hyperlink{A.1}{A.2})),
	for every  $x\hspace*{-0.1em}\in \hspace*{-0.1em}\overline{\Omega}$ and $r\hspace*{-0.1em}\in\hspace*{-0.1em} \mathbb{R}^2$~defined~by 
	\begin{align*}
		\phi(x,r)\coloneqq \tfrac{1}{p(x)}\vert r\vert ^{p(x)}\,,
	\end{align*}
	and as manufactured primal solution $u\in W^{1,p(\cdot)}_0(\Omega)$ (\textit{i.e.}, minimizer of \eqref{eq:nl_dirichlet_primal}), in polar coordinates, for every $(r,\varphi)^\top\in (0,\infty)\times (0,2\pi)$ defined~by
	\begin{align*}
			u(r, \varphi) \coloneqq (1-r^2\cos^2(\varphi))(1-r^2\sin^2(\varphi))r^{\sigma(r)}\sin(\tfrac{2}{3} \varphi)\,,
	\end{align*}
	where $\sigma\in C^\infty(0,+\infty)$ with $\sigma(r)\in (0,1)$ for all $r\in  (0,1)$, \textit{i.e.}, we choose $f\hspace*{-0.1em}\in\hspace*{-0.1em} L^{p'(\cdot)}(\Omega)$ accordingly.
	
	As approximations, for $k=0,\ldots, 20$, we employ $\phi_k\coloneqq \phi_{h_k}\colon \Omega\times \mathbb{R}^2\to \mathbb{R}$, for a.e.\ $x\in \Omega$ and every $r\in \mathbb{R}^2$ defined by 
	\begin{align*}
		\phi_{h_k}(x,r)&\coloneqq \tfrac{1}{p_k(x)-1}(h_k^2+\vert r\vert )^{p_k(x)-1}\vert r\vert \\&\quad-\tfrac{1}{p_k(x)(p_k(x)-1)}((h_k^2+\vert r\vert)^{p_k(x)}-h_k^{2p_k(x)})\,,
	\end{align*}
	where $p_k\coloneqq p_{h_k}\in \mathcal{L}^0(\mathcal{T}_h)$ is defined by  
	\begin{align*}
		p_{h_k}|_T\coloneqq p(x_T)\quad\text{ for all }T\in \mathcal{T}_k\,,
	\end{align*}
	where $x_T\hspace*{-0.1em}\coloneqq \hspace*{-0.1em}\frac{1}{3}\sum_{\nu\in \mathcal{N}_h\,:\,\nu \in T}{\nu}$ is the barycenter of $T$ for all $T\hspace*{-0.1em}\in\hspace*{-0.1em} \mathcal{T}_k$,  and $f_k\hspace*{-0.1em}\coloneqq\hspace*{-0.1em} f_{h_k}\hspace*{-0.1em}\coloneqq \hspace*{-0.1em}\Pi_{h_k} f\hspace*{-0.1em}\in\hspace*{-0.1em} \mathcal{L}^0(\mathcal{T}_h)$.
	
	For every $p^-\in \{1.5,2\}$, the function $\sigma\in C^\infty(0,+\infty)$ for every $r\in (0,\infty)$ is defined by  
	\begin{align*}
		\sigma(r) \coloneqq 1.01-\tfrac{1}{p^-+r^2}\,,
	\end{align*}
	which just yields the fractional regularity
	\begin{align}
		F(\cdot,\nabla u) \in\mathcal{N}^{\frac{1}{2},2}(\Omega;\mathbb{R}^2)\,,\label{eq:frac_reg}
	\end{align}
	where $F\colon \Omega\times \mathbb{R}^2\to \mathbb{R}^2$ is defined by $F(x,r)\coloneqq \vert r\vert^{\smash{\frac{p(x)-2}{2}}}r$ for a.e.\ $x\in \Omega$ and all $r\in \mathbb{R}^2$.
	
	\hspace{-1mm}In \hspace{-0.1mm}the \hspace{-0.1mm}case \hspace{-0.1mm}of \hspace{-0.1mm}uniform \hspace{-0.1mm}mesh \hspace{-0.1mm}refinement \hspace{-0.1mm}(\textit{i.e.}, \hspace{-0.1mm}$\theta\hspace*{-0.15em}=\hspace*{-0.15em}1$ \hspace{-0.1mm}in \hspace{-0.1mm}Algorithm \hspace{-0.1mm}\ref{alg:afem}), \hspace{-0.1mm}the~\hspace{-0.1mm}fractional~\hspace{-0.1mm}\mbox{regularity}~\hspace{-0.1mm}\eqref{eq:frac_reg} let us expect the reduced convergence rate~$\smash{h_k \sim N_k^{\smash{-\frac{1}{2}}}}$,~${k=0,\ldots,20}$, where $N_k\coloneqq  \textup{dim}(\mathcal{S}^{1,cr}_0(\mathcal{T}_k))$,  for the alternative total error quantity
	\begin{align*}
		\left.\begin{aligned}
				\tilde{\rho}_{\textup{tot}}^2(\overline{u}_k^{cr},z_k^{rt})&\coloneqq \|F(\cdot,\nabla\overline{u}_k^{cr})-F(\cdot,\nabla u)\|_{2,\Omega}^2\\&\quad+\|F^*(\cdot,z_k^{rt})-F^*(\cdot,z)\|_{2,\Omega}^2
		\end{aligned}\right\} \,,\quad k=0,\ldots,20\,,
	\end{align*}
	where $F^*\colon \Omega\times \mathbb{R}^2\to \mathbb{R}^2$ is defined by $F^*(x,r)\coloneqq \vert r\vert^{\smash{\frac{p'(x)-2}{2}}}r$ for a.e.\ $x\in \Omega$~and~all~$r\in \mathbb{R}^2$, which is equivalent to the primal-dual total error $\rho^2_{\textup{tot}}(\overline{u}_k^{cr},z_k^{rt})$, $k=0,\ldots,20$, \textit{i.e.}, there exists a constant $c_{p(\cdot)}>0$, depending only on $p^-\coloneqq \text{ess\,inf}_{x\in \Omega}{p(x)}$ and $p^+\coloneqq \text{ess\,sup}_{x\in \Omega}{p(x)}$, such that
	\begin{align*}
				c_{p(\cdot)}^{-1}\,\rho_{\textup{tot}}^2(\overline{u}_k^{cr},z_k^{rt})\leq	\tilde{\rho}_{\textup{tot}}^2(\overline{u}_k^{cr},z_k^{rt})\leq c_{p(\cdot)}\,\rho_{\textup{tot}}^2(\overline{u}_k^{cr},z_k^{rt})\,,\quad k=0,\ldots,20\,.
	\end{align*}

	The coarsest triangulation $\mathcal{T}_0$ of Figure \ref{fig:pxDirichlet_Triang} consists of 96 elements and 65 vertices.~In~Figure~\ref{fig:pxDirichlet_Rate}, for $p^-\in \{1.5,2\}$, one finds that uniform~mesh~refinement
	(\textit{i.e.}, $\theta=1$ in Algorithm \ref{alg:afem})
	 yields the expected reduced convergence rate $\smash{h_k \sim N_k^{\smash{-\frac{1}{2}}}}$, $k=0,\dots,4$, while adaptive mesh refinement (\textit{i.e.}, $\theta=\frac{1}{2}$ in Algorithm \ref{alg:afem}) yields the optimal convergence rate $\smash{h_k^2\sim N_k^{-1}}$, $k=0,\dots,20$. In~\mbox{particular}, for~every $p^-\in \{1.5,2\}$ and $k=0,\dots,20$, when using adaptive mesh refinement, and $ k=0,\dots,4$, when using uniform mesh refinement, the primal-dual gap  estimator $\eta^2_{\textup{gap}}(\overline{u}_k^{cr},z_k^{rt})$ is reliable and efficient with respect to $\tilde{\rho}_{\textup{tot}}^2(\overline{u}_k^{cr},z_k^{rt})$, $k=0,\dots,20$.

	\begin{figure}[H]
		\centering
		\includegraphics[width=14.5cm]{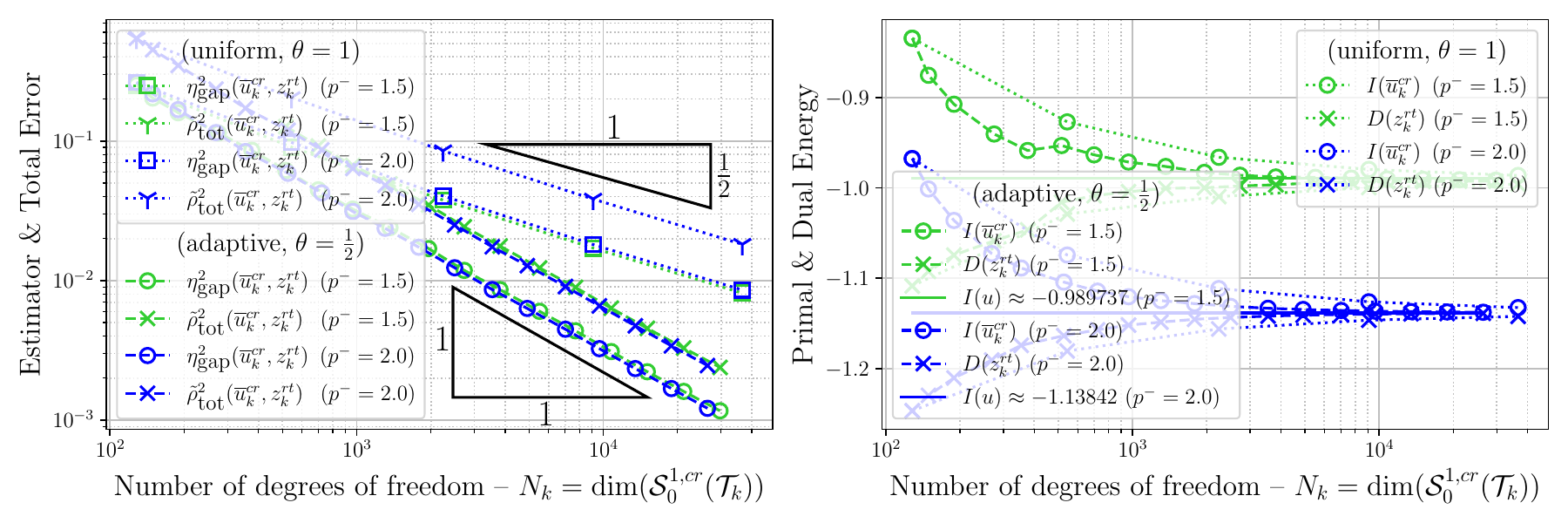}
		\caption{\hspace*{-0.15mm}LEFT: \hspace*{-0.15mm}primal-dual \hspace*{-0.15mm}gap \hspace*{-0.15mm}estimator \hspace*{-0.15mm}$\smash{\eta^2_{\textup{gap}}}(\overline{u}_k^{cr},z_k^{rt})$ \hspace*{-0.15mm}and \hspace*{-0.15mm}alternative~\hspace*{-0.15mm}total~\hspace*{-0.15mm}error~\hspace*{-0.15mm}$\tilde{\rho}^2_{\textup{tot}}(\overline{u}_k^{cr},z_k^{rt})$; RIGHT: primal energy  $I(\overline{u}_k^{cr})$  and dual energy $D(z_k^{rt})$;
			each for $p^-\in\{1.5,2\}$  and $k=0,\dots,20$, when using adaptive mesh refinement (\textit{i.e.}, $\theta=\frac{1}{2}$ in Algorithm \ref{alg:afem}), and for $k=0,\dots, 4$, when uniform mesh refinement (\textit{i.e.}, $\theta\hspace*{-0.1em}=\hspace*{-0.1em}1$ in Algorithm \ref{alg:afem}),  in the \mbox{$p(\cdot)$-Dirichlet}~\mbox{problem}.}\vspace*{-1mm}
		\label{fig:pxDirichlet_Rate}
	\end{figure}
	
	In Figure \ref{fig:pxDirichlet_Triang}, for $p^-\in \{1.5,2\}$, one finds that
	Algorithm \ref{alg:afem} refines towards the origin, where the gradient of the primal solution $u\in W^{1,p(\cdot)}_0(\Omega)$ has its singularity. 
	More precisely, Figure \ref{fig:pxDirichlet_Triang} displays the triangulations $\mathcal{T}_k$, $k\in \{0,10,20\}$, generated by Algorithm \ref{alg:afem} in the case $p^-=2$.
	
	This behavior can also be seen in Figure \ref{fig:pxDirichlet_Solution}, in which the discrete primal~solution~${u_{10}^{cr}\hspace*{-0.1em}\in\hspace*{-0.1em} \mathcal{S}^{1,cr}_0(\mathcal{T}_{10})}$, the node-averaged discrete primal solution $\Pi_{h_{10}}^{av}u_{10}^{cr}\in \mathcal{S}^1_0(\mathcal{T}_{10})$, and the discrete dual solution $z_{10}^{rt}\in \mathcal{R}T^0(\mathcal{T}_{10})$ are plotted, each in the case $p^-=2$.\vspace*{-1mm}\enlargethispage{10mm}
	
	\begin{figure}[H]
		\centering
		\includegraphics[width=14.5cm]{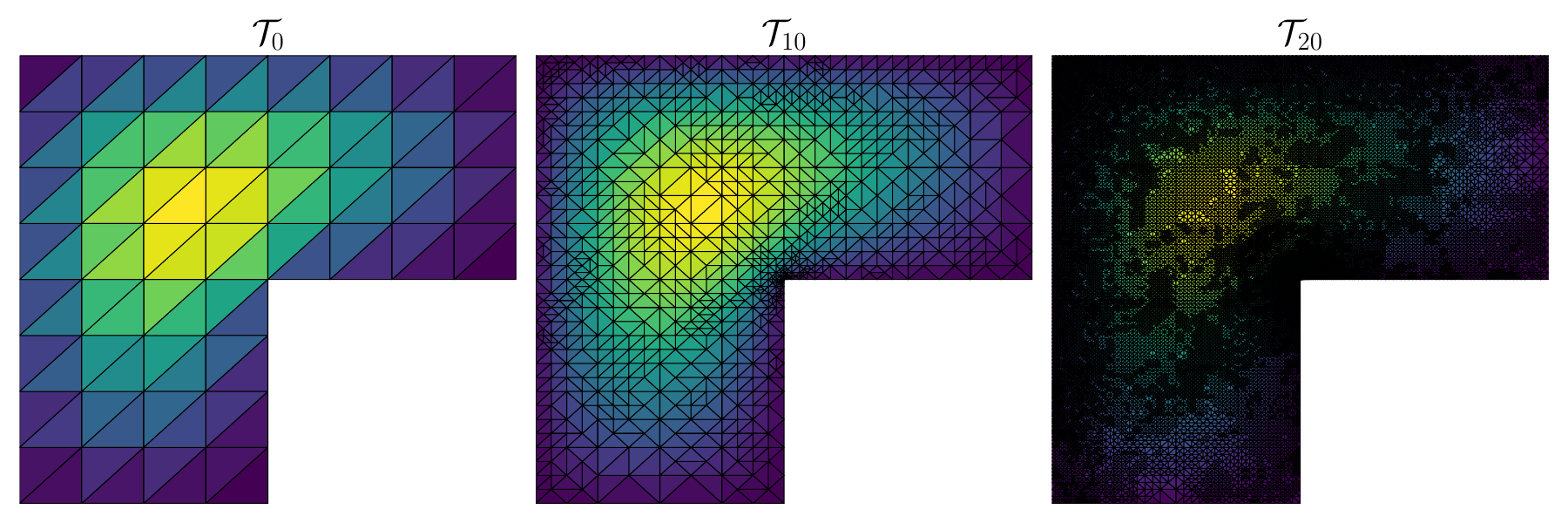}
		\caption{Initial triangulation $\mathcal{T}_0$ and 
			adaptively refined triangulations $\mathcal{T}_k$,~${k\hspace*{-0.15em}\in\hspace*{-0.15em} \{10,20\}}$,~\mbox{generated} by  Algorithm \ref{alg:afem} for $\theta=\frac{1}{2}$, each in the case $p^-=2$  in the $p(\cdot)$-Dirichlet~problem.}
		\label{fig:pxDirichlet_Triang}
	\end{figure}\vspace*{-5mm}
	
	\begin{figure}[H]
		\centering
		\includegraphics[width=14.5cm]{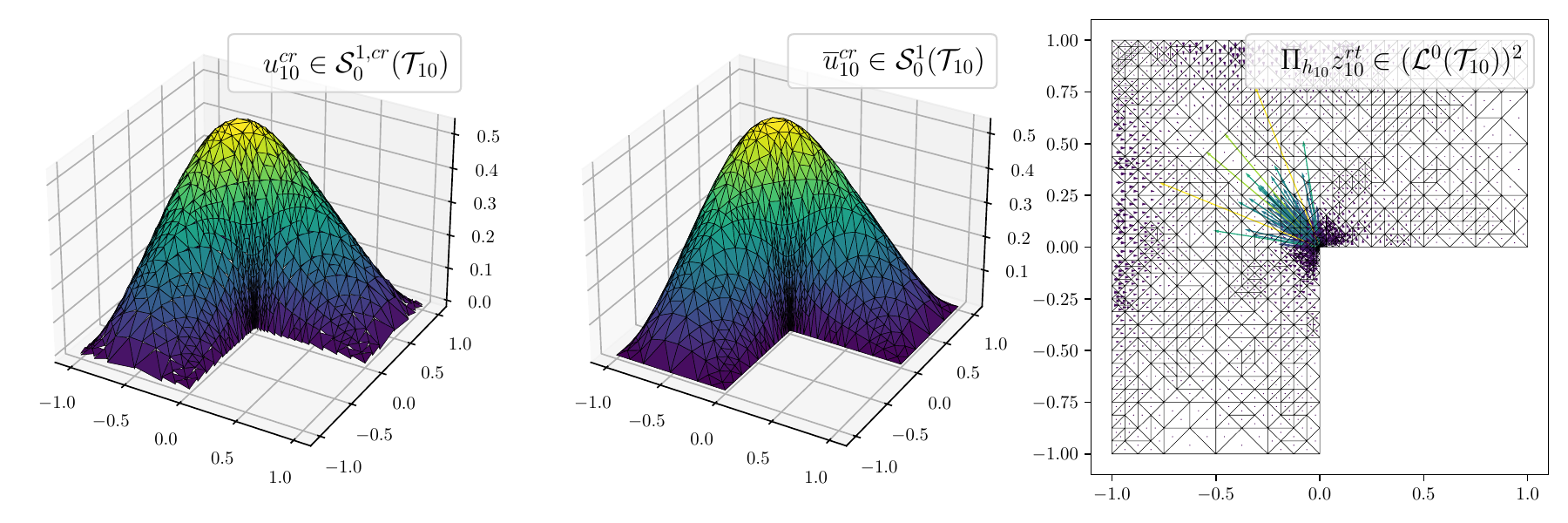}
			\caption{LEFT: discrete primal solution $u_{10}^{cr}\in \mathcal{S}^{1,cr}_0(\mathcal{T}_{10})$; MIDDLE: node-averaged discrete primal solution $\overline{u}_{10}^{cr}\in \mathcal{S}^1_0(\mathcal{T}_{10})$;  RIGHT: (local) $L^2$-projection (onto $(\mathcal{L}^0(\mathcal{T}_{10}))^2$) of discrete dual solution $z_{10}^{rt}\in \mathcal{R}T^0(\mathcal{T}_{10})$, each in the the case $p^-=2$ in the $p(\cdot)$-Dirichlet~problem.}
		\label{fig:pxDirichlet_Solution}
	\end{figure}

	\newpage
	\subsection{Obstacle problem}\label{subsec:num_obstacle}\vspace*{-0.5mm}

	\hspace*{5mm}In this subsection,  
	we review the  theoretical findings of  Subsection \ref{subsec:obstacle}.\vspace*{-0.5mm}

	\subsubsection{Implementation details regarding the optimization procedure}\vspace*{-0.5mm}
	
	\hspace*{5mm}Before we  present our numerical experiments, again, we briefly outline  implementation details regarding the optimization procedure.\vspace*{-0.5mm}
		
	\begin{remark}
		\begin{description}[noitemsep,topsep=1pt,labelwidth=\widthof{\textit{(iii)}},leftmargin=!,font=\normalfont\itshape]
			\item[(i)] The discrete primal solution $u_k^{cr}\in\mathcal{S}^{1,cr}(\mathcal{T}_k)$ (\textit{i.e.}, minimizer of \eqref{eq:obstacle_discrete_primal}) and the discrete Lagrange multiplier $\smash{\overline{\lambda}}_k^{cr}\in \Pi_{h_k}(\mathcal{S}^{1,cr}_D(\mathcal{T}_k))$ (\textit{i.e.}, solution of \eqref{eq:obstacle_multiplier})~in~step~(\hyperlink{Solve}{'Solve'}) are computed using a primal-dual active set strategy interpreted as a locally super-linear converging semi-smooth Newton method (\textit{cf}.\ \cite[Alg.\ 6.1  with  $\alpha=1$]{BK22Obstacle}). Since only a finite number of active sets are possible, the algorithm terminates after a finite number of iterations at $(u_k^{cr},\smash{\overline{\lambda}}_k^{cr})^\top\in\mathcal{S}^{1,cr}(\mathcal{T}_k)\times \Pi_{h_k}(\mathcal{S}^{1,cr}_D(\mathcal{T}_k))$ . The linear system emerging in each semi-smooth Newton step is solved using the sparse direct solver of \texttt{\textup{SciPy}} (version~1.8.1,~\textit{cf}.~\cite{scipy});
			\item[(ii)] The reconstruction of the discrete dual solution $z_k^{rt}\in \smash{\mathcal{R}T^0(\mathcal{T}_k)}$ (\textit{i.e.}, maximizer of \eqref{eq:obstacle_discrete_dual})  in step (\hyperlink{Solve}{'Solve'}) is based on the generalized Marini formula \eqref{eq:obstacle_marini};
			\item[(iii)] As conforming approximations in step (\hyperlink{Solve}{'Solve'}),~we~employ~${\overline{u}_k^{cr}\hspace*{-0.1em}\coloneqq \hspace*{-0.1em}\max\{\Pi^{av}_{h_k} u_k^{cr},\chi\}\hspace*{-0.1em}\in\hspace*{-0.1em} W^{1,2}_D(\Omega)}$ in the case $u_D=0$ and $\overline{z}_k^{rt}=z_k^{rt}\in \mathcal{R}T^0(\mathcal{T}_k) \subseteq W^2(\textup{div};\Omega)$ in any case. 
			\item[(iv)] The local refinement indicators $\smash{\{\eta^2_{\textup{gap},T}(\overline{u}_k^{cr},z_k^{rt})\}_{T\in \mathcal{T}_k}}\subseteq \mathbb{R}_{\ge 0}$, for every $T\in \mathcal{T}_h$,~are~given~via
			\begin{align*}
				\eta^2_{\textup{gap},T}(\overline{u}_k^{cr},z_k^{rt})&\coloneqq \tfrac{1}{2}\|\nabla \overline{u}_k^{cr}-z_k^{rt}\|_{2,T}^2+ (-\textup{div}\,z_k^{rt}-f,\overline{u}_k^{cr}-\chi)_T\,,
			\end{align*}
			which follows from restricting $\eta^2_{\textup{gap}}(\overline{u}_k^{cr},z_k^{rt})$ (\textit{cf}. \eqref{eq:obstacle_eta}) to each element $T\in \mathcal{T}_h$.
		\end{description}
	\end{remark}

	\subsubsection{Example with unknown exact solution}\enlargethispage{14mm}\vspace*{-0.5mm}
	
	\qquad For our numerical experiments, we choose $\Omega\coloneqq (-\frac{3}{2},\frac{3}{2})^2 $, $ \Gamma_D \coloneqq \partial\Omega$, $ \Gamma_N \coloneqq \emptyset$, $f=0\in L^2(\Omega)$, $u_D=0\in W^{\frac{1}{2},2}(\Gamma_D)$, and $\chi\in W^{1,2}_0(\Omega)$ (\textit{cf}.\ Figure \ref{fig:Obstacle_Chi}), for every $x=(x_1,x_2)^\top\in \Omega$ defined by 
	\begin{align*}
		\chi(x_1,x_2)\coloneqq \max\{ 0, \min\{ \min\{ x_1 + 1, \tfrac{1}{2}, 1 - x_1 \}, \min\{ x_2 + 1, \tfrac{1}{2}, 1 - x_2 \} \} \}\,.
	\end{align*}
	
	As approximation, for $k=0,\ldots,25$, we employ $\chi_k\coloneqq  \chi_{h_k}\coloneqq\Pi_{h_k}\chi \in \mathcal{L}^0(\mathcal{T}_k)$ (\textit{cf}.\ Figure~\ref{fig:Obstacle_Chi}).
	The primal solution $u\in W^{1,2}_0(\Omega)$ (\textit{i.e.}, minimizer of \eqref{eq:obstacle_primal}) is not known and cannot be expected to satisfy $u\in W^{2,2}(\Omega)$ inasmuch as $\chi\notin W^{2,2}(\Omega)$.
	In consequence,  uniform mesh refinement (\textit{i.e.}, $\theta=1$ in Algorithm \ref{alg:afem})  is expected to yield a reduced convergence rate compared to the optimal  convergence rate $h_k^2\sim N_k^{-1}$, $k=0,\ldots,25$, where ${N_k\coloneqq \textup{dim}(\mathcal{S}^{1,cr}_0(\mathcal{T}_k))+\textup{dim}(\mathcal{L}^0(\mathcal{T}_k))}$.\vspace*{-1mm}
	
	\begin{figure}[H]
		\centering
		\includegraphics[width=14.5cm]{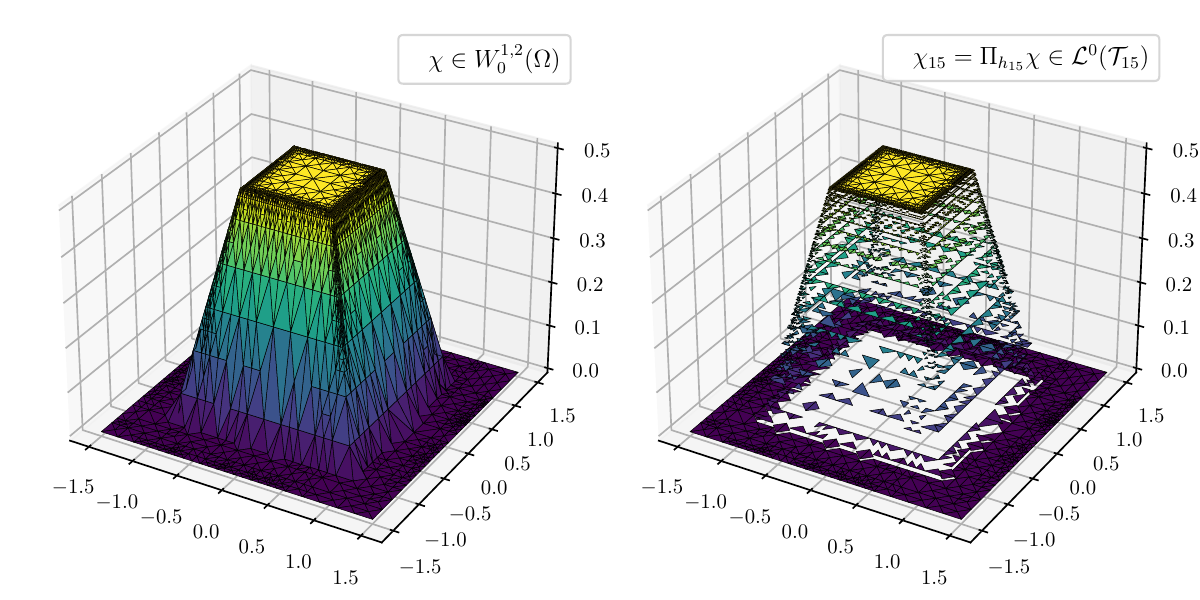}\vspace*{-1mm}
		\caption{LEFT: nodal interpolant of the obstacle $\chi\in W^{1,2}_0(\Omega)$; RIGHT: (local) $L^2$-projection (onto $\mathcal{L}^0(\mathcal{T}_h)$) $\chi_{15}\coloneqq \Pi_{h_{15}}\chi \in \mathcal{L}^0(\mathcal{T}_{15})$ of the obstacle $\chi\in W^{1,2}_0(\Omega)$.}
		\label{fig:Obstacle_Chi}
	\end{figure}
	
	\newpage The coarsest triangulation $\mathcal{T}_0$ in Figure \ref{fig:Obstacle_Triang}   consists of 64 elements and 41 vertices. 
	More precisely, Figure~\ref{fig:Obstacle_Triang} displays
	\hspace*{-0.15mm}the \hspace*{-0.15mm}triangulations \hspace*{-0.15mm}$\mathcal{T}_k$, \hspace*{-0.15mm}$k\hspace*{-0.15em}\in\hspace*{-0.15em} \{0,5,10,15,20,25\}$, \hspace*{-0.15mm}generated \hspace*{-0.15mm}by~\hspace*{-0.15mm}\mbox{Algorithm}~\ref{alg:afem}~\hspace*{-0.15mm}for~\hspace*{-0.15mm}${\theta\hspace*{-0.15em}=\hspace*{-0.15em}\frac{1}{2}}$.
   The discrete contact zones $\mathcal{C}_k^{cr}\hspace*{-0.1em}\coloneqq\hspace*{-0.1em}\{\Pi_{h_k}u_k^{cr}\hspace*{-0.1em}=\hspace*{-0.1em}\chi_k\}\hspace*{-0.1em}=\hspace*{-0.1em}\{\smash{\overline{\lambda}}_k^{cr}\hspace*{-0.1em}<\hspace*{-0.1em}0\}$, $k\hspace*{-0.1em}\in\hspace*{-0.1em} \{0,5,10,15,20,25\}$,~are~plotted in white in Figure \ref{fig:Obstacle_Triang} while their complements are  shaded.

	 Algorithm \ref{alg:afem}  refines the triangulations towards the contact zone $\mathcal{C}\coloneqq \{u=\chi\}$~(\textit{cf}.~\mbox{Figure}~\ref{fig:Obstacle_Triang}). The discrete contact zones $\mathcal{C}_k^{cr}$, $k\in\{0,\dots,25\}$, reduce to $\mathcal{C}$.  
	This  can also be~observed~in~Figure~\ref{fig:solution_pyramid}, where the discrete primal solution $u_{15}^{cr}\in \mathcal{S}^{1,cr}_0(\mathcal{T}_{15})$, the node-averaged discrete primal solution $\Pi_{h_{15}}^{av}u_{15}^{cr}\in \mathcal{S}^1_0(\mathcal{T}_{15})$, the discrete~Lagrange~multiplier~$\smash{\overline{\lambda}}_{15}^{cr}\in \Pi_{h_{15}}(\mathcal{S}^{1,cr}_0(\mathcal{T}_{15}))$, and the discrete dual solution $z_{15}^{rt}\in \mathcal{R}T^0(\mathcal{T}_{15})$ are plotted. 
	In Figure \ref{fig:Obstacle_Rate}, one finds that uniform mesh refinement (\textit{i.e.}, $\theta=1$ in Algorithm \ref{alg:afem}) yields the 
	  expected reduced convergence rate $\smash{h_k \sim N_k^{\smash{-\frac{1}{2}}}}$, $k=0,\dots,4$, while adaptive mesh refinement (\textit{i.e.}, $\theta=\frac{1}{2}$ in Algorithm \ref{alg:afem}) 
	yields  the optimal convergence rate $\smash{h_k^2\sim N_k^{-1}}$, $k=0,\dots,25$.\vspace*{-3mm}\enlargethispage{15mm}

		\begin{figure}[H]
		\centering
		\hspace*{-1mm}\includegraphics[width=14.8cm]{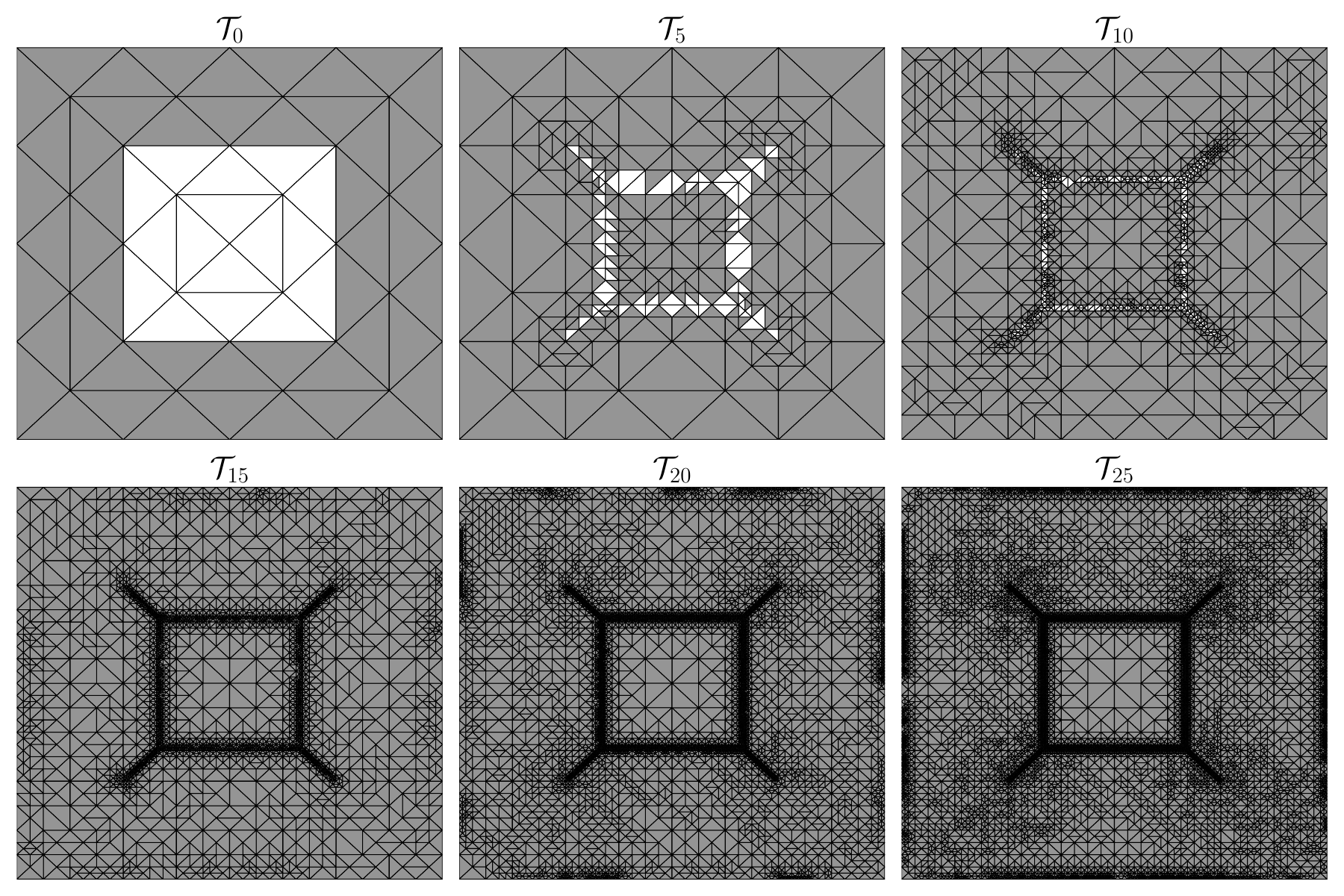}\vspace*{-2mm}
		\caption{Adaptively refined triangulations $\mathcal{T}_k$, $k\in\{0,5,10,15,20,25\}$, with discrete~contact zones $\mathcal{C}_k^{cr}$, $k\in\{0,5,10,15,20,25\}$, shown in white in the obstacle problem.}
		\label{fig:Obstacle_Triang}
	\end{figure}\vspace*{-5mm}

	 \begin{figure}[H]
		\centering
		\includegraphics[width=14.5cm]{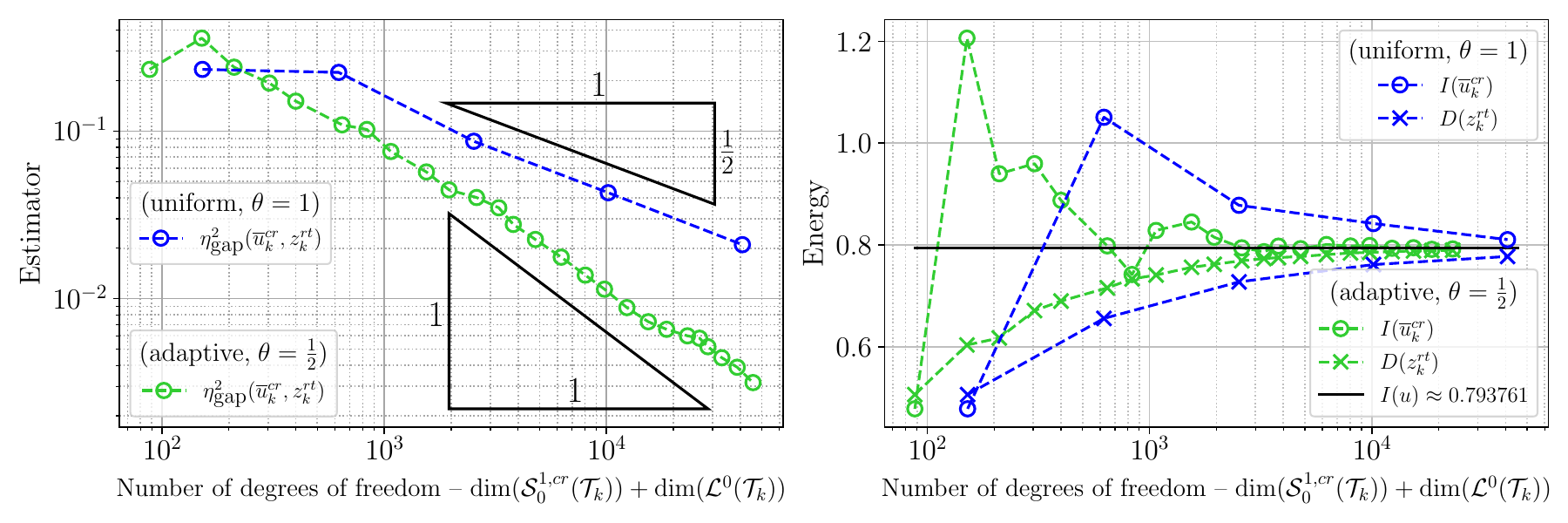}\vspace*{-1mm}
		\caption{LEFT: primal-dual gap estimator $\smash{\eta^2_{\textup{gap}}}(\overline{u}_k^{cr},z_k^{rt})$; RIGHT: primal energy  $I(\overline{u}_k^{cr})$, dual energy $D(z_k^{rt})$, and primal energy $I(u)$ approximated via Aitken’s $\delta^2$-process (\textit{cf}. \cite{Ait26});
			each for $k=0,\dots,25$, when using adaptive mesh refinement  (\textit{i.e.}, $\theta=\frac{1}{2}$ in Algorithm \ref{alg:afem}),  and for $k=0,\dots, 4 $, when uniform mesh refinement (\textit{i.e.}, $\theta=1$ in Algorithm \ref{alg:afem}),~in~the~obstacle~problem.}
		\label{fig:Obstacle_Rate}
	\end{figure}

	\hphantom{.}
	\vfill
	\begin{figure}[H]
		\centering
		\hspace*{-2mm}\includegraphics[width=15cm]{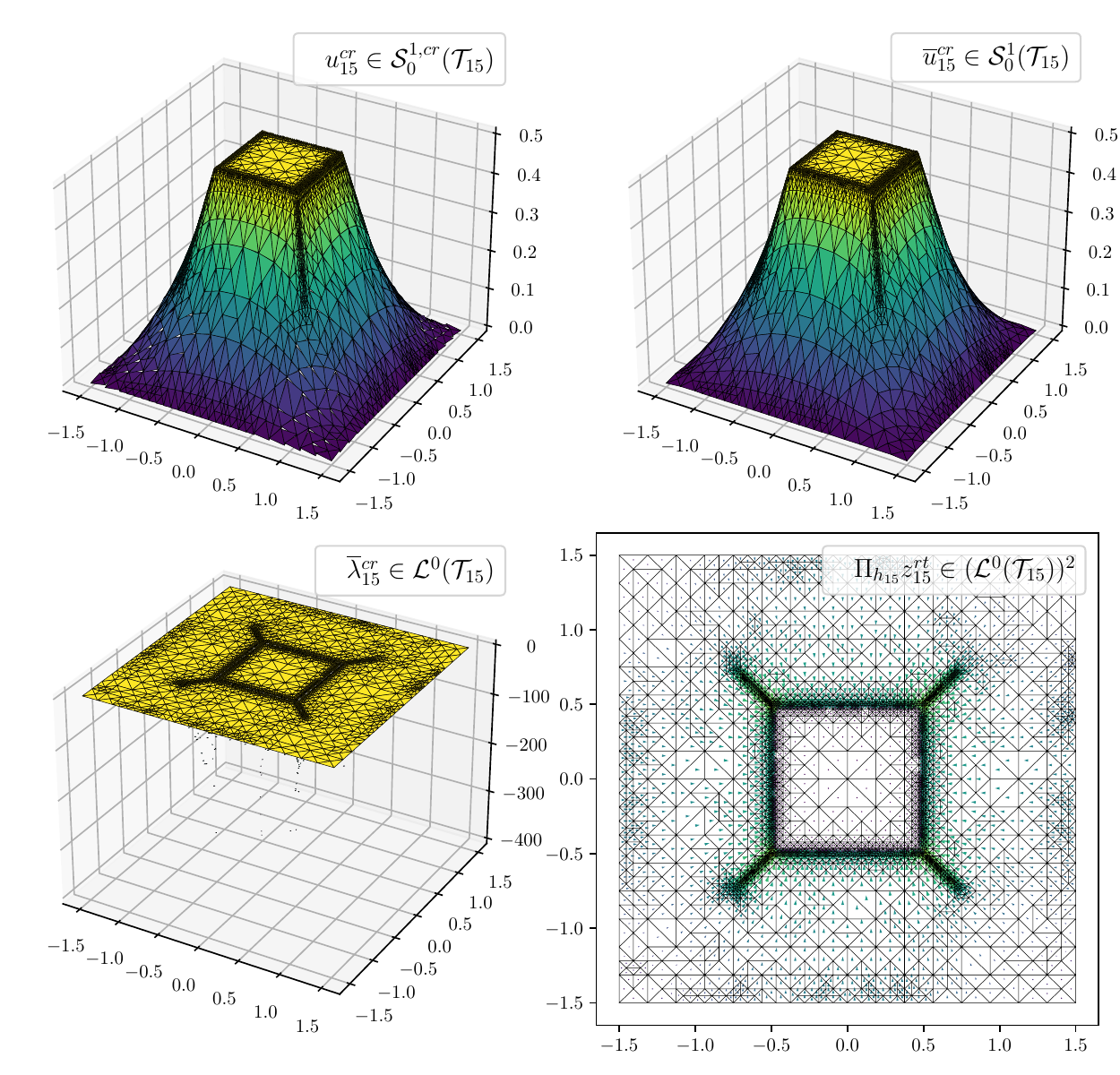}
		\caption{UPPER LEFT: discrete primal solution $u_{15}^{cr}\in \mathcal{S}^{1,cr}_0(\mathcal{T}_{15})$; UPPER RIGHT: node-averaged  discrete primal solution $\Pi_{h_{15}}^{av}u_{15}^{cr}\in \mathcal{S}^1_0(\mathcal{T}_{15})$; LOWER LEFT: discrete Lagrange multiplier
			$\smash{\overline{\lambda}}_{15}^{cr}\hspace*{-0.1em}\in\hspace*{-0.1em} \mathcal{L}^0(\mathcal{T}_{15})$; LOWER RIGHT: (local) $L^2$-projection of the discrete~dual~solution~${z_{15}^{rt}\hspace*{-0.1em}\in\hspace*{-0.1em} \mathcal{R}T^0(\mathcal{T}_{15})}$; each on the triangulation $\mathcal{T}_{15}$ in the obstacle problem.}
		\label{fig:solution_pyramid}
	\end{figure} 
	\vfill
	\newpage
	\subsection{Rudin--Osher--Fatemi (ROF) image denoising problem}\label{subsec:num_rof}\enlargethispage{11mm}\vspace*{-0.5mm}
	
		\hspace*{5mm}In this subsection,  
	we review the  theoretical findings of  Subsection \ref{subsec:rof}. To compare approximations to an exact solution, we impose homogeneous Dirichlet boundary~conditions~on~$\Gamma_D=\partial\Omega$, even though, then, a corresponding existence theory is difficult to establish, in general. However, the set-up~derived~in~\mbox{Subsection}~\ref{subsec:rof} carries over verbatimly with $\Gamma_N=\emptyset$~provided~that~the~existence of a minimizer  is a priori  guaranteed.\vspace*{-0.5mm}
	
		\subsubsection{Implementation details regarding the optimization procedure}\vspace*{-0.5mm}
		
	 \hspace*{5mm}Before we present our numerical experiments, again, we briefly outline  implementation details regarding the optimization procedure.\vspace*{-0.5mm}
	
	\begin{remark}
		\begin{description}[noitemsep,topsep=1pt,labelwidth=\widthof{\textit{(iii)}},leftmargin=!,font=\normalfont\itshape]
			\item[(i)] The discrete  primal solution $u_k^{cr}\in\mathcal{S}^{1,cr}_0(\mathcal{T}_k)$ in step (\hyperlink{Solve}{'Solve'}) is computed using 
			a semi-implicit discretized $L^2$-gradient flow (\textit{cf}.\ \cite[Alg.\ 5.1]{BK23ROF}) for fixed~step-size~${\tau=1}$, stopping criterion $\varepsilon_{stop}^{h_k}\coloneqq \frac{h_k}{\sqrt{20}}$, and initial condition $u_k^0=0\in \mathcal{S}_0^{1,cr}(\mathcal{T}_k)$. Appealing to \cite[Prop.\ 5.2(ii)]{BK23ROF}, \cite[Alg.\ 5.1]{BK23ROF} is unconditionally strongly stable, so that employing the fixed step-size $\tau=1$ is a canonical choice. 
			The stopping~criterion~$\smash{\varepsilon_{stop}^{h_k}\coloneqq \frac{h_k}{\sqrt{20}}}$ ensures (\textit{cf}.\  the argumentation below \cite[Alg.\ 5.1]{BK23ROF}) that the final iterate $u_{h_k}^{i^*}\hspace*{-0.1em}\in\hspace*{-0.1em} \mathcal{S}^{1,cr}_0(\mathcal{T}_k)$,~${i^*\hspace*{-0.1em}\in \hspace*{-0.1em}\mathbb{N}}$,~is~a~\mbox{sufficiently} accurate approximation  (in $L^2(\Omega)$) of the discrete primal solution $u_k^{cr}\in \mathcal{S}^{1,cr}_0(\mathcal{T}_k)$, in the sense 
			that its accuracy does not violate the best possible~convergence~rate~(\textit{cf}.~\mbox{\cite[Rem.~5.6]{BK23ROF}}). The \hspace*{-0.1mm}linear \hspace*{-0.1mm}systems \hspace*{-0.1mm}emerging \hspace*{-0.1mm}in \hspace*{-0.1mm}each \hspace*{-0.1mm}gradient \hspace*{-0.1mm}descent \hspace*{-0.1mm}step \hspace*{-0.1mm}is \hspace*{-0.1mm}solved~\hspace*{-0.1mm}using~\hspace*{-0.1mm}the~\hspace*{-0.1mm}conjugate~\hspace*{-0.1mm}\mbox{gradient} \hspace*{-0.1mm}method \hspace*{-0.1mm}of \hspace*{-0.1mm}\textup{\texttt{PETSc}} \hspace*{-0.1mm}(version~3.17.3, \hspace*{-0.1mm}\textit{cf}.\ \hspace*{-0.1mm}\cite{PETSc19})~\hspace*{-0.1mm}preconditioned~\hspace*{-0.1mm}with~\hspace*{-0.1mm}an~\hspace*{-0.1mm}\mbox{incomplete}~\hspace*{-0.1mm}LU~\hspace*{-0.1mm}\mbox{factorization}.
			\item[(ii)] The reconstruction of the discrete dual solution $z_k^{rt}\in \mathcal{R}T^0(\mathcal{T}_k)$ (\textit{i.e.}, maximizer of \eqref{eq:tv_discrete_dual}) in step (\hyperlink{Solve}{'Solve'}) 
			is based on the generalized Marini formula \eqref{eq:rof_marini}.
			\item[(iii)] As conforming approximations in step (\hyperlink{Solve}{'Solve'}), we employ $\overline{u}_k^{cr}\coloneqq	\Pi_{h_k}^{\partial\Omega}  u_k^{cr} \in \mathcal{S}^{1,cr}_0(\mathcal{T}_k)$ with $\overline{u}_k^{cr}=0$ a.e.\ on $\partial\Omega$, 
			where the operator $\Pi_{h_k}^{\partial\Omega} \colon \mathcal{S}^{1,cr}(\mathcal{T}_k)\to \mathcal{S}^{1,cr}_0(\mathcal{T}_k)$ for every $v_{h_k}\in\mathcal{S}^{1,cr}(\mathcal{T}_k) $ is defined by\vspace*{-0.5mm}
			\begin{align*}
				\Pi_{h_k}^{\partial\Omega} v_{h_k}\coloneqq \sum_{S\in \mathcal{S}_{h_k}^{i}\,:\,S\cap \partial\Omega=\emptyset}{v_{h_k}(x_S)\,\varphi_S}\,,
			\end{align*}
			where $x_S\coloneqq \frac{1}{d}\sum_{\nu\in \mathcal{N}_h\,:\, \nu \in S}{\nu}$ denotes the barycenter of $S$ for all $S\in \mathcal{S}_h$,
			and we employ $\overline{z}_k^{rt}\in  \mathcal{R}T^0(\mathcal{T}_k)$ with $\vert \overline{z}_k^{rt}\vert \leq 1$ a.e.\ in $\Omega$, defined by\vspace*{-0.5mm}
				\begin{align}\label{eq:projection_step}
				\overline{z}_k^{rt}\coloneqq \frac{z_k^{rt}}{\max\{1,\|z_k^{rt}\|_{\infty,\Omega}\}} \,.
			\end{align}
			Note that the post-processing $\overline{u}_k^{cr}\coloneqq	\Pi_{h_k}^{\partial\Omega}  u_k^{cr}$ is only due to the imposed~homogeneous~Dirichlet boundary condition. In the case $\Gamma_D\hspace{-0.1em}=\hspace{-0.1em}\emptyset$,  the choice $\overline{u}_k^{cr}\hspace{-0.1em}\coloneqq \hspace{-0.1em}u_k^{cr}\hspace{-0.1em}\in\hspace{-0.1em} \mathcal{S}^{1,cr}(\mathcal{T}_k)$ is always~admissible.
			
			\item[(iv)] The local refinement indicators $\smash{\{\eta^2_{\textup{gap},T}(\overline{u}_k^{cr},\overline{z}_k^{rt})\}_{T\in \mathcal{T}_k}}\subseteq \mathbb{R}_{\ge 0}$, for every $T\in \mathcal{T}_h$,~are~given~via
			\begin{align*}
				\eta^2_{\textup{gap},T}(\overline{u}_k^{cr},z_k^{rt})&\coloneqq \eta^2_{\textup{gap},A,T}(\overline{u}_k^{cr},\overline{z}_k^{rt})+ \eta^2_{\textup{gap},B,T}(\overline{u}_k^{cr},\overline{z}_k^{rt})\,,\\
				\eta^2_{\textup{gap},A,T}(\overline{u}_k^{cr},\overline{z}_k^{rt})&\coloneqq \|\nabla_h \overline{u}_k^{cr}\|_{1,T}-(\nabla_h \overline{u}_k^{cr},\Pi_h \overline{z}_k^{rt})_T+\sum_{S\in \mathcal{S}_h\,:\,S\subseteq \partial T}{\|\jump{\overline{u}_k^{cr}}_S\|_{1,S}}\,,\\[-2.5mm]
				\eta^2_{\textup{gap},B,T}(\overline{u}_k^{cr},\overline{z}_k^{rt})&\coloneqq \tfrac{1}{2\alpha }\|\textup{div}\,\overline{z}_k^{rt}+\alpha\,(\overline{u}_k^{cr}-g)\|_{2,T}^2\,,
			\end{align*}
			where we used in \eqref{eq:tv_eta} the discrete integration-by-parts formula \eqref{eq:pi} and the discrete representation of the total variation  (\textit{cf}. \cite{discrete_tv}), \textit{i.e.}, for every $v_h\in \mathcal{L}^1(\mathcal{T}_h)$, it holds that
			\begin{align*}
				\vert \textup{D}v_h\vert (\Omega)=\|\nabla_h v_h\|_{1,\Omega}+\sum_{S\in \mathcal{S}_h}{\|\jump{v_h}_S\|_{1,S}}\,,
			\end{align*}
			to arrive at an alternative representation for \eqref{eq:tv_eta}, \text{i.e.}, for every $v_h\in \mathcal{S}^{1,cr}(\mathcal{T}_h)$ and $y_h\in \mathcal{R}T^0_0(\mathcal{T}_h)$ with $\vert y_h\vert \leq 1$ a.e.\ in $\Omega$, we have that
			\begin{align*}
				\eta^2_{\textup{gap}}(v_h,y_h)&=\|\nabla_h v_h\|_{1,\Omega}-(\nabla_h v_h,\Pi_h y_h)_\Omega+\sum_{S\in \mathcal{S}_h}{\|\jump{v_h}_S\|_{1,S}}\\[-2.5mm]&\quad +\tfrac{1}{2\alpha }\|\textup{div}\,y_h+\alpha\,(v_h-g)\|_{2,T}^2\,.
			\end{align*}
		\end{description}
	\end{remark}
	
	\newpage
	\subsection{Example with Lipschitz continuous dual solution}\enlargethispage{16mm}\vspace*{-0.5mm}
	
	\hspace{5mm}We examine an example from \cite{BTW21}. In this example, we let $\Omega\coloneqq (-1,1)^d$, $d\in \{2,3\}$,~${\Gamma_D\coloneqq \partial\Omega}$,  $r\coloneqq \smash{\frac{1}{2}}$, $\alpha \coloneqq 10$, and $g\coloneqq \chi_{B_r^d(0)}\in BV(\Omega)\cap L^\infty(\Omega)$. Then,~the~primal~solution~$u\in BV(\Omega)\cap L^\infty(\Omega)$ and a dual solution $z\in W^2(\textup{div};\Omega)\cap L^\infty(\Omega;\mathbb{R}^d)$, for a.e.\ $x\in \Omega$ are~given~via\vspace*{-1mm}
	\begin{align}\label{one_disk_primal_solution}
		u(x)&\coloneqq (1-\tfrac{d}{\alpha r})\,g(x)\,,\qquad
		z(x)\coloneqq\begin{cases}
			-\tfrac{x}{r}&\textup{ if }\vert x\vert < r\,,\\
			-\tfrac{rx}{\vert x\vert^d}&\textup{ if }\vert x\vert \geq r\,.
		\end{cases}
	\end{align}

	As approximations, for  $k=0,\ldots,25$, we employ  $\smash{\phi_k\coloneqq\phi_{h_k}\in C^1(\mathbb{R}^d)}$, for every $\smash{r\in \mathbb{R}^d}$~defined by $\smash{\phi_{h_k}(r)\coloneqq (1-h_k^2)\,(h_k^4+\vert r\vert^2)^{\frac{1}{2}}}$, 
	and $\smash{g_k\coloneqq g_{h_k}\coloneqq \Pi_{h_k} g\in \mathcal{L}^0(\mathcal{T}_k)}$.
	

	\textit{2D Case.}
	The coarsest triangulation $\mathcal{T}_0$ of Figure \ref{fig:ROF_Triang} consists of $32$ elements and $25$ vertices.
	More~precisely, Figure \ref{fig:ROF_Triang} displays
	the triangulations $\mathcal{T}_k$, $k\in \{0,15,25\}$, generated~by~Algorithm~\ref{alg:afem}.
	A refinement towards $\partial B_r^2(0)$, \textit{i.e.}, the jump set $J_u$ of the  primal~solution~${u\hspace*{-0.1em}\in\hspace*{-0.1em} BV(\Omega)\cap L^\infty(\Omega)}$\linebreak (\textit{cf}.\ \eqref{one_disk_primal_solution}) is reported.
	This behavior is also seen in Figure \ref{fig:ROF_Solution}, in which the discrete primal solution $u_{15}^{cr}\in \mathcal{S}^{1,cr}_0(\mathcal{T}_{15})$, the (local)
	$L^2$-projection (onto $\mathcal{L}^0(\mathcal{T}_{15})$) ${\Pi_{h_{15}} u_{15}^{cr}\in \mathcal{L}^0(\mathcal{T}_{15})}$, 
	the (local) $L^2$-projections (onto $\mathcal{L}^1(\mathcal{T}_{15})$) of 
	the modulus of the  discrete dual solution $z_{15}^{rt}\in \mathcal{R}T^0(\mathcal{T}_{15})$, and the (local) $L^2$-projections (onto $\mathcal{L}^1(\mathcal{T}_{15})$) of 
	the modulus of the unit-length scaled  discrete dual solution $\overline{z}_{15}^{rt}\in \mathcal{R}T^0(\mathcal{T}_{15})$ (\textit{cf}. \eqref{eq:projection_step}) are plotted. In 
	Figure \ref{fig:ROF_Rate}, one sees that uniform~mesh~refinement (\textit{i.e.}, $\theta=1$ in Algorithm \ref{alg:afem}) yields the expected reduced 
	 the convergence rate $h_k\sim N_k^{\smash{-\frac{1}{2}}}$, $k=0,\ldots,25$,   (predicted by \cite{CP20,Bar21}), while 
	 adaptive mesh refinement (\textit{i.e.}, $\theta=\frac{1}{2}$ in Algorithm~\ref{alg:afem}) yields the optimal convergence rate $h_k^2\sim N_k^{\smash{-1}}$,~$k=0,\ldots,25$.
	In addition, Figure \ref{fig:ROF_Rate} indicates the primal-dual gap estimator is reliable and efficient~respect~to the alternative total error quantity
	\begin{align}\label{eq:reduced_rho}
		\tilde{\rho}^2_{\textup{tot}}(\overline{u}_k^{cr},\overline{z}_k^{rt})\coloneqq \tfrac{\alpha}{2}\|\overline{u}_k^{cr}-u\|^2_{2,\Omega}+\tfrac{1}{2\alpha}\|\textup{div}\,\overline{z}_k^{rt}-\textup{div}\,z\|^2_{2,\Omega}\,,\quad k=0,\ldots,25\,,
	\end{align}
	which is  a lower bound for the total error $\smash{\rho_{\textup{tot}}^2(\overline{u}_k^{cr},\overline{z}_k^{rt})=\eta^2_{\textup{gap}}(\overline{u}_k^{cr},\overline{z}_k^{rt})}$, $k=0,\ldots,25$.\vspace*{-2.5mm}

	\begin{figure}[H]
		\centering 
		\hspace*{-1mm}\includegraphics[width=14.5cm]{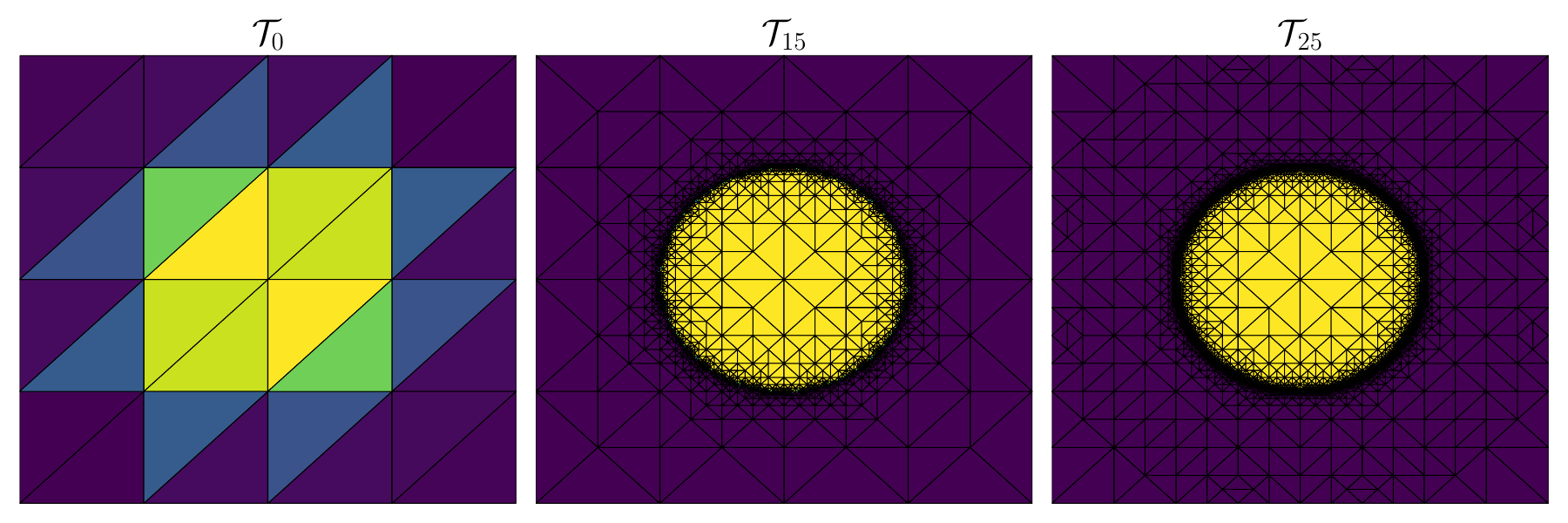}\vspace*{-2mm}
		\caption{Initial triangulation $\mathcal{T}_0$ and 
			adaptively refined triangulations $\mathcal{T}_k$,~$k\hspace*{-0.15em}\in\hspace*{-0.15em} \{15,25\}$,~generated by  Algorithm \ref{alg:afem} for $\theta=\frac{1}{2}$ in the Rudin--Osher--Fatemi image de-noising problem.}
		\label{fig:ROF_Triang} 
	\end{figure}\vspace*{-6mm}
	
	\begin{figure}[H]
		\centering
		\hspace*{-1mm}\includegraphics[width=14.5cm]{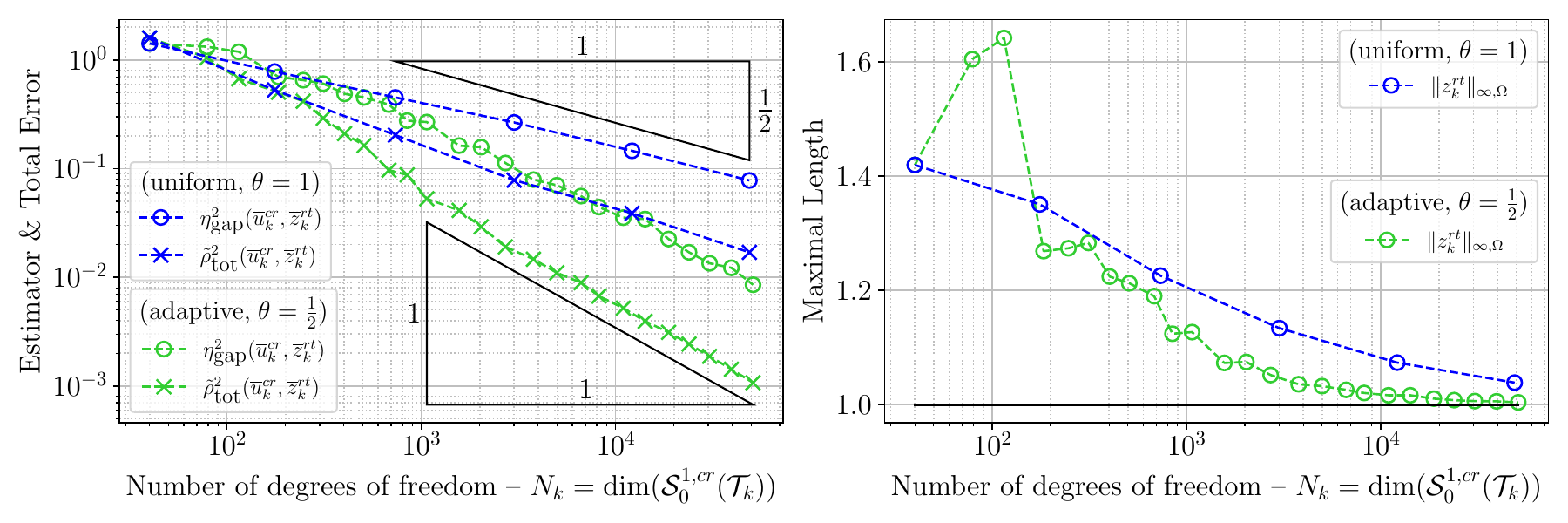}\vspace{-1.5mm}
		\caption{LEFT: \hspace*{-0.1mm}primal-dual \hspace*{-0.1mm}gap \hspace*{-0.1mm}estimator \hspace*{-0.1mm}$\eta^2_{\textup{gap}}(\overline{u}_k^{cr},\overline{z}_k^{rt})$ \hspace*{-0.1mm}and \hspace*{-0.1mm}alternative~\hspace*{-0.1mm}total~\hspace*{-0.1mm}error~\hspace*{-0.1mm}$\tilde{\rho}^2_{\textup{tot}}(\overline{u}_k^{cr},\overline{z}_k^{rt})$; RIGHT: maximal length of discrete dual solution $\|z_k^{rt}\|_{\infty,\Omega}$;
		each for $k=0,\dots,25$,~when~using~ad-aptive \hspace*{-0.15mm}mesh \hspace*{-0.15mm}refinement \hspace*{-0.15mm}(\textit{i.e.}, \hspace*{-0.15mm}$\theta\hspace*{-0.15em}=\hspace*{-0.15em}\frac{1}{2}$ \hspace*{-0.15mm}in \hspace*{-0.15mm}Algorithm \hspace*{-0.15mm}\ref{alg:afem}), \hspace*{-0.15mm}and \hspace*{-0.15mm}for \hspace*{-0.15mm}$k=0,\dots, 5$,~\hspace*{-0.15mm}when~\hspace*{-0.15mm}using~\hspace*{-0.15mm}uniform~\hspace*{-0.15mm}mesh refinement (\textit{i.e.}, $\theta\hspace*{-0.15em}=\hspace*{-0.15em}1$ in Algorithm \ref{alg:afem}), in the Rudin--Osher--Fatemi~image~\mbox{de-noising}~problem.} 
		\label{fig:ROF_Rate}
	\end{figure}
	
	\hphantom{.}
	\vfill
	\begin{figure}[H]
		\centering
		\hspace*{-1mm}\includegraphics[width=15cm]{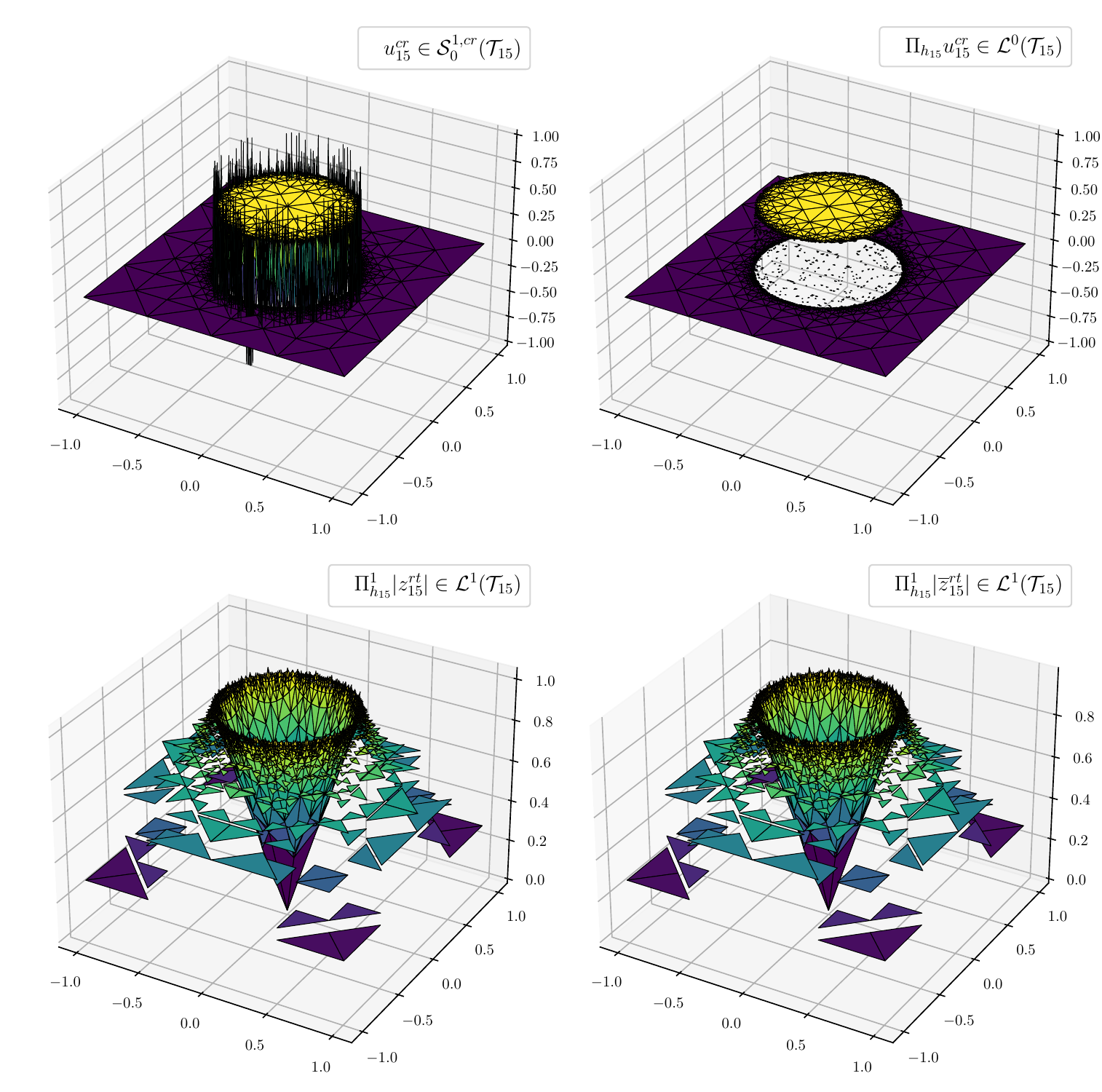}
		\caption{UPPER LEFT: discrete primal solution $u_{15}^{cr}\in \mathcal{S}^{1,\textit{\textrm{cr}}}_0(\mathcal{T}_{15})$, UPPER RIGHT: (local) $L^2$-projection of discrete primal solution $\Pi_{h_{15}}u_{15}^{cr}\in  \mathcal{L}^0(\mathcal{T}_{15})$; LOWER LEFT: (local) $L^2$-projection (onto $\mathcal{L}^1(\mathcal{T}_{15})$) of discrete  dual solution $\Pi_{h_{15}}^1\vert z_{15}^{rt}\vert \in \mathcal{L}^1(\mathcal{T}_{15})$; LOWER RIGHT:  (local) $L^2$-projection (onto \hspace*{-0.1mm}$\mathcal{L}^1(\mathcal{T}_{15})$)  \hspace*{-0.1mm}of \hspace*{-0.1mm}unit-length \hspace*{-0.1mm}scaled \hspace*{-0.1mm}discrete~\hspace*{-0.1mm}dual~\hspace*{-0.1mm}\mbox{solution}~\hspace*{-0.1mm}${\Pi_{h_{15}}^1\vert \overline{z}_{15}^{rt}\vert\hspace*{-0.1em}\in\hspace*{-0.1em} \mathcal{L}^1(\mathcal{T}_{15})}$~\hspace*{-0.1mm}(\textit{cf}.~\hspace*{-0.1mm}\eqref{eq:projection_step});
		each on the triangulation $\mathcal{T}_{15}$ in the Rudin--Osher--Fatemi image de-noising problem.}
		\label{fig:ROF_Solution}
	\end{figure}
	\vfill
	\newpage

	\textit{3D Case.} The initial triangulation  $\mathcal{T}_0$  of Algorithm~\ref{alg:afem} consists of $384$ elements~and~$125$~vertices.
	Algorithm \ref{alg:afem} refines the triangulations towards $\partial B_r^3(0)$, \textit{i.e.}, the jump set $J_u$ of the exact solution $u\in BV(\Omega)\cap L^\infty(\Omega)$ (\textit{cf}.\  \eqref{one_disk_primal_solution}),  which can be  observed
	in Figure \ref{fig:OneDisk3D_solution}, in which, more concretely, for $k\in \{0,5,9\}$, the discrete primal solution $u_k^{cr}\in \mathcal{S}^{1,cr}_0(\mathcal{T}_k)$ and 
	the (local) $L^2$-projection (onto $\mathcal{L}^1(\mathcal{T}_k)$) of 
	the modulus of the discrete dual solution $z_k^{rt}\in \mathcal{R}T^0(\mathcal{T}_k)$ are plotted. 
	In 
	Figure \ref{fig:OneDisk3D_rate}, one finds that uniform mesh refinement (\textit{i.e.}, $\theta=1$ in Algorithm \ref{alg:afem}) yields 
	the expected reduced convergence rate $h_k\sim N_k^{\smash{-\frac{1}{3}}}$, $k=0,\ldots,3$, (predicted~by~\cite{CP20,Bar21}),
	while adaptive mesh refinement (\textit{i.e.}, $\theta=\frac{1}{2}$ in Algorithm \ref{alg:afem})  yields the improved
	 convergence rate $h_k^2\sim N_k^{\smash{-\frac{2}{3}}}$,~$k=0,\ldots,9$.\vspace*{-2.5mm}
	
	    \begin{figure}[H]
		\centering
		\includegraphics[width=4.75cm]{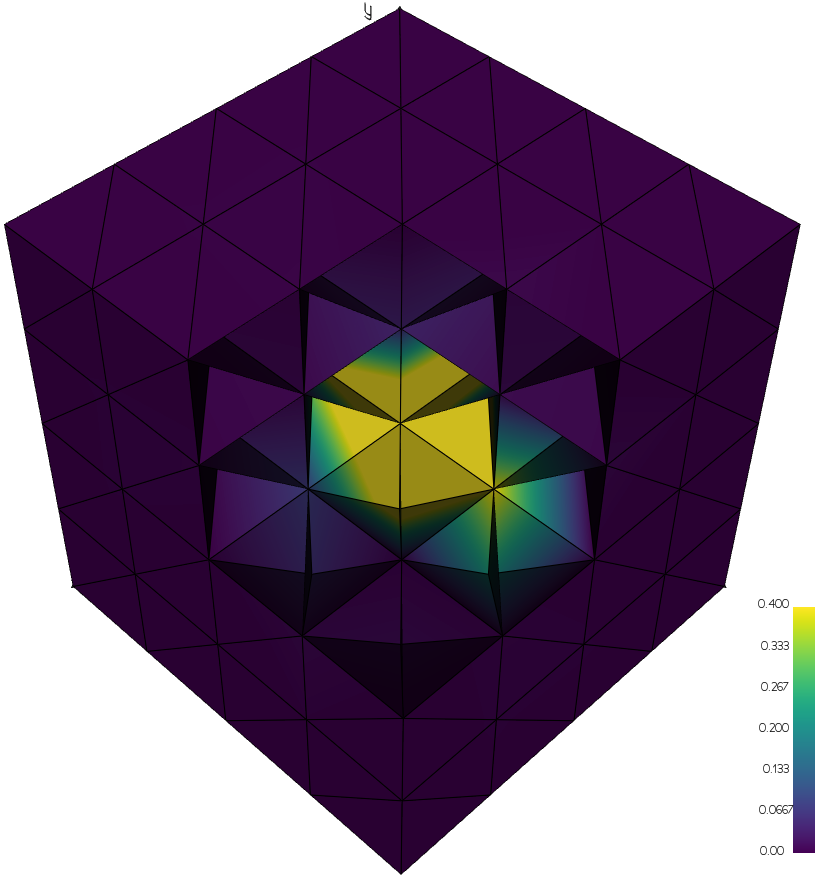}\includegraphics[width=4.75cm]{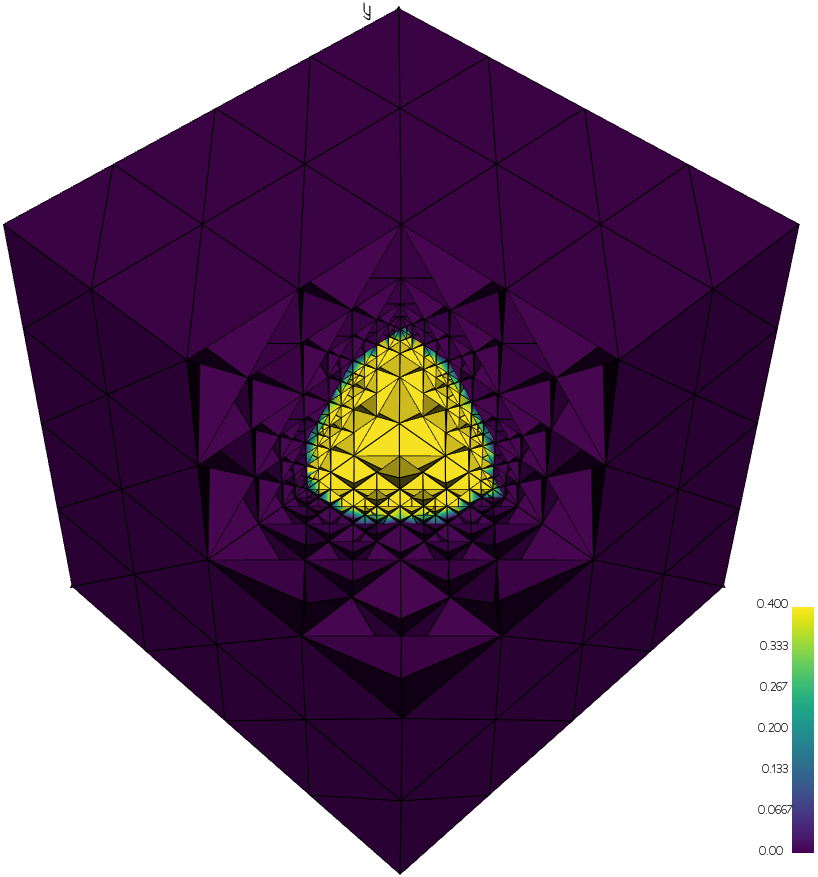}\includegraphics[width=4.75cm]{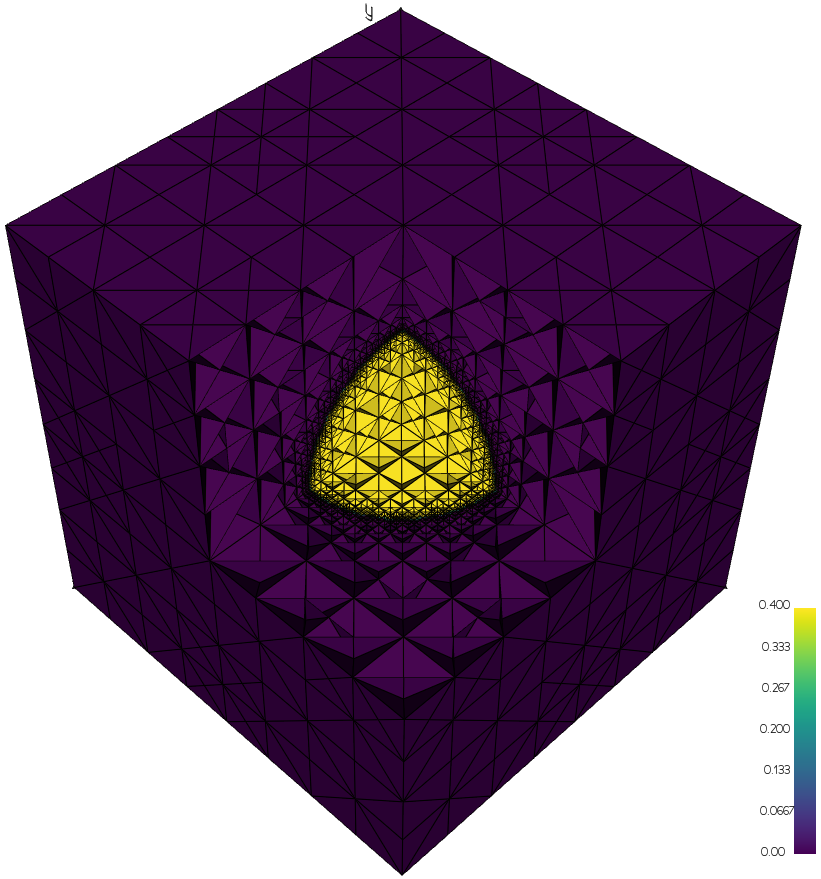}
		\includegraphics[width=4.75cm]{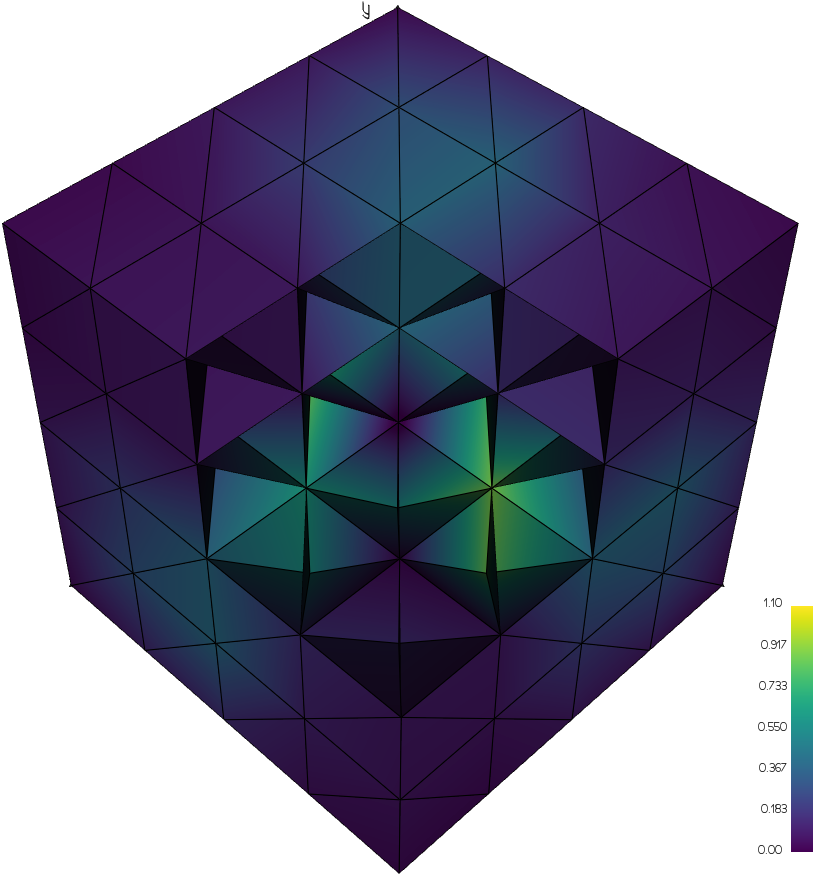}\includegraphics[width=4.75cm]{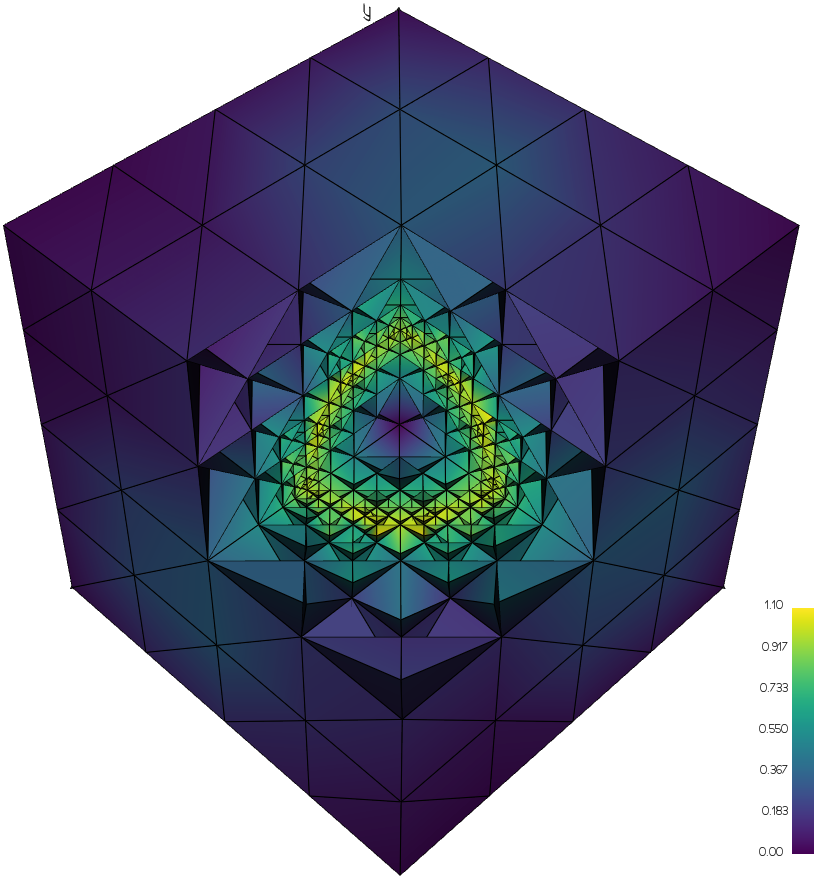}\includegraphics[width=4.75cm]{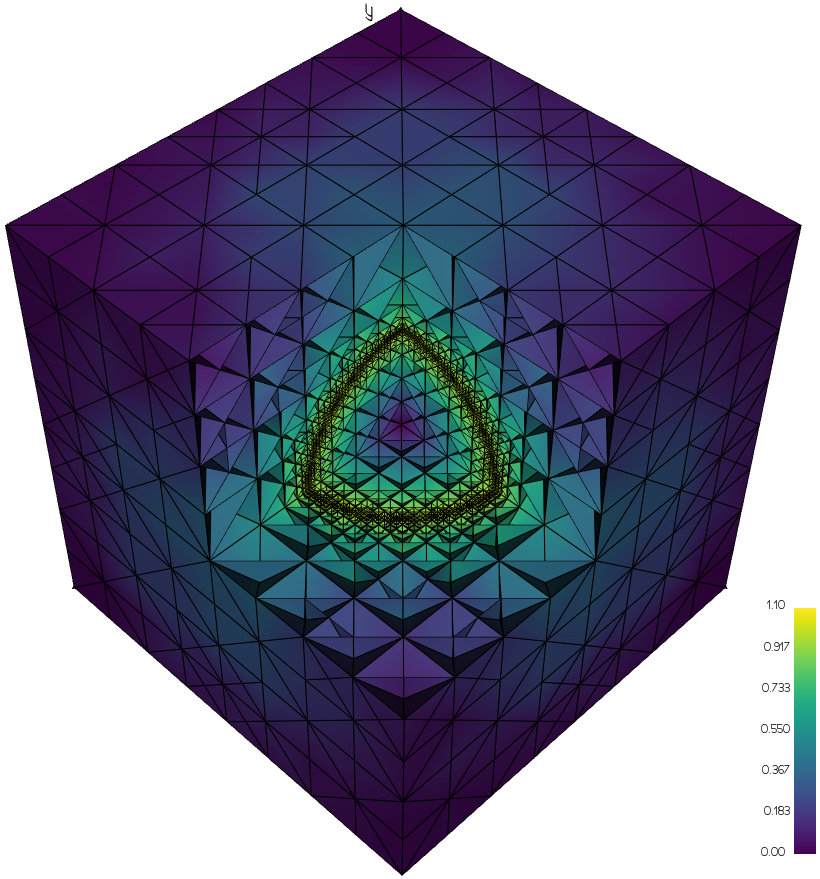}
		\vspace{-1mm}
		\caption{\hspace*{-0.15mm}TOP:  \hspace*{-0.15mm}discrete  \hspace*{-0.15mm}primal  \hspace*{-0.15mm}solutions  \hspace*{-0.15mm}$u_k^{cr}\hspace*{-0.15em}\in\hspace*{-0.15em}\mathcal{S}^{1,cr}_0(\mathcal{T}_k)$, $k\hspace*{-0.15em}\in\hspace*{-0.15em} \{0,5,9\}$;  \hspace*{-0.15mm}BOTTOM:~(\mbox{local})~$L^2$- projection \hspace*{-0.15mm}(onto \hspace*{-0.15mm}$\mathcal{L}^1(\mathcal{T}_k)$) \hspace*{-0.15mm}of \hspace*{-0.15mm}the \hspace*{-0.15mm}moduli \hspace*{-0.15mm}of \hspace*{-0.15mm}the  \hspace*{-0.15mm}discrete~\hspace*{-0.15mm}dual~\hspace*{-0.15mm}solutions~\hspace*{-0.15mm}${\Pi_{h_k}^1\vert z_k^{rt}\vert\hspace*{-0.15em} \in\hspace*{-0.15em} \mathcal{L}^1(\mathcal{T}_k)}$, \hspace*{-0.15mm}${k\hspace*{-0.15em}\in\hspace*{-0.15em} \{0,5,9\}}$; each on the triangulation $\mathcal{T}_k$,  $k\in \{0,5,9\}$, in the Rudin--Osher--Fatemi image~\mbox{de-noising}~problem.}
		\label{fig:OneDisk3D_solution}
	\end{figure}\vspace{-5mm}\enlargethispage{15mm}
	
	\begin{figure}[H]
		\centering
		\hspace*{-2mm}\includegraphics[width=14.5cm]{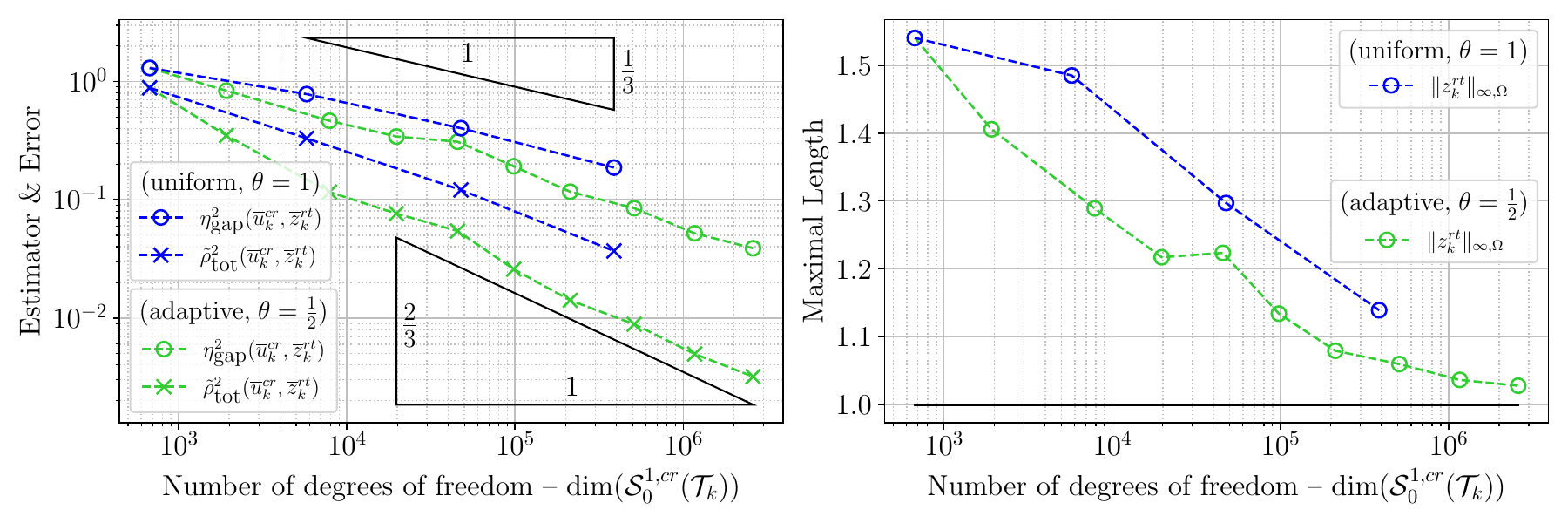}\vspace{-2.5mm}
		\caption{\hspace*{-1mm}LEFT: \hspace*{-0.15mm}primal-dual \hspace*{-0.15mm}gap \hspace*{-0.15mm}estimator \hspace*{-0.15mm}$\eta^2_{\textup{gap}}(\overline{u}_k^{cr},\overline{z}_k^{rt})$ \hspace*{-0.15mm}and \hspace*{-0.15mm}alternative~\hspace*{-0.15mm}total~\hspace*{-0.15mm}error~\hspace*{-0.15mm}$\tilde{\rho}^2_{\textup{tot}}(\overline{u}_k^{cr},\overline{z}_k^{rt})$; RIGHT: maximal length of discrete dual solution $\|z_k^{rt}\|_{\infty,\Omega}$;
		each for $k=0,\dots,9$,~when~using~ad-aptive \hspace*{-0.15mm}mesh \hspace*{-0.15mm}refinement \hspace*{-0.15mm}(\textit{i.e.}, \hspace*{-0.15mm}$\theta\hspace*{-0.15em}=\hspace*{-0.15em}\frac{1}{2}$ \hspace*{-0.15mm}in \hspace*{-0.15mm}Algorithm \hspace*{-0.15mm}\ref{alg:afem}), \hspace*{-0.15mm}and \hspace*{-0.15mm}for \hspace*{-0.15mm}$k=0,\dots, 3$,~\hspace*{-0.15mm}when~\hspace*{-0.15mm}using~\hspace*{-0.15mm}uniform~\hspace*{-0.15mm}mesh refinement (\textit{i.e.}, $\theta\hspace*{-0.15em}=\hspace*{-0.15em}1$ in Algorithm \ref{alg:afem}), in the Rudin--Osher--Fatemi~image~\mbox{de-noising}~problem.} 
		\label{fig:OneDisk3D_rate}
	\end{figure}\newpage

	\subsection{Jumping coefficients}\label{num:jumping}\vspace*{-0.5mm}

	\hspace*{5mm}In this subsection,  
	we review the  theoretical findings of  Subsection \ref{subsec:jumping}.\vspace*{-0.5mm}
	
	\subsubsection{Implementation details regarding the optimization procedure}\vspace*{-0.5mm}
	
	\hspace*{5mm}Before we present our numerical experiments, again, we briefly outline  implementation details regarding the optimization procedure.\vspace*{-0.5mm}\enlargethispage{10mm}
	
	\begin{remark}
		\begin{description}[noitemsep,topsep=1pt,labelwidth=\widthof{\textit{(iii)}},leftmargin=!,font=\normalfont\itshape]
			\item[(i)] The discrete primal solution $u_k^{cr}\in \smash{\mathcal{S}^{1,cr}(\mathcal{T}_k)}$  (\textit{i.e.}, minimizer of \eqref{eq:jumping_discrete_primal})~in~step (\hyperlink{Solve}{'Solve'}) is computed  using the sparse direct solver from \textup{\textsf{MUMPS}} (version 5.5.0, \textit{cf}.\ \cite{mumps}) applied to the corresponding discrete Euler--Lagrange equation;
			\item[(ii)] The reconstruction of the discrete dual solution $z_k^{rt}\in \smash{\mathcal{R}T^0(\mathcal{T}_k)}$ (\textit{i.e.}, maximizer of \eqref{eq:jumping_discrete_dual})  in step (\hyperlink{Solve}{'Solve'}) is based on the generalized Marini formula \eqref{eq:jumping_marini};
			\item[(iii)] As conforming approximations in (\hyperlink{Solve}{'Solve'}), we employ $\overline{u}_k^{cr}\hspace*{-0.175em}\coloneqq\hspace*{-0.175em}\Pi^{av}_{h_k} u_k^{cr}\hspace*{-0.175em}\in\hspace*{-0.175em} \mathcal{S}^1_D(\mathcal{T}_k)\subseteq  W^{1,2}_D(\Omega)$ in the case $u_D=0$ and $\overline{z}_k^{rt}=z_k^{rt}\in \mathcal{R}T^0(\mathcal{T}_k) \subseteq W^2(\textup{div};\Omega)$ in any case.
			\item[(iv)] The local refinement indicators $\smash{\{\eta^2_{\textup{gap},T}(\overline{u}_k^{cr},z_k^{rt})\}_{T\in \mathcal{T}_k}}\subseteq \mathbb{R}_{\ge 0}$, for every $T\in \mathcal{T}_h$,~are~given~via
			\begin{align*}
				\eta^2_{\textup{gap},T}(\overline{u}_k^{cr},z_k^{rt})\coloneqq \tfrac{1}{2}\|A^{\frac{1}{2}}(\cdot) \nabla \overline{u}_k^{cr}-A^{-\frac{1}{2}}(\cdot)z_k^{rt}\|_{2,T}^2\,,
			\end{align*}
			which follows from restricting $\eta^2_{\textup{gap}}(\overline{u}_k^{cr},z_k^{rt})$ (\textit{cf}. \eqref{eq:jumping_eta}) to each element $T\in \mathcal{T}_h$.
		\end{description}
	\end{remark}
	
	\subsubsection{Example with unknown exact solution}\vspace*{-0.5mm}
	
	\hspace*{5mm}For our numerical experiments, we choose $\Omega\hspace*{-0.15em}\coloneqq \hspace*{-0.15em}\left(-1,1\right)^2$, $ \Gamma_D \hspace*{-0.15em}\coloneqq \hspace*{-0.15em}\partial\Omega$,~$ \Gamma_N\hspace*{-0.15em} \coloneqq\hspace*{-0.15em} \emptyset$,~${u_D\hspace*{-0.15em}=\hspace*{-0.15em}0\in W^{\frac{1}{2},2}(\Gamma_D)}$, $f=1\in L^2(\Omega)$, and 
	as jumping coefficient matrix $ A^\varepsilon \colon \Omega\to\mathbb{R}^{2\times 2}$,~for~every~$\varepsilon\in \{16,32,64\}$ and $x\in \Omega$ defined by
	\begin{align}\label{def:A_eps}
		A^\varepsilon(x)\coloneqq 
		\begin{cases}
			\varepsilon \textup{I}_{2\times 2}&\text{ if }\vert x+\mathrm{e}_1\vert< 1\,,\\
			\frac{1}{\varepsilon}\textup{I}_{2\times 2}&\text{ else}\,,
		\end{cases}
	\end{align}
	where $\mathrm{e}_1\coloneqq (1,0)^\top\in \mathbb{R}^2$.
	
	As approximation, for $k=0,\ldots,40$ and $\varepsilon\in \{16,32,64\}$, we employ $A_k^{\varepsilon}\coloneqq A_{h_k}^{\varepsilon}\coloneqq \Pi_{h_k}^{\varepsilon} A^{\varepsilon}\in (\mathcal{L}^0(\mathcal{T}_k^{\varepsilon}))^{2\times 2}$.
	For every $\varepsilon\in \{16,32,64\}$, the primal solution $u^{\varepsilon}\in W^{1,2}_0(\Omega)$ (\textit{i.e.} minimizer~of~\eqref{eq:jumping_primal} with $A=A^\varepsilon$) is not known and cannot be expected to satisfy $u^\varepsilon\hspace*{-0.1em}\in\hspace*{-0.1em} W^{2,2}(\Omega)$~since~${A^{\varepsilon}\hspace*{-0.1em}\notin \hspace*{-0.1em} C^0(\overline{\Omega};\mathbb{R}^{2\times 2})}$.
	As a consequence, uniform mesh refinement (\textit{i.e.}, $\theta=1$ in Algorithm \ref{alg:afem})  is expected to~yield~a reduced convergence rate compared to the optimal~convergence rate $(h_k^{\varepsilon})^2\sim (N_k^{\varepsilon})^{-1}$, where $N_k^{\varepsilon}\coloneqq \textup{dim}(\mathcal{S}^{1,cr}_0(\mathcal{T}_k^{\varepsilon}))$.
	
		The coarsest triangulation $\mathcal{T}_0$ consists of $32$ halved elements and $25$ vertices.~In~Figure~\ref{fig:Jumping_Triang}, for every $\varepsilon\in\{16,32,64\}$, the final triangulations
		 $\mathcal{T}_{40}^{\varepsilon}$  generated~by~Algorithm~\ref{alg:afem} are displayed.~In~it,
	a refinement~towards~$\partial B_1^2(\mathrm{e}_1)\cap \Omega$, \textit{i.e.}, the jump set $J_{A^\varepsilon}$ of the discontinuous coefficient matrix $A^\varepsilon \in L^\infty(\Omega;\mathbb{R}^{2\times 2})$  (\textit{cf}.\ \eqref{def:A_eps}) is reported.\vspace*{-2mm}
	
	 \begin{figure}[H]
		\centering
		\includegraphics[width=14.5cm]{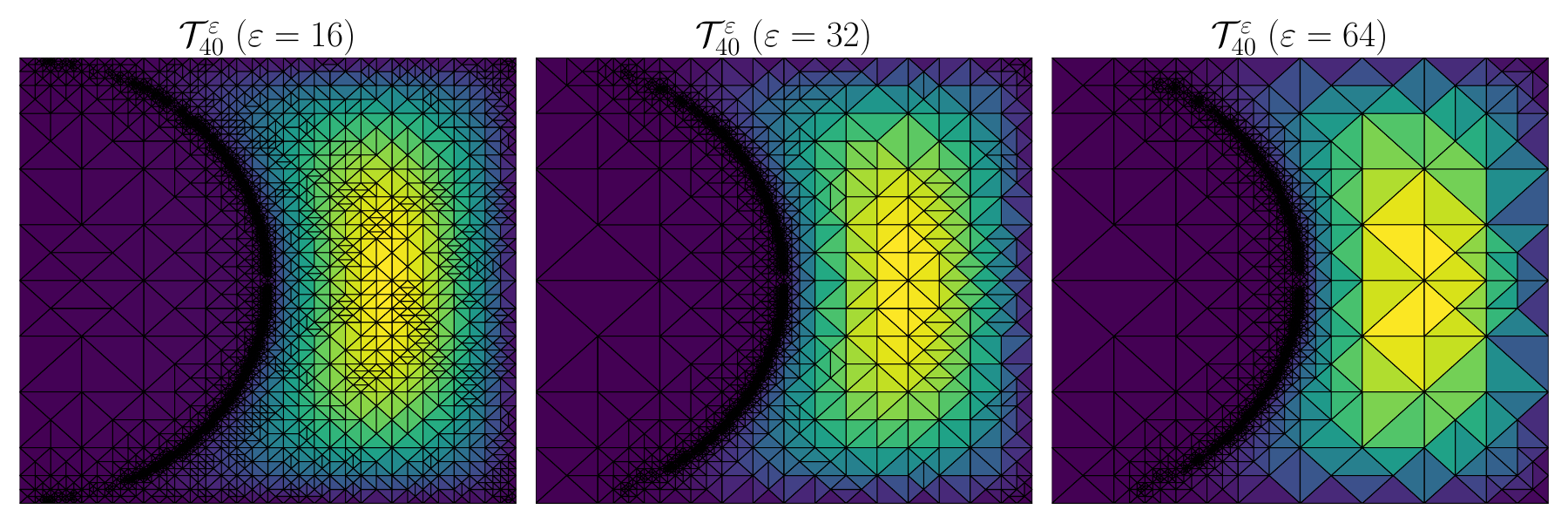}
		\caption{Final triangulations $\mathcal{T}_{40}^{\varepsilon}$, $\varepsilon\in \{16,32,64\}$, generated by Algorithm \ref{alg:afem} for $\theta=\frac{1}{2}$ in the jumping coefficient problem.}
		\label{fig:Jumping_Triang}
	\end{figure}
	
	This behavior is also seen in Figure \ref{fig:Jumping_Solution}, in which,  for every $\varepsilon\in \{16,32,64\}$, the discrete primal solution \hspace*{-0.1mm}$u_{30}^{cr,\varepsilon}\hspace*{-0.15em}\in\hspace*{-0.15em}   \mathcal{S}^{1,cr}_0(\mathcal{T}_{30}^{\varepsilon})$ \hspace*{-0.1mm}(\textit{i.e.}, \hspace*{-0.1mm}minimizer \hspace*{-0.1mm}of \hspace*{-0.1mm}\eqref{eq:jumping_discrete_primal} \hspace*{-0.1mm}with \hspace*{-0.1mm}$A_{h_k}\hspace*{-0.2em}=\hspace*{-0.15em}A_{h_k}^{\varepsilon}$)
	\hspace*{-0.1mm}and~\hspace*{-0.1mm}the~\hspace*{-0.1mm}\mbox{node-averaged}~\hspace*{-0.1mm}\mbox{discrete} primal solution $\overline{u}_{30}^{cr,\varepsilon}\hspace*{-0.15em}\in\hspace*{-0.15em}  \mathcal{S}^1_0(\mathcal{T}_{30}^{\varepsilon})$ are plotted.\
	In addition,  Figures~\ref{fig:Jumping_Triang},~\ref{fig:Jumping_Solution}~show~that~for~increasing value of $\varepsilon\in \{16,32,64\}$, the refinement is more concentrated~at~the~jump~set~${J_{A^\varepsilon}=\partial B_1^2(\mathrm{e}_1)\cap \Omega}$.\vspace*{-1mm}\enlargethispage{11mm}

	 \begin{figure}[H]
		\centering
		\includegraphics[width=14.5cm]{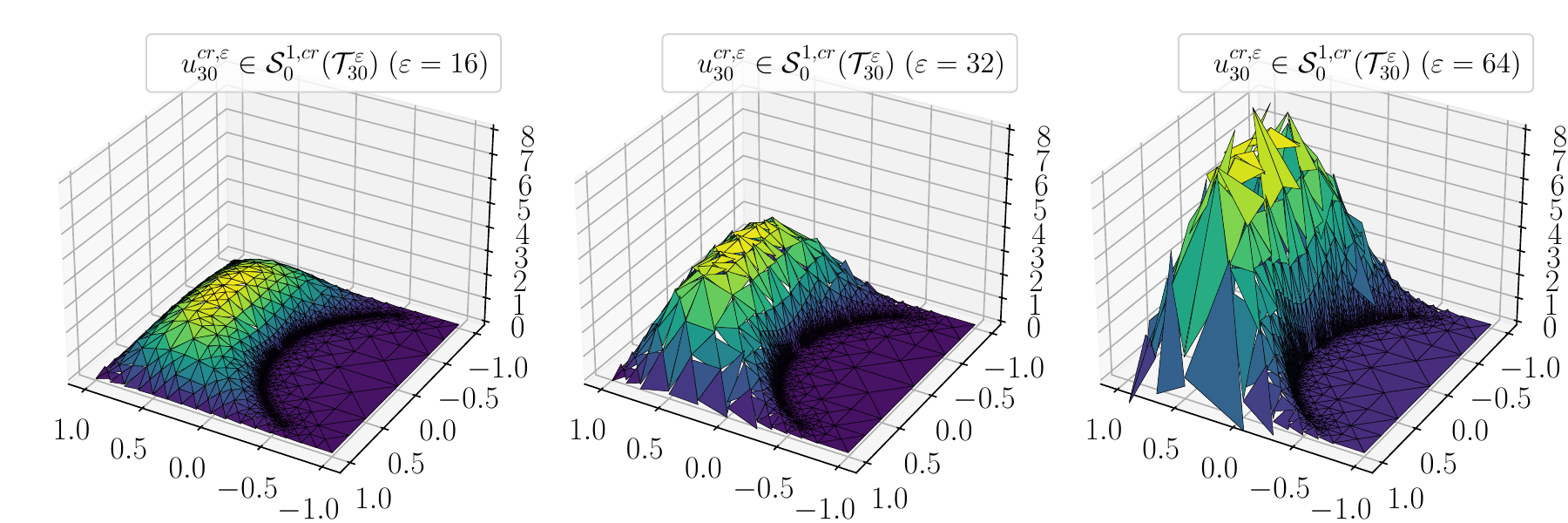}\\
		\includegraphics[width=14.5cm]{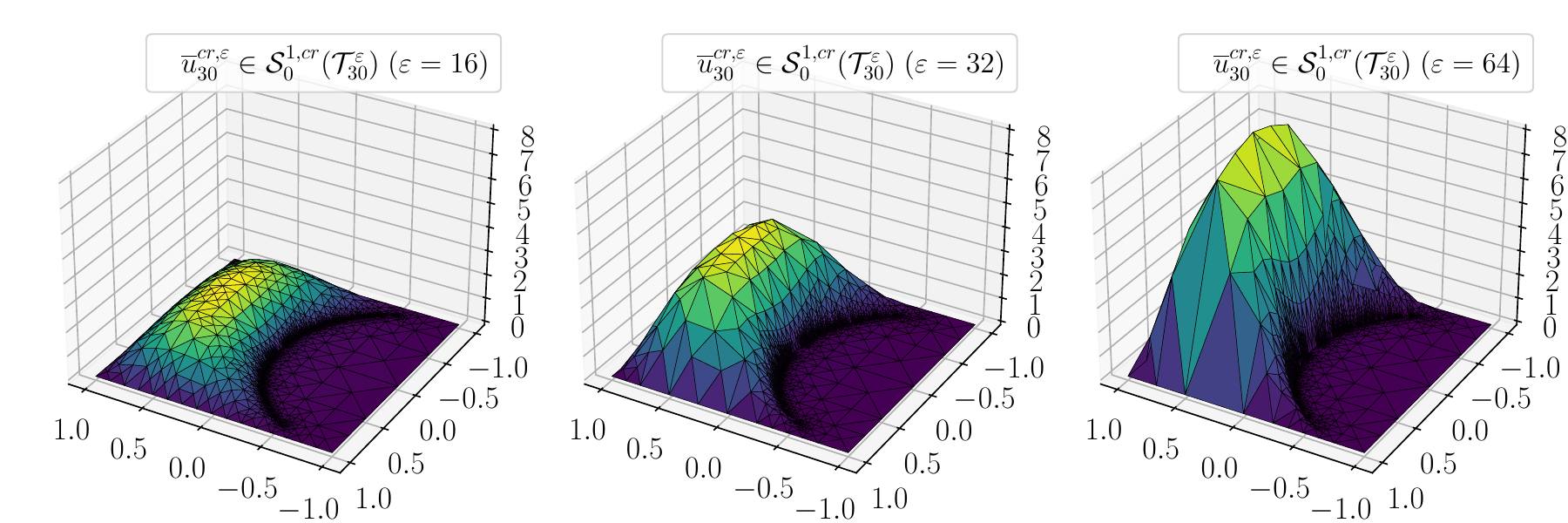}\vspace*{-1mm}
		\caption{TOP: discrete primal solutions $u_{30}^{cr,\varepsilon}\in \mathcal{S}^{1,cr}_0(\mathcal{T}_{30}^{\varepsilon})$, $\varepsilon\in \{16,32,64\}$; BOTTOM: 
		node-averaged	discrete primal solution $\overline{u}_{30}^{cr,\varepsilon}\in \mathcal{S}^{1,cr}_0(\mathcal{T}_{30}^{\varepsilon})$, $\varepsilon\in \{16,32,64\}$;
		each on the triangulation~$\mathcal{T}_{30}^{\varepsilon}$ in the jumping coefficient problem.}
		\label{fig:Jumping_Solution}
	\end{figure}
	
	In Figure \ref{fig:Jumping_Rate}, for  every $\varepsilon\in \{16,32,64\}$, one finds that uniform mesh refinement (\textit{i.e.}, $\theta=1$~in Algorithm~\ref{alg:afem}) yields the expected reduced convergence rate
	 $(h_k^{\varepsilon})^{0.7}\sim (N_k^{\varepsilon})^{\smash{-0.35}}$,~${k=0,\ldots,5}$, while adaptive mesh refinement (\textit{i.e.}, $\theta=\frac{1}{2}$ in Algorithm \ref{alg:afem}) yields the optimal convergence rate $(h_k^{\varepsilon})^2\sim (N_k^{\varepsilon})^{\smash{-1}}$,~$k=0,\ldots,40$.\vspace*{-1mm}

	\begin{figure}[H]
		\centering
		\hspace*{-2mm}\includegraphics[width=14.5cm]{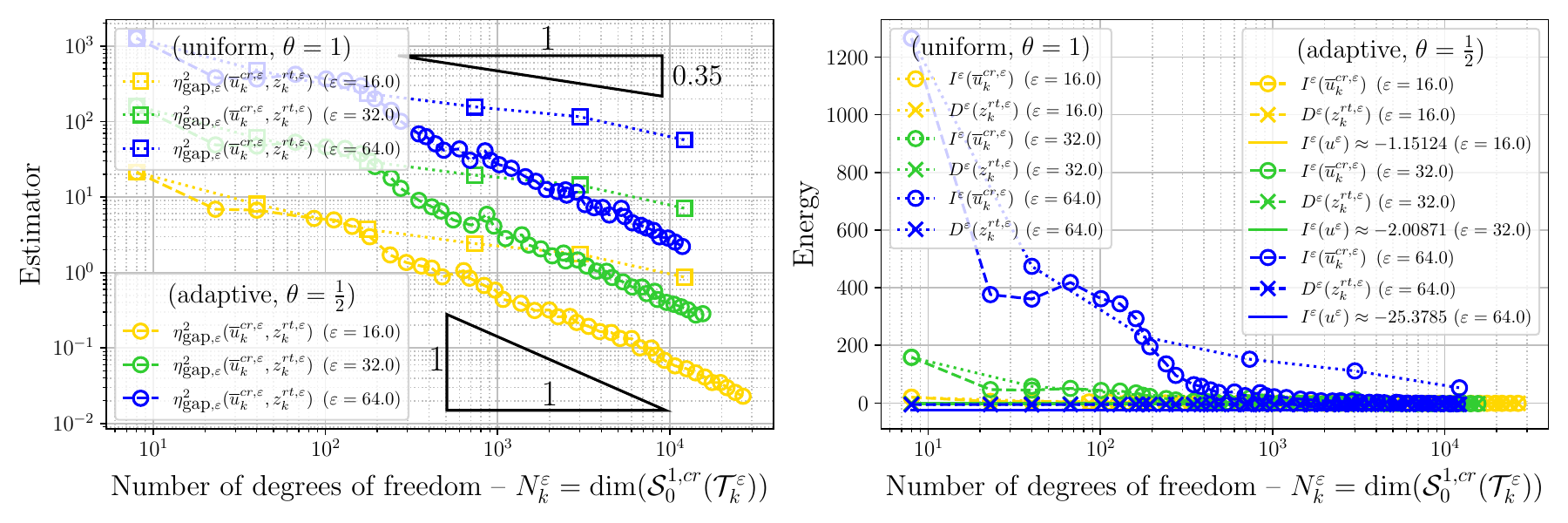}\vspace*{-1mm}
		\caption{LEFT: primal-dual gap estimator $\eta^2_{\textup{gap}}(\overline{u}_k^{cr,\varepsilon},z_k^{rt,\varepsilon})$; RIGHT: 
			primal energy $I^{\varepsilon}(\overline{u}_k^{cr,\varepsilon})$, dual energy $D^{\varepsilon}(z_k^{rt,\varepsilon})$, and primal energy $I^{\varepsilon}(u^{\varepsilon})$ approximated via Aitken's $\delta^2$-process (\textit{cf}. \cite{Ait26});
			each for $\varepsilon\in \{16,32,64\}$ and  $k=0,\dots,40$, when using adaptive mesh refinement  (\textit{i.e.}, $\theta=\frac{1}{2}$ in Algorithm \ref{alg:afem}), and for $k=0,\dots, 5$, when uniform mesh refinement  (\textit{i.e.}, $\theta=1$ in Algorithm~\ref{alg:afem}), in the jumping coefficient problem.}
		\label{fig:Jumping_Rate}
	\end{figure}

	\subsection{Anisotropic mesh refinement}\label{num:anisotropic}\vspace*{-0.75mm}\enlargethispage{13mm}

	\hspace*{5mm}In this subsection, we examine the behavior of the primal-dual gap estimator with respect to anisotropic mesh refinement. For an extensive examination of anisotropic mesh refinement, we refer the reader to \cite{Apel99,NicCre06}.
	We restrict to the Poisson problem \eqref{intro:poisson}, \textit{i.e.},
	we employ~the~implemen-tation of Subsection \ref{subsec:jumping}, but in the case $A(x)\coloneqq \textup{I}_{d\times d}$ for a.e.\ $x\in \Omega$.\vspace*{-0.5mm}
	
	\subsubsection{Example with unknown exact solution}\vspace*{-0.75mm}
	
	\hspace*{5mm}For our numerical experiments, we choose $\Omega \coloneqq \left(-1,1\right)^d\setminus([0,1]\times [-1,0]^{d-1})$, $d\in \{2,3\}$, $ \Gamma_D \coloneqq\partial\Omega$,~$ \Gamma_N\coloneqq \emptyset$,~${u_D = 0\in W^{\frac{1}{2},2}(\Gamma_D)}$, and $f=1\in L^2(\Omega)$. For different grading strengths $\beta\in \{\frac{1}{2},1,\frac{3}{2}\}$ and $k=0,\ldots,40$, we anisotropically refine the triangulations $\mathcal{T}_k^{\beta}$
		towards~the~origin, where the gradient of the unknown primal solution ${u\hspace*{-0.1em}\in\hspace*{-0.1em} W^{1,2}_0(\Omega)}$ (\textit{i.e.}, minimizer of \eqref{eq:jumping_primal} with $A\coloneqq I_{d\times d}$) is expected to have a singularity.  
		Note that the grading strength $\beta=1$  corresponds to uniform mesh refinement.
	
	\textit{2D Case.} For $\beta\in \{\frac{1}{2},1,\frac{3}{2}\}$, the coarsest triangulation $\mathcal{T}_0^{\beta}$ consists of $6$ element and $8$ vertices. Figure \ref{fig:Anisotropic2D_Triang} depicts 
	the anisotropically refined triangulation $\smash{\mathcal{T}_{20}^{\beta}}$  for  grading strengths $\beta\in \{\frac{1}{2},1,\frac{3}{2}\}$.
	 In it, one observes that for increasing grading strength $\beta\in \{\frac{1}{2},1,\frac{3}{2}\}$,
	the potential singularity of the gradient of the unknown primal solution $u\in W^{1,2}_0(\Omega)$ at the~origin~is~better~resolved.\linebreak The same behavior can be seen in Figure \ref{fig:Anisotropic2D_Solution}, in which 
	for different grading strengths $\beta\in \{\frac{1}{2},1,\frac{3}{2}\}$, the discrete primal solution $u_{20}^{cr,\beta}\in \mathcal{S}^{1,cr}_0(\mathcal{T}_{20}^{\beta})$ (\textit{i.e.}, minimizer of \eqref{eq:jumping_primal} with $A_{h_{20}}=\textup{I}_{2\times 2}$)
	and the node-averaged discrete primal solution $\overline{u}_{20}^{cr,\beta}\in \mathcal{S}^{1,cr}_0(\mathcal{T}_{20}^{\beta})$ are depicted.
	In Figure \ref{fig:Anisotropic2D_Rate}, one sees that the grading strengths $\beta=\frac{1}{2}$ and $\beta=1$ yield the reduced convergence rates $\smash{(h_k^{\beta})^{0.8}\sim (N_k^{\beta})^{-0.4}}$ (for $\smash{\beta=\frac{1}{2}}$) and $\smash{(h_k^{\beta})^{1.5}\sim (N_k^{\beta})^{-0.75}}$ (for $\beta=1$), respectively, where $\smash{N_k^{\beta}=\textup{dim}(\mathcal{S}^{1,cr}_0(\mathcal{T}_k^{\beta}))}$, 
	while
	\hspace*{-0.1mm}the \hspace*{-0.1mm}grading \hspace*{-0.1mm}strength \hspace*{-0.1mm}$\smash{\beta\hspace*{-0.15em}=\hspace*{-0.15em}\frac{3}{2}}$ \hspace*{-0.1mm}yields \hspace*{-0.1mm}the  \hspace*{-0.1mm}optimal \hspace*{-0.1mm}convergence~\hspace*{-0.1mm}rate~\hspace*{-0.1mm}$\smash{(h_k^{\beta})^2\hspace*{-0.15em}\sim\hspace*{-0.15em} (N_k^{\beta})^{-1}}$.~Overall, we find that the primal-dual gap estimator is robust with respect to the choice of grading strengths.\vspace*{-2mm}
	
	\begin{figure}[H]
	\centering
	\includegraphics[width=14.5cm]{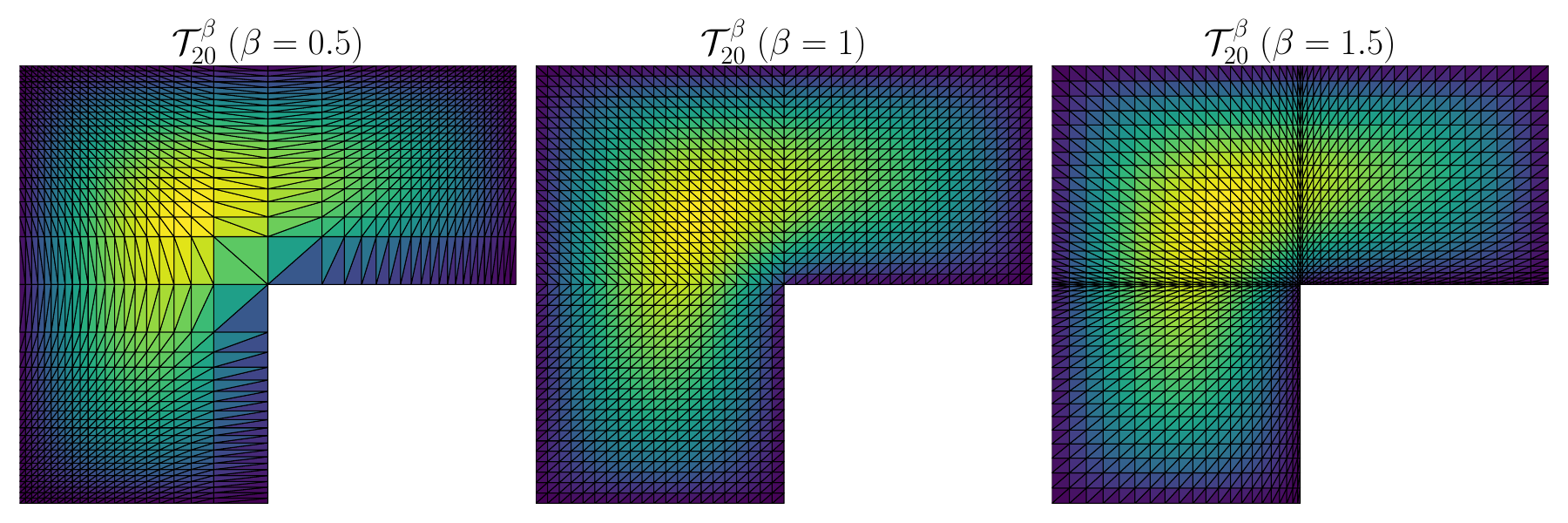}\vspace*{-1.5mm}
	\caption{Anisotropically refined triangulations $\mathcal{T}_{20}^{\beta}$ for different grading strengths $\beta\in \{\frac{1}{2},1,\frac{3}{2}\}$.}
	\label{fig:Anisotropic2D_Triang}
	\end{figure}\vspace*{-7.5mm}
	
		\begin{figure}[H]
		\centering
		\includegraphics[width=14.5cm]{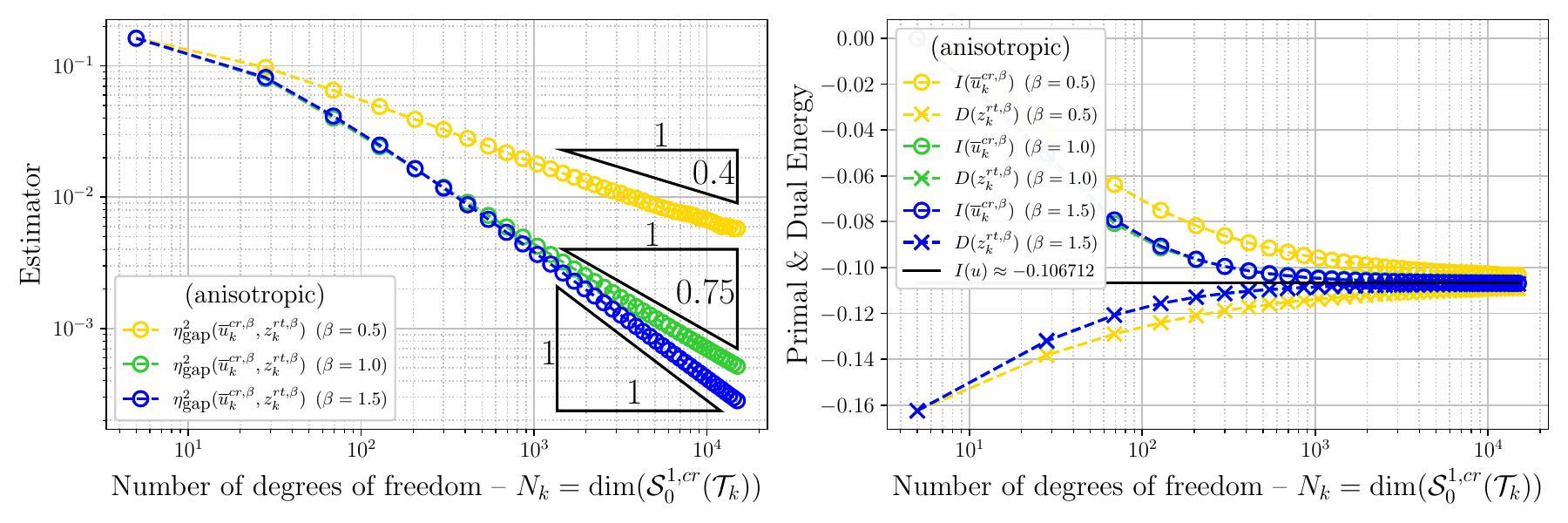}\vspace*{-2mm}
		\caption{LEFT: primal-dual gap estimator $\eta^2_{\textup{gap}}(\overline{u}_k^{cr,\varepsilon},z_k^{rt,\varepsilon})$; RIGHT: 
			primal energy $I(\overline{u}_k^{cr,\beta})$, dual energy $D (z_k^{rt,\beta})$, and primal energy $I(u)$ approximated via Aitken's $\delta^2$-process (\textit{cf}.\ \cite{Ait26});
			each for grading strengths $\beta\in \{\frac{1}{2},1,\frac{3}{2}\}$ and  $k=0,\dots,40$.}
		\label{fig:Anisotropic2D_Rate}
	\end{figure}
	
	\begin{figure}[H]
		\centering
		\includegraphics[width=14.5cm]{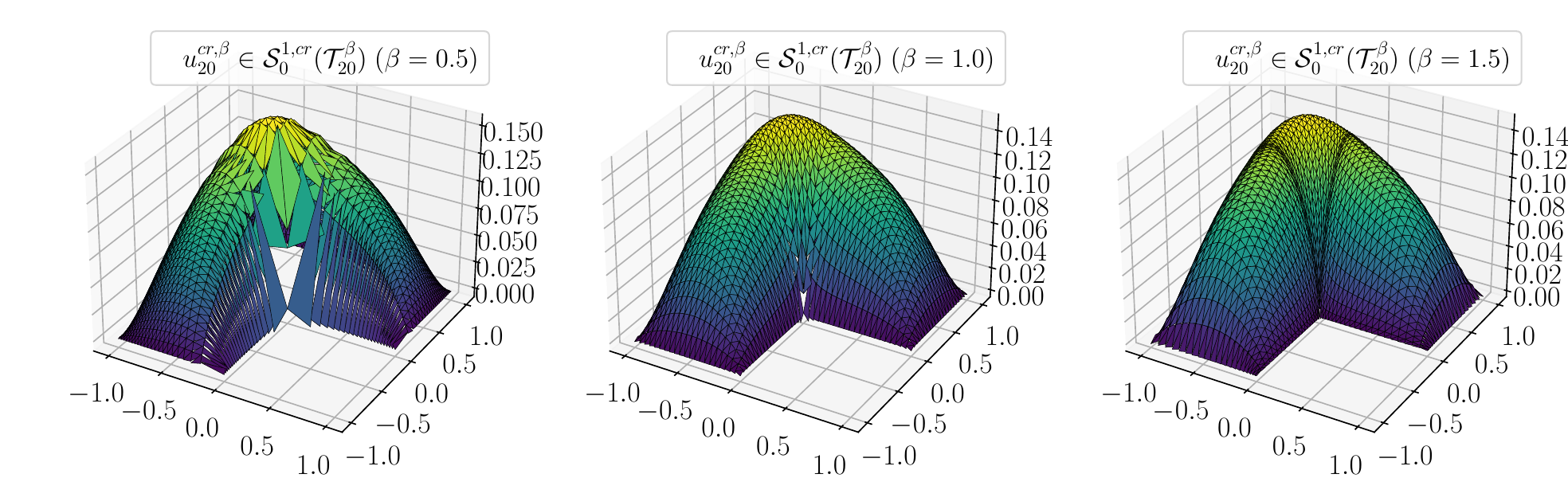}\\
		\includegraphics[width=14.5cm]{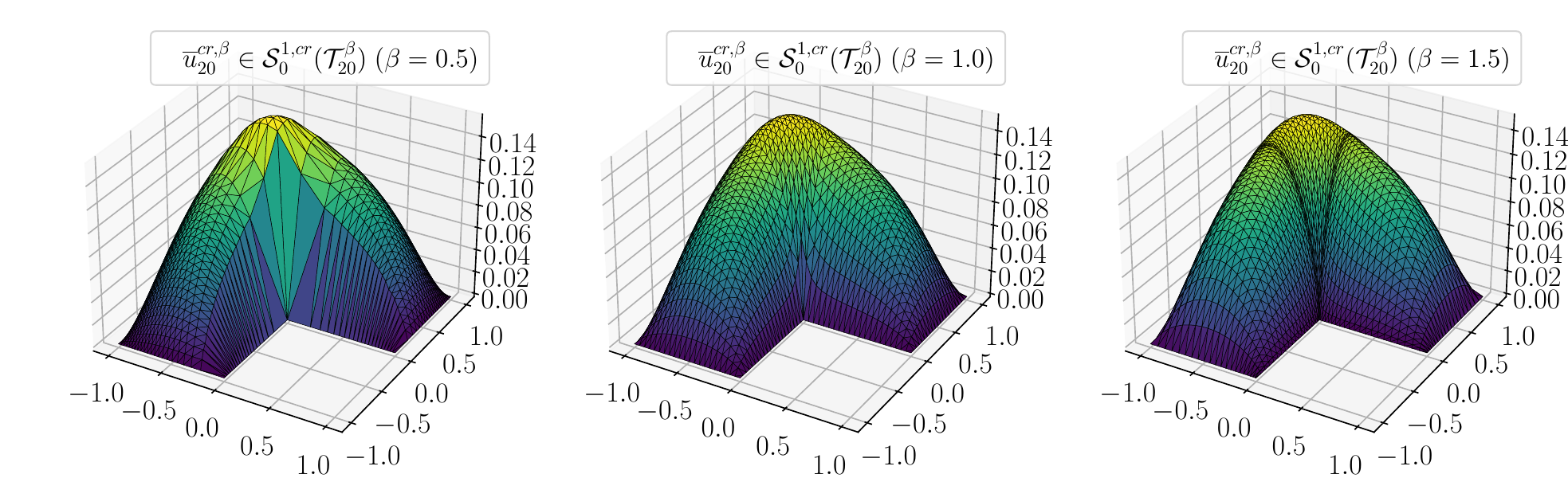}
		\caption{TOP: discrete primal solutions $u_{20}^{cr,\beta}\in \mathcal{S}^{1,cr}_0(\mathcal{T}_{20}^{\beta})$, $\beta\in \{\frac{1}{2},1,\frac{3}{2}\}$; BOTTOM: 
			node-averaged	discrete primal solution $\overline{u}_{20}^{cr,\beta}\in \mathcal{S}^{1,cr}_0(\mathcal{T}_{20}^{\beta})$, $\beta\in \{\frac{1}{2},1,\frac{3}{2}\}$;
			each on the anisotropically refined triangulation~$\mathcal{T}_{30}^{\beta}$ for  grading strengths $\beta\in \{\frac{1}{2},1,\frac{3}{2}\}$.}
		\label{fig:Anisotropic2D_Solution}
	\end{figure}
	
		\textit{3D Case.} For $\beta \in \{\frac{1}{2},1,\frac{3}{2}\}$, the coarsest triangulation $\mathcal{T}_0^{\beta}$ consists of $336$~elements~and~$117$~vertices. In Figure~\ref{fig:Anisotropic3D_Solution_uCR} and Figure  \ref{fig:Anisotropic3D_Solution_zRT}, for $k\in \{0,2,4\}$ and 
	different grading strengths $\beta\in \{\frac{1}{2},1,\frac{3}{2}\}$, the discrete primal solution $u_k^{cr,\beta}\hspace{-0.15em}\in\hspace{-0.15em} \mathcal{S}^{1,cr}_0(\mathcal{T}_k^{\beta})$ (\textit{i.e.}, minimizer of \eqref{eq:jumping_primal} with $A_{h_k}\hspace{-0.15em}=\hspace{-0.15em}\textup{I}_{2\times 2}$)~and~the~\mbox{(local)} $L^2$-projection (onto $\mathcal{L}^1(\mathcal{T}_k)$)  of the modulus of  discrete dual solution~${z_k^{rt,\beta}\hspace{-0.15em}\in\hspace{-0.15em} \mathcal{R}T^0(\mathcal{T}_k^{\beta})}$~are~\mbox{depicted}.
	In \hspace*{-0.1mm}Figure \hspace*{-0.1mm}\ref{fig:Anisotropic3D_Rate}, \hspace*{-0.1mm}one \hspace*{-0.1mm}finds \hspace*{-0.1mm}that \hspace*{-0.1mm}the \hspace*{-0.1mm}grading \hspace*{-0.1mm}strengths \hspace*{-0.1mm}$\beta\hspace*{-0.15em}=\hspace*{-0.15em}\frac{1}{2}$ \hspace*{-0.1mm}and $\beta\hspace*{-0.15em}=\hspace*{-0.15em}1$~\hspace*{-0.1mm}yield~\hspace*{-0.1mm}the~\hspace*{-0.1mm}reduced~\hspace*{-0.1mm}\mbox{convergence}~\hspace*{-0.1mm}rates $\smash{(h_k^{\beta})^{\frac{3}{4}}\sim (N_k^{\beta})^{-\frac{1}{4}}}$ (for $\beta =\frac{1}{2}$) and $\smash{(h_k^{\beta})^{\frac{3}{2}}\sim (N_k^{\beta})^{-\frac{1}{2}}}$ (for $\beta =1$),~where~$\smash{N_k^{\beta}=\textup{dim}(\mathcal{S}^{1,cr}_0(\mathcal{T}_k^{\beta}))}$, 
	while the grading strength $\beta=\frac{3}{2}$ yield the improved convergence rate $\smash{(h_k^{\beta})^2\sim (N_k^{\beta})^{-\frac{2}{3}}}$. 
	
	\begin{figure}[H]
		\centering
		\includegraphics[width=14.5cm]{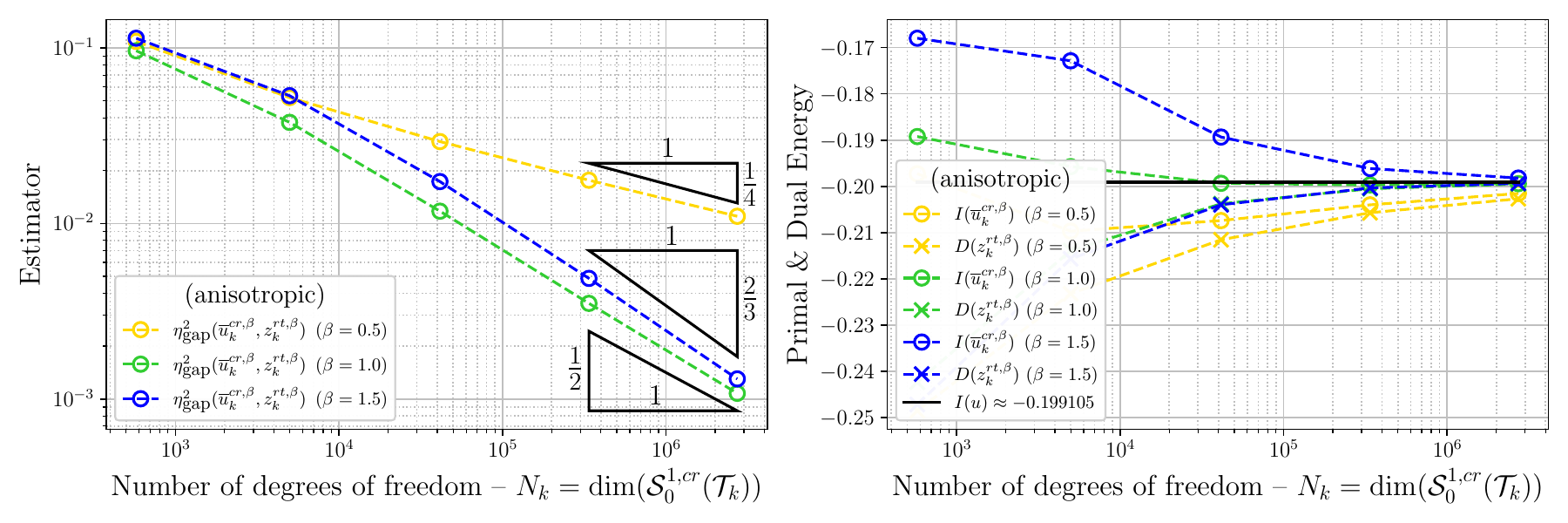}
		\caption{LEFT: primal-dual gap estimator $\eta^2_{\textup{gap}}(\overline{u}_k^{cr,\varepsilon},z_k^{rt,\varepsilon})$; RIGHT: 
			primal energy $I(\overline{u}_k^{cr,\beta})$, dual energy $D (z_k^{rt,\beta})$, and primal energy $I(u)$ approximated via Aitken's $\delta^2$-process (\textit{cf}.\ \cite{Ait26});
			each for grading strengths $\beta\in \{\frac{1}{2},1,\frac{3}{2}\}$ and  $k=0,\dots,3$.}
		\label{fig:Anisotropic3D_Rate}
	\end{figure}
	 \newpage
	 
	 \hphantom{.}
	 \vfill
	\begin{figure}[H]
		\centering
	\hspace*{-1mm}\includegraphics[width=4.75cm]{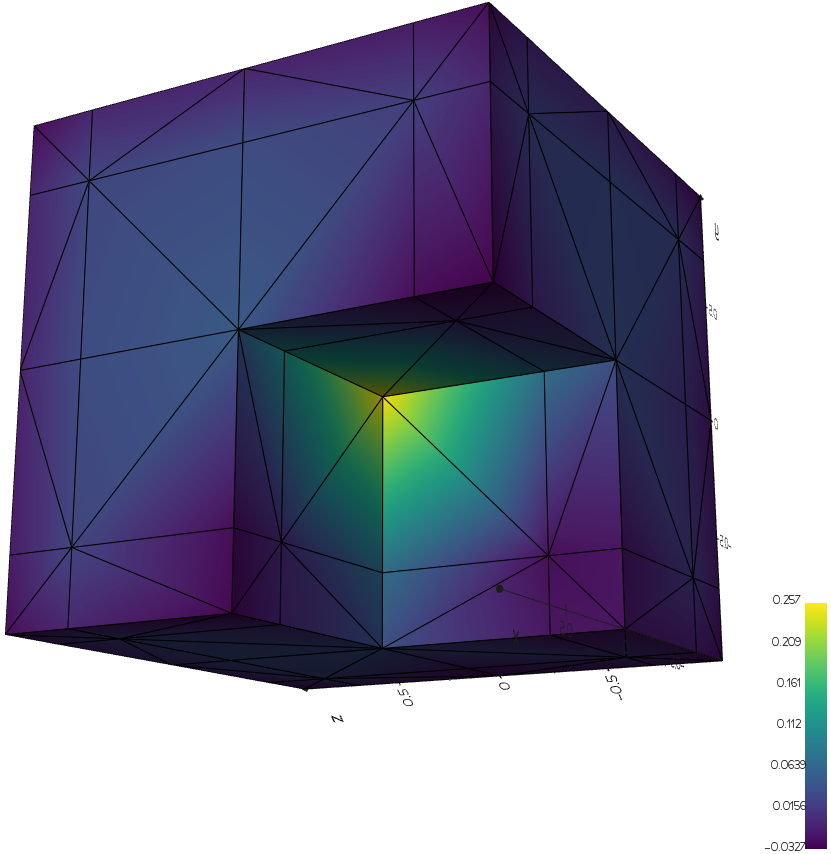}\hspace*{1mm}\includegraphics[width=4.75cm]{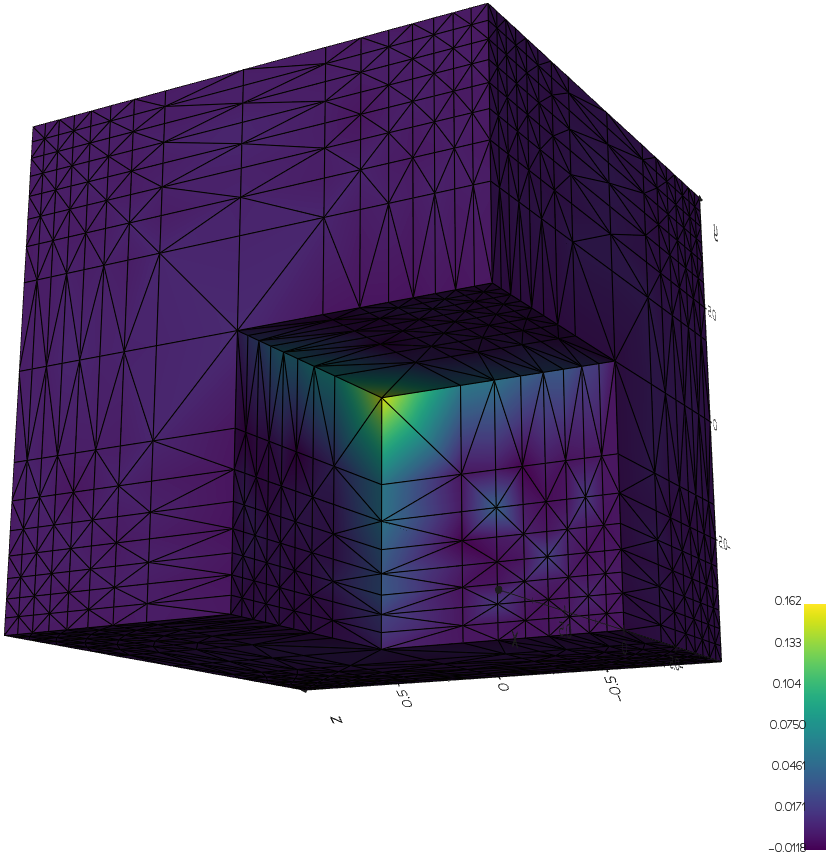}\hspace*{1mm}\includegraphics[width=4.75cm]{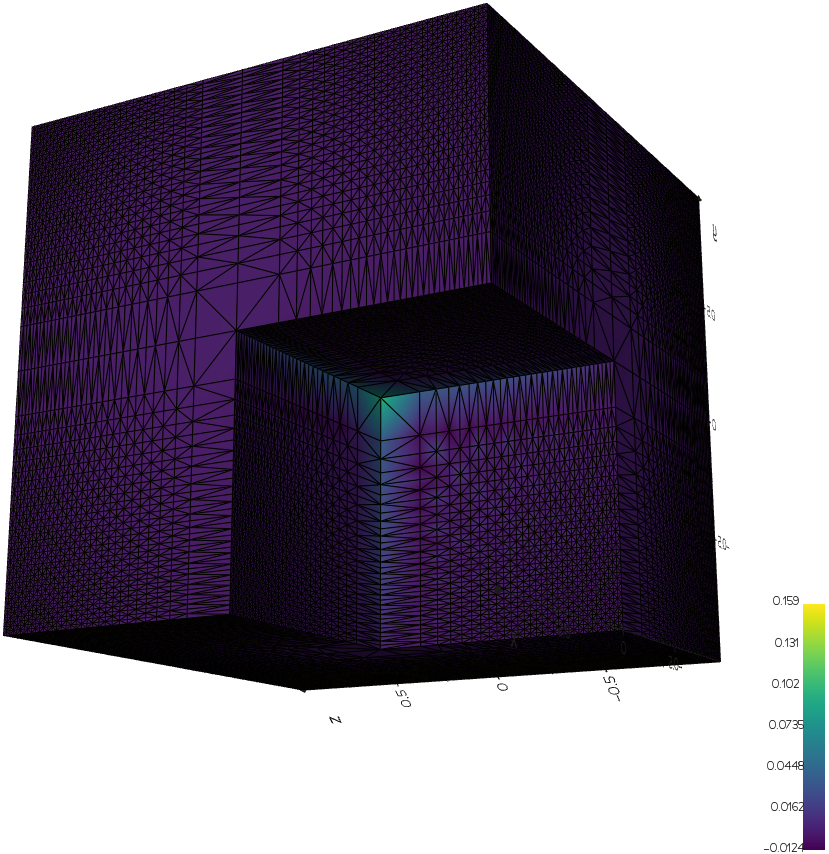}\\
	\hspace*{-1mm}\includegraphics[width=4.75cm]{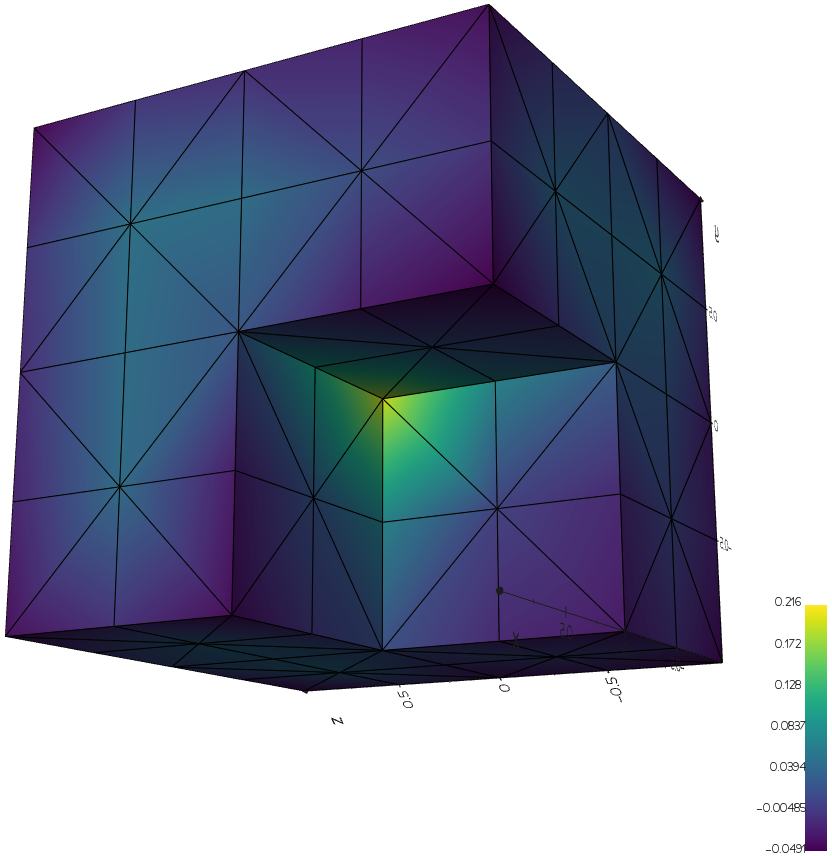}\hspace*{1mm}\includegraphics[width=4.75cm]{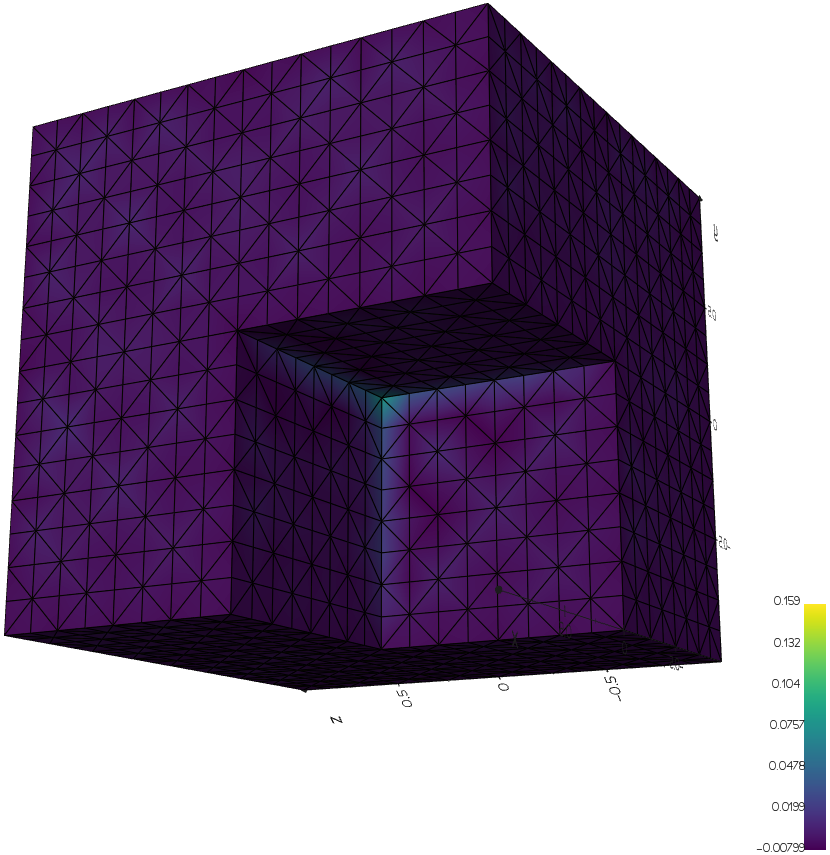}\hspace*{1mm}\includegraphics[width=4.75cm]{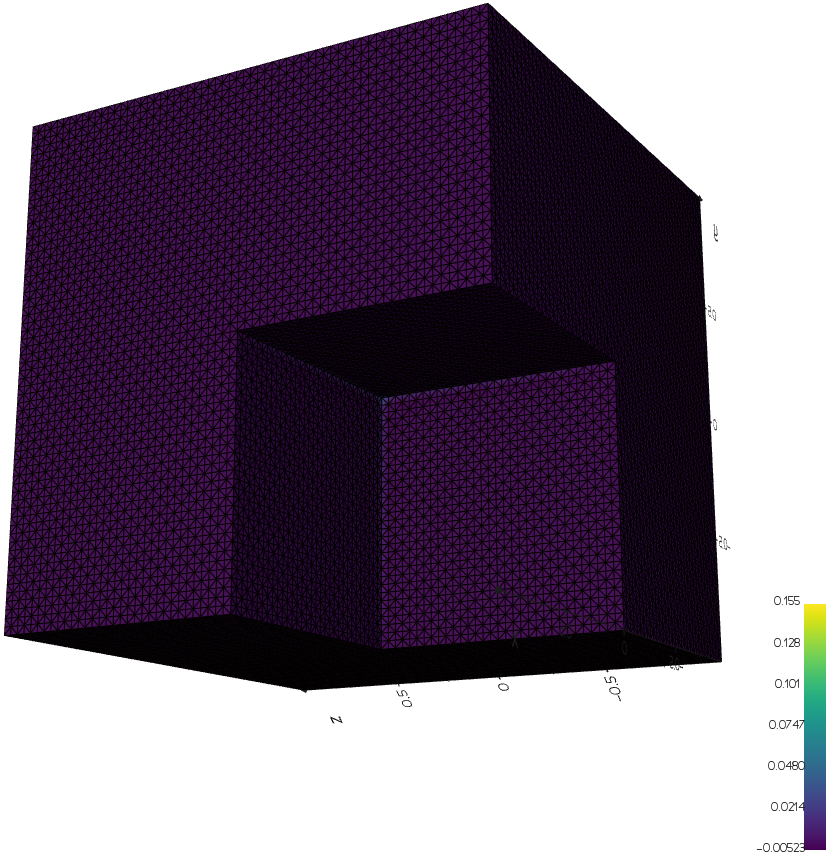}\\
		\hspace*{-1mm}\includegraphics[width=4.75cm]{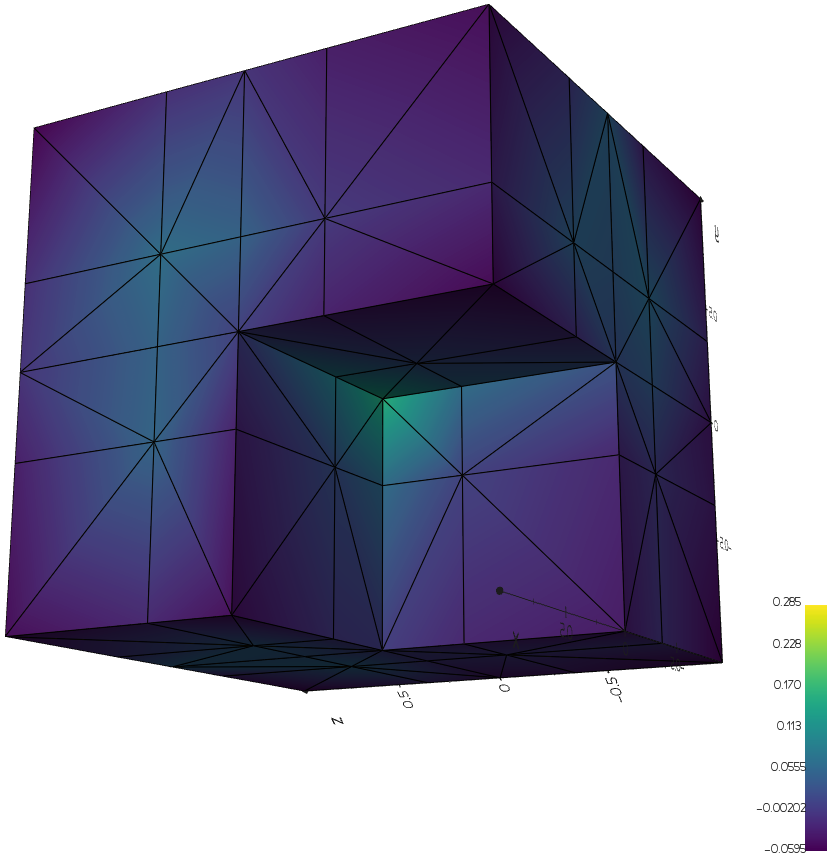}\hspace*{1mm}\includegraphics[width=4.75cm]{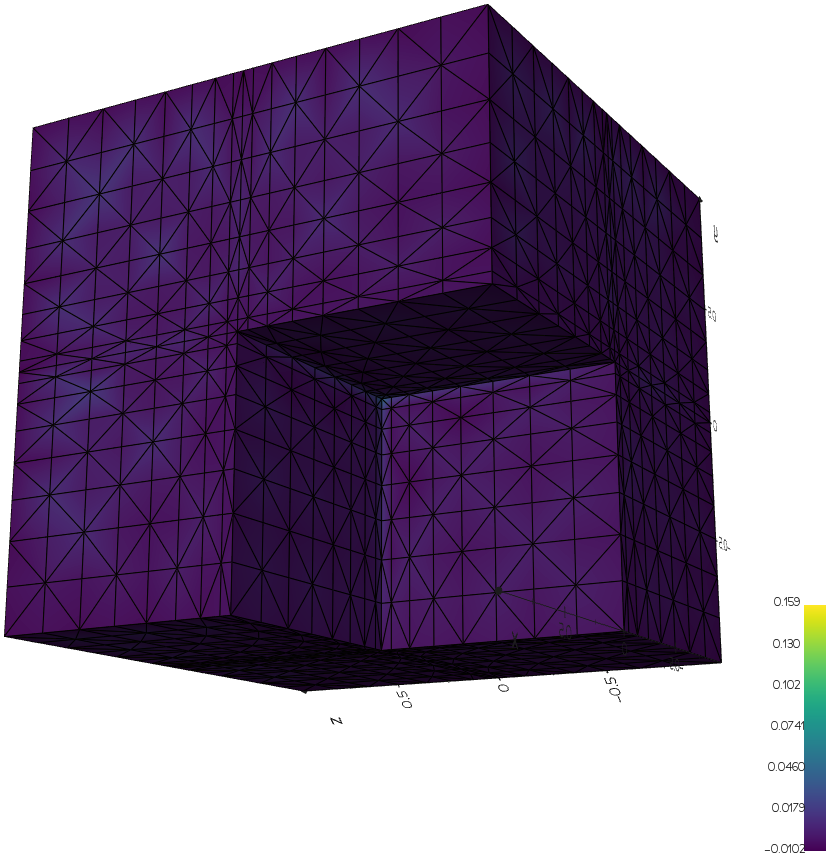}\hspace*{1mm}\includegraphics[width=4.75cm]{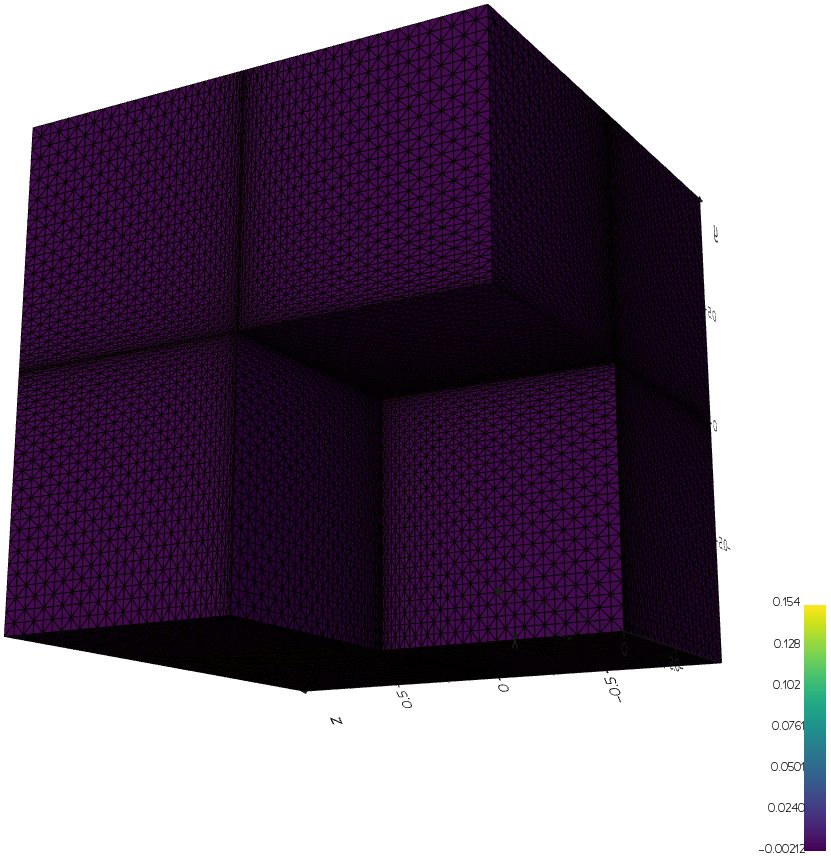}\\
\caption{Discrete primal solution $u_k^{cr,\beta}\in \mathcal{S}^{1,cr}_0(\mathcal{T}_k^{\beta})$ on triangulations$\mathcal{T}_k^{\beta}$ obtained using anisotropic mesh refinement  for $k\in\{0,2,4\}$ (from left to right) and 
	$\beta\in \{\frac{1}{2},1,\frac{3}{2}\}$ (from top to bottom).} 
		\label{fig:Anisotropic3D_Solution_uCR}
	\end{figure}
	\vfill
	\newpage
	
	\hphantom{.}
	\vfill
	\begin{figure}[H]
		\centering
		\hspace*{-1mm}\includegraphics[width=4.75cm]{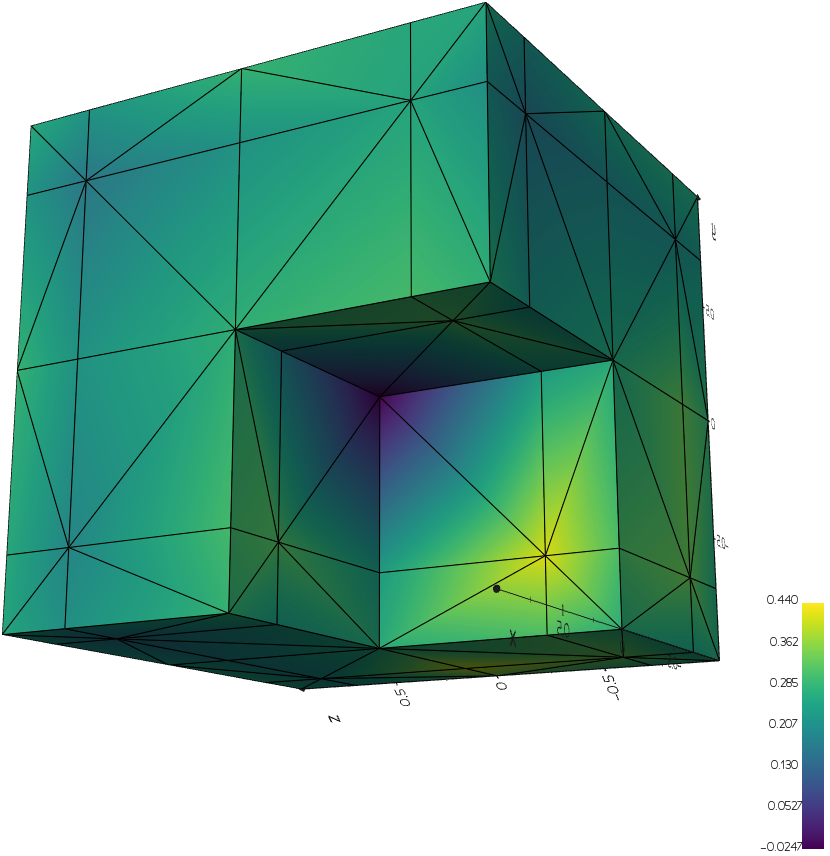}\hspace*{1mm}\includegraphics[width=4.75cm]{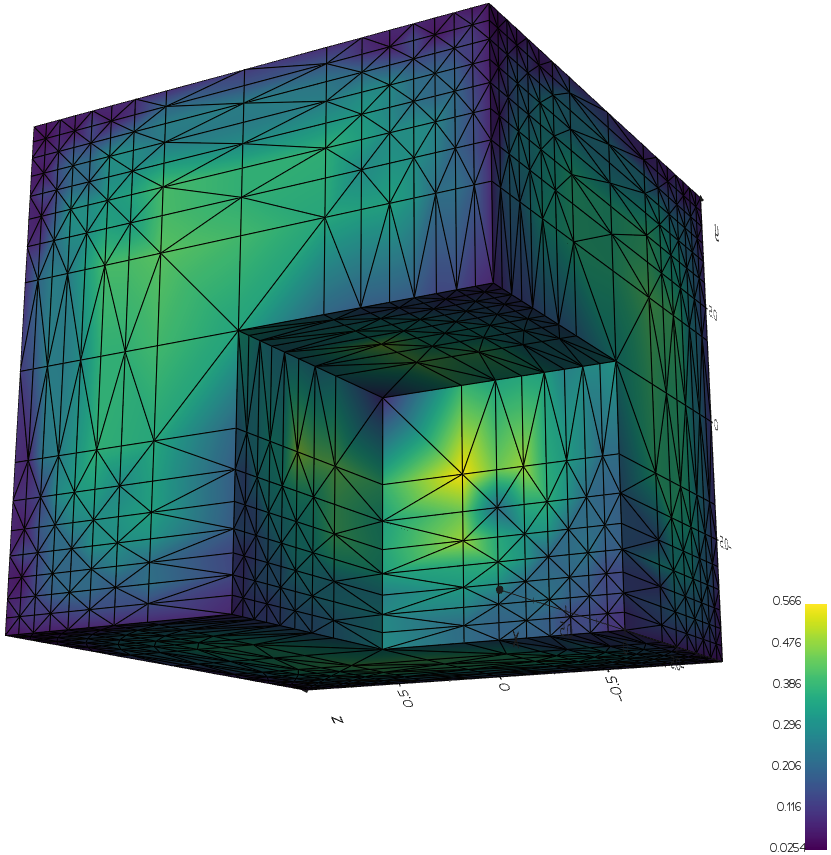}\hspace*{1mm}\includegraphics[width=4.75cm]{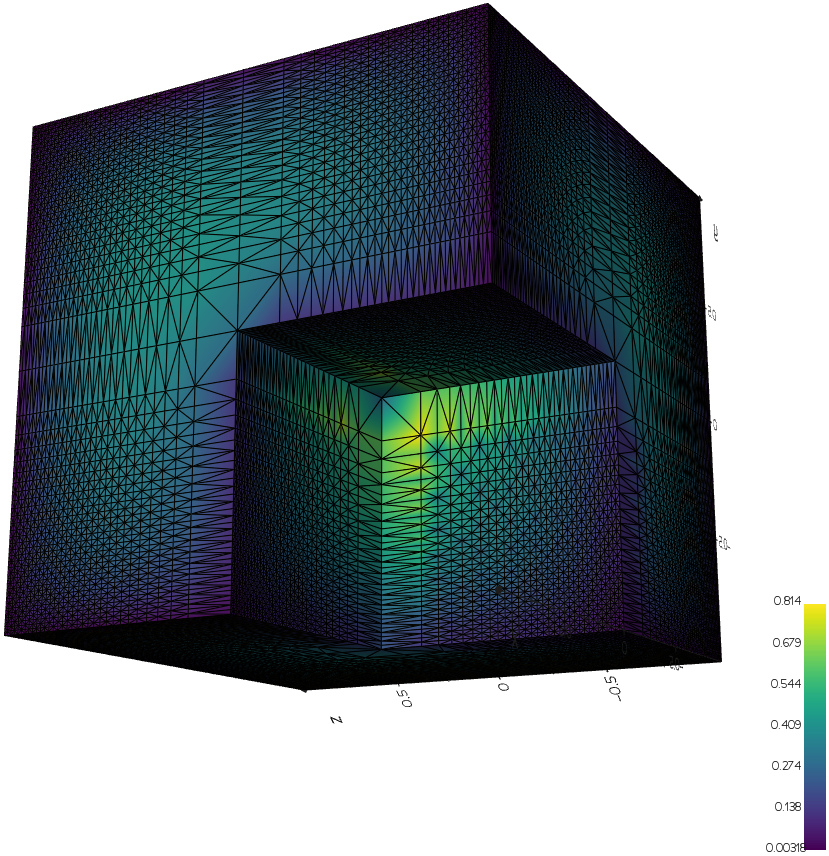}\\
		\hspace*{-1mm}\includegraphics[width=4.75cm]{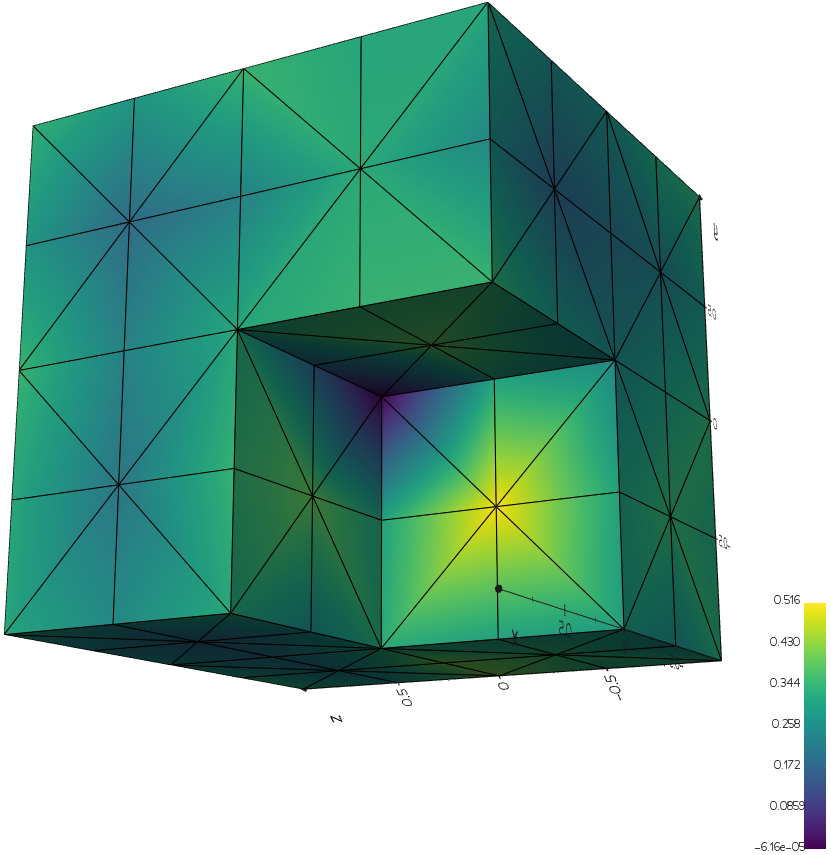}\hspace*{1mm}\includegraphics[width=4.75cm]{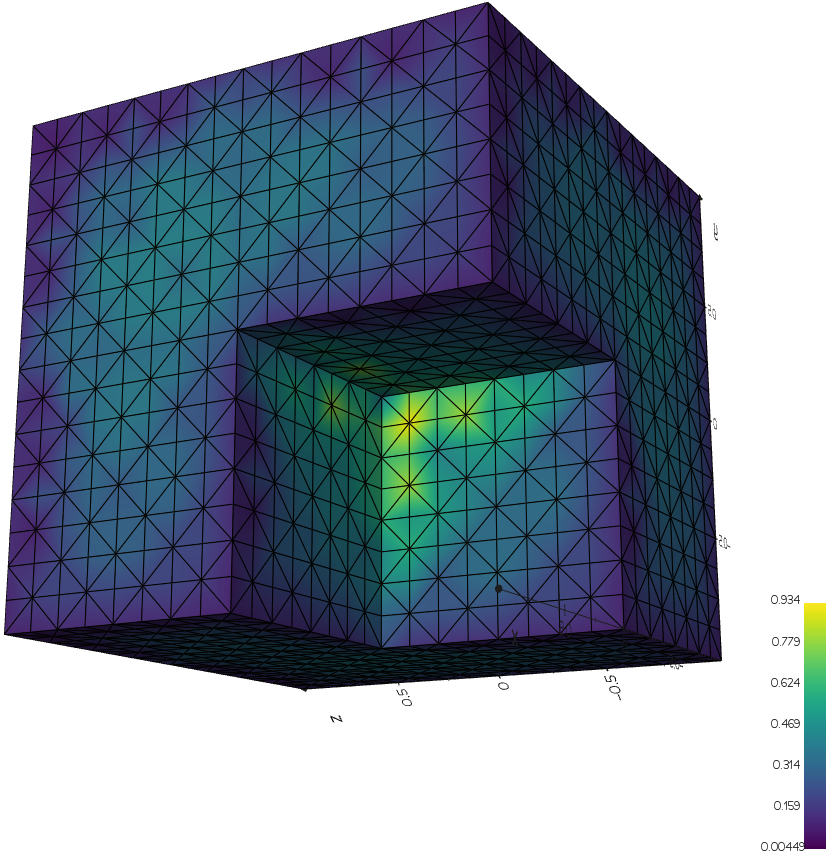}\hspace*{1mm}\includegraphics[width=4.75cm]{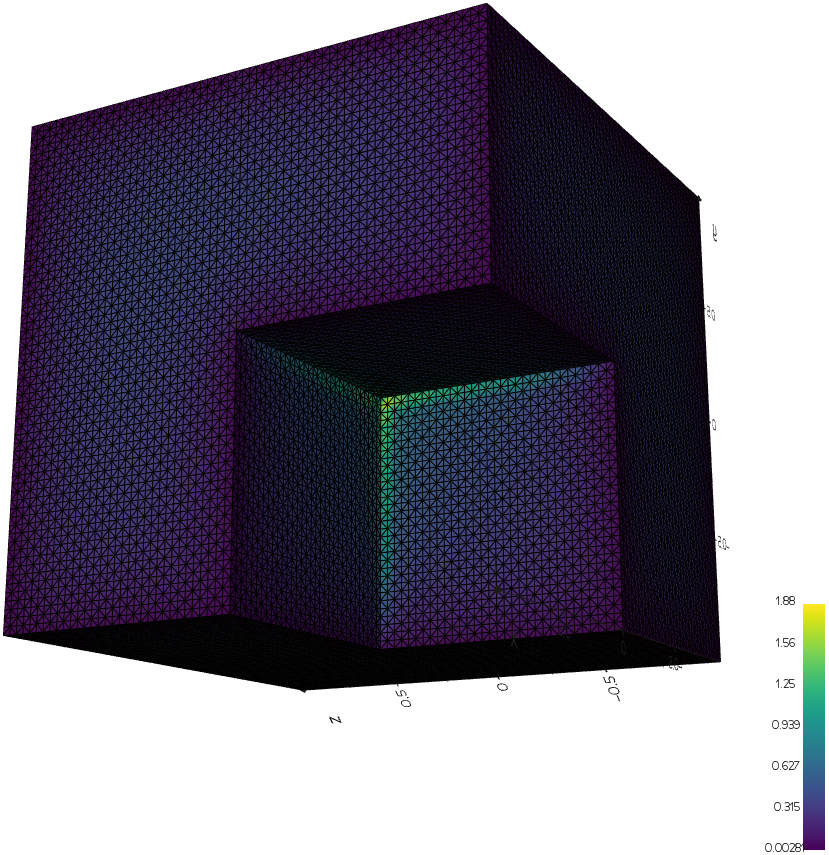}\\
		\hspace*{-1mm}\includegraphics[width=4.75cm]{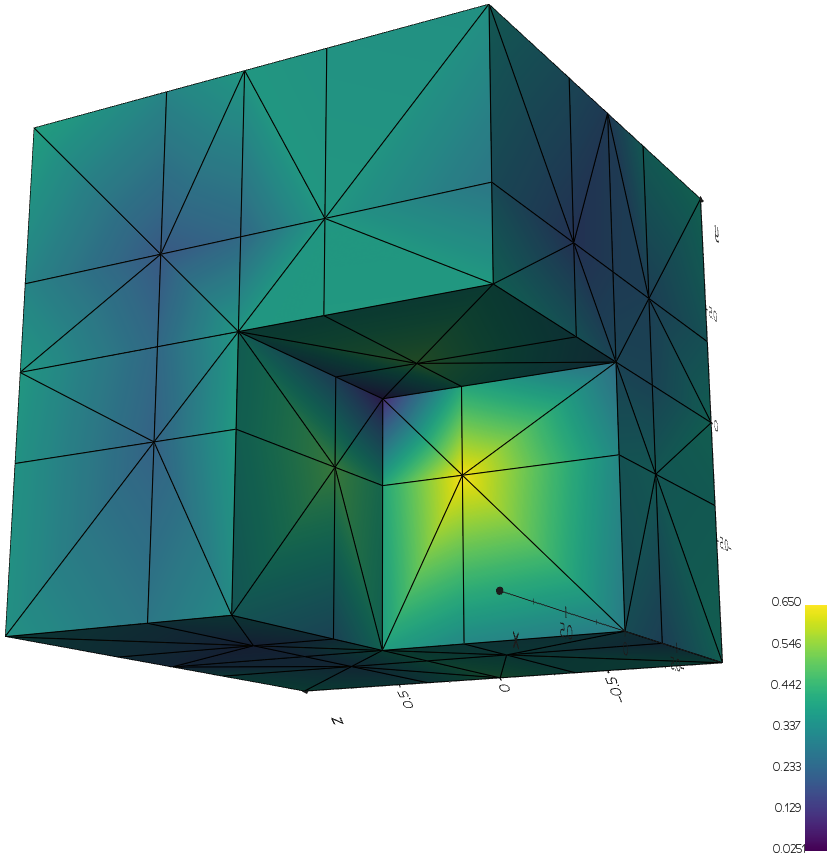}\hspace*{1mm}\includegraphics[width=4.75cm]{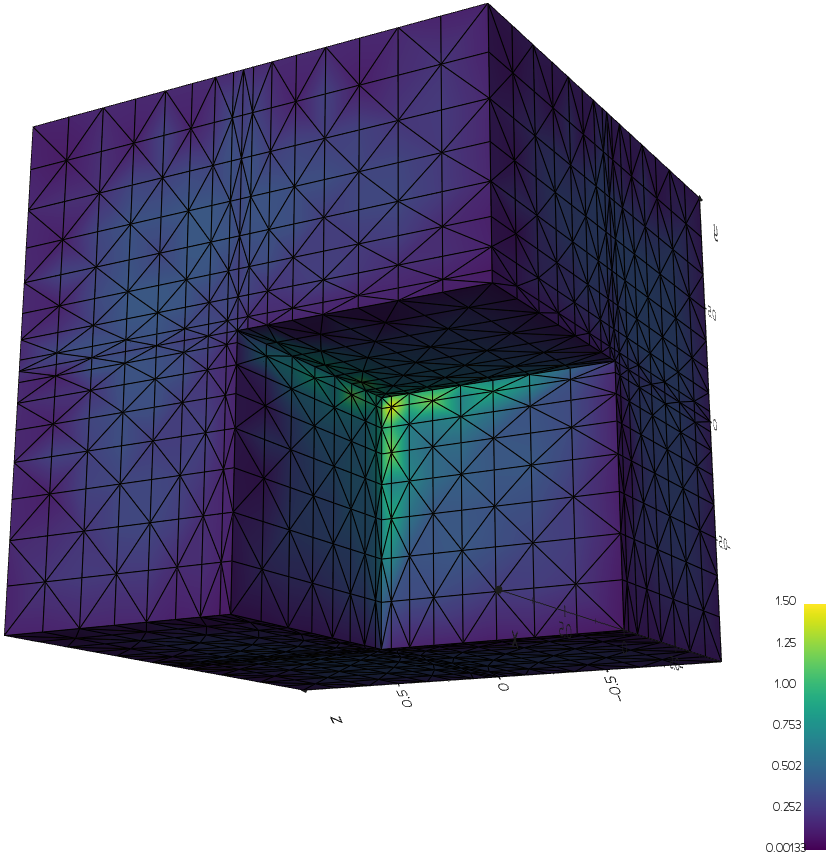}\hspace*{1mm}\includegraphics[width=4.75cm]{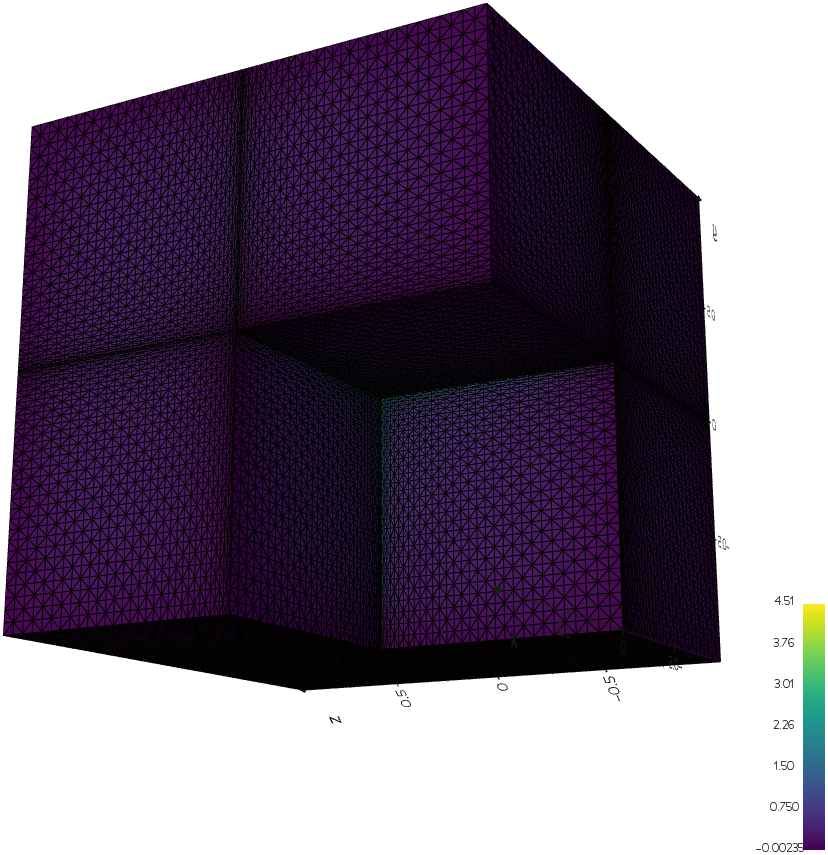}\\
		\caption{(Local) $L^2$-projection (onto $\mathcal{L}^1(\mathcal{T}_k^{\beta}))$ of modulus of discrete dual solution $\smash{\Pi_{h_k^{\beta}}^1\vert z_k^{cr,\beta}}\vert \in \mathcal{L}^1(\mathcal{T}_k^{\beta})$ on triangulation $\mathcal{T}_k^{\beta}$ obtained using anisotropic mesh refinement for $k\in\{0,2,4\}$ (from left to right) and 
			$\beta\in \{\frac{1}{2},1,\frac{3}{2}\}$ (from~top~to~bottom).} 
		\label{fig:Anisotropic3D_Solution_zRT}
	\end{figure}
	\vfill

	\newpage
	\section{Conclusion} 
	
	\hspace*{5mm}On the basis of convex duality arguments, explicit error representations, so-called \textit{generalized Prager--Synge type error identities}, have been derived.
	The minimal requirement for the validity of these identities is a so-called \textit{strong duality relation} between the primal problem and the dual problem, which is satisfied under mild assumptions.
	More precisely, the generalized Prager--Synge type error identities enable to compute the sum of the inaccessible primal and dual approximation errors by means of the so-called \textit{primal-dual gap estimator}.
	The primal-dual gap estimator has an integral representation with two integrands which both, by the Fenchel--Young inequality, are non-negative and vanish if and only if  so-called \textit{convex optimality relations} are satisfied.
	These convex duality relations do not require any regularity of the energy densities, apart from measurability together with convexity and lower semi-continuity with respect to the second argument.\linebreak This makes 
	the primal-dual gap estimator a predestined error estimator, in particular, for non-smooth, convex minimization problems.
	Since the generalized Prager--Synge type error identities treat the primal problem and the dual problem at the same time, it is necessary to insert both a primal approximation  and a dual approximation.
	This can be computationally~expensive,~\textit{e.g.},  if one is actually only interested in the primal approximation error.
	As a consequence, to be numerically practicable it is necessary to have a computationally inexpensive way to 
	obtain a dual approximation via post-processing a primal approximation or vice versa. To this end,   we formulated on the basis of the non-conforming approximation of the primal problem using the Crouzeix--Raviart element a discrete primal problem that generates convex duality relations like in the continuous case and, in particular, a discrete reconstruction formula for a maximizer of a corresponding discrete dual problem,  defined on the lowest-order Raviart--Thomas~element. This reconstruction formula gives us a computationally inexpensive way to approximate the primal and the dual problem at the same time using only the Crouzeix--Raviart element. 

	\appendix
	\section{Node-averaging quasi-interpolation operator and interpolation error estimate in terms of (shifted) $N$-functions}
	
	\hspace*{5mm}In this appendix, we  recall the definition of node-averaging operator $\Pi_h^{av}\colon\mathcal{L}^1(\mathcal{T}_h)\to \mathcal{S}^1_D(\mathcal{T}_h)$ and an interpolation error estimate in terms of (shifted) $N$-functions.
	
	\subsection{Node-averaging quasi-interpolation operator }\label{subsec:node_average} 
	
	\hspace*{5mm}The \textit{node-averaging quasi-interpolation operator} $\Pi_h^{av}\colon  \mathcal{L}^1(\mathcal{T}_h)\to \mathcal{S}^1_D(\mathcal{T}_h)$,  
	denoting~for~every ${\nu\in \mathcal{N}_h}$, 
	by ${\mathcal{T}_h(\nu)\coloneqq \{T\in \mathcal{T}_h\mid \nu \in T\}}$, the set of elements sharing $\nu$, for every $ v_h\in \mathcal{L}^1(\mathcal{T}_h)$, is defined by
	\begin{align*}
		\Pi_h^{av}v_h\coloneqq \sum_{\nu\in \smash{\mathcal{N}_h}}{\langle v_h\rangle_\nu \varphi_\nu}\,,\qquad \langle v_h\rangle_\nu\coloneqq \begin{cases}
			\frac{1}{\textup{card}(\mathcal{T}_h(\nu ))}\sum_{T\in \mathcal{T}_h(\nu)}{(v_h|_T)(\nu)}&\;\text{ if }\nu\in \Omega\cup \Gamma_N\,,\\
			0&\;\text{ if }\nu\in \Gamma_D\,,
		\end{cases}
	\end{align*}
	where we denote by $(\varphi_\nu )_{\smash{\nu\in \mathcal{N}_h}}$ the nodal basis of $\mathcal{S}^1(\mathcal{T}_h)$. 
	
	\subsection{Interpolation error estimate in terms of (shifted) $N$-functions}\label{subsec:auxiliary}
	
	\hspace*{5mm}A convex function $\varphi\colon \mathbb{R}_{\ge 0}\to \mathbb{R}_{\ge 0}$ is said to be an \textit{$N$-function} if and only if $\varphi(0)=0$,~$\varphi(t)>0$ for all $t>0$, $\lim_{t\to 0}{\varphi(t)/t}=0$, and $\lim_{t\to\infty}{\varphi(t)/t}=\infty$. Then, there exists a right-derivative $\varphi'\colon \mathbb{R}_{\ge 0}\to \mathbb{R}_{>0}$, which is non-decreasing and satisfies $\varphi'(0) =0$, $\varphi'(t)>0$ for all $t>0$, and $\lim_{t\to \infty}{\varphi'(t)}=\infty$.
	In addition, an $N$-function $\varphi\colon \mathbb{R}_{\ge 0}\to \mathbb{R}_{\ge 0}$ is said to satisfy the \textit{$\Delta_2$-condition}  (in short, $\varphi\in \Delta_2$) if and only if there exists a constant $c>0$ such that $\varphi(2t)\leq c\,\varphi(t)$~for~all~${t\ge 0}$. We denote the smallest such constant by $\Delta_2(\varphi)$. 
	An $N$-function $ \varphi\colon \mathbb{R}_{\ge 0}\to \mathbb{R}_{\ge 0}$ is said to satisfy the \textit{$\nabla_2$-condition} (in short, $\varphi\in \nabla_2$), if its Fenchel conjugate $\varphi^*\colon \mathbb{R}_{\ge 0}\to \mathbb{R}_{\ge 0}$ is an $N$-function satisfying the $\Delta_2$-condition. If $\varphi\colon \mathbb{R}_{\ge 0}\to \mathbb{R}_{\ge 0}$ satisfies the $\Delta_2$- and the $\nabla_2$-condition (in short, $\varphi\in \Delta_2\cap \nabla_2$), then 
	we define the corresponding  family of \textit{shifted $N$-functions} $\varphi_a\colon \mathbb{R}_{\ge 0}\to \mathbb{R}_{\ge 0}$, $a\ge 0$, for every $t\ge 0$ by
	\begin{align*}
			\varphi_a(t)\coloneqq\int_0^t{\varphi_a'(s)\,\textup{d}s}\,,\quad\text{ where }\varphi_a'(s)\coloneqq\frac{\varphi'(a+s)}{a+s}s\text{ for all }s\ge 0\,. 
	\end{align*}
	Appealing to \cite[Lem.\ 22]{DK08}, it holds that  $c_{\varphi}\coloneqq\sup_{a\ge 0}{\Delta_2(\varphi_a)}<\infty$.	In particular,~for~every~$\varepsilon>0$, there exists a constant 
	$c_\varepsilon>0$, not depending on $a\ge 0$, such that for every $t,s\ge 0$ and $a\ge 0$, there holds the following \textit{$\varepsilon$-Young inequality}:
	\begin{align}
		s\, t\leq  c_\varepsilon\,(\varphi_a)^*(s)+\varepsilon\,\varphi_a(t)\,.\label{eq:eps-young}
	\end{align}

	\begin{proposition}\label{cor:n-function}
		Let $\varphi\colon \mathbb{R}_{\ge 0}\to \mathbb{R}_{\ge 0}$ be an $N$-function  such that $\varphi\in\Delta_2\cap \nabla_2$. 
		Then,  for every $v_h\in  \mathcal{S}^{1,cr}(\mathcal{T}_h)$, $m\in\{0,1,2\}$, $a\ge 0$ and $T\in \mathcal{T}_h$, we have that 
		\begin{align*}
			\int_T{\varphi_a(h_T^m\vert \nabla_h^m(v_h-\Pi_h^{av}v_h)\vert)\,\textup{d}x}\leq  c_{av}
		\int_{\omega_T}{\varphi_a(h_T\vert\nabla_h v_h\vert)\,\textup{d}x}\,.
		\end{align*}
		where $c_{av}>0$ depends only  on $c_{\varphi}$  and $\omega_0$. 
	\end{proposition}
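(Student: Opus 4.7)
The plan is to reduce the inequality to a pointwise bound of $h_T^m |\nabla_h^m (v_h - \Pi_h^{av} v_h)|$ on $T$ by $c\,h_T \max_{T'\subset \omega_T} |\nabla_h v_h|_{T'}|$, and then to absorb the resulting pointwise maximum into the integral against $\varphi_a$ by exploiting that $\nabla_h v_h$ is element-wise constant together with the uniform $\Delta_2$-condition $c_\varphi = \sup_{a\ge 0} \Delta_2(\varphi_a)<\infty$ established in \cite[Lem.~22]{DK08}.

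For $m=2$ the assertion is trivial, since $(v_h - \Pi_h^{av}v_h)|_T\in \mathbb{P}^1(T)$ yields $\nabla_h^2(v_h - \Pi_h^{av}v_h) \equiv 0$ and $\varphi_a(0)=0$. For $m\in\{0,1\}$, scaling to the reference simplex combined with the equivalence of norms on the finite-dimensional space $\mathbb{P}^1(T)$ gives
\begin{align*}
h_T^m\|\nabla^m(v_h - \Pi_h^{av}v_h)\|_{L^\infty(T)} \le c\,\max_{\nu \in \mathcal{N}_h\cap T}|v_h|_T(\nu) - \langle v_h\rangle_\nu|\,,
\end{align*}
with $c$ depending only on the chunkiness $\omega_0$.

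The core step is then the estimate
\begin{align*}
|v_h|_T(\nu) - \langle v_h\rangle_\nu| \le c\,h_T \max_{T' \in \mathcal{T}_h(\nu)} |\nabla v_h|_{T'}|
\end{align*}
for every vertex $\nu$ of $T$. For $\nu\in \Omega \cup \Gamma_N$, I would connect any two elements $T',T''\in \mathcal{T}_h(\nu)$ by a chain of neighboring elements sharing a side $S\in \mathcal{S}_h^{i}$ whose length is bounded in terms of $\omega_0$; on each such side, the Crouzeix--Raviart property $\pi_h\jump{v_h}_S=0$ forces the affine scalar $\jump{v_h}_S$ to have vanishing mean on $S$, so $|\jump{v_h}_S(\nu)| \le c\,h_S\,\|\nabla_S\jump{v_h}_S\|_{L^\infty(S)}$, and a discrete trace / inverse estimate bounds the right-hand side by $c\,h_T(|\nabla v_h|_{T'}| + |\nabla v_h|_{T''}|)$. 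Telescoping along the chain gives the asserted vertex bound. For $\nu\in \Gamma_D$, where $\langle v_h\rangle_\nu = 0$, one invokes the same mean-value argument applied to the boundary condition $\pi_h \jump{v_h}_S=0$ for $S \in \mathcal{S}_h^{\Gamma_D}$ with $\nu \in S$, so that $|v_h|_T(\nu)| \le c\,h_T|\nabla v_h|_T|$.

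Combining the pointwise bound with the monotonicity of $\varphi_a$ and iterated use of $\Delta_2(\varphi_a)\le c_\varphi$ to absorb the multiplicative constants yields, for a.e.\ $x\in T$,
\begin{align*}
\varphi_a\bigl(h_T^m|\nabla_h^m(v_h - \Pi_h^{av}v_h)|(x)\bigr) \le c\,\max_{T'\subset \omega_T}\varphi_a\bigl(h_T|\nabla v_h|_{T'}|\bigr)\,,
\end{align*}
with a constant depending only on $c_\varphi$ and $\omega_0$. Integrating over $T$, using shape regularity $|T|\sim|T'|$ and $h_T \sim h_{T'}$ for $T'\subset \omega_T$, and recalling that $\nabla_h v_h$ is constant on each such $T'$, the right-hand side is dominated by $c_{av}\int_{\omega_T}\varphi_a(h_T|\nabla_h v_h|)\,\mathrm{d}x$, which is the desired bound. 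The main obstacle is the bookkeeping of constants in the chain-of-sides argument: one must ensure that the iterated application of the $\Delta_2$-condition to account for the chain length (controlled by $\omega_0$) and the passage between neighboring elements produces a constant that is independent of the shift parameter $a\ge 0$. Everything else reduces to routine scaling and polynomial inverse estimates.
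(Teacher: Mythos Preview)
The paper does not give its own proof here; it simply cites \cite[Cor.~A.2]{K22CR}. Your sketch is the standard route to such estimates and is almost certainly what the cited reference carries out: reduce to vertex deviations via inverse/scaling on $\mathbb{P}^1(T)$, bound $|v_h|_T(\nu)-\langle v_h\rangle_\nu|$ by $c\,h_T\max_{T'\in\mathcal{T}_h(\nu)}|\nabla v_h|_{T'}|$ through a chain-of-sides argument exploiting the Crouzeix--Raviart zero-mean-jump property, and then absorb the resulting multiplicative constants into $\varphi_a$ via the uniform $\Delta_2$-bound $c_\varphi$.

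One point is worth flagging. Your treatment of Dirichlet vertices uses $\pi_h v_h|_S=0$ for $S\in\mathcal{S}_h^{\Gamma_D}$, i.e.\ it presupposes $v_h\in\mathcal{S}^{1,cr}_D(\mathcal{T}_h)$. The proposition as printed allows $v_h\in\mathcal{S}^{1,cr}(\mathcal{T}_h)$, for which the inequality is actually false on elements touching $\Gamma_D$: take $v_h\equiv 1$, so that $\nabla_h v_h=0$ and the right-hand side vanishes, while $\Pi_h^{av}v_h$ is forced to zero at Dirichlet nodes and the left-hand side is positive for $m=0$. This is a slip in the stated hypothesis rather than a defect in your argument; the only place the result is used in the paper (the proof of Theorem~\ref{thm:equivalences}) applies it to $e_h=u_h^{p1}-u_h^{cr}\in\mathcal{S}^{1,cr}_D(\mathcal{T}_h)$, where your boundary step is valid.
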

	
	\begin{proof}
		See \cite[Cor.\ A.2]{K22CR}.
	\end{proof}
	
	{\setlength{\bibsep}{0pt plus 0.0ex}\small
		
%

\begin{thebibliography}{10}
		
		\bibitem{Ains05}
		\bgroup\scshape{}M.~Ainsworth\egroup{}, Robust a posteriori error estimation
		for nonconforming finite element approximation,  \emph{SIAM J. Numer. Anal.}
		\textbf{42} no.~6 (2005), 2320--2341. 
		\doi{10.1137/S0036142903425112}.
		
		\bibitem{AinOde00-book}
		\bgroup\scshape{}M.~Ainsworth\egroup{} and \bgroup\scshape{}J.~T.
		Oden\egroup{}, \emph{A posteriori error estimation in finite element
			analysis}, \emph{Pure and Applied Mathematics (New York)}, Wiley-Interscience
		[John Wiley \& Sons], New York, 2000.
		\doi{10.1002/9781118032824}.
		
		\bibitem{Ait26}
		\bgroup\scshape{}A.~C. Aitken\egroup{}, On bernoulli's numerical solution of
		algebraic equations,  \emph{Proceedings of the Royal Society of Edinburgh}
		no.~46 (1926), 280--305. \doi{10.1017/S0370164600022070}.
		
		\bibitem{Alon96}
		\bgroup\scshape{}A.~Alonso\egroup{}, Error estimators for a mixed method,
		\emph{Numer. Math.} \textbf{74} no.~4 (1996), 385--395. 
		\doi{10.1007/s002110050222}.
		
		\bibitem{mumps}
		\bgroup\scshape{}P.~R. Amestoy\egroup{}, \bgroup\scshape{}I.~S. Duff\egroup{},
		\bgroup\scshape{}J.~Koster\egroup{}, and \bgroup\scshape{}J.-Y.
		L'Excellent\egroup{}, A fully asynchronous multifrontal solver using
		distributed dynamic scheduling,  \emph{SIAM Journal on Matrix Analysis and
			Applications} \textbf{23} no.~1 (2001), 15--41.
		
		\bibitem{Apel99}
		\bgroup\scshape{}T.~Apel\egroup{}, \emph{Anisotropic finite elements: local
			estimates and applications}, \emph{Advances in Numerical Mathematics}, B. G.
		Teubner, Stuttgart, 1999.
		
		\bibitem{Arbo95}
		\bgroup\scshape{}T.~Arbogast\egroup{} and \bgroup\scshape{}Z.~Chen\egroup{}, On
		the implementation of mixed methods as nonconforming methods for second-order
		elliptic problems,  \emph{Math. Comp.} \textbf{64} no.~211 (1995), 943--972.
		 \doi{10.2307/2153478}.
		
		\bibitem{ArnBre85}
		\bgroup\scshape{}D.~N. Arnold\egroup{} and \bgroup\scshape{}F.~Brezzi\egroup{},
		Mixed and nonconforming finite element methods: implementation,
		postprocessing and error estimates,  \emph{RAIRO Mod\'{e}l. Math. Anal.
			Num\'{e}r.} \textbf{19} no.~1 (1985), 7--32.
		\doi{10.1051/m2an/1985190100071}.
		
		\bibitem{AB85}
		\bgroup\scshape{}D.~N. Arnold\egroup{} and \bgroup\scshape{}F.~Brezzi\egroup{},
		Mixed and nonconforming finite element methods: implementation,
		postprocessing and error estimates,  \emph{Mathematical Modelling and
			Numerical Analysis} \textbf{19} (1985), 7--32.
		
		\bibitem{BaDuRo92}
		\bgroup\scshape{}I.~Babu\v{s}ka\egroup{},
		\bgroup\scshape{}R.~Dur\'{a}n\egroup{}, and
		\bgroup\scshape{}R.~Rodr\'{\i}guez\egroup{}, Analysis of the efficiency of an
		a posteriori error estimator for linear triangular finite elements,
		\emph{SIAM J. Numer. Anal.} \textbf{29} no.~4 (1992), 947--964.
		\doi{10.1137/0729058}.
		
		\bibitem{BabStr01}
		\bgroup\scshape{}I.~Babu\v{s}ka\egroup{} and
		\bgroup\scshape{}T.~Strouboulis\egroup{}, \emph{The finite element method and
			its reliability}, \emph{Numerical Mathematics and Scientific Computation},
		The Clarendon Press, Oxford University Press, New York, 2001. 
		
		\bibitem{PETSc19}
		\bgroup\scshape{}S.~Balay et al.\egroup{},
		{PETS}c {W}eb page, \url{https://www.mcs.anl.gov/petsc}, 2019. Available at
		\url{https://www.mcs.anl.gov/petsc}.
		 
		\bibitem{balci2023error}
		\bgroup\scshape{}A.~K. Balci\egroup{} and
		\bgroup\scshape{}A.~Kaltenbach\egroup{}, Error analysis for a
		{C}rouzeix--{R}aviart approximation of the variable exponent {D}irichlet problem, \emph{IMA Journal of Numerical Analysis} (accepted), 
		2024. 
		\doi{10.1093/imanum/drae025}
		
		\bibitem{BanRan03}
		\bgroup\scshape{}W.~Bangerth\egroup{} and
		\bgroup\scshape{}R.~Rannacher\egroup{}, \emph{Adaptive finite element methods
			for differential equations}, \emph{Lectures in Mathematics ETH Z\"{u}rich},
		Birkh\"{a}user Verlag, Basel, 2003. 
		\doi{10.1007/978-3-0348-7605-6}.
		
		\bibitem{Bar15}
		\bgroup\scshape{}S.~Bartels\egroup{}, \emph{Numerical methods for nonlinear
			partial differential equations}, \emph{Springer Series in Computational
			Mathematics} \textbf{47}, Springer, Cham, 2015. 
		\doi{10.1007/978-3-319-13797-1}.
		
		\bibitem{Bar21}
		\bgroup\scshape{}S.~Bartels\egroup{}, Nonconforming discretizations of convex
		minimization problems and precise relations to mixed methods,  \emph{Comput.
			Math. Appl.} \textbf{93} (2021), 214--229. 
		\doi{10.1016/j.camwa.2021.04.014}.
		
		\bibitem{BK22}
		\bgroup\scshape{}S.~Bartels\egroup{} and
		\bgroup\scshape{}A.~Kaltenbach\egroup{}, \emph{Error estimates for
			total-variation regularized minimization problems with singular dual
			solutions}, 2022. Available at \url{http://arxiv.org/abs/1905.13591}.
		
		\bibitem{BK22Obstacle}
		\bgroup\scshape{}S.~Bartels\egroup{} and
		\bgroup\scshape{}A.~Kaltenbach\egroup{}, Error analysis for a
		{C}rouzeix--{R}aviart approximation of the obstacle problem, 2023.
		\doi{10.48550/ARXIV.2302.01646}.
		
		\bibitem{BW21}
		\bgroup\scshape{}S.~Bartels\egroup{} and \bgroup\scshape{}Z.~Wang\egroup{},
		Orthogonality relations of {C}rouzeix-{R}aviart and {R}aviart-{T}homas finite
		element spaces,  \emph{Numer. Math.} \textbf{148} no.~1 (2021), 127--139.
		  \doi{10.1007/s00211-021-01199-3}.
		
		\bibitem{bartels15}
		\bgroup\scshape{}S.~Bartels\egroup{}, Error control and adaptivity for a
		variational model problem defined on functions of bounded variation,
		\emph{Math. Comp.} \textbf{84} no.~293 (2015), 1217--1240.
		\doi{10.1090/S0025-5718-2014-02893-7}.
		
		\bibitem{Bar20}
		\bgroup\scshape{}S.~Bartels\egroup{}, Error estimates for a class of
		discontinuous {G}alerkin methods for nonsmooth problems via convex duality
		relations,  \emph{Math. Comp.} \textbf{90} no.~332 (2021), 2579--2602.
		\doi{10.1090/mcom/3656}.
		
		\bibitem{BKAFEM22}
		\bgroup\scshape{}S.~Bartels\egroup{} and
		\bgroup\scshape{}A.~Kaltenbach\egroup{}, Explicit and efficient error
		estimation for convex minimization problems,  \emph{Math. Comp.} \textbf{92}
		no.~343 (2023), 2247--2279. 
		\doi{10.1090/mcom/3821}.
		
		\bibitem{BTW21}
		\bgroup\scshape{}S.~Bartels\egroup{}, \bgroup\scshape{}R.~Tovey\egroup{}, and
		\bgroup\scshape{}F.~Wassmer\egroup{}, Singular solutions, graded meshes,and
		adaptivity for total-variation regularized minimization problems,
		\emph{ESAIM Math. Model. Numer. Anal.} \textbf{56} no.~6 (2022), 1871--1888.
		 \doi{10.1051/m2an/2022056}.
		
		\bibitem{BK23ROF}
		\bgroup\scshape{}S.~Bartels\egroup{} and
		\bgroup\scshape{}A.~Kaltenbach\egroup{}, Explicit a posteriori error
		representation for variational problems and application to {TV}-minimization,
		2023.
		
		\bibitem{discrete_tv}
		\bgroup\scshape{}L.~Baumg\"{a}rtner\egroup{},
		\bgroup\scshape{}R.~Bergmann\egroup{}, \bgroup\scshape{}R.~Herzog\egroup{},
		\bgroup\scshape{}S.~Schmidt\egroup{}, and
		\bgroup\scshape{}J.~Vidal-N\'{u}nez\egroup{}, Total generalized variation for
		piecewise constant functions on triangular meshes with applications in
		imaging,  \emph{SIAM J. Imaging Sci.} \textbf{16} no.~1 (2023), 313--339.
		\doi{10.1137/22M1505281}.
		
		\bibitem{BeCaLu16}
		\bgroup\scshape{}R.~Becker\egroup{}, \bgroup\scshape{}D.~Capatina\egroup{}, and
		\bgroup\scshape{}R.~Luce\egroup{}, Local flux reconstructions for standard
		finite element methods on triangular meshes,  \emph{SIAM J. Numer. Anal.}
		\textbf{54} no.~4 (2016), 2684--2706. 
		\doi{10.1137/16M1064817}.
		
		\bibitem{Boffi2008}
		\bgroup\scshape{}D.~Boffi\egroup{}, \bgroup\scshape{}F.~Brezzi\egroup{}, and
		\bgroup\scshape{}M.~Fortin\egroup{}, \emph{Finite Elements for the Stokes
			Problem}, pp.~45--100, Springer Berlin Heidelberg, Berlin, Heidelberg, 2008.
		\doi{10.1007/978-3-540-78319-0_2}.
		
		\bibitem{BHS08}
		\bgroup\scshape{}D.~Braess\egroup{}, \bgroup\scshape{}R.~H.~W. Hoppe\egroup{},
		and \bgroup\scshape{}J.~Sch\"{o}berl\egroup{}, A posteriori estimators for
		obstacle problems by the hypercircle method,  \emph{Comput. Vis. Sci.}
		\textbf{11} no.~4-6 (2008), 351--362. 
		\doi{10.1007/s00791-008-0104-2}.
		
		\bibitem{BS08}
		\bgroup\scshape{}D.~Braess\egroup{} and
		\bgroup\scshape{}J.~Sch\"{o}berl\egroup{}, Equilibrated residual error
		estimator for edge elements,  \emph{Math. Comp.} \textbf{77} no.~262 (2008),
		651--672. 
		 \doi{10.1090/S0025-5718-07-02080-7}.
		
		\bibitem{Brae07-book}
		\bgroup\scshape{}D.~Braess\egroup{}, \emph{Finite elements}, third ed.,
		Cambridge University Press, Cambridge, 2007, Theory, fast solvers, and
		applications in elasticity theory, Translated from the German by Larry L.
		Schumaker. 
		\doi{10.1017/CBO9780511618635}.
		
		\bibitem{Brae09}
		\bgroup\scshape{}D.~Braess\egroup{}, An a posteriori error estimate and a
		comparison theorem for the nonconforming {$P_1$} element,  \emph{Calcolo}
		\textbf{46} no.~2 (2009), 149--155. 
		\doi{10.1007/s10092-009-0003-z}.
		
		\bibitem{BraSch08}
		\bgroup\scshape{}D.~Braess\egroup{} and
		\bgroup\scshape{}J.~Sch\"{o}berl\egroup{}, Equilibrated residual error
		estimator for edge elements,  \emph{Math. Comp.} \textbf{77} no.~262 (2008),
		651--672.
		  \doi{10.1090/S0025-5718-07-02080-7}.
		
		\bibitem{bregman67}
		\bgroup\scshape{}L.~Br\'egman\egroup{}, The relaxation method of finding the
		common point of convex sets and its application to the solution of problems
		in convex programming,  \emph{USSR Computational Mathematics and Mathematical
			Physics} \textbf{7} no.~3 (1967), 200--217.
		\doi{https://doi.org/10.1016/0041-5553(67)90040-7}.
		
		\bibitem{breit-lars-etal}
		\bgroup\scshape{}D.~Breit\egroup{}, \bgroup\scshape{}L.~Diening\egroup{},
		\bgroup\scshape{}J.~Storn\egroup{}, and
		\bgroup\scshape{}J.~Wichmann\egroup{}, The parabolic {$p$}-{L}aplacian with
		fractional differentiability,  \emph{IMA J. Numer. Anal.} \textbf{41} no.~3
		(2021), 2110--2138. 
		 \doi{10.1093/imanum/draa081}.
		
		\bibitem{BreS08}
		\bgroup\scshape{}S.~C. Brenner\egroup{} and \bgroup\scshape{}L.~R.
		Scott\egroup{}, \emph{The mathematical theory of finite element methods},
		third ed., \emph{Texts in Applied Mathematics} \textbf{15}, Springer, New
		York, 2008.
		\doi{10.1007/978-0-387-75934-0}.
		
		\bibitem{Bre15}
		\bgroup\scshape{}S.~C. Brenner\egroup{}, Forty years of the
		{C}rouzeix-{R}aviart element,  \emph{Numer. Methods Partial Differential
			Equations} \textbf{31} no.~2 (2015), 367--396.
		\doi{10.1002/num.21892}.
		
		\bibitem{caffarelli}
		\bgroup\scshape{}L.~A. Caffarelli\egroup{}, The obstacle problem revisited,
		\emph{J. Fourier Anal. Appl.} \textbf{4} no.~4-5 (1998), 383--402.
		 \doi{10.1007/BF02498216}.
		
		\bibitem{CL15}
		\bgroup\scshape{}C.~Carstensen\egroup{} and \bgroup\scshape{}D.~J.
		Liu\egroup{}, Nonconforming {FEM}s for an optimal design problem,  \emph{SIAM
			J. Numer. Anal.} \textbf{53} no.~2 (2015), 874--894. 
		\doi{10.1137/130927103}.
		
		\bibitem{CarFun99}
		\bgroup\scshape{}C.~Carstensen\egroup{} and \bgroup\scshape{}S.~A.
		Funken\egroup{}, Fully reliable localized error control in the {FEM},
		\emph{SIAM J. Sci. Comput.} \textbf{21} no.~4 (1999/00), 1465--1484.
		\doi{10.1137/S1064827597327486}.
		
		\bibitem{CaGeRi12}
		\bgroup\scshape{}C.~Carstensen\egroup{}, \bgroup\scshape{}J.~Gedicke\egroup{},
		and \bgroup\scshape{}D.~Rim\egroup{}, Explicit error estimates for {C}ourant,
		{C}rouzeix-{R}aviart and {R}aviart-{T}homas finite element methods,  \emph{J.
			Comput. Math.} \textbf{30} no.~4 (2012), 337--353. 
		\doi{10.4208/jcm.1108-m3677}.
		
		\bibitem{CP20}
		\bgroup\scshape{}A.~Chambolle\egroup{} and \bgroup\scshape{}T.~Pock\egroup{},
		Crouzeix-{R}aviart approximation of the total variation on simplicial meshes,
		\emph{J. Math. Imaging Vision} \textbf{62} no.~6-7 (2020), 872--899.
		\doi{10.1007/s10851-019-00939-3}.
		
		\bibitem{ChaLeg23}
		\bgroup\scshape{}L.~Chamoin\egroup{} and \bgroup\scshape{}F.~Legoll\egroup{},
		An introductory review on a posteriori error estimation in finite element
		computations,  \emph{SIAM Rev.} \textbf{65} no.~4 (2023), 963--1028.
		\doi{10.1137/21M1464841}.
		
		\bibitem{CH22}
		\bgroup\scshape{}F.~Chouly\egroup{} and \bgroup\scshape{}P.~Hild\egroup{}, On a
		finite element approximation for the elastoplastic torsion problem,
		\emph{Appl. Math. Lett.} \textbf{132} (2022), Paper No. 108191, 6.
		\mr{4432040}.  \doi{10.1016/j.aml.2022.108191}.
		
		\bibitem{ciarlet}
		\bgroup\scshape{}P.~G. Ciarlet\egroup{}, \emph{Mathematical elasticity.
			{V}olume {I}. {T}hree-dimensional elasticity}, \emph{Classics in Applied
			Mathematics} \textbf{84}, Society for Industrial and Applied Mathematics
		(SIAM), Philadelphia, PA, [2022] \copyright 2022, Reprint of the 1988 edition
		[0936420].
		
		\bibitem{CR73}
		\bgroup\scshape{}M.~Crouzeix\egroup{} and \bgroup\scshape{}P.-A.
		Raviart\egroup{}, Conforming and nonconforming finite element methods for
		solving the stationary {S}tokes equations. {I},  \emph{Rev. Fran\c{c}aise
			Automat. Informat. Recherche Op\'{e}rationnelle S\'{e}r. Rouge} \textbf{7}
		no.~{\rm R}-3 (1973), 33--75. 
		
		\bibitem{daniele2014}
		\bgroup\scshape{}P.~Daniele\egroup{}, \bgroup\scshape{}S.~Giuffr\`e\egroup{},
		\bgroup\scshape{}A.~Maugeri\egroup{}, and
		\bgroup\scshape{}F.~Raciti\egroup{}, Duality theory and applications to
		unilateral problems,  \emph{J. Optim. Theory Appl.} \textbf{162} no.~3
		(2014), 718--734. \mr{3245967}.  \doi{10.1007/s10957-013-0512-4}.
		
		\bibitem{DDPV96}
		\bgroup\scshape{}E.~Dari\egroup{}, \bgroup\scshape{}R.~Duran\egroup{},
		\bgroup\scshape{}C.~Padra\egroup{}, and \bgroup\scshape{}V.~Vampa\egroup{}, A
		posteriori error estimators for nonconforming finite element methods,
		\emph{RAIRO Mod\'{e}l. Math. Anal. Num\'{e}r.} \textbf{30} no.~4 (1996),
		385--400. 
		  \doi{10.1051/m2an/1996300403851}.
		
		\bibitem{DK08}
		\bgroup\scshape{}L.~Diening\egroup{} and \bgroup\scshape{}C.~Kreuzer\egroup{},
		Linear convergence of an adaptive finite element method for the
		{$p$}-{L}aplacian equation,  \emph{SIAM J. Numer. Anal.} \textbf{46} no.~2
		(2008), 614--638. 
		\doi{10.1137/070681508}.
		
		\bibitem{Doe96}
		\bgroup\scshape{}W.~D\"{o}rfler\egroup{}, A convergent adaptive algorithm for
		{P}oisson's equation,  \emph{SIAM J. Numer. Anal.} \textbf{33} no.~3 (1996),
		1106--1124.
		  \doi{10.1137/0733054}.
		
		\bibitem{ET99}
		\bgroup\scshape{}I.~Ekeland\egroup{} and
		\bgroup\scshape{}R.~T\'{e}mam\egroup{}, \emph{Convex analysis and variational
			problems}, english ed., \emph{Classics in Applied Mathematics} \textbf{28},
		Society for Industrial and Applied Mathematics (SIAM), Philadelphia, PA,
		1999, Translated from the French.
		\doi{10.1137/1.9781611971088}.
		
		\bibitem{EG21}
		\bgroup\scshape{}A.~Ern\egroup{} and \bgroup\scshape{}J.~L. Guermond\egroup{},
		\emph{Finite Elements I: Approximation and Interpolation}, \emph{Texts in
			Applied Mathematics} no.~1, Springer International Publishing, 2021.
		\doi{10.1007/978-3-030-56341-7}.
		
		\bibitem{ErStVo10}
		\bgroup\scshape{}A.~Ern\egroup{}, \bgroup\scshape{}A.~F. Stephansen\egroup{},
		and \bgroup\scshape{}M.~Vohral\'{\i}k\egroup{}, Guaranteed and robust
		discontinuous {G}alerkin a posteriori error estimates for
		convection-diffusion-reaction problems,  \emph{J. Comput. Appl. Math.}
		\textbf{234} no.~1 (2010), 114--130. 
		\doi{10.1016/j.cam.2009.12.009}.
		
		\bibitem{ErnVoh13}
		\bgroup\scshape{}A.~Ern\egroup{} and
		\bgroup\scshape{}M.~Vohral\'{\i}k\egroup{}, Four closely related equilibrated
		flux reconstructions for nonconforming finite elements,  \emph{C. R. Math.
			Acad. Sci. Paris} \textbf{351} no.~1-2 (2013), 77--80. 
		\doi{10.1016/j.crma.2013.01.001}.
		
		\bibitem{ErnVoh15}
		\bgroup\scshape{}A.~Ern\egroup{} and
		\bgroup\scshape{}M.~Vohral\'{\i}k\egroup{}, Polynomial-degree-robust a
		posteriori estimates in a unified setting for conforming, nonconforming,
		discontinuous {G}alerkin, and mixed discretizations,  \emph{SIAM J. Numer.
			Anal.} \textbf{53} no.~2 (2015), 1058--1081.
		\doi{10.1137/130950100}.
		
		\bibitem{friedmann}
		\bgroup\scshape{}A.~Friedman\egroup{}, \emph{Variational principles and
			free-boundary problems}, \emph{A Wiley-Interscience Publication}, John Wiley
		\& Sons, Inc., New York, 1982, Pure and Applied Mathematics.
		
		\bibitem{Gud10A}
		\bgroup\scshape{}T.~Gudi\egroup{}, A new error analysis for discontinuous
		finite element methods for linear elliptic problems,  \emph{Math. Comp.}
		\textbf{79} no.~272 (2010), 2169--2189. 
		\doi{10.1090/S0025-5718-10-02360-4}.
		
		\bibitem{Gud10B}
		\bgroup\scshape{}T.~Gudi\egroup{}, Some nonstandard error analysis of
		discontinuous {G}alerkin methods for elliptic problems,  \emph{Calcolo}
		\textbf{47} no.~4 (2010), 239--261.
		\doi{10.1007/s10092-010-0022-9}.
		
		\bibitem{HK04}
		\bgroup\scshape{}M.~Hinterm\"{u}ller\egroup{} and
		\bgroup\scshape{}K.~Kunisch\egroup{}, Total bounded variation regularization
		as a bilaterally constrained optimization problem,  \emph{SIAM J. Appl.
			Math.} \textbf{64} no.~4 (2004), 1311--1333.
		\doi{10.1137/S0036139903422784}.
		
		\bibitem{Hun07}
		\bgroup\scshape{}J.~D. Hunter\egroup{}, Matplotlib: A 2d graphics environment,
		\emph{Computing in Science \& Engineering} \textbf{9} no.~3 (2007), 90--95.
		\doi{10.1109/MCSE.2007.55}.
		
		\bibitem{K22CR}
		\bgroup\scshape{}A.~Kaltenbach\egroup{}, Error analysis for a
		{C}rouzeix--{R}aviart approximation of the $p$-{D}irichlet problem,  \emph{Journal
			of Numerical Mathematics} \textbf{0} no.~0 (2023).
		\doi{doi:10.1515/jnma-2022-0106}.
		
		\bibitem{LLC18}
		\bgroup\scshape{}D.~J. Liu\egroup{}, \bgroup\scshape{}A.~Q. Li\egroup{}, and
		\bgroup\scshape{}Z.~R. Chen\egroup{}, Nonconforming {FEM}s for the
		{$p$}-{L}aplace problem,  \emph{Adv. Appl. Math. Mech.} \textbf{10} no.~6
		(2018), 1365--1383.
		 \doi{10.4208/aamm}.
		
		\bibitem{LW10}
		\bgroup\scshape{}A.~Logg\egroup{} and \bgroup\scshape{}G.~N. Wells\egroup{},
		Dolfin: Automated finite element computing,  \emph{ACM Transactions on
			Mathematical Software} \textbf{37} no.~2 (2010).
		\doi{10.1145/1731022.1731030}.
		
		\bibitem{LucWoh04}
		\bgroup\scshape{}R.~Luce\egroup{} and \bgroup\scshape{}B.~I. Wohlmuth\egroup{},
		A local a posteriori error estimator based on equilibrated fluxes,
		\emph{SIAM J. Numer. Anal.} \textbf{42} no.~4 (2004), 1394--1414.
		 \doi{10.1137/S0036142903433790}.
		
		\bibitem{Mar85}
		\bgroup\scshape{}L.~D. Marini\egroup{}, An inexpensive method for the
		evaluation of the solution of the lowest order {R}aviart-{T}homas mixed
		method,  \emph{SIAM J. Numer. Anal.} \textbf{22} no.~3 (1985), 493--496.
		 \doi{10.1137/0722029}.
		
		\bibitem{MoNoSi03}
		\bgroup\scshape{}P.~Morin\egroup{}, \bgroup\scshape{}R.~H. Nochetto\egroup{},
		and \bgroup\scshape{}K.~G. Siebert\egroup{}, Local problems on stars: a
		posteriori error estimators, convergence, and performance,  \emph{Math.
			Comp.} \textbf{72} no.~243 (2003), 1067--1097. 
		\doi{10.1090/S0025-5718-02-01463-1}.
		
		\bibitem{vedo}
		\bgroup\scshape{}M.~t. Musy\egroup{}, marcomusy/vedo: 2023.4.4, March 2023.
		\doi{10.5281/zenodo.7734756}.
		
		\bibitem{NeiRep04}
		\bgroup\scshape{}P.~Neittaanm\"{a}ki\egroup{} and
		\bgroup\scshape{}S.~Repin\egroup{}, \emph{Reliable methods for computer
			simulation}, \emph{Studies in Mathematics and its Applications} \textbf{33},
		Elsevier Science B.V., Amsterdam, 2004, Error control and a posteriori
		estimates. 
		
		\bibitem{NicCre06}
		\bgroup\scshape{}S.~Nicaise\egroup{} and
		\bgroup\scshape{}E.~Creus\'{e}\egroup{}, Isotropic and anisotropic a
		posteriori error estimation of the mixed finite element method for second
		order operators in divergence form,  \emph{Electron. Trans. Numer. Anal.}
		\textbf{23} (2006), 38--62. 
		
		\bibitem{NSV00}
		\bgroup\scshape{}R.~H. Nochetto\egroup{},
		\bgroup\scshape{}G.~Savar\'e\egroup{}, and
		\bgroup\scshape{}C.~Verdi\egroup{}, A posteriori error estimates for variable
		time-step discretizations of nonlinear evolution equations,
		\emph{Communications on Pure and Applied Mathematics} \textbf{53} no.~5
		(2000), 525--589.
		\doi{https://doi.org/10.1002/(SICI)1097-0312(200005)53:5<525::AID-CPA1>3.0.CO;2-M}.
		
		\bibitem{OBGXY05}
		\bgroup\scshape{}S.~Osher\egroup{}, \bgroup\scshape{}M.~Burger\egroup{},
		\bgroup\scshape{}D.~Goldfarb\egroup{}, \bgroup\scshape{}J.~Xu\egroup{}, and
		\bgroup\scshape{}W.~Yin\egroup{}, An iterative regularization method for
		total variation-based image restoration,  \emph{Multiscale Modeling \&
			Simulation} \textbf{4} no.~2 (2005), 460--489. \doi{10.1137/040605412}.
		
		\bibitem{plaza}
		\bgroup\scshape{}A.~Plaza\egroup{} and \bgroup\scshape{}G.~F. Carey\egroup{},
		Local refinement of simplicial grids based on the skeleton,  \emph{Appl.
			Numer. Math.} \textbf{32} no.~2 (2000), 195--218 (English).
		\doi{10.1016/S0168-9274(99)00022-7}.
		
		\bibitem{PraSyn47}
		\bgroup\scshape{}W.~Prager\egroup{} and \bgroup\scshape{}J.~L. Synge\egroup{},
		Approximations in elasticity based on the concept of function space,
		\emph{Quart. Appl. Math.} \textbf{5} (1947), 241--269. 
		\doi{10.1090/qam/25902}.
		
		\bibitem{RT75}
		\bgroup\scshape{}P.-A. Raviart\egroup{} and \bgroup\scshape{}J.~M.
		Thomas\egroup{}, A mixed finite element method for 2nd order elliptic
		problems,  in \emph{Mathematical aspects of finite element methods ({P}roc.
			{C}onf., {C}onsiglio {N}az. delle {R}icerche ({C}.{N}.{R}.), {R}ome, 1975)},
		1977, pp.~292--315. Lecture Notes in Math., Vol. 606. 
		
		\bibitem{Rep08}
		\bgroup\scshape{}S.~Repin\egroup{}, \emph{A posteriori estimates for partial
			differential equations}, \emph{Radon Series on Computational and Applied
			Mathematics} \textbf{4}, Walter de Gruyter GmbH \& Co. KG, Berlin, 2008.
		 \doi{10.1515/9783110203042}.
		
		\bibitem{Repin18}
		\bgroup\scshape{}S.~Repin\egroup{} and \bgroup\scshape{}J.~Valdman\egroup{},
		Error identities for variational problems with obstacles,  \emph{ZAMM Z.
			Angew. Math. Mech.} \textbf{98} no.~4 (2018), 635--658. 
		\doi{10.1002/zamm.201700105}.
		
		\bibitem{Rep20A}
		\bgroup\scshape{}S.~I. Repin\egroup{}, A posteriori error estimates for
		approximate solutions of variational problems with power growth functionals,
		\emph{Zap. Nauchn. Sem. S.-Peterburg. Otdel. Mat. Inst. Steklov. (POMI)}
		\textbf{249} no.~Kraev. Zadachi Mat. Fiz. i Smezh. Vopr. Teor. Funkts. 29
		(1997), 244--255, 317. 
		 \doi{10.1007/BF02680150}.
		
		\bibitem{Rep20B}
		\bgroup\scshape{}S.~I. Repin\egroup{}, A posteriori error estimation for
		nonlinear variational problems by duality theory,  \emph{Zap. Nauchn. Sem.
			S.-Peterburg. Otdel. Mat. Inst. Steklov. (POMI)} \textbf{243} no.~Kraev.
		Zadachi Mat. Fiz. i Smezh. Vopr. Teor. Funktsi\u{\i}. 28 (1997), 201--214,
		342.
		\doi{10.1007/BF02673600}.
		
		\bibitem{Rep99}
		\bgroup\scshape{}S.~I. Repin\egroup{}, A posteriori error estimates for
		approximate solutions to variational problems with strongly convex
		functionals,  \emph{J. Math. Sci. (New York)} \textbf{97} no.~4 (1999),
		4311--4328, Problems of mathematical physics and function theory.
		 \doi{10.1007/BF02365047}.
		
		\bibitem{Rep20C}
		\bgroup\scshape{}S.~I. Repin\egroup{}, A posteriori error estimation for
		variational problems with uniformly convex functionals,  \emph{Math. Comp.}
		\textbf{69} no.~230 (2000), 481--500.
		\doi{10.1090/S0025-5718-99-01190-4}.
		
		\bibitem{repin2024posteriori}
		\bgroup\scshape{}S.~I. Repin\egroup{}, A posteriori error identities and
		estimates of modelling errors,  \emph{Advances in Applied Mechanics (AAMS)}
		\textbf{58} (2024).
		
		\bibitem{ROF92}
		\bgroup\scshape{}L.~I. Rudin\egroup{}, \bgroup\scshape{}S.~Osher\egroup{}, and
		\bgroup\scshape{}E.~Fatemi\egroup{}, Nonlinear total variation based noise
		removal algorithms, \textbf{60}, 1992, Experimental mathematics:
		computational issues in nonlinear science (Los Alamos, NM, 1991),
		pp.~259--268. 
		\doi{10.1016/0167-2789(92)90242-F}.
		
		\bibitem{signorini}
		\bgroup\scshape{}A.~Signorini\egroup{}, Questioni di elasticit\`a non
		linearizzata e semilinearizzata,  \emph{Rend. Mat. e Appl. (5)} \textbf{18}
		(1959), 95--139. 
		
		\bibitem{SmeVoh20}
		\bgroup\scshape{}I.~Smears\egroup{} and
		\bgroup\scshape{}M.~Vohral\'{\i}k\egroup{}, Simple and robust equilibrated
		flux {\it a posteriori} estimates for singularly perturbed reaction-diffusion
		problems,  \emph{ESAIM Math. Model. Numer. Anal.} \textbf{54} no.~6 (2020),
		1951--1973. 
		\doi{10.1051/m2an/2020034}.
		
		\bibitem{Tran23}
		\bgroup\scshape{}N.~T. Tran\egroup{}, Discrete weak duality of hybrid
		high-order methods for convex minimization problems, 2023.
		
		\bibitem{VeeVer09}
		\bgroup\scshape{}A.~Veeser\egroup{} and
		\bgroup\scshape{}R.~Verf\"{u}rth\egroup{}, Explicit upper bounds for dual
		norms of residuals,  \emph{SIAM J. Numer. Anal.} \textbf{47} no.~3 (2009),
		2387--2405. 
		\doi{10.1137/080738283}.
		
		\bibitem{Verf09}
		\bgroup\scshape{}R.~Verf\"{u}rth\egroup{}, A note on constant-free a posteriori
		error estimates,  \emph{SIAM J. Numer. Anal.} \textbf{47} no.~4 (2009),
		3180--3194. 
		 \doi{10.1137/080726239}.
		
		\bibitem{zanotti_stokes}
		\bgroup\scshape{}R.~Verf\"{u}rth\egroup{} and
		\bgroup\scshape{}P.~Zanotti\egroup{}, A quasi-optimal {C}rouzeix-{R}aviart
		discretization of the {S}tokes equations,  \emph{SIAM J. Numer. Anal.}
		\textbf{57} no.~3 (2019), 1082--1099. 
		\doi{10.1137/18M1177688}.
		
		\bibitem{Ver13}
		\bgroup\scshape{}R.~Verfürth\egroup{}, \emph{{A Posteriori Error Estimation
				Techniques for Finite Element Methods}}, Oxford University Press, 04 2013.
		\doi{10.1093/acprof:oso/9780199679423.001.0001}.
		
		\bibitem{scipy}
		\bgroup\scshape{}P.~Virtanen et al.\egroup{}, 
		{{SciPy} 1.0: Fundamental Algorithms for Scientific Computing in Python},
		\emph{Nature Methods} \textbf{17} (2020), 261--272.
		\doi{10.1038/s41592-019-0686-2}.
		
		\bibitem{Vohr11}
		\bgroup\scshape{}M.~Vohral\'{\i}k\egroup{}, Guaranteed and fully robust a
		posteriori error estimates for conforming discretizations of diffusion
		problems with discontinuous coefficients,  \emph{J. Sci. Comput.} \textbf{46}
		no.~3 (2011), 397--438.
		\doi{10.1007/s10915-010-9410-1}.
		
	\end{thebibliography}

	\providecommand{\bysame}{\leavevmode\hbox to3em{\hrulefill}\thinspace}
	\providecommand{\noopsort}[1]{}
	\providecommand{\mr}[1]{\href{http://www.ams.org/mathscinet-getitem?mr=#1}{MR~#1}}
	\providecommand{\zbl}[1]{\href{http://www.zentralblatt-math.org/zmath/en/search/?q=an:#1}{Zbl~#1}}
	\providecommand{\jfm}[1]{\href{http://www.emis.de/cgi-bin/JFM-item?#1}{JFM~#1}}
	\providecommand{\arxiv}[1]{\href{http://www.arxiv.org/abs/#1}{arXiv~#1}}
	\providecommand{\doi}[1]{\url{https://doi.org/#1}}
	\providecommand{\MR}{\relax\ifhmode\unskip\space\fi MR }
	\providecommand{\MRhref}[2]{%
		\href{http://www.ams.org/mathscinet-getitem?mr=#1}{#2}
	}
	\providecommand{\href}[2]{#2}

	}
	
\end{document}